\newcommand{\toyomu}[2][]{\todo[color=red!20!white, size=\footnotesize, #1]{TM: #2}}
\newcommand{\purba}[2][]{\todo[color=yellow!60!white, size=\footnotesize, #1]{PD: #2}}
\newcommand{\rafal}[2][]{\todo[color=green!60!white, size=\footnotesize, #1]{RL: #2}}
\begin{document}

\title{Level crossings of fractional Brownian motion}
\author[1]{Purba Das}
\author[2]{Rafał Łochowski}
\author[3]{Toyomu Matsuda \orcidlink{0000-0002-2422-0863}}
\author[3]{Nicolas Perkowski}
\affil[1]{King's College London, England. \protect\\ \href{mailto:purbadas@umich.edu}{purbadas@umich.edu}}
\affil[2]{Warsaw School of Economics, Poland. \protect\\
\href{mailto:rlocho@sgh.waw.pl}{rlocho@sgh.waw.pl}}
\affil[3]{Institut für Mathematik, Freie Universität Berlin, Germany. \protect\\
\href{mailto:toyomu.matsuda@fu-berlin.de}{toyomu.matsuda@fu-berlin.de}  and
\href{mailto:perkowski@math.fu-berlin.de}{perkowski@math.fu-berlin.de}}
\date{}

\maketitle

\begin{abstract}
 
Since the classical work of Lévy, it is known that the local time of Brownian motion can be characterized through the limit of level crossings. 
While subsequent extensions of this characterization have primarily focused on Markovian or martingale settings, this work presents a highly anticipated extension to fractional Brownian motion --- a prominent non-Markovian and non-martingale process.
Our result is viewed as a fractional analogue of Chacon et al. \cite{chacon1981}. 
Consequently, it provides a global path-by-path construction of fractional Brownian local time. 

Due to the absence of conventional probabilistic tools in the fractional setting, our approach utilizes completely different argument with a flavor of the subadditive ergodic theorem, combined with the shifted stochastic sewing lemma recently obtained in \cite{matsuda22}.

Furthermore, we prove an almost-sure convergence of the ($1/H$)-th variation of fractional Brownian motion with the \rafal{I added the} Hurst parameter $H$, along random partitions defined by level crossings, called Lebesgue partitions. This result raises an interesting conjecture on the limit, which seems to capture non-Markovian nature of fractional Brownian motion.
\bigskip

\noindent
\emph{Keywords and phrases.} Fractional Brownian motion, level crossings, local time, ($1/H$)-th  variation, Lebesgue partition, stochastic sewing lemma.

\noindent
\emph{MSC 2020.} 60G22, 60J55.

\end{abstract}

\tableofcontents

\section{Introduction}\label{sec:intro}
Level crossings of stochastic processes have been studied since the classical works of Kac \cite{Kac:1943aa} and Rice \cite{Rice:1945aa}. 
Depending on whether the process is smooth or rough, the study of its level crossings 
rely on different methods.
As for the smooth case, which is not the scope of this article, the reader can 
refer to the survey article \cite{Kratz:2006tb}, the textbook \cite{azais09} and the reference therein.  

By far the most prominent example of \rafal{I changed this sequence slightly}  a rough stochastic process  is a Brownian motion. 
The first work on level crossings of Brownian motion 
is attributed to Lévy \cite{levy1965}, who characterized its local time as \rafal{I changed this sequence slightly} a limit of normalised numbers of level crossings. 
More precisely, for a given process $w$, we set
\begin{equation*}
 U_{s, t} (\epsilon, w) 
 \assign \# \big\{ (u, v)  \of s \leq u < v \leq t, \,\,
   w_u = 0, w_v = \epsilon, \forall r \in (u, v) \: w_r \in (0,
   \epsilon) \big\}, 
\end{equation*}
as illustrated in Figure~\ref{fig:U_s_t}. 
The quantity $U_{s, t}(\epsilon, w)$ counts the number of upcrossings from $0$ to $\epsilon$ 
in the interval $[s, t]$.
For Brownian motion $W$ and $a \in \mathbb{R}$, we have
\begin{align*}
  \lim_{\epsilon \to 0} \epsilon U_{0, t}(\epsilon, W - a) = \frac{1}{2} L_t^W(a)
\end{align*}
almost surely, where $L_t^W(a)$ is the local time of $W$ at time $t$ and at level $a$. 
The local time is defined as the density of the occupation measure, see Definition \ref{def:local time} below.
This result can be found in standard textbooks such as \cite{ito_mckean,revuz_yor,Morters:2010vm},  
and it can be generalized for semimartingales \cite{El-Karoui:1978aa} and for Markov processes \cite{Fristedt:1983aa}.  
\begin{figure}[h]
  \raisebox{-0.5\height}{\includegraphics[width=\textwidth]{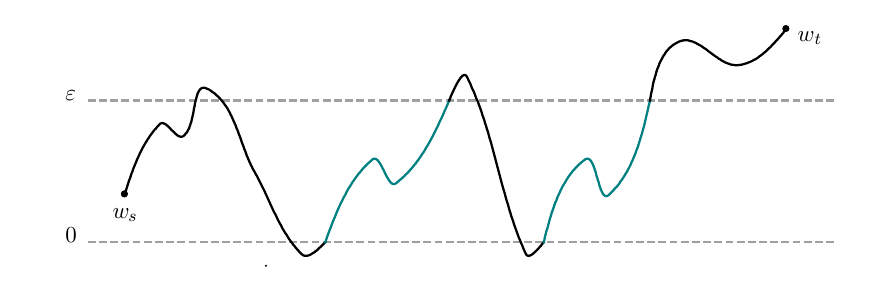}}
  \caption{Graphical illustration of $U_{s,t}(\epsilon, w)$. In the picture, $U_{s,t}(\epsilon, w) = 2$.}
  \label{fig:U_s_t}
\end{figure}

On the other hand, there exist rough stochastic processes that do not fall under the category of semimartingales or Markovian processes. One such example is the \emph{fractional Brownian motion} $B^H$, which is a Gaussian process parameterized by Hurst index $H \in (0, 1)$. Precisely, $B^H$ is neither a semimartingale nor Markovian for $H \neq 1/2$, and it becomes Brownian motion when $H = 1/2$.

Fractional Brownian motion possesses a local time \rafal{I added the reference to Definition \ref{def:local time}}(defined as in Definition \ref{def:local time}). Given Lévy's result on the local time of Brownian motion, a natural question arises regarding whether a similar result holds for the local time of fractional Brownian motion. However, a complete answer to this question has not been obtained thus far. This is surprising considering the age of Lévy's result and that of fractional Brownian motion.

Some works have explored the characterization of the fractional Brownian local time in relation to level crossings. For instance, the works \cite{azais90,Azais:1996aa} demonstrate that the number of zeros for certain smoothed fractional Brownian motions converges, in a suitable sense, to the local time. Furthermore, level crossings of stochastic processes have received attention in the context of pathwise stochastic calculus \cite{perkowski15, davis2018, cont19, RL2021, Kim:2022aa, ananova20}, as well as in some applied literatures \cite{feuerverger, Krug98}.

Constructing the local time via level crossings 
is not only a natural problem, but also it can lead to a significant implication on 
the path \rafal{I changed this sentence slightly} properties of the process.
This was first observed by Chacon et al. \cite{chacon1981} built on \cite{perkins81}. 
Therein proven is the existence of a measurable set $\Omega_W$ such that $\mathbb{P}(W \in \Omega_W) =1$ (recall that $W$ is Brownian motion) and 
for every $w \in \Omega_W$, $a \in \mathbb{R}$ and $t \geq 0$, the limit 
\begin{align}\label{eq:BM_loc_time_conv}
  \lim_{\epsilon \to 0} \epsilon U_{0, t}(\epsilon, w - a)
\end{align}
exists and is equal to one-half times the local time of $w$ at the level $a$. Hence, the existence of the limit of level crossings is a path property, 
and this provides a path-by-path construction of Brownian local time. 
Furthermore, the construction is global in that the convergence in \eqref{eq:BM_loc_time_conv} holds for all $a \in \mathbb{R}$ off a single null set.
This result explains why such construction of the local time receives attention in the pathwise stochastic calculus.
It is worth noting that the master thesis \cite{Lemieux_1983} of Lemieux extended the result for a large class of semimartingales.

The result of Chacon et al. has a remarkable consequence on \textit{pathwise quadratic variation}, calculated as a limit of sums of square increments, where the increments are taken along partitions of a fixed interval with vanishing mesh.  
The precise definition of the pathwise quadratic variation is as follows: given a sequence $\bm{\pi}$ of partitions $\pi_n$ ($n \in \mathbb{N}$) with vanishing mesh, the pathwise quadratic variation $[w]_{\bm{\pi}}$ of 
a process $w$ is defined by 
\begin{align*}
  [w]_{\bm{\pi}} \defby \lim_{n \to \infty} \sum_{[s, t] \in \pi_n} \abs{w_t - w_s}^2
\end{align*}
whenever the limit exists. 
A deterministic partition is a partition that does not depend on the path/process.
The classical works \cite{levy1940,levy1965} of Lévy show that for any refining deterministic partition sequence $\bm{\pi}$ of $[0, t]$ with vanishing mesh we have 
\begin{align}\label{eq:bm_qv}
  \mathbb{P}([W]_{\bm{\pi}} = t) = 1.
\end{align}
Dudley \cite{Dudley:1973aa} proved that if the deterministic sequence $\bm{\pi} = (\pi_n)$ satisfies  
\begin{align}\label{eq:dudley_cond}
  \abs{\pi_n} \defby \max_{[s, t] \in \pi_n} \abs{t-s} = o(1/\log n)
\end{align}
then \eqref{eq:bm_qv} holds, 
with the optimality of the condition \eqref{eq:dudley_cond} being shown as well. 
In general, the pathwise quadratic variation (even when it exists) may depend on the choice of a sequence of partitions \cite[page 47]{freedman}, \cite{davis2018}.
Furthermore, the null set in Lévy's and Dudley's works depends on the sequence $\bm{\pi}$ of partitions.

Hence, an obvious question is if, given a stochastic process $X$, there is any uncountable class $\mathrm{P}$ of partition sequences such that 
almost surely for any $\bm{\pi}, \bm{\pi}' \in \mathrm{P}$ we have $[X]_{\bm{\pi}} = [X]_{\bm{\pi}'}$. 
Here comes the result of Chacon et al., 
which proves
that Brownian motion has a measure zero set outside which any quadratic variation along any sequence of the \emph{Lebesgue partitions} 
(defined at the beginning of Section~\ref{subsec:main_results}) of 
$[0, t]$ with vanishing mesh is equal to $t$. 
We remark that, unlike in Dudley's result, there is \rafal{I changed this sequence} no condition on the decay of meshes of partitions and the null set is uniform over all sequences of the Lebesgue partitions.
\toyomu{To Purba: can you cite your works here?}

In this paper, 
we extend Levy's construction of the local time and the result of Chacon et al. for fractional Brownian motions with the Hurst parameter $H < 1/2$.  

\subsection{Main results}\label{subsec:main_results}
We write $B^H$ for a fractional Brownian motion with $B_0^H = 0$ and
with the Hurst parameter $H \in (0, 1)$. Specifically, $B^H$ is a one-dimensional centered Gaussian process 
satisfying $B^H_0 = 0$ and 
\begin{align*}
  \mathbb{E}[(B^H_t - B^H_s)^2] = a_H (t-s)^{2H}.
\end{align*}
We choose the constant $a_H$ in a manner that the identity \eqref{eq:mandelbrot} below holds. 
The specific value of $a_H$ is not important for our study.

To introduce the Lebesgue partitions,
let $\mathrm{P}$ be a partition of the space $\mathbb{R}$. That is, we have a strictly increasing sequence $(y_n)_{n\in \mathbb{Z}}$ of real numbers such that 
\begin{equation*}
    \lim_{n \to - \infty} y_n = - \infty, \quad \lim_{n \to +\infty} y_n = +\infty,
\end{equation*}
and we have $\mathrm{P} = \{ [y_{n-1}, y_n] : n \in \mathbb{Z} \}$.
\rafal{maybe it is in place to add here the definition of a partition we use?}
Let $\Lambda(\mathrm{P})$ be the set of all endpoints of intervals from $\mathrm{P}$, or $\Lambda(\mathrm{P}) = \{y_n : n \in \mathbb{Z}\}$.
Given a path
$w:[0,\infty) \to \mathbb{R}$, we set $T_0 (\mathrm{P}, w)
\assign 0$ and recursively define 
\begin{align}\label{eq:def_of_T_n}
 T_n (\mathrm{P}, w) \assign \inf \{ t > T_{n - 1} (\mathrm{P}, w) \of w_t
   \in \Lambda(\mathrm{P}) \setminus \{ w_{T_{n - 1} (\mathrm{P}, w)} \}
   \} 
\end{align}
with $\inf \emptyset = + \infty$.
(If $T_{n - 1} = + \infty$, we set $T_n \assign + \infty$.) Note that we do
not assume $w_0 = 0$. 
The partition given by
\begin{equation}\label{eq:lebesgue-partition}
  \{[T_{n-1}(\mathrm{P}, w), T_n(\mathrm{P}, w)] : n \in \mathbb{N}, T_n(\mathrm{P}, w) \leq t\}
\end{equation}
is called a \emph{Lebesgue partition}. 
See Figure~\ref{fig:T_n(epsilon,w)} for a graphical illustration.
Lebesgue partition is also called partition along $y$-axis.
\begin{figure}
  \centering
  \raisebox{-0.5\height}{\includegraphics[width=\textwidth]{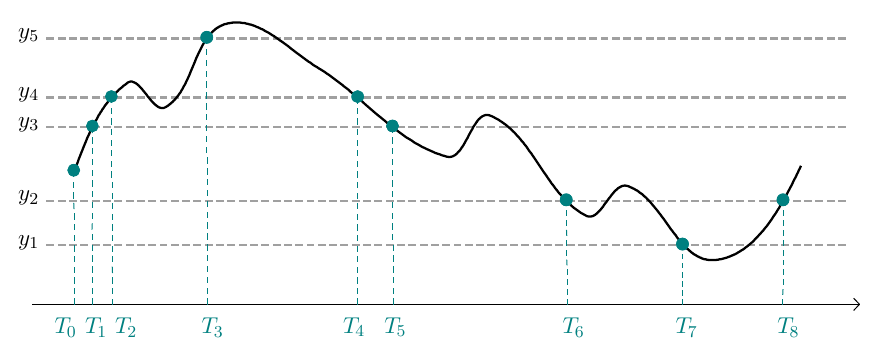}}
  \caption{Lebesgue partition}
  \label{fig:T_n(epsilon,w)}
\end{figure}

For a uniform partition $\mathrm{P}^{\epsilon} \defby \set{[\epsilon k, \epsilon (k+1)] \of k \in \mathbb{Z}}$, we simply write $T_n(\epsilon, w) \defby T_n(\mathrm{P}^{\epsilon}, w)$.
We denote by $K_{s,t}(\epsilon, w)$ the number of $\epsilon$-level crossings in the interval 
$[s, t]$, defined precisely as\footnote{Our convention is $\mathbb{N}= \{ 1, 2, 3, \ldots \}$.
In particular $0 \notin \mathbb{N}$.}
\begin{align}\label{eq:def_of_K}
  K_{s,t}(\epsilon, w) \defby 
  \# \{ n \in \mathbb{N} \setminus \{1\} \of T_n
   (\epsilon, w_{s + \cdot}) \leq t \} + 
   \indic_{\{w_s \in \epsilon \mathbb{Z}\}} \indic_{\{T_1(\epsilon, w_{s+\cdot}) \leq t\}},
\end{align}
where $\#A$ represents the cardinality of the 
set $A$.
%
We observe
\begin{equation*}
  \sum_{n: T_n(\epsilon, w) \leq t} \abs{w_{T_n(\epsilon, w)} - w_{T_{n-1}(\epsilon, w)}}^{\frac{1}{H}} 
  = \epsilon^{1/H} K_{0, t}(\epsilon, w) + 
  \indic_{\{w_0 \notin \epsilon \mathbb{Z}\}} \abs{w_{T_1(\epsilon, w)} - w_0}^{\frac{1}{H}}.
\end{equation*}
Note that $\abs{w_{T_1(\epsilon, w)} - w_0} \leq \epsilon$.
Therefore, the study of the ($1/H$)-th variation along a sequence of
uniform Lebesgue partitions is equivalent to that of limiting behavior of  $K_{0, t}(\epsilon, w)$ as $\epsilon \to 0$.
 

For $\rho \in \R$ and a process $w$, the process $w + \rho$ is defined by
$(w + \rho)_t \assign w_t + \rho$.
Our first main result is on the ($1/H$)-th variation of fractional Brownian motion along uniform Lebesgue partitions of fractional Brownian motion $B^H$.
\begin{theorem}[Convergence of ($1/H$)-th variation along uniform Lebesgue partitions]\label{thm:convergence-of-K}
For all $H \in (0, 1)$, there exists a positive finite constant $\mathfrak{c}_H$ with the following property.
  Let $\rho \in \R$, $t \in (0, \infty)$
  and $(\epsilon_n)_{n=1}^{\infty}$ be a sequence of positive numbers such that
  $\epsilon_n = O(n^{-\eta})$ for some $\eta > 0$. We then have
  \begin{equation}\label{eq:convergence-of-K}
    \lim_{n \to \infty} \epsilon_n^{1/H} K_{0, t}(\epsilon_n, B^H + \rho) = \mathfrak{c}_H t
    \quad \text{almost surely.}
  \end{equation}
\end{theorem}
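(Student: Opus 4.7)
The plan is to identify the constant $\mathfrak{c}_H$ through an ergodic argument at large scale, and then control the deviation of $\epsilon^{1/H} K_{0, t}(\epsilon, B^H + \rho)$ from $\mathfrak{c}_H t$ in $L^p$ by means of the shifted stochastic sewing lemma of \cite{matsuda22}. The almost-sure convergence along the polynomial sequence $\epsilon_n = O(n^{-\eta})$ will then follow by Chebyshev's inequality and Borel--Cantelli with $p$ chosen large enough.

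For the identification of $\mathfrak{c}_H$, self-similarity of fractional Brownian motion yields the distributional identity $\epsilon^{1/H} K_{0, t}(\epsilon, B^H + \rho) \stackrel{d}{=} (t/T) K_{0, T}(1, B^H + \epsilon^{-1} \rho)$ with $T = t \epsilon^{-1/H} \to \infty$. The family $t \mapsto K_{s, s+t}(1, B^H)$ is subadditive in $t$ up to a boundary error of size one, and the time shift on two-sided fractional Brownian motion is ergodic, so a Kingman-type subadditive ergodic argument gives $T^{-1} K_{0, T}(1, B^H + c) \to \mathfrak{c}_H$ almost surely as $T \to \infty$. The limit is independent of the starting shift $c$ because the diffusive spreading of $B^H$ randomizes the level modulo one at large times, and positivity/finiteness of $\mathfrak{c}_H = \lim_T T^{-1} \mathbb{E}[K_{0,T}(1, B^H)]$ follow from standard Gaussian density estimates for $B^H_T$. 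This yields convergence of $\mathbb{E}[\epsilon^{1/H} K_{0,t}(\epsilon, B^H+\rho)]$ to $\mathfrak{c}_H t$, but only in distribution for the fixed-$t$ limit that the theorem requires.

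The heart of the proof is therefore an $L^p$ fluctuation bound for the fixed-$t$ object. Splitting $[0, t]$ into blocks $[t_{k-1}, t_k]$ of a suitable common length, one has to control the centered sum $\sum_k \bigl(K_{t_{k-1}, t_k}(\epsilon, B^H + \rho) - \mathfrak{c}_H (t_k - t_{k-1}) \epsilon^{-1/H}\bigr)$. Each summand is a functional of the increments of $B^H$ on $[t_{k-1}, t_k]$ shifted by the $\mathcal{F}_{t_{k-1}}$-measurable quantity $B^H_{t_{k-1}} + \rho$, which is precisely the setting of the shifted stochastic sewing lemma; applying it should produce $\mathbb{E}|\epsilon^{1/H} K_{0,t}(\epsilon, B^H + \rho) - \mathfrak{c}_H t|^p \leq C \epsilon^{\beta p}$ for some $\beta > 0$ and all $p$ large, whence Borel--Cantelli along $\epsilon_n = O(n^{-\eta})$ closes the argument. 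The main obstacle I foresee is verifying the regularity hypotheses of the shifted SSL for the integer-valued, discontinuous counting functional $K_{s,t}(\epsilon, \cdot)$: some smoothing, or a direct estimate exploiting the Gaussian density of $B^H_{t_{k-1}}$, should be needed to recover the continuity in the shift variable that the shifted SSL requires. A secondary technicality is the treatment of the translation by $\rho$, which is handled by the $\epsilon$-periodicity $K_{s,t}(\epsilon, w + n\epsilon) = K_{s,t}(\epsilon, w)$ for $n \in \mathbb{Z}$.
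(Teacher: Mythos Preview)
Your overall architecture---scaling, an ergodic/subadditive identification of $\mathfrak{c}_H$, an $L^p$ fluctuation bound via the shifted stochastic sewing lemma, then Borel--Cantelli---matches the paper. But there is a genuine missing idea: you never introduce the $\rho$-averaged count
\[
  \bar K_{s,t}(\epsilon,w)\;=\;\epsilon^{-1}\int_{-\epsilon/2}^{\epsilon/2} K_{s,t}(\epsilon,w+\rho')\,\mathrm{d}\rho',
\]
and both places where your sketch becomes vague are exactly where the paper uses $\bar K$.

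First, your identification of $\mathfrak{c}_H$ via Kingman applied to $K_{0,T}(1,B^H+c)$ does not go through directly, because $(K_{s,t}(1,B))_{s<t}$ is \emph{not} stationary: $K_{s,t}(1,B)$ depends on $B_s \bmod 1$, not only on the increments on $[s,t]$. Your remark that ``diffusive spreading \ldots\ randomizes the level modulo one'' is precisely the hard part. The paper sidesteps this entirely: $\bar K_{s,t}(\epsilon,B)$ \emph{is} a function of the increments alone, hence stationary, and $\mathfrak{c}_H$ is defined by Fekete's lemma on $t^{-1}\mathbb{E}[\bar K_{0,t}(1,B)]$.

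Second, in the sewing step you correctly flag the obstacle---regularity of the integer-valued functional $K$ in the shift variable---but do not resolve it. The paper's resolution is again $\bar K$: with germs $A^1_{s,t}=K_{s,t}(\epsilon,B+\rho)(t-s)$, $A^2_{s,t}=\bar K_{s,t}(\epsilon,B)(t-s)$, $A^3_{s,t}=\mathbb{E}[\bar K_{0,\zeta}(1,B)](t-s)$, one estimates $\mathbb{E}[A^1-A^2\mid\mathcal{F}_v]$ by a Girsanov/Cameron--Martin argument comparing $K(\cdot+\rho)$ and $K(\cdot+\rho')$ \emph{in expectation} (this is the ``weak'' estimate you allude to, made precise in Lemma~\ref{lem:K-rho}), and $\mathbb{E}[A^2-A^3\mid\mathcal{F}_v]$ by the asymptotic independence of increments of $B^H$ (since $\bar K$ is measurable with respect to the increment $\sigma$-algebra). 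Neither estimate is available for $K$ alone. Once you insert $\bar K$, your plan becomes the paper's proof; without it, both the ergodic step and the sewing hypotheses remain open.
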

The proof of Theorem~\ref{thm:convergence-of-K} will be given at the end of Section~\ref{subsec:var_conv}. A more explicit representation of the constant $\mathfrak{c}_H$ is given 
by \eqref{eq:const_c_H}.
For $H < 1/2$, we can remove the \rafal{I removed the word 'decaying'} condition $\epsilon_n = O(n^{-\eta})$, 
see Theorem~\ref{thm:lemieux_type_result} below.

Theorem~\ref{thm:convergence-of-K} concerns level crossings at all levels. 
We can also consider level crossings at a specific level, leading to Lévy's classical work on local time \cite{levy1965}.
For each $\epsilon \in (0, 1)$ and $w \in C ([0, T] ; \mathbb{R})$, we
consider the number of upcrossings \rafal{I changed this sentence
} by $w$ the space interval $[0, \epsilon]$ between times $s$ and $t$ by setting
\begin{align}\label{eq:def_of_U}
 U_{s, t} (\epsilon, w) \assign \# \big\{ (u, v) \in [s,t]^2 \of u < v, 
   w_u = 0, w_v = \epsilon, \forall r \in (u, v) \: w_r \in (0,
   \epsilon) \big\} . 
\end{align}
See Figure~\ref{fig:U_s_t} for illustration.
In the case of Brownian motion ($H=1/2$), it is well-known (e.g.,
\cite[Section~2.2]{ito_mckean}, \cite[Chapter~VI]{revuz_yor}, \cite[Section~6]{Morters:2010vm}) that 
provided $\epsilon_n \to 0$ and $\epsilon_n > 0$,\purba{Just to clarify: does $\downarrow 0$ means monotonically decreasing? I think we just need $\to 0$ and $\epsilon_n>0$ TM: corrected} we have
\begin{align}\label{eq:level_crossing_local_time_Brownian}
  \lim_{n \to \infty} \epsilon_n U_{0, t}(\epsilon_n, B^{1/2} - a) = \frac{1}{2} L_t^{1/2}(a)
  \quad \text{almost surely,}
\end{align}
where $L^{1/2}$ is the local time of Brownian motion $B^{1/2}$.
This representation of the local time was extended for semimartingales by Karoui~\cite{El-Karoui:1978aa}.

We recall that the local time of $B^H$, defined just below, exists for all $H \in (0, 1)$.
\begin{definition}[Local time]\label{def:local time}
  We denote by $(L_t^H (a))_{t \geq 0, a \in \mathbb{R}}$ the \emph{local time}
  of $B^H$ with $t$ representing time and $a$ representing level. 
  More precisely, $L^H$ is a unique random field satisfying the following occupation density formula:
  \begin{align*}
    \int_0^t f(B^H_r) \mathrm{d} r = \int_{\mathbb{R}} f(a) L_t^H(a) \mathrm{d} a, \quad \forall t\geq 0, \forall f \in C(\mathbb{R}). 
  \end{align*}
  As for the existence of $L^H$ and its continuity, see e.g., \cite{berman1973,Geman:1980ve}. 
\end{definition}
\toyomu{cite  Berman, S.M.: Local nondeterminism and local times of Gaussian processes. Bull. Am. Math. Soc.
79(2), 475–477 (1973). PD:added}
The following is an extension of Lévy's result to fractional Brownian motion.
\begin{theorem}[Local time via level crossings]\label{thm:convergence_to_local_time}
  Let $H < 1/2$, $a \in \R$, $t \in (0, \infty)$ and   
  $(\epsilon_n)_{n=1}^{\infty}$ be a sequence of positive numbers such that
  $\epsilon_n \to 0$.
  We then have 
  \begin{align}\label{eq:level_crossings_local_time_fbm}
    \lim_{n \to \infty} \epsilon_n^{\frac{1}{H} - 1} 
    U_{0, t}(\epsilon_n, B^H - a) = \frac{\mathfrak{c}_H}{2} L_t^H(a)
    \quad \text{almost surely,}
  \end{align}
where the constant $\mathfrak{c}_H$ is the same as in Theorem \ref{thm:convergence-of-K} and in Equation \eqref{eq:const_c_H}.
\end{theorem}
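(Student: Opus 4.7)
The plan is to reduce Theorem~\ref{thm:convergence_to_local_time} to Theorem~\ref{thm:convergence-of-K} (or its unconditional $H<1/2$ version referenced as Theorem~\ref{thm:lemieux_type_result}) via a combinatorial identity relating a sum of upcrossings across a grid of levels to the total level-crossing count, and then to upgrade the resulting integrated convergence to a pointwise statement using the shifted stochastic sewing lemma.

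\emph{Step 1 (combinatorial reduction).} Each transition of $B^H$ between two adjacent points of the grid $a + \epsilon\mathbb{Z}$ is an up- or downcrossing of some strip $[a + \epsilon k,\, a + \epsilon(k+1)]$, and at each level up- and downcrossings must alternate. Hence
\[
K_{0,t}(\epsilon, B^H - a) \;=\; 2 \sum_{k \in \mathbb{Z}} U_{0,t}(\epsilon, B^H - a - \epsilon k) \;+\; \mathcal{E}_\epsilon,
\]
with $|\mathcal{E}_\epsilon| \le C\sup_{s \le t}|B^H_s|/\epsilon$ coming from the at-most-one-per-level discrepancy between $U$ and $D$, so that $\epsilon^{1/H}\mathcal{E}_\epsilon = O(\epsilon^{1/H - 1}) \to 0$. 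Applying Theorem~\ref{thm:convergence-of-K} with $\rho = -a$ gives, almost surely,
\[
\lim_{n \to \infty} \epsilon_n \sum_{k \in \mathbb{Z}} \epsilon_n^{1/H - 1} U_{0,t}(\epsilon_n, B^H - a - \epsilon_n k) \;=\; \frac{\mathfrak{c}_H}{2}\, t \;=\; \frac{\mathfrak{c}_H}{2} \int_{\mathbb{R}} L_t^H(b)\, db,
\]
which is the Riemann-sum version of the target identity, smeared with the constant test function.

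\emph{Steps 2--3 (localization and pointwise).} Writing $f_\epsilon(b) := \epsilon^{1/H - 1} U_{0,t}(\epsilon, B^H - b)$, I would next prove the $\phi$-weighted analogue
\[
\lim_n \epsilon_n \sum_{k} \phi(a + \epsilon_n k)\, f_{\epsilon_n}(a + \epsilon_n k) \;=\; \frac{\mathfrak{c}_H}{2} \int \phi(b) L_t^H(b)\, db \qquad (\phi \in C_c(\mathbb{R})),
\]
by rerunning the subadditive-ergodic/shifted-sewing argument behind Theorem~\ref{thm:convergence-of-K} applied to $\phi$-weighted level crossings; the occupation density formula supplies $\int \phi L_t^H = \int_0^t \phi(B^H_s)\, ds$ as the natural limit. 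Specializing $\phi$ to an approximation of a point mass at $a$, and using the continuity of $b \mapsto L_t^H(b)$ (Berman) together with spatial $L^p$-Hölder control of $b \mapsto f_\epsilon(b)$ uniform in $\epsilon$, then yields the claimed pointwise convergence $f_{\epsilon_n}(a) \to \frac{\mathfrak{c}_H}{2} L_t^H(a)$.

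\emph{Main obstacle.} The delicate step is controlling the spatial oscillations of the random field $b \mapsto f_\epsilon(b)$ uniformly in $\epsilon$---that is, $L^p$-estimates of the form $\|f_\epsilon(b_1) - f_\epsilon(b_2)\|_{L^p} \le C|b_1-b_2|^\alpha$---in order to upgrade Riemann-sum convergence to pointwise convergence at a deterministic level. The shifted stochastic sewing lemma of \cite{matsuda22} is tailor-made for exactly this type of spatially-parametrized estimate in the non-Markovian Gaussian setting, and it is the workhorse enabling the pointwise statement.
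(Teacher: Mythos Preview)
Your Step~1 is correct and indeed the identity $\int_{\mathbb{R}} \mathbb{E}[U_{s,t}(\epsilon,B-a)]\,\mathrm{d}a = \tfrac{1}{2}\epsilon\,\mathbb{E}[\bar K_{0,\zeta}(1,B)]$ (after scaling) appears in the paper as Lemma~\ref{lem:expect_U_convergence}. But the overall strategy is not the paper's, and Steps~2--3 have a real gap.

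The paper does \emph{not} prove integrated convergence and then localize. It works at a fixed level $a$ from the outset: for $v<s<t$ it decomposes $B_r = X_r + Y_r + Z_r$ (past/intermediate/recent increments) and shows directly that
\[
\mathbb{E}\bigl[U_{s,t}(\epsilon,B-a)\,\big|\,\mathcal{F}_v\bigr]
\;\approx\; \frac{\mathbb{E}[\bar K_{0,\zeta}(1,B)]}{2\sqrt{2\pi}\,\sigma_{Y+Z}}\,
e^{-\tfrac{1}{2}(X_s/\sigma_{Y+Z})^2}\,\epsilon,
\]
which is exactly the local-time germ (Lemma~\ref{lem:U_conditioning}). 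The integration over $a$ is used only to \emph{identify} the prefactor as $\mathfrak{c}_H/2$ (Lemma~\ref{lem:expect_U_convergence}); the actual sewing argument and the resulting quantitative bound (Theorem~\ref{thm:local-time-level-crossing}) are carried out level by level. Getting that conditional estimate requires a chain of nontrivial lemmas (Girsanov to freeze $X,Y$; a Gaussian change of variable in $Y_s$; the observation that $U_{s,t}(\epsilon,Y_s+Z)\neq 0$ forces $|Y_s|\le \|Z\|_\infty$; and a sharp moment bound on $U$).

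Your localization step, by contrast, rests on two assertions that are not justified. First, ``rerunning'' the proof of Theorem~\ref{thm:convergence-of-K} with a $\phi$-weight is problematic: that proof hinges on the stationarity of $\bar K$ (Lemma~\ref{lem:K-bar-subadditive-and-stationary}), which a non-constant weight destroys. Second, and more seriously, the uniform-in-$\epsilon$ spatial H\"older estimate $\|f_\epsilon(b_1)-f_\epsilon(b_2)\|_{L^p}\lesssim |b_1-b_2|^\alpha$ is exactly the hard part, and the shifted stochastic sewing lemma is \emph{not} ``tailor-made'' for it: that lemma controls Riemann sums in \emph{time} via conditional expectations, not oscillations in a spatial parameter. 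Pathwise, $b\mapsto U_{0,t}(\epsilon,B-b)$ is integer-valued and discontinuous (the paper flags the analogous issue for $\rho\mapsto K(\epsilon,B+\rho)$ in Section~1.3), so any regularity must come from averaging in law, and you have not indicated where that averaging would come from. Without this estimate the passage from smeared convergence to pointwise convergence at a fixed $a$ does not go through.
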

In principle, the null set of \eqref{eq:level_crossing_local_time_Brownian} 
or \eqref{eq:level_crossings_local_time_fbm} could potentially depend on the level $a$ 
and the sequence $(\epsilon_n)_{n=1}^{\infty}$.
However, an intriguing result by Chacon et al. \cite{chacon1981} demonstrates that for Brownian motion 
the null set can be chosen uniformly over the level $a$ and the sequence $(\epsilon_n)_{n=1}^{\infty}$. 
In the next theorem, we extend this result to fractional Brownian motion.

Before stating the theorem, we introduce some notation.
Analogously to $U_{s,t}$, we denote by $D_{s,t}$ the total number of downcrossings
\begin{align}\label{eq:def_of_D}
 D_{s, t} (\epsilon, w) \assign \# \big\{ (u, v) \in [s, t]^2 \of u < v,
   w_v = 0, w_u = \epsilon, \forall r \in (u, v) \: w_r \in (0,
   \epsilon) \big\} . 
\end{align}
Given a partition $\mathrm{P}$ of $\mathbb{R}$, we set 
\begin{align}\label{eq:def_vr_lebesgue}
  V_{s, t}(\mathrm{P}, w) \defby \sum_{[a, b] \in \mathrm{P}} (b-a)^{\frac{1}{H}} \left(U_{s,t}(b-a, w-a) + D_{s, t}(b-a, w-a)\right).
\end{align}
The quantity $V_{s, t}(\mathrm{P}, w)$ measures the ($1/H$)-th variation along the Lebesgue partition defined by $\mathrm{P}$.
For the uniform partition $\mathrm{P}^{\epsilon}$, 
we have 
\begin{equation*}
  V_{0, t}(\mathrm{P}^{\epsilon}, w)  = \epsilon^{1/H} K_{0, t}(\epsilon, w) + 
  \indic_{\{w_0 \notin \epsilon \mathbb{Z}\}} \abs{w_{T_1(\epsilon, w)} - w_0}^{\frac{1}{H}}.
\end{equation*}
\begin{theorem}[Fractional analogue of Chacon et al.]\label{thm:lemieux_type_result}
Let $H < 1/2$ and let $\mathfrak{c}_H$ be the constant of Theorem~\ref{thm:convergence-of-K}. Then, there exists a measurable set   $\Omega_H \subseteq C([0, \infty); \mathbb{R})$ with the following property.
  \begin{itemize}
    \item $\mathbb{P}(B^H \in \Omega_H) = 1$.
    \item For every $w \in \Omega_H$ and $T \in (0, \infty)$, we have 
      \begin{align*}
        \lim_{\epsilon \to 0, \epsilon > 0}
        \sup_{\substack{\mathrm{P} : \abs{\mathrm{P}} \leq \epsilon, \\  t \leq T}} 
        \abs{V_{0, t}(\mathrm{P}, w) - \mathfrak{c}_H t} = 0. 
      \end{align*}
    \item For every $w \in \Omega_H$, there exists a continuous map 
      \begin{align*}
        [0, \infty) \times \mathbb{R} \ni (t, a) \mapsto l_t(w, a) \in [0, \infty)
      \end{align*}
      such that for every $T \in (0, \infty)$ we have
      \begin{align*}
        \lim_{\epsilon \to 0, \epsilon > 0} \sup_{a \in \mathbb{R}, t \leq T} 
        \abs[\Big]{\epsilon^{\frac{1}{H} - 1} U_{0, t}(\epsilon, w - a) - \frac{\mathfrak{c}_H}{2} l_t(w, a)} = 0. 
      \end{align*}
      Furthermore,  the occupation density formula holds: 
      \begin{align*}
        \int_0^t f(w_r) \mathrm{d} r = \int_{\mathbb{R}} f(a) l_t(w, a) \mathrm{d} a, 
        \quad \forall t\geq 0, \forall f \in C(\mathbb{R}). 
      \end{align*}
  \end{itemize}
\end{theorem}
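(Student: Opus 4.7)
The plan is to bootstrap the pointwise convergence statements of Theorem~\ref{thm:convergence-of-K} and Theorem~\ref{thm:convergence_to_local_time} into uniform-in-parameters statements that hold on a single probability-one event $\Omega_H$. I would first establish the uniform upcrossing convergence (the second bullet), define $l_t(w,a)$ as the limit, then identify it with $L_t^H$ along $B^H$-paths so that the occupation density formula transfers to $l$ (third bullet), and finally deduce the $(1/H)$-th variation statement (first bullet) via a Riemann-sum argument applied to~\eqref{eq:def_vr_lebesgue}.

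The technical heart is to produce a null set outside which
\[
\sup_{a \in \mathbb{R},\, t \leq T} \abs[\Big]{\epsilon^{\frac{1}{H}-1} U_{0,t}(\epsilon, B^H - a) - \tfrac{\mathfrak{c}_H}{2} L_t^H(a)} \xrightarrow{\epsilon \to 0} 0.
\]
I would first refine Theorem~\ref{thm:convergence_to_local_time} into a quantitative $L^p$-bound
\[
\mathbb{E}\bigl[ \abs{\epsilon^{1/H-1} U_{0,t}(\epsilon, B^H - a) - \tfrac{\mathfrak{c}_H}{2} L_t^H(a)}^p \bigr] \lesssim_{T,p} \epsilon^{\gamma p},
\]
with some exponent $\gamma = \gamma(H) > 0$, using the shifted stochastic sewing lemma of \cite{matsuda22} that already underlies Theorem~\ref{thm:convergence_to_local_time}. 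Setting $F_\epsilon(t,a) \defby \epsilon^{1/H-1} U_{0,t}(\epsilon, B^H - a)$, I would next establish a Kolmogorov-type modulus estimate on increments $\mathbb{E}[\abs{F_\epsilon(t,a) - F_\epsilon(t',a')}^p]$ that is uniform in small $\epsilon$. Combined with Borel--Cantelli along dyadic $\epsilon_k = 2^{-k}$ and the joint continuity of $L^H$, this yields a.s.\ uniform convergence on every compact set in $(t, a)$ along the dyadic subsequence. To pass from the dyadic subsequence to all $\epsilon \to 0$ and thereby exploit the $H < 1/2$ hypothesis, I would sandwich an arbitrary $\epsilon \in [\epsilon_{k+1}, \epsilon_k]$ using the quantitative rate above.

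On the intersection $\Omega_H$ of the resulting probability-one events, I define $l_t(w,a) \defby \lim_\epsilon \tfrac{2}{\mathfrak{c}_H} \epsilon^{1/H-1} U_{0,t}(\epsilon, w-a)$, with continuity of $(t,a) \mapsto l_t(w,a)$ inherited from the uniform limit and from continuity of $L^H$; for $\omega$ with $B^H(\omega) \in \Omega_H$ the limit forces $l_t(B^H(\omega), a) = L_t^H(a)$, so the occupation density formula for $L^H$ transfers to $l$. The first bullet then follows by writing
\[
V_{0,t}(\mathrm{P}, w) = \sum_{[a,b] \in \mathrm{P}} (b - a)\, (b-a)^{\frac{1}{H}-1}\bigl[U_{0,t}(b-a, w-a) + D_{0,t}(b-a, w-a)\bigr],
\]
and noting that only intervals meeting the range of $w$ on $[0,T]$ contribute and their total length is bounded by the oscillation of $w$ plus $2\epsilon$. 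Applying the uniform convergence of the previous paragraph (together with its analogue for $D$) with $\epsilon = b-a$ turns the sum into a Riemann sum approximating $\mathfrak{c}_H \int_{\mathbb{R}} l_t(w,a)\,\mathrm{d}a = \mathfrak{c}_H t$, uniformly in $\abs{\mathrm{P}} \leq \epsilon$ and $t \leq T$. The main obstacle throughout is the uniform-in-$a$ moment bound for $U$: obtaining a Kolmogorov modulus in the level variable with an $\epsilon$-independent exponent is delicate and will likely require pushing the shifted sewing machinery beyond the pointwise statement of Theorem~\ref{thm:convergence_to_local_time}.
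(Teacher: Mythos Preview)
Your overall architecture (quantitative $L^p$ bound $\Rightarrow$ a.s.\ uniform convergence over a discrete grid $\Rightarrow$ interpolation to all $(\epsilon,a)$ $\Rightarrow$ variation via a Riemann sum against the occupation density) matches the paper. The Riemann-sum argument for the first bullet is essentially the paper's Theorem~\ref{thm:lemieux_variation}.

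The real divergence is in the interpolation step. You propose to obtain uniformity in the level $a$ via a Kolmogorov modulus $\mathbb{E}[|F_\epsilon(t,a)-F_\epsilon(t,a')|^p]\lesssim|a-a'|^{\gamma p}$ uniform in $\epsilon$, and you correctly flag this as the main obstacle. The paper bypasses it entirely with a \emph{deterministic} monotonicity observation: if $[x_2,x_2+\epsilon_2]\subseteq[x_1,x_1+\epsilon_1]$ then $U_{0,t}(\epsilon_1,w-x_1)\le U_{0,t}(\epsilon_2,w-x_2)$. Given a polynomially growing finite grid $G_k$ in the level variable and the scale grid $\epsilon=k^{-6}$, a union bound on the quantitative estimate of Theorem~\ref{thm:local-time-level-crossing} (your first ingredient) plus Borel--Cantelli gives a.s.\ convergence uniformly over $G_k$; then every pair $(\epsilon,a)$ is sandwiched, pathwise, between two nearby grid pairs via the monotonicity, and one closes using only the uniform continuity of $L^H$. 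No stochastic regularity of $F_\epsilon$ in $a$ is ever needed.

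Two concrete issues with your version. First, your sandwich ``using the quantitative rate'' is an $L^p$ statement, not a pathwise inequality, so it cannot by itself trap $F_\epsilon$ for \emph{all} $\epsilon$ on a single null set; you need a deterministic inequality in $(\epsilon,a)$, and that is exactly what the inclusion-monotonicity provides. Second, a Kolmogorov modulus for $a\mapsto F_\epsilon(t,a)$ uniform in $\epsilon$ is not available from the tools developed here: the paper explicitly notes that $\rho\mapsto K_{0,1}(\epsilon,w+\rho)$ can be highly discontinuous and that pathwise continuity estimates fail, which is precisely why the sewing machinery is used only at the pointwise level and the upgrade to uniformity is done via monotonicity rather than regularity.
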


\begin{proof}
  It follows from Theorem~\ref{thm:lemieux_local_time} and Theorem~\ref{thm:lemieux_variation}.
\end{proof}
In \cite{chacon1981} the corresponding result for Brownian motion is interpreted as a \emph{quadratic arc length}. By analogy, we could say that the fractional Brownian motion of Hurst index $H<1/2$ has \emph{$1/H$ arc length} $\mathfrak{c}_H t$. Note that the $1/H$ arc length is a purely geometric, path-dependent quantity which is invariant under translations and time-reparametrizations.

\purba[inline]{The main proof uses the SW argument on $\bar K$. So don't we also have $\epsilon_n^{\frac{1}{H}}TV^{\epsilon_n}\to [B^H]^{(1/H)}$ along lebesgue partition almost surely? Or am I missing something? 
TM: It easily follows from the second item.}

\begin{remark}
  In \cite{matsuda22} the following quantitative estimate was proved. There exists an $\epsilon
  \in (0, 1)$ such that for every $m \in [1, \infty)$ we have
  \[ \Big\| \sum_{[u, v] \in \pi} | B^H_v - B^H_u |^{\frac{1}{H}} -\mathbb{E}
     \big[ | B^H_1 |^{\frac{1}{H}} \big] t \Big\|_{L^m (\mathbb{P})}
     \lesssim_{m, t} | \pi |^{\epsilon}  \]
  for any deterministic partition $\pi$ of $[0, t]$.
  Therefore, by the Borel--Cantelli lemma, for any sequence $(\pi_n)_{n =
  1}^{\infty}$ of deterministic partitions with
  \begin{equation}\label{eq:fbm_partition_decay}
   | \pi_n | = O (n^{- \delta}), \quad \delta \in (0, \infty), 
  \end{equation}
  we have $\lim_{n \to \infty} \sum_{[u, v] \in \pi^n} | B^H_v - B^H_u
  |^{\frac{1}{H}} =\mathbb{E} [ | B^H_1 |^{\frac{1}{H}} ] t$ almost
  surely.
  Unlike Theorem~\ref{thm:lemieux_type_result}, we need the decaying condition \eqref{eq:fbm_partition_decay}. 
  The work \cite{Dudley:1973aa} shows that
  the condition \eqref{eq:fbm_partition_decay} is not optimal for the Brownian case; 
  finding the optimal condition for the fractional case seems open.
\end{remark}
\subsection{Conjecture}
There is an interesting aspect on the constant $\mathfrak{c}_H$. 
For Brownian motion, the quadratic variation along any deterministic partition almost surely matches with the quadratic variation along any Lebesgue partitions. 
That is, 
\begin{align}\label{eq:c_H_Brownian}
  \mathfrak{c}_{\frac{1}{2}} = \mathbb{E}[(B^{1/2}_1)^2].
\end{align}
It is tempting to guess that such relation holds for $H \neq 1/2$ as well.
Indeed, such conjecture is stated in \cite[after Lemma~3.5]{cont19}.
However, the identity \eqref{eq:c_H_Brownian} is due to 
the strong Markov property of Brownian motion. 
Therefore, for $H \neq 1/2$, there is no reason why 
$\mathfrak{c}_H$ and $\mathbb{E}[\abs{B^H_1}^{\frac{1}{H}}]$ should be equal.
Motivated by the simulation shown in Figures~\ref{fig:lebesgue_partition_variation}, 
we propose the following conjecture.
\begin{conjecture}[The constant $\mathfrak{c}_H$]
  For fractional Brownian motion with $H\neq 1/2$, 
  we conjecture that the ($1/H$)-th variation of 
  fractional Brownian motion along deterministic partitions differs 
  from the ($1/H$)-th variation of fractional Brownian motion along uniform Lebesgue partitions. 
  To be more precise, we conjecture
  \begin{equation*}
    \begin{cases}
     \mathfrak{c}_H > \mathbb{E}[\abs{B_1^H}^{1/H}] & \text{if } H < 1/2, \\
     \mathfrak{c}_H < \mathbb{E}[\abs{B_1^H}^{1/H}] & \text{if } H > 1/2.
    \end{cases}
  \end{equation*}
  If this is indeed the case, the constant $\mathfrak{c}_H$ captures 
  non-Markovian nature of fractional Brownian motion.
\end{conjecture}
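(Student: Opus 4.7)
The plan is to first extract a tractable representation of $\mathfrak{c}_H$. By self-similarity of $B^H$, Theorem~\ref{thm:convergence-of-K} implies
\begin{equation*}
  \mathfrak{c}_H = \lim_{T \to \infty} \frac{\mathbb{E}[K_{0,T}(1, B^H)]}{T},
\end{equation*}
so the conjecture reduces to comparing the long-run average crossing rate of $B^H$ of the integer lattice with $\mathbb{E}[|B^H_1|^{1/H}]$. As a sanity check, for Brownian motion the strong Markov property at hitting times $T_n(1, B^{1/2})$ yields an i.i.d.\ renewal structure, and optional stopping on $(B^{1/2}_t)^2 - t$ combined with Wald's identity gives $\mathbb{E}[T_1(1, B^{1/2})] = 1$, which together with $\mathbb{E}[(B^{1/2}_1)^2] = 1$ recovers the equality \eqref{eq:c_H_Brownian}.

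For $H \neq 1/2$ I would try to break this equality via a coupling between the deterministic uniform partition $\{k t/N\}_{k=0}^{N}$ and the Lebesgue partition $\{T_k\}_{k=0}^{K}$ of $[0,t]$, choosing $N = K$ so that both partitions have the same number of intervals. Under this matching, both partition sums divided by $N$ represent averages of $(1/H)$-th powers of fBM increments over intervals of mean length $t/N \approx \epsilon^{1/H}/\mathfrak{c}_H$, and the question becomes whether the constant-magnitude Lebesgue increments (all equal to $\epsilon^{1/H}$) dominate or are dominated on average by the Gaussian-like magnitudes $|B^H_{t/N}|^{1/H}$. Heuristically, anti-persistence for $H<1/2$ should force $\mathbb{E}[\tau_1(B^H)] < 1$ for the first exit time of $\pm 1$ (fBM oscillates enough to reach $\pm 1$ faster than Brownian scaling predicts), while persistence for $H>1/2$ should force $\mathbb{E}[\tau_1(B^H)] > 1$. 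Translating these hitting-time inequalities into the corresponding strict inequalities for $\mathfrak{c}_H$ via the subadditive ergodic/shifted sewing framework that underlies Theorem~\ref{thm:convergence-of-K} would then close the argument.

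The main obstacle is the absence of the Markov property: the inter-crossing times $\tau_k = T_k - T_{k-1}$ of $B^H$ are strongly correlated through the infinite past via the Volterra kernel, so the clean renewal argument from the Brownian case does not transfer. A promising substitute is the shifted stochastic sewing lemma of \cite{matsuda22}, which should allow one to isolate a limiting Palm-type law of the inter-crossing excursion and compare its moments against the unconditional law of $B^H_1$ through Gaussian correlation inequalities or a Girsanov-type change of measure on the Volterra representation. As a fallback, one might expand $\mathfrak{c}_H - \mathbb{E}[|B^H_1|^{1/H}]$ perturbatively around $H = 1/2$ and check that the leading nonvanishing term has the predicted sign; this would at least establish the conjecture in a neighborhood of $1/2$ and rigorously corroborate the numerical evidence of Figure~\ref{fig:lebesgue_partition_variation}.
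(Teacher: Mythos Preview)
The statement you are addressing is a \emph{conjecture} in the paper, not a theorem: the authors provide no proof, only a heuristic explanation of why the equality \eqref{eq:c_H_Brownian} relies on the strong Markov property and numerical evidence (Figure~\ref{fig:lebesgue_partition_variation}) supporting the proposed strict inequalities. There is therefore no proof in the paper to compare against.

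Your proposal is not a proof either but a sketch of possible attack lines, and the gaps are substantial. The central reduction you propose---comparing $\mathfrak{c}_H$ to $\mathbb{E}[|B^H_1|^{1/H}]$ via the sign of $\mathbb{E}[\tau_1(B^H)]-1$ for the first exit time from $(-1,1)$---is itself an unproven statement of roughly the same difficulty as the conjecture: there is no established monotonicity of fractional exit-time moments in $H$ that you can cite, and your anti-persistence/persistence heuristic does not supply one. Even granting that inequality, the passage from a single exit-time moment to the limiting rate $\mathfrak{c}_H$ fails precisely because, as you yourself note, the inter-crossing times $(\tau_k)$ are not a renewal sequence; the subadditive/shifted-sewing machinery of Section~\ref{sec:variation} identifies the limit $\mathfrak{c}_H$ but gives no comparison principle of the type you need. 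The fallback perturbative expansion around $H=1/2$ is a reasonable research direction, but you have not computed any term of it, so it contributes nothing to a proof. In short, every step beyond the Brownian sanity check remains at the level of intuition, which is exactly why the paper states this as an open conjecture.
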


\begin{figure}
  \centering
  \begin{subfigure}{\textwidth}
    \centering
\subfloat[$H = 0.4$] {  \includegraphics[width=.49\textwidth]{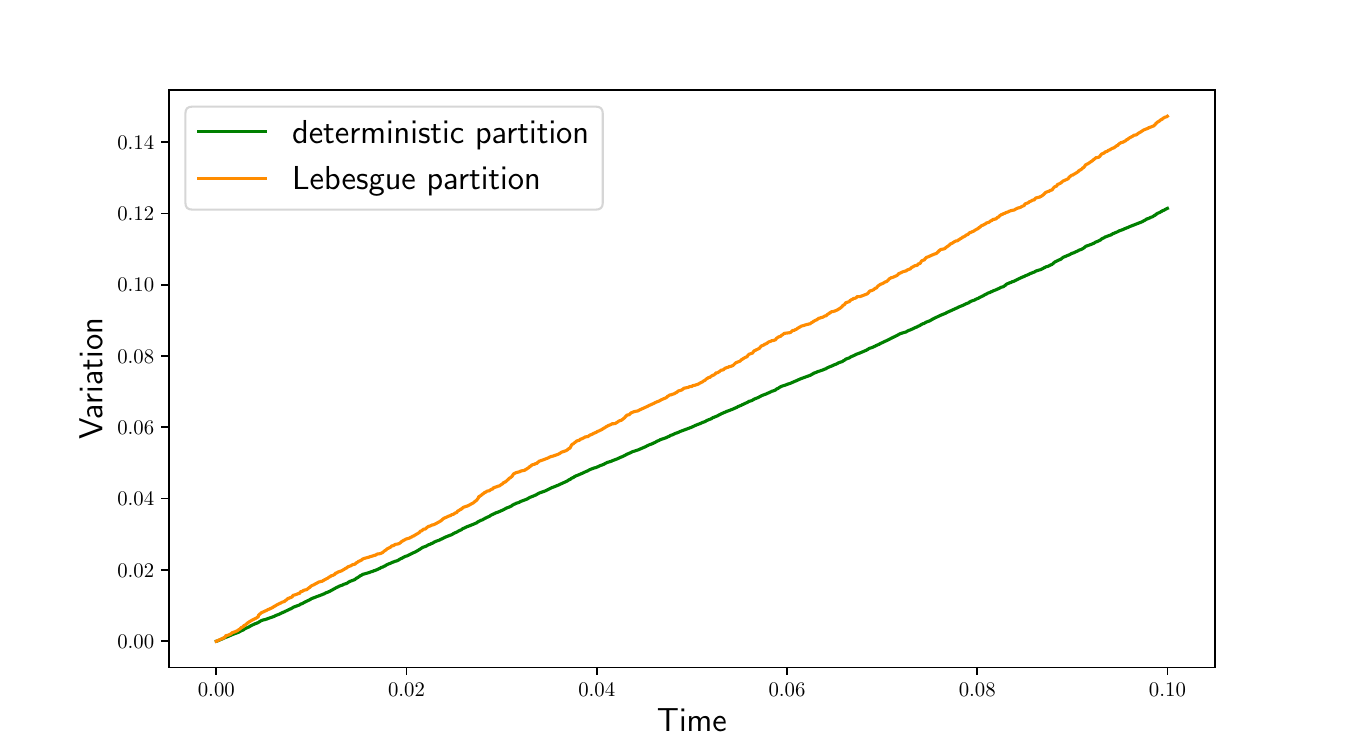}
\label{fig:lebesgue_partition_H_0.4}}
\subfloat[$H = 0.6$]{
 \includegraphics[width=.49\textwidth]{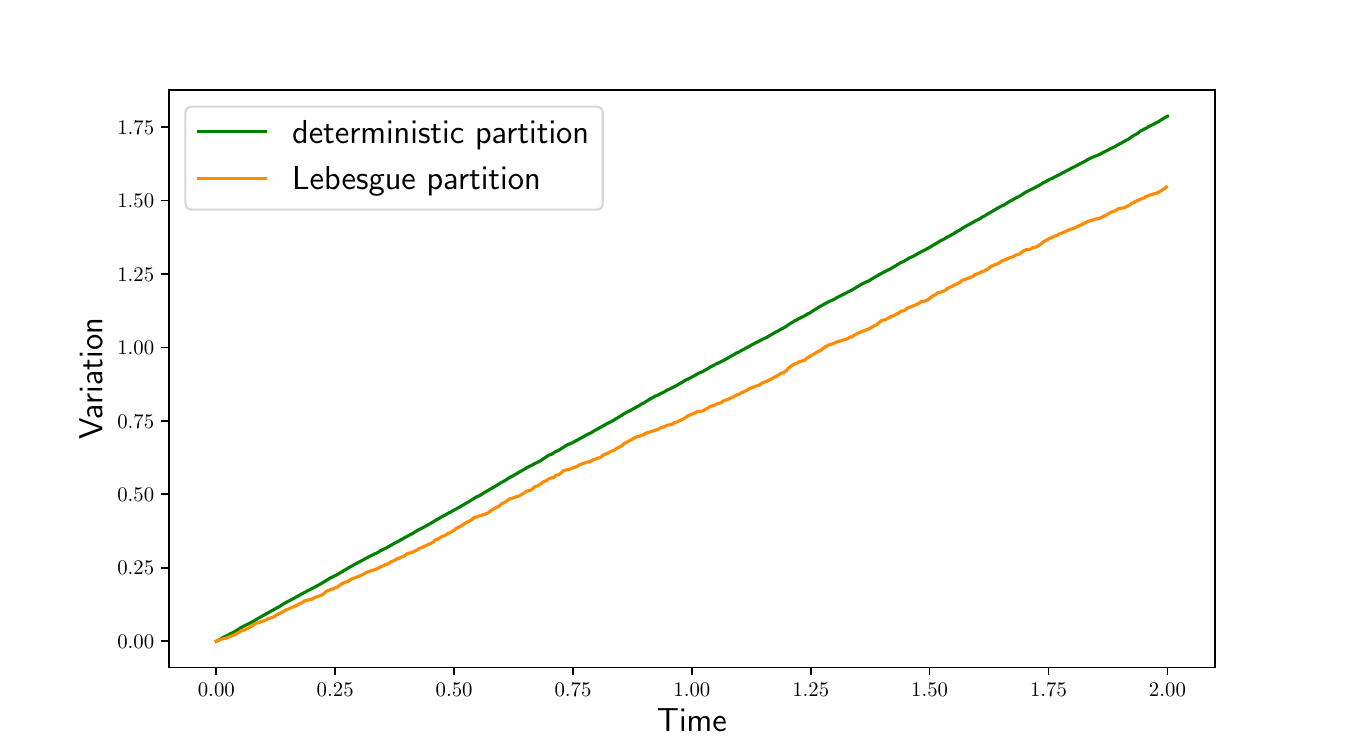}
}
    \label{fig:lebesgue_partition_H_0.6}
  \end{subfigure}
  \captionsetup{singlelinecheck=off}
  \caption[]{Comparison between the variation along a deterministic uniform partition and 
  that along a Lebesgue partition. 
  }
  \label{fig:lebesgue_partition_variation}
\end{figure}
\begin{SCfigure}[0.7]
  \caption{Comparison between the variation along a deterministic uniform partition and that along a Lebesgue partition for Brownian motion.}
  \includegraphics[width=0.49\textwidth]{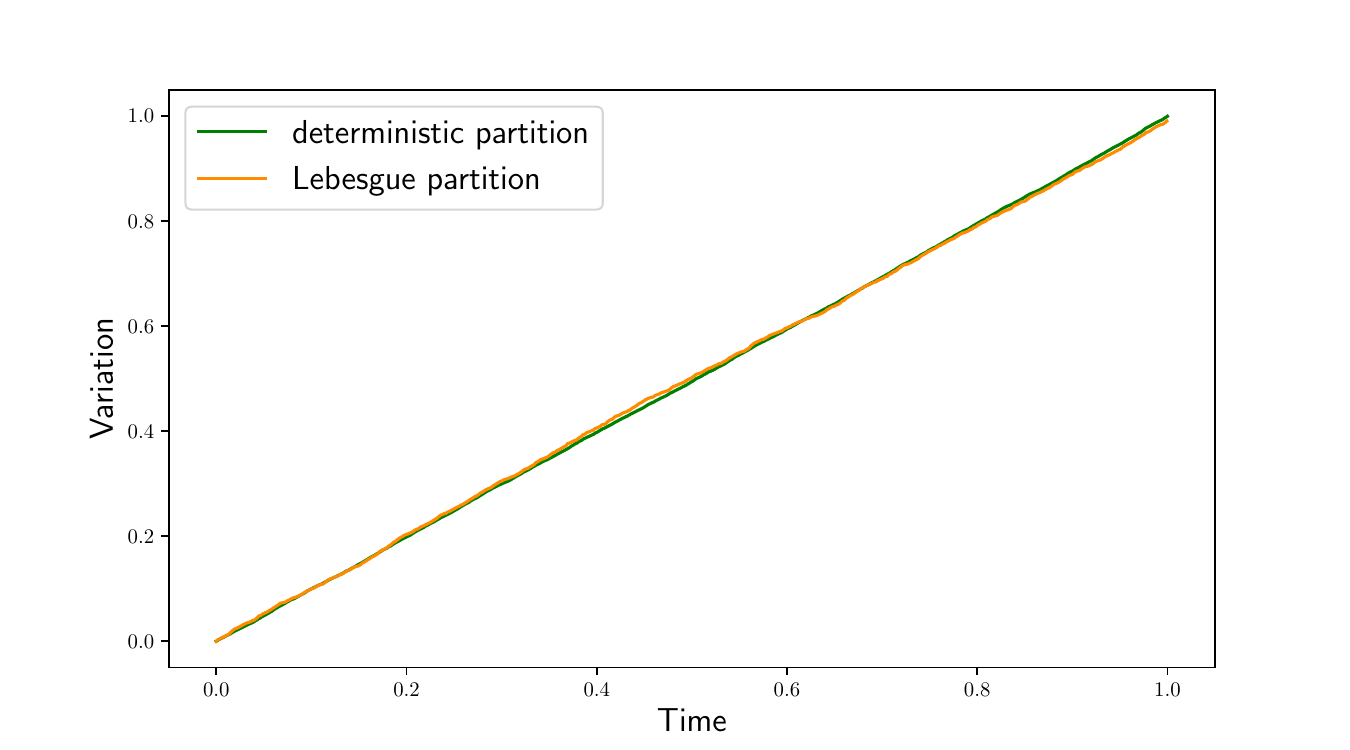}
  \label{fig:lebesgue_part_BM}
\end{SCfigure}
\begin{remark}
  We comment on the simulation for Figure~\ref{fig:lebesgue_partition_variation}.
  The variation, denoted as $V_t$, is given by
  \begin{displaymath}
    V_t \defby \sum_{[u, v] \in \pi^{\#}, v \leq t} \abs{B^H_v - B^H_u}^{\frac{1}{H}},
  \end{displaymath}
  up to time $T$, where $\# \in \{\text{deterministic}, \text{Lebesgue}\}$.
  We discretize the fractional Brownian motion with a step size of $T/n$.
  For the deterministic partition, we use the partition $\pi^{\text{deterministic}} = \{kT/n\}_{k=1}^n$,
  while for the Lebesgue partition, we utilize the partition $\pi^{\text{Lebesgue}} = \{T_k(\epsilon, B)\}_k$.
  To simulate the variation $V$ along a Lebesgue partition, it is crucial to appropriately choose the parameters $T$, $n$ and $\epsilon$ to ensure that the partition is neither too dense nor too sparse.
  For instance, when $H = 0.4$, we simulate $V$ with $T = 0.1$, $n = 30000$, and $\epsilon = 0.015$.
  Similarly, for $H = 0.6$, we simulate $V$ with $T = 2$, $n = 30000$, and $\epsilon = 0.013$.
  Figure \ref{fig:lebesgue_part_BM} represents a similar figure for Brownian motion (i.e. $H=1/2$). 
  The figure is generated with $T=1, n=30000, \epsilon=0.014$.
  The figure supports the fact that for a Brownian motion the quadratic variation along deterministic partitions is same as that along the Lebesgue partitions. 
\end{remark}
\subsection{Strategy of the proofs}
Let us outline our strategy to prove Theorem~\ref{thm:convergence-of-K}. 
The first observation is that it is easier to deal with an averaged version of $K$. Namely, we define 
\begin{align*}
  \bar{K}_{s, t}(\epsilon, w) \defby \epsilon^{-1} \int_{-\epsilon/2}^{\epsilon/2} K_{s, t}(\epsilon, w + \rho) \mathrm{d} \rho.
\end{align*}
The quantity $\bar{K}$ is related to \emph{truncated variation}, see Remark \ref{rem:truncated_var} below. 
By scaling (Lemma~\ref{lem:K-bar-subadditive-and-stationary}), we have 
\begin{align*}
  \bar{K}_{0, 1}(\epsilon, B^H) = \bar{K}_{0, \epsilon^{-1/H}}(1, B^H) \quad \text{in law.}
\end{align*}
Therefore, at least in law, the limiting behavior of $\bar{K}_{0, 1}(\epsilon, B^H)$ as $\epsilon \to 0$ is the same as that of $\bar{K}_{0, \epsilon^{-1/H}}(1, B^H)$. 
Rewriting $T = \epsilon^{-1/H}$, we thus hope to see the limit of $T^{-1} \bar{K}_{0, T}(1, B^H)$.

Regarding this, we are actually in the setting of the superadditive ergodic theorem. 
Indeed, it is not difficult to see that the family $(\bar{K}_{s,t}(1, B^H))_{s<t}$ is stationary and superadditive (Lemma~\ref{lem:K-bar-subadditive-and-stationary}). 
Therefore, the limit 
\begin{align*}
  \lim_{T \to \infty} \frac{1}{T} \bar{K}_{0, T}(\epsilon, B^H) = \sup_{T > 0} \frac{1}{T} \mathbb{E}[\bar{K}_{0, T}(\epsilon, B^H)] 
\end{align*}
exists almost surely and in $L^1(\mathbb{P})$. It turns out that the limit is the constant $\mathfrak{c}_H$ in Theorem~\ref{thm:convergence-of-K}.

To take advantage of this finding on $\bar{K}$, we can naively guess that 
\begin{align}\label{eq:K_different_rho}
  \abs{K_{0, 1}(\epsilon, B^H + \rho) -  K_{0, 1}(\epsilon, B^H + \rho')}
\end{align}
is small provided that $\abs{\rho - \rho'}$ is small. It is therefore expected to have 
\begin{align}\label{eq:K_bar_K_limit_equal}
  \lim_{\epsilon \to 0} \epsilon^{\frac{1}{H}} K_{0, 1}(\epsilon, B^H) = \lim_{\epsilon \to 0} \epsilon^{\frac{1}{H}} \bar{K}_{0, 1}(\epsilon, B^H) = \mathfrak{c}_H.
\end{align}
However, proving \eqref{eq:K_bar_K_limit_equal} requires a non-trivial argument. In fact, the map 
  $\rho \mapsto K_{0, 1}(\epsilon, w + \rho)$
can be highly discontinuous, and it is impossible to estimate \eqref{eq:K_different_rho} pathwisely. 

This inherent difficulty motivates us to employ a \emph{probabilistic} argument in order to prove Theorem~\ref{thm:convergence-of-K}. In addition to superadditivity, a crucial ingredient is the stochastic sewing lemma introduced by Lê \cite{le20}, which provides effective estimates for stochastic Riemann sums. For our specific problem, we require an extension of this lemma, called the shifted stochastic sewing lemma, recently obtained by the third and fourth authors \cite{matsuda22}. This extension is particularly suitable for capturing asymptotic decorrelation in stochastic Riemann sums.
It is worth mentioning that there are already some studies that leverage the stochastic sewing lemma for analyzing local times \cite{Harang:2021aa,Harang:2022aa,butkovsky2023stochastic}.

Our proof of Theorem~\ref{thm:convergence-of-K} proceeds as follows. Not only is the family $(K_{s,t}(\epsilon, B^H))_{s < t}$ superadditive, but it is also almost subadditive:
\begin{align*}
K_{s, t}(\epsilon, B^H) \leq K_{s, u}(\epsilon, B^H) + K_{u, t}(\epsilon, B^H) + 1, \quad s < u < t.
\end{align*}
This leads to an approximation:
\begin{align*}
\epsilon^{1/H} K_{0, 1}(\epsilon, B^H) \approx \sum_{[s, t] \in \pi_{\epsilon}} \epsilon^{1/H} K_{s, t}(\epsilon, B^H),
\end{align*}
where $\pi_{\epsilon}$ is a uniform partition of the interval $[0, 1]$ with $\lvert \pi_{\epsilon} \rvert \approx \epsilon^{1/H}$. A similar approximation holds for $\bar{K}$. Thus, we obtain
\begin{align*}
\epsilon^{1/H} K_{0, 1}(\epsilon, B^H) - \epsilon^{1/H} \bar{K}_{0, 1}(\epsilon, B^H) \approx \sum_{[s, t] \in \pi_{\epsilon}} \epsilon^{1/H} [K_{s, t}(\epsilon, B^H) - \bar{K}_{s, t}(\epsilon, B^H)].
\end{align*}
The Riemann sum on the right-hand side can then be estimated using the shifted stochastic sewing lemma, which ultimately yields the desired convergence result.

To establish the convergence to the local time (Theorem~\ref{thm:convergence_to_local_time}), we follow a similar line of argument. However, the technical difficulty increases substantially due to the lack of a counterpart to $\bar{K}$. We postpone an overview of this technical argument to Section~\ref{subsec:heuristic}.

One notable strength of the stochastic sewing lemma is its ability to provide a quantitative bound. Thanks to this property, we can obtain a quantitative version of Theorem~\ref{thm:convergence_to_local_time}, as stated in Theorem~\ref{thm:local-time-level-crossing}. This quantitative result enables us to employ the pathwise argument of Chacon et al. \cite{chacon1981}, which elevates the convergence result from Theorem~\ref{thm:convergence_to_local_time} to the more refined version of Theorem~\ref{thm:lemieux_type_result}.

\subsubsection*{Outline} 
The organization of the paper is as follows. 
Theorem~\ref{thm:convergence-of-K} is proven in Section~\ref{sec:variation}, 
while Theorem~\ref{thm:convergence_to_local_time} and Theorem~\ref{thm:lemieux_type_result} 
are proven in Section~\ref{sec:local_time}.

In Section~\ref{subsec:elementary}, we derive some elementary results on $K$, in Section~\ref{subsec:shifted_ssl}, 
we recall our key ingredient called the shifted stochastic sewing lemma, and in Section~\ref{subsec:var_conv}, we 
give our proof of Theorem~\ref{thm:convergence-of-K}. 

In Section~\ref{sec:local_time}, after stating a quantitative version of Theorem~\ref{thm:convergence_to_local_time} as 
Theorem~\ref{thm:local-time-level-crossing}, we review our technical strategy in Section~\ref{subsec:heuristic}. 
Section~\ref{subsec:local_time_key} is the most demanding part of the paper, which proves Theorem~\ref{thm:local-time-level-crossing} with the key estimate being Lemma~\ref{lem:U_conditioning}. 
As a consequence of Theorem~\ref{thm:local-time-level-crossing}, we complete the proof of Theorem~\ref{thm:lemieux_type_result} in Section~\ref{subsec:lemieux}. 
\subsubsection*{Notation} 
Given a path $f \from [0, T] \to \mathbb{R}^d$, we write $f_{s, t} \defby f_t - f_s$, and 
we denote by $\dot{f}$ the derivative $\frac{\mathrm{d} f}{\mathrm{d} t}$. 
We fix a probability space $(\Omega, \mathcal{F}, \mathbb{P})$ on which fractional Brownian motion is defined, 
we write $\mathbb{E}$ for the expectation with respect to $\mathbb{P}$, and we write 
\begin{equation*}
    \norm{F}_{L^p(\mathbb{P})} \defby \Big( \int_{\Omega} \abs{F}^p \mathrm{d} \mathbb{P} \Big)^{1/p}
\end{equation*}
with usual modification for $p = \infty$.
The expression $X \dequal Y$ means that the random variables $X$ and $Y$ have the same law.
We write $A \lesssim A'$ if there exists a positive constant $C$, depending only on some (unimportant) parameters, such that $A \leq C A'$.  If we want to emphasize the dependency on parameters $\alpha, \beta, \ldots$, then we write $A \lesssim_{\alpha, \beta, \ldots} A'$.  
The following objects appear throughout the paper. 
\begin{itemize}
  \item We write $B = B^H$ for fractional Brownian motion in one dimension. 
    Unlike Section~\ref{sec:intro}, we mostly suppress the script $H$ (i.e., we simply write $B$ for fractional Brownian motion).
    {\bfseries In this paper we will not write down dependency on $H$.} For instance, when we write $A \lesssim A'$, the proportional constant may depend on $H$.
  \item We denote by $K_{s, t}(\epsilon, w)$ the total number of $\epsilon$-level crossings in the interval $[s, t]$, as precisely defined at \eqref{eq:def_of_K}. 
    We denote by $U_{s, t}(\epsilon, w)$ the total number of upcrossings from $0$ to $\epsilon$ in the interval $[s, t]$, as precisely defined in \eqref{eq:def_of_U}.
  \item We write $(L_t^H(a))_{t \geq 0, a \in \mathbb{R}} = (L_t(a))_{t \geq 0, a \in \mathbb{R}}$ for the local time of fractional Brownian motion $B$, see Definition~\ref{def:local time}.
\end{itemize}

\subsection*{Acknowledgement}
Our work is aided by the Deutsche Forschungsgemeinschaft (DFG) through the Berlin--Oxford IRTG 2544 program, which enabled PD to visit Berlin and financially supports TM. R{\L}'s research was partially funded by the National Science Centre, Poland, under Grant No. 2019/35/B/ST1/04292.
\toyomu{Maybe comment on exchange program. PD: added}

\section{Variation along uniform Lebesgue partitions}\label{sec:variation}
The goal of this section is to prove Theorem~\ref{thm:convergence-of-K}.  
We begin observing elementary results on the counting $K$ of level crossings, defined by \eqref{eq:def_of_K}.  
\subsection{Elementary results}\label{subsec:elementary}
Let us first recall the Mandelbrot--Van Ness representation of fractional Brownian motion \cite{mandelbrot68}, which will be used throughout. 

\begin{definition}[Fractional Brownian motion]
  \label{def:mandelbrot_van_ness}
  We set 
  \begin{align}\label{eq:kernel_def}
    \kernel(t, s) \defby (t-s)^{H-\frac{1}{2}} - (-s)_+^{H - \frac{1}{2}}, 
    \quad s < t.  
  \end{align} 
  Let $W = (W_t)_{t \in \mathbb{R}}$ be a two-sided Brownian motion in one dimension.
  Throughout the paper, we suppose that fractional Brownian motion $B = B^H$ has the \emph{Mandelbrot--Van Ness representation} 
  \begin{align}\label{eq:mandelbrot}
    B_t = \int_{-\infty}^{t} \kernel(t, s) \mathrm{d} W_s.
  \end{align}
  Note that we have $\mathbb{E}[(B_t - B_s)^2] = a_H (t-s)^{2 H}$ 
  for some constant $a_H$, whose actual value is irrelevant for us.
\end{definition} 

\begin{lemma}[Scaling of $K$]
  \label{lem:K-scaling} 
  For $\lambda \in (0, \infty)$, we have
  \[ (K_{s, t} (\epsilon, B + \rho))_{s<t,\; \epsilon > 0, \rho \in \mathbb{R}}
     \dequal (K_{\lambda^{1/H} s, \lambda^{1 / H} t} (\lambda \epsilon, B + \lambda
     \rho))_{s < t,\; \epsilon > 0,  \rho \in \mathbb{R}} . \]
\end{lemma}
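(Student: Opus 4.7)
The plan is to combine self-similarity of fractional Brownian motion with two deterministic scaling identities for the counting $K$: one in space and one in time. Everything follows once we verify these identities from the definition \eqref{eq:def_of_T_n} of the hitting times $T_n$.

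First I would record the two deterministic identities. \textbf{Space scaling:} for any $\lambda > 0$ and any path $w$, multiplication by $\lambda^{-1}$ sends the lattice $\lambda \epsilon \mathbb{Z}$ onto $\epsilon \mathbb{Z}$, so $w_t \in \lambda \epsilon \mathbb{Z}$ iff $\lambda^{-1} w_t \in \epsilon \mathbb{Z}$. Together with the fact that $w_t \neq w_s$ iff $\lambda^{-1} w_t \neq \lambda^{-1} w_s$, this gives $T_n(\lambda \epsilon, \lambda w) = T_n(\epsilon, w)$ for all $n$, and hence (inspecting the boundary term in \eqref{eq:def_of_K}) $K_{s,t}(\lambda \epsilon, \lambda w) = K_{s,t}(\epsilon, w)$. \textbf{Time scaling:} for any $c > 0$, the path $\tilde w_t := w_{ct}$ satisfies $T_n(\epsilon, \tilde w) = c^{-1} T_n(\epsilon, w)$, since hitting times of a time-rescaled path are rescaled by $c^{-1}$. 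Consequently $K_{cs, ct}(\epsilon, w) = K_{s,t}(\epsilon, w_{c \cdot})$.

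Next I would apply these two identities to the right-hand side of the claimed equality. Writing $B + \lambda \rho = \lambda (\lambda^{-1} B + \rho)$ and using space scaling,
\begin{equation*}
K_{\lambda^{1/H} s,\, \lambda^{1/H} t}(\lambda \epsilon, B + \lambda \rho) = K_{\lambda^{1/H} s,\, \lambda^{1/H} t}\bigl(\epsilon, \lambda^{-1} B + \rho\bigr).
\end{equation*}
Then applying time scaling with $c = \lambda^{1/H}$,
\begin{equation*}
K_{\lambda^{1/H} s,\, \lambda^{1/H} t}\bigl(\epsilon, \lambda^{-1} B + \rho\bigr) = K_{s,t}\bigl(\epsilon, \lambda^{-1} B_{\lambda^{1/H} \cdot} + \rho\bigr).
\end{equation*}
These manipulations are pathwise and carry over to the joint family indexed by $(s, t, \epsilon, \rho)$.

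Finally, I would invoke the self-similarity of fractional Brownian motion: the process $(\lambda^{-1} B_{\lambda^{1/H} t})_{t \in \mathbb{R}}$ has the same law as $(B_t)_{t \in \mathbb{R}}$, which is immediate from the Mandelbrot--Van Ness representation \eqref{eq:mandelbrot} and the scaling of two-sided Brownian motion. Since $K_{s,t}(\epsilon, w + \rho)$ is a measurable functional of the path $w$ jointly in $(s, t, \epsilon, \rho)$, this distributional identity for the driving process transfers to the joint law of the $K$-family, giving the desired equality in law.

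There is essentially no serious obstacle: the only mild subtlety is checking that the boundary term $\indic_{\{w_s \in \epsilon \mathbb{Z}\}} \indic_{\{T_1(\epsilon, w_{s+\cdot}) \leq t\}}$ in \eqref{eq:def_of_K} also respects both scalings, which it plainly does by the same observation used for $T_n$. Otherwise the lemma is a bookkeeping exercise in chasing the definitions.
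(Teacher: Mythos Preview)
Your proof is correct and follows essentially the same route as the paper: both arguments combine the deterministic space and time scaling identities for $K$ with the self-similarity $\lambda^{-1}B_{\lambda^{1/H}\cdot}\dequal B$ of fractional Brownian motion. The paper's version is just terser---it packages the rescaled process as $B^{(\lambda)}_t \assign \lambda B_{\lambda^{-1/H}t}$ and runs the identity from the left-hand side rather than the right---but the content is identical.
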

\begin{proof}
  We set $B^{(\lambda)}_t \assign \lambda B_{\lambda^{- 1 / H} t}$. Note
  that $B^{(\lambda)} \dequal B$ and observe that
  \begin{equation*}
    K_{s, t} (\epsilon, B_t + \rho)  =  K_{s, t} (\lambda \epsilon, \lambda
    (B + \rho)) \text{}
     =  K_{\lambda^{1/H}s, \lambda^{1/H}t} (\lambda \epsilon, B^{(\lambda)} +
    \lambda \rho). \qedhere
  \end{equation*}
\end{proof}
\begin{lemma}[Superadditivity of $K$]
  \label{lem:K-subadditive}
  Let $r < s < t$ and $w$ be a path. Then,
  \[ K_{r, s} (\epsilon, w) + K_{s, t} (\epsilon, w) 
    \leq K_{r, t} (\epsilon, w) 
    \leq K_{r, s} (\epsilon, w) + K_{s, t} (\epsilon, w) + 1. \]
\end{lemma}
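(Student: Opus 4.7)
The plan is to re-interpret $K_{a,b}(\epsilon, w)$ geometrically as the cardinality of a set $S_{a,b}$ of ``level transition times'' in $(a, b]$, and then compare the union $S_{r,s} \cup S_{s,t}$ with $S_{r,t}$. Specifically, I would define $S_{a,b}$ to be the set of times $u \in (a, b]$ at which $w_u \in \epsilon\mathbb{Z}$ and $w_u$ differs from the most recent value of $w$ lying in $\epsilon\mathbb{Z}$ attained on $[a, u)$; when $w_a \notin \epsilon\mathbb{Z}$, the very first lattice hit after $a$ is excluded as an ``initialization'' rather than a transition. A direct inspection of the definition of $T_n(\epsilon, w_{a+\cdot})$ shows $K_{a,b}(\epsilon, w) = |S_{a,b}|$, because between consecutive $T_n$'s the path can only revisit the current lattice level, not reach a different one.

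For the lower bound $K_{r,s} + K_{s,t} \leq K_{r,t}$, I would prove $S_{r,s} \cup S_{s,t} \subseteq S_{r,t}$ with the union disjoint. Disjointness is immediate from $S_{r,s} \subseteq (r, s]$ and $S_{s,t} \subseteq (s, t]$ together with $s \notin S_{s,t}$. The inclusion $S_{r,s} \subseteq S_{r,t}$ is clear by enlarging the reference interval. The substantive inclusion $S_{s,t} \subseteq S_{r,t}$ requires checking that switching the reference from $s$ to $r$ preserves the transition condition: if $u \in S_{s,t}$ and there is a lattice hit in $(s, u)$, the ``most recent lattice value'' from both perspectives coincides; if there is no such hit, membership in $S_{s,t}$ forces $w_s \in \epsilon\mathbb{Z}$, and then the most recent lattice value in $(r, u)$ from the $r$-perspective is $s$ itself, so both perspectives compare $w_u$ with $w_s$.

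For the upper bound $K_{r,t} \leq K_{r,s} + K_{s,t} + 1$, I would argue $|S_{r,t} \setminus (S_{r,s} \cup S_{s,t})| \leq 1$. A time $u$ in this set difference must lie in $(s, t]$ and must fail to belong to $S_{s,t}$. A short case analysis shows that this forces $w_s \notin \epsilon\mathbb{Z}$ and $u$ to be the first lattice hit after $s$: then $u$ is excluded from $S_{s,t}$ as an initialization, but from the $r$-perspective the most recent lattice value lies in $[r, s]$ and can differ from $w_u$, so $u$ may legitimately belong to $S_{r,t}$. Since the first lattice hit after $s$ is unique, there is at most one such $u$. The main care point throughout is the initialization convention at each reference point; handling the four subcases according to whether $w_r$ and $w_s$ lie in $\epsilon\mathbb{Z}$ is routine once the set-theoretic picture above is fixed, and is the only step with genuine combinatorial content.
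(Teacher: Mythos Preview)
Your argument is correct and is essentially the paper's proof recast in set-theoretic language. The paper proceeds by a direct two-case dichotomy: setting $N := \max\{n : T_n(\epsilon,w) \le s\}$, it observes that $K_{r,t} = K_{r,s} + K_{s,t}$ when $w_{T_N} = w_{T_1(\epsilon,w_{s+\cdot})}$ or $w_{T_N} = w_s$, and $K_{r,t} = K_{r,s} + K_{s,t} + 1$ otherwise, with a figure illustrating the single ``straddling'' crossing that may be lost at the cut point $s$. Your identification of $S_{r,t}\setminus(S_{r,s}\cup S_{s,t})$ with (at most) the first lattice hit after $s$ when $w_s\notin\epsilon\mathbb{Z}$ is exactly this missed crossing, and your case split according to whether there is a lattice hit in $(s,u)$ mirrors the paper's dichotomy. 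The added value of your formulation is that it makes the disjointness and inclusion structure explicit rather than relying on a picture; the paper's version is shorter because it computes the exact defect in one line.
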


\begin{proof}
  Without loss of generality, we set $r = 0$.
  Recalling the definition of $T_n$ from \eqref{eq:def_of_T_n}, we set
  \[ N \assign \max \{ n \in \mathbb{N} \cup \{ 0 \} \of  T_n (\epsilon,
     w) \leq s \} . \]
  \begin{figure}[h]
    \centering
    \includegraphics[width=\textwidth]{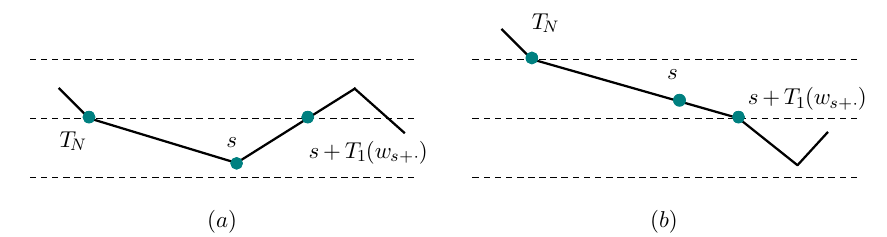}
    \caption{Level crossings around the middle time $s$. In case (b), the level crossing around $s$ is not counted in $K_{r, s}(\epsilon, w) + K_{s, t}(\epsilon, w)$.}
    \label{fig:K_superadditive}
  \end{figure}
  As shown in Figure~\ref{fig:K_superadditive}, we have the following cases.
  \begin{enumerate}[label=(\alph*)]
    \item If $w_{T_N (\epsilon, w)} = w_{T_1 (\epsilon, s+w_{s +
    \cdot})}$ or if $w_{T_N(\epsilon, w)} = w_s$, then $K_{r, t} (\epsilon, w) = K_{r, s} (\epsilon, w) +
    K_{s, t} (\epsilon, w)$.

    \item Otherwise, we have $K_{r, t} (\epsilon, w) = K_{r, s} (\epsilon, w)
    + K_{s, t} (\epsilon, w) + 1$. \qedhere
  \end{enumerate}
\end{proof}

For our arguments, the following variants of $K$ will appear.
\begin{notation}[Averaged $K$]\label{not:K-bar-and-J}
  We set 
  \begin{equation*}
    \bar{K}_{s, t}(\epsilon, w) \assign \epsilon^{-1} \int_{-\epsilon/2}^{\epsilon/2} K_{s, t}(\epsilon, w+\rho) \dd \rho.
  \end{equation*}
\purba{$\bar{K}$ is actually $\epsilon TV^\epsilon_{r,s}(\omega)$, which is something know. Should we use $TV^\epsilon$ notation?}
\toyomu{If $\epsilon$ remains, it seems nicer to use $\bar{K}$, but we should of course mention the relation.}
\end{notation}
\begin{remark}\label{rem:truncated_var}
  The quantity $\bar{K}$ is related to the so-called \emph{truncated variation} \cite{lochowski2008} defined by 
  \begin{align*}
    \operatorname{TV}^{\epsilon}(w, [s, t]) 
    \defby \sup_{\pi : \text{partition of } [s, t]} \sum_{[u, v] \in \pi} 
    \max \{ \abs{w_v - w_u} - \epsilon, 0 \}.
  \end{align*}
  Indeed, as shown in \cite[Theorem~1]{lochowski2017}, we have the identity 
  \begin{align*}
    \operatorname{TV}^{\epsilon}(w, [s, t]) = \epsilon^{-1} \bar{K}_{s,t}(\epsilon, w).
  \end{align*}
\end{remark}
The advantage of $\bar{K}$ is that in addition to the superadditivity, it is stationary.

\begin{lemma}[Scaling, superadditivity and stationarity of $\bar{K}$]
  \label{lem:K-bar-subadditive-and-stationary}
  Let $r < s < t$ and let $w$ be a process.
  \begin{enumerate}[label=(\roman*)]
    \item For $\lambda>0$, we have 
    \begin{align*}
      (\bar{K}_{s, t} (\epsilon, B))_{s<t, \epsilon>0}
    \dequal (\bar{K}_{\lambda^{1/H} s, \lambda^{1 / H} t}(\lambda \epsilon, B))_{s<t, \epsilon>0}.
    \end{align*}
    \item We have 
    \begin{align*}
      \bar{K}_{r, s}(\epsilon, w) + \bar{K}_{s, t}(\epsilon, w) \leq \bar{K}_{r, t}(\epsilon, w) 
      \leq \bar{K}_{r, s}(\epsilon, w) + \bar{K}_{s, t}(\epsilon, w) + 1. 
    \end{align*}
    \item We have $\bar{K}_{s, t} (\epsilon, w) = \bar{K}_{0, t-s} (\epsilon, w_{s +
    \cdot} - w_s)$. In particular, 
    \begin{align*}
      \bar{K}_{s, t} (\epsilon, B)
    \dequal \bar{K}_{0, t-s} (\epsilon, B).
    \end{align*}
  \end{enumerate}
\end{lemma}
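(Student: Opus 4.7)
The plan is to deduce each of the three assertions from the corresponding property of $K$ already established in Lemma~\ref{lem:K-scaling} and Lemma~\ref{lem:K-subadditive}, with part (iii) relying on one small but crucial periodicity observation.

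For part (i), I will insert the joint distributional identity of Lemma~\ref{lem:K-scaling} directly under the $\rho$-integral in the definition of $\bar{K}$, then perform the change of variables $\rho' = \lambda\rho$. The integration domain $[-\epsilon/2, \epsilon/2]$ is mapped to $[-\lambda\epsilon/2, \lambda\epsilon/2]$, and the prefactor $\epsilon^{-1}$ combined with the Jacobian $\lambda^{-1}$ yields $(\lambda\epsilon)^{-1}$, which is exactly what is needed to recognise the right-hand side as $\bar{K}_{\lambda^{1/H}s, \lambda^{1/H}t}(\lambda\epsilon, B)$. The joint-in-$(s,t,\epsilon)$ equality in law carries through because the $\rho$-integral is against a fixed deterministic measure.

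Part (ii) is the most immediate: integrating both inequalities of Lemma~\ref{lem:K-subadditive} pointwise in $\rho$ over $[-\epsilon/2, \epsilon/2]$ and dividing by $\epsilon$ yields the claim, with the trivial $+1$ surviving the averaging unchanged.

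Part (iii) contains the only mildly nontrivial idea. The starting observation is that the level set $\epsilon\mathbb{Z}$ is invariant under translation by $\epsilon$, and therefore the map $\rho \mapsto K_{s, t}(\epsilon, w + \rho)$ is periodic in $\rho$ with period $\epsilon$. Consequently, the integral $\int_I K_{s,t}(\epsilon, w+\rho)\, d\rho$ is independent of the choice of length-$\epsilon$ interval $I$. Choosing $I = [-w_s - \epsilon/2, -w_s + \epsilon/2]$ and substituting $\rho \mapsto \rho - w_s$ yields $\bar{K}_{s,t}(\epsilon, w) = \bar{K}_{s,t}(\epsilon, w - w_s)$. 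A pure time-shift in the definition~\eqref{eq:def_of_K} of $K$ (recall that the hitting times $T_n$ are already computed in the shifted-time coordinate $w_{s+\cdot}$) then rewrites this as $\bar{K}_{0, t-s}(\epsilon, w_{s+\cdot} - w_s)$. The distributional identity $\bar{K}_{s,t}(\epsilon, B) \dequal \bar{K}_{0, t-s}(\epsilon, B)$ is then an immediate consequence of the stationarity of increments $B_{s+\cdot} - B_s \dequal B$ of fractional Brownian motion. The main obstacle, such as it is, is noticing the periodicity in $\rho$; without it, the removal of the starting value $w_s$ (needed to make the shift-invariance visible) would not be obvious.
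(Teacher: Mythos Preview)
Your proposal is correct and follows essentially the same route as the paper. Parts (i) and (ii) are reduced to Lemma~\ref{lem:K-scaling} and Lemma~\ref{lem:K-subadditive} exactly as in the paper, and for part (iii) the paper uses the very periodicity observation you isolate---namely that $\bar{K}_{s,t}(\epsilon, w) = \bar{K}_{s,t}(\epsilon, w+\rho)$ for every $\rho$---and then sets $\rho = -w_s$ followed by the time-shift, matching your argument in substance.
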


\begin{proof}
  The claim (a) follows from Lemma
  \ref{lem:K-scaling} and
  the claim (b) follows from Lemma \ref{lem:K-subadditive}. For the
  claim (c), we observe that for every $\rho \in \mathbb{R}$ we have
  \begin{equation*}
    \bar{K}_{s, t}(\epsilon, w) = \bar{K}_{s, t}(\epsilon, w+\rho)
    = \bar{K}_{0, t-s}(\epsilon, w_{s+\cdot} + \rho)
  \end{equation*}
  In particular, we choose $\rho \assign - w_s$.
\end{proof}

\begin{lemma}[Moments of $K$]
  \label{lem:J-moment} For every $p, t, \epsilon \in (0, \infty)$ we have
    $\expect[\sup_{\rho \in \mathbb{R}} K_{0, t}(\epsilon, B+\rho)^p] < \infty$.
\end{lemma}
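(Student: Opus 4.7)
The plan is to prove a completely deterministic pathwise bound of the form
\begin{equation*}
    \sup_{\rho \in \mathbb{R}} K_{0,t}(\epsilon, B+\rho) \;\leq\; t\,\epsilon^{-1/\alpha}\,\|B\|_{\alpha;[0,t]}^{1/\alpha} + 1,
\end{equation*}
valid for any $\alpha \in (0,H)$, where $\|\cdot\|_{\alpha;[0,t]}$ denotes the $\alpha$-Hölder seminorm on $[0,t]$. Once this is in hand, the moment bound follows at once from the well-known fact that $\|B\|_{\alpha;[0,t]}$ has Gaussian tails.

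To prove the pathwise bound I would fix $\rho \in \mathbb{R}$ and consider the crossing times $T_n = T_n(\epsilon, B+\rho)$ from \eqref{eq:def_of_T_n}. The crucial observation is that for every $n \geq 2$, both $(B+\rho)_{T_{n-1}}$ and $(B+\rho)_{T_n}$ belong to $\epsilon\mathbb{Z}$ and are distinct by construction, so
\begin{equation*}
    |B_{T_n} - B_{T_{n-1}}| \;=\; |(B+\rho)_{T_n} - (B+\rho)_{T_{n-1}}| \;\geq\; \epsilon.
\end{equation*}
Combining this with the Hölder estimate $|B_{T_n} - B_{T_{n-1}}| \leq \|B\|_{\alpha;[0,t]} (T_n - T_{n-1})^{\alpha}$ yields the lower bound $T_n - T_{n-1} \geq (\epsilon/\|B\|_{\alpha;[0,t]})^{1/\alpha}$ on the length of every inter-crossing interval with $n \geq 2$. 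Since these intervals are disjoint and contained in $[0,t]$, the number of crossings with $n \geq 2$ and $T_n \leq t$ is at most $t\,\epsilon^{-1/\alpha}\,\|B\|_{\alpha;[0,t]}^{1/\alpha}$; the extra $+1$ absorbs the boundary indicator in \eqref{eq:def_of_K}. Because the right-hand side does not depend on $\rho$, the same inequality persists after taking $\sup_\rho$.

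To conclude, I would invoke Fernique's theorem for the centered Gaussian process $B$ viewed in the separable Banach space of $\alpha$-Hölder continuous paths on $[0,t]$: this yields $\mathbb{E}[\exp(c\|B\|_{\alpha;[0,t]}^2)] < \infty$ for some $c > 0$, hence $\mathbb{E}[\|B\|_{\alpha;[0,t]}^{p/\alpha}] < \infty$ for every $p > 0$. Taking $L^p$-norms in the pathwise bound gives the claim.

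There is no genuine obstacle here; the argument is a few lines once the correct pathwise inequality is identified. The only subtlety to watch is that the lower bound $\epsilon$ on $|B_{T_n}-B_{T_{n-1}}|$ is only guaranteed for $n \geq 2$ (in case $B_0 + \rho \notin \epsilon\mathbb{Z}$, the first hit $B_{T_1}$ may be arbitrarily close to $B_0$), but this is exactly what \eqref{eq:def_of_K} accommodates through the indicator term, which contributes at most $1$.
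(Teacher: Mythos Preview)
Your proposal is correct and follows essentially the same approach as the paper: obtain a deterministic pathwise bound on $\sup_\rho K_{0,t}(\epsilon,B+\rho)$ in terms of the $\alpha$-Hölder seminorm of $B$ (for $\alpha<H$), then invoke finiteness of all moments of that seminorm. The only cosmetic differences are that the paper phrases the pathwise bound via a contradiction/block-counting argument (showing at most one $T_n$ per interval of length $\delta=\epsilon^{1/\alpha}(1+\llbracket B\rrbracket_{C^\alpha})^{-1/\alpha}$) rather than your direct interval-length estimate, and cites Kolmogorov's continuity theorem instead of Fernique's theorem for the moment bound.
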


\begin{proof}
  For $\alpha \in (0, H)$, we set
  \begin{equation*} 
    \llbracket B \rrbracket_{C^{\alpha} ([0, t])}   \assign \sup_{0 \leq r < s \leq t} \frac{|
      B_s - B_r |}{(s - r)^{\alpha}} . 
  \end{equation*} 
By the Kolmogorov continuity theorem, we have
  \begin{equation}\label{eq:B-holder-moment}
    \mathbb{E} [\llbracket B \rrbracket _{C^{\alpha} ([0, t])}^p] < \infty.
  \end{equation}
  We set \rafal{In subscripts you use somtimes $\llbracket B \rrbracket_{C^{\alpha} ([0,
     t])}$ and sometimes $\llbracket B \rrbracket_{C^{\alpha} ([0,
     1])}$. I think we should decide for $t$}
  \begin{equation*}
     \delta \assign  \epsilon^{\frac{1}{\alpha}} (1 + \llbracket B \rrbracket_{C^{\alpha} ([0,
     t])})^{-\frac{1}{\alpha}} .
  \end{equation*} 
  To lead to a contradiction, suppose that there exist integers $k$ and $n$ such that
  \begin{equation*}
    k \delta \leq T_n (\epsilon, B + \rho) <
  T_{n + 1} (\epsilon, B + \rho) \leq (k + 1) \delta \quad \text{with } T_{n+1}(\epsilon, B+\rho) \leq t.
  \end{equation*}
  Then,
  \begin{align*}
   \epsilon = | B_{T_{n + 1} (\epsilon, B + \rho)} - B_{T_n (\epsilon, B + \rho)} | &\leq 
   \llbracket B \rrbracket_{C^{\alpha} ([0, t])} \delta^{\alpha} \\
     &= \epsilon \llbracket B \rrbracket_{C^{\alpha} ([0, t])}
     (1 + \llbracket B \rrbracket_{C^{\alpha} ([0, t])})^{- 1} < \epsilon,
  \end{align*}
  which is a contradiction. Thus, we must have
  \begin{equation*}
     \# \{ n \of k \delta \leq T_n (\epsilon, B + \rho) \leq (k + 1) \delta \} \leq
     1 \quad \text{for each } k 
  \end{equation*}
  and \rafal{I think that instead of $\delta^{-1}$ we should have $t/\delta$}
  \begin{equation}\label{eq:J_bound_by_Holder}
     \sup_{\rho \in \R} K_{0, t} (\epsilon, B + \rho) \leq  t \delta^{- 1} =  t \epsilon^{-\frac{1}{\alpha}} (1 + \llbracket B \rrbracket_{C^{\alpha} ([0, t])})^{\frac{1}{\alpha}} ,
  \end{equation} 
  which is $L^p(\P)$-integrable by \eqref{eq:B-holder-moment}.
\end{proof}

In view of Lemma~\ref{lem:K-bar-subadditive-and-stationary}, the family $(\expect[\bar{K}_{0, t}(1, B)])_{t \geq 0}$
satisfies
\begin{equation*}
  \expect[\bar{K}_{0, s+t}(1, B)] \geq \expect[\bar{K}_{0, s}(1, B)] + \expect[\bar{K}_{0, t}(1, B)].
\end{equation*}
Therefore, by Fekete's Lemma, the following limit exists in $[0, \infty]$:
\begin{equation}\label{eq:const_c_H}
  \mathfrak{c}_H \assign \lim_{t \to \infty} \frac{1}{t} \expect[\bar{K}_{0, t}(1, B)]
  = \sup_{t > 0} \frac{1}{t} \expect[\bar{K}_{0, t}(1, B)].
\end{equation}
The constant $\mathfrak{c}_H$ coincides with the one from Theorem~\ref{thm:convergence-of-K}.
The following lemma shows that $\mathfrak{c}_H$ is non-trivial.
\begin{lemma}[Non-triviality of $\mathfrak{c}_H$]
  We have $\mathfrak{c}_H \in (0, \infty)$.
\end{lemma}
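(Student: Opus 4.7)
The plan is to establish $0 < \mathfrak{c}_H < \infty$ by two independent arguments, starting from the representation $\mathfrak{c}_H = \sup_{t>0}\frac{1}{t}\expect[\bar{K}_{0,t}(1,B)]$ from \eqref{eq:const_c_H}: the upper bound requires controlling $\frac{1}{t}\expect[\bar{K}_{0,t}(1,B)]$ uniformly in $t$, while the lower bound only needs one $t > 0$ for which this expectation is strictly positive.

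For finiteness, I will iterate the almost subadditivity of Lemma~\ref{lem:K-bar-subadditive-and-stationary}(ii) and use stationarity (item (iii)) to obtain $\expect[\bar{K}_{0,n}(1,B)] \leq n\expect[\bar{K}_{0,1}(1,B)] + (n-1)$ for every $n \in \mathbb{N}$. Dividing by $n$ and passing to the limit yields $\mathfrak{c}_H \leq \expect[\bar{K}_{0,1}(1,B)] + 1$. Since $\bar{K}_{0,1}(1,B) \leq \sup_{\rho \in \mathbb{R}}K_{0,1}(1,B+\rho)$ by definition, the right-hand side is finite by Lemma~\ref{lem:J-moment} (applied with $p=1$, $t=1$, $\epsilon=1$).

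For positivity, it suffices to show $\expect[\bar{K}_{0,1}(1,B)] > 0$. I will consider the event
\begin{equation*}
  A \defby \Big\{\sup_{s \in [0,1]} B_s > 3/2\Big\} \cap \Big\{\inf_{s \in [0,1]} B_s < -3/2\Big\}.
\end{equation*}
On $A$, for every $\rho \in (-1/2,1/2)$ the shifted path $B+\rho$ starts inside $(-1/2, 1/2)$ and later attains values both above $1$ and below $-1$, so by continuity it must hit at least two distinct integers in $[0,1]$; hence $T_2(1, B+\rho) \leq 1$ and $K_{0,1}(1, B+\rho) \geq 1$. Integrating over $\rho$ yields $\bar{K}_{0,1}(1,B) \geq \indic_A$, whence $\expect[\bar{K}_{0,1}(1,B)] \geq \mathbb{P}(A)$.

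The only genuinely non-routine step is verifying $\mathbb{P}(A) > 0$, and this is where I expect the main obstacle to lie. I would deduce it from the full topological support of fractional Brownian motion on $\{w \in C([0,1];\mathbb{R}) : w_0 = 0\}$, a standard Gaussian-measure fact following from the density of the Cameron--Martin space, together with the observation that $\{\sup w > 3/2\} \cap \{\inf w < -3/2\}$ is a non-empty open subset of $C([0,1];\mathbb{R})$. An alternative would be to exhibit an explicit Cameron--Martin element whose peak and trough realise the inequalities, again yielding $\mathbb{P}(A) > 0$ by absolute continuity.
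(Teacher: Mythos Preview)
Your proof is correct. The finiteness argument is essentially identical to the paper's: both observe that $(\bar{K}_{s,t}+1)_{s<t}$ is subadditive, deduce $\mathfrak{c}_H \leq \mathbb{E}[\bar{K}_{0,1}(1,B)]+1$, and then bound $\bar{K}_{0,1}(1,B)$ by $\sup_{\rho}K_{0,1}(1,B+\rho)$ to invoke Lemma~\ref{lem:J-moment}.

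For positivity, however, the paper takes a shorter route. Instead of your event $A=\{\sup_{[0,1]}B>3/2\}\cap\{\inf_{[0,1]}B<-3/2\}$, the paper simply observes
\[
  \mathfrak{c}_H \;\geq\; \mathbb{E}[\bar{K}_{0,1}(1,B)] \;\geq\; \mathbb{P}(B_1 \geq 2) \;>\; 0.
\]
The point is that if $B_1\geq 2$, then for every $\rho\in(-1/2,1/2)$ the path $B+\rho$ starts in $(-1/2,1/2)$ and ends at $B_1+\rho\geq 3/2$, hence must cross two consecutive integers; this yields $K_{0,1}(1,B+\rho)\geq 1$ for all such $\rho$. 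The event $\{B_1\geq 2\}$ has positive probability by a one-dimensional Gaussian tail bound, so no appeal to full topological support of the law of $B$ is needed. Your argument via the support theorem is valid but brings in heavier machinery than the problem requires; the paper's choice of a fixed-time event keeps the proof entirely elementary.
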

\begin{proof}
  To see $\mathfrak{c}_H > 0$, we observe 
  \begin{align*}
    \mathfrak{c}_H \geq \mathbb{E}[\bar{K}_{0,1}(1, B)]
    \geq \mathbb{P}(B_1 \geq 2) > 0.
  \end{align*}
  To see $\mathfrak{c}_H < \infty$, we note by Lemma~\ref{lem:K-bar-subadditive-and-stationary} 
  that $(\bar{K}_{s,t}+1)$, $s<t$ is subadditive. Therefore, \rafal{$J$ is undefinied.}
  \begin{align*}
    \mathfrak{c}_H \leq \mathbb{E}[\bar{K}_{0,1}(1, B)] + 1 
    \leq \mathbb{E}[\sup_{\rho} K_{0, 1}(1, B-\rho)] + 1,
  \end{align*}
  which is finite by Lemma~\ref{lem:J-moment}.
\end{proof}
\begin{remark}
  By the subadditivity, we have 
  \begin{align*}
    \frac{\mathbb{E}[\bar{K}_{0, t}(1, B)]}{t} \leq \mathfrak{c}_H 
    \leq \frac{\mathbb{E}[\bar{K}_{0, t}(1, B)] + 1}{t}.
  \end{align*}
  In particular, 
  \begin{align}\label{eq:c_H_convergence_bound}
    \abs[\Big]{\mathfrak{c}_H - \frac{\mathbb{E}[\bar{K}_{0, t}(1, B)]}{t}} \leq t^{-1}.
  \end{align}
\end{remark}

\subsection{The shifted stochastic sewing lemma}\label{subsec:shifted_ssl}
A key ingredient to the proof of Theorem~\ref{thm:convergence-of-K} is the following lemma.
\begin{lemma}[Shifted stochastic sewing lemma, {\cite[Theorem~1.1]{matsuda22}}]\label{lem:shifted_ssl}
  Let $(\mathcal{F}_t)_{t \in [0, T]}$ be a filtration and 
  let $(A_{s,t})_{0 \leq s < t \leq T}$ 
  be a two-parameter stochastic process such that $A_{s, t}$ is $\mathcal{F}_t$-measurable. 
  Suppose that for some $p \in [2, \infty)$ we have $A_{s, t} \in L^p(\mathbb{P})$ 
  for every $s <t$. 
  Moreover, suppose that for $v < s < u < t$ and $M \in (0, \infty)$ we have the estimates
  \begin{align}
    \norm{A_{s, t} - A_{s, u} - A_{u, t}}_{L^p(\mathbb{P})} 
    &\leq  \Gamma_1 (t-s)^{\beta_1}, \notag \\
    \norm{\mathbb{E}[A_{s, t} - A_{s, u} - A_{u, t} \vert \mathcal{F}_v]}_{L^p(\mathbb{P})}
    &\leq \Gamma_2 (s-v)^{-\alpha} (t-s)^{\beta_2} \label{eq:ssl_conditional},
  \end{align}
  where $t-s \leq M^{-1} (s-v)$ is assumed in \eqref{eq:ssl_conditional}, with $\alpha, \beta_1, \beta_2$ satisfying 
  \begin{align*}
    \min \{2 \beta_1, 2 (\beta_2 - \alpha), \beta_2 \} > 1.
  \end{align*}
  Then there exists a unique $(\mathcal{F}_t)$-adapted stochastic process $(\mathcal{A}_t)_{t \in [0, T]}$ 
  with $\mathcal{A}_0 = 0$
  such that 
  \begin{align*}
    \norm{\mathcal{A}_{s, t} - A_{s, t}}_{L^p(\mathbb{P})} &\lesssim_{p, \alpha, \beta_1, \beta_2, M} \Gamma_1 (t-s)^{\beta_1}
    + \Gamma_2 (t-s)^{\beta_2 - \alpha}, \\
    \norm{\mathbb{E}[\mathcal{A}_{s, t} - A_{s, t} \vert \mathcal{F}_v]}_{L^p(\mathbb{P})} 
    &\lesssim_{p, \alpha, \beta_1, \beta_2, M} 
    \Gamma_2 (s-v)^{-\alpha} (t-s)^{\beta_2} 
  \end{align*}
  for every $v < s < t$, where $t-s \leq M^{-1}(s-v)$ is assumed in the second estimate.
  Furthermore, we can find a $\delta > 0$, 
  depending only on $\alpha, \beta_1, \beta_2$, such that 
  \begin{align*}
    \norm[\Big]{\mathcal{A}_T - \sum_{[s, t] \in \pi} A_{s, t}}_{L^p(\mathbb{P})}
    \lesssim_{p, \alpha, \beta_1, \beta_2, T} (\Gamma_1 + \Gamma_2) \abs{\pi}^{\delta}
  \end{align*}
  for every partition $\pi$ of $[0, T]$.
\end{lemma}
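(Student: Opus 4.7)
The plan is to follow the blueprint of Lê's stochastic sewing lemma \cite{le20}, but with an additional layer of care to exploit the shift between the conditioning time $v$ and the germ $A_{s,t}$. Uniqueness is standard: any two solutions $\mathcal{A}, \mathcal{A}'$ would satisfy $\|\mathcal{A}_{s,t}-\mathcal{A}'_{s,t}\|_{L^p(\mathbb{P})} \lesssim (t-s)^{\min(\beta_1, \beta_2-\alpha)}$, and since $\min(2\beta_1, 2(\beta_2-\alpha))>1$ implies an exponent strictly greater than $1/2$, a telescoping argument along a vanishing-mesh partition forces $\mathcal{A}_T - \mathcal{A}'_T = 0$.

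For existence, I would define the dyadic Riemann sums $\mathcal{A}^n_T := \sum_{[s,t]\in\pi_n} A_{s,t}$ along uniform partitions $\pi_n$ of mesh $T 2^{-n}$ and prove $(\mathcal{A}^n_T)_n$ is Cauchy in $L^p(\mathbb{P})$. The difference $\mathcal{A}^{n+1}_T - \mathcal{A}^n_T$ is a sum of germs of the form $\delta_{s,m,t} := A_{s,t} - A_{s,m} - A_{m,t}$ indexed by $[s,t]\in\pi_n$ with midpoint $m$. The core idea is to split each $\delta_{s,m,t} = (\delta_{s,m,t} - \mathbb{E}[\delta_{s,m,t}\mid \mathcal{F}_{v(s,t)}]) + \mathbb{E}[\delta_{s,m,t}\mid \mathcal{F}_{v(s,t)}]$ for a well-chosen shifted time $v(s,t) \leq s - M(t-s)$. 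The first piece, when summed in $s$, forms a martingale-like object whose $L^p$ norm is controlled by the BDG-type inequality of Lê, contributing a bound governed by $\Gamma_1 (T 2^{-n})^{\beta_1 - 1/2}$; the exponent is positive thanks to $2\beta_1 > 1$, which yields geometric summability in $n$. The second piece is a pure conditional drift bounded pointwise by $\Gamma_2 (s-v)^{-\alpha}(t-s)^{\beta_2}$, and with the admissible choice $s - v \asymp M(t-s)$ this contributes a term of order $\Gamma_2 (T 2^{-n})^{\beta_2 - \alpha - 1}$, again geometrically summable because $2(\beta_2 - \alpha) > 1$ and $\beta_2 > 1$.

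In order to access the conditional estimates on $\mathcal{A}_{s,t} - A_{s,t}$ promised by the lemma, I would redo the Cauchy argument after conditioning on $\mathcal{F}_v$ throughout, carefully tracking how the filtration interacts with the construction: the adaptedness of $A_{s,t}$ to $\mathcal{F}_t$ is used to ensure the martingale-difference structure survives the conditioning, while the constraint $t - s \leq M^{-1}(s-v)$ is respected at every dyadic scale by taking $n$ large enough that $T 2^{-n} \leq M^{-1}(s-v)$ on the relevant sub-intervals. Once the dyadic sums converge to a process $\mathcal{A}$, the two stated bounds on $\|\mathcal{A}_{s,t} - A_{s,t}\|_{L^p(\mathbb{P})}$ follow by summing the geometric series produced in the previous paragraph. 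For the final estimate on $\|\mathcal{A}_T - \sum_\pi A_{s,t}\|_{L^p(\mathbb{P})}$ for an arbitrary partition $\pi$, one compares $\pi$ to its dyadic refinement and telescopes using the same $\delta_{s,m,t}$ bounds, extracting the small factor $|\pi|^\delta$ from the gap between the smoothness exponents $\beta_1, \beta_2 - \alpha$ and the critical threshold $1/2$.

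The main obstacle is precisely the coupling between the choice of the shift $v(s,t)$ and the grouping of dyadic intervals into martingale blocks. In the unshifted case one takes $v = s$ and the BDG step is transparent; here, $v$ lies strictly in the past, so the ``martingale differences'' $\delta_{s,m,t} - \mathbb{E}[\delta_{s,m,t}\mid\mathcal{F}_v]$ associated to neighbouring intervals $[s,t]$ are not innovations of a single filtration but must be processed in blocks, typically of size comparable to $s-v$, and the blockwise BDG bounds must be reassembled without losing the geometric decay. Handling this combinatorics — in particular, proving that the blockwise estimate respects the three-exponent balance $\min(2\beta_1, 2(\beta_2-\alpha), \beta_2) > 1$ and reduces to the unshifted stochastic sewing lemma when $M \to 0$ — is the technical heart of the proof, and is where the constant in the final bound acquires its dependence on $M$.
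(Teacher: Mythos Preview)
The paper does not prove this lemma; it is quoted verbatim from \cite[Theorem~1.1]{matsuda22} and used as a black box. So there is no ``paper's own proof'' to compare your sketch against. That said, your outline is a faithful description of the strategy in \cite{matsuda22}: build dyadic Riemann sums, split $\delta A_{s,m,t}$ into a drift term $\mathbb{E}[\delta A_{s,m,t}\mid\mathcal{F}_{v}]$ and a fluctuation, control the drift directly by the shifted conditional hypothesis, and control the fluctuation by a BDG-type inequality after grouping into blocks of size comparable to the shift $s-v$. You have correctly located the main technical point, namely that with $v<s$ the fluctuation terms from neighbouring dyadic cells are no longer martingale increments for a common filtration and must be processed in blocks.

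One genuine gap in your sketch is the uniqueness argument. From the two stated bounds you get $\|\mathcal{A}_{s,t}-\mathcal{A}'_{s,t}\|_{L^p}\lesssim (t-s)^{\gamma}$ with $\gamma>\tfrac12$, but a plain telescoping (triangle inequality) along a partition only yields $\sum_{[s,t]\in\pi}(t-s)^{\gamma}$, which does not vanish unless $\gamma>1$. To force the difference to zero with $\gamma>\tfrac12$ you must also invoke the second (conditional) estimate: write $\mathcal{A}_T-\mathcal{A}'_T=\sum_{[s,t]\in\pi}(\mathcal{A}-\mathcal{A}')_{s,t}$, split each summand into its conditional expectation on a shifted $\mathcal{F}_{v(s,t)}$ plus a fluctuation, apply BDG to the fluctuation sum, and use the conditional bound (with $\beta_2>1$ and $\beta_2-\alpha>\tfrac12$) on the drift sum. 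This is exactly the same mechanism as in the existence step, and without it the uniqueness claim as you wrote it does not go through.
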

\begin{remark}
  The stochastic sewing lemma was first obtained in the seminal work \cite{le20}, 
  and the first shifted version, where $\alpha = 0$, was obtained by \cite{Gerencser:2022aa}, 
  which was extended by \cite{matsuda22} to handle the case $\alpha > 0$.
\end{remark}
\begin{remark}\label{rem:how_to_use_ssl}
  For our problems, the family $(A_{s, t})_{0 \leq s < t \leq T}$ satisfies 
  \begin{align*}
    \norm{A_{s,t}}_{L^p(\mathbb{P})} \leq \Gamma_1 (t-s)^{\beta_1}, 
    \quad \norm{\mathbb{E}[A_{s,t} \vert \mathcal{F}_v]}_{L^p(\mathbb{P})} \leq \Gamma_2 (s-v)^{-\alpha} (t-s)^{\beta_2}.
  \end{align*}
  In this case, by uniqueness of the limit $\mathcal{A}$, we must have $\mathcal{A} \equiv 0$. 
  In particular, with some $\delta = \delta (\alpha, \beta_1, \beta_2)$ we have 
  \begin{align*}
    \norm[\Big]{\sum_{[s,t] \in \pi} A_{s, t}}_{L^p(\mathbb{P})} 
    \lesssim_{T, p, \alpha, \beta_1, \beta_2} (\Gamma_1 + \Gamma_2) \abs{\pi}^{\delta}.
  \end{align*}
\end{remark}
Let us explain the strength of Lemma~\ref{lem:shifted_ssl}. 
The key is the estimate \eqref{eq:ssl_conditional}, 
which allows us to bring a \emph{weak} estimate into a strong estimate.
To illustrate an example, recall the Mandelbrot--Van Ness representation \eqref{eq:mandelbrot}, and we write $(\mathcal{F}_t)_{t \in \mathbb{R}}$ 
for the filtration generated by the Brownian motion $W$ in \eqref{eq:mandelbrot}.
Suppose that we want to estimate $\mathbb{E}[F(B) \vert \mathcal{F}_v]$. 
We then have 
\begin{align*}
  \mathbb{E}[F(B) \vert \mathcal{F}_v]
  = \mathbb{E}[F(y + \tilde{B})] \vert_{y = \mathbb{E}[B \vert \mathcal{F}_v]}, 
  \quad \tilde{B} \defby \int_v^{\cdot} \kernel(\cdot, s) \mathrm{d} W_s.
\end{align*}
Then the problem reduces to the estimate of a Gaussian expectation (weak estimate), which allows us to leverage regularity of the Gaussian density. 

Obviously, weak estimates provide better bounds than strong estimates do. As this point of view is the key to our arguments, 
let us elaborate on one simple example.
Let $X$ be a one-dimensional standard Gaussian random variable, and let $F \from 
\mathbb{R} \to \mathbb{R}$.  
If we want to estimate 
\begin{align*}
  \mathbb{E}[\abs{F(X) - F(X + a)}]
\end{align*}
for a small $a \in \mathbb{R}$, then we need to assume some regularity of $F$. 
On the other hand, if we want to estimate 
\begin{align*}
  \abs{\mathbb{E}[F(X)] - \mathbb{E}[F(X+a)]},
\end{align*}
then by the Gaussian change of variable we get 
\begin{align*}
  \mathbb{E}[F(X+a)] = e^{-\frac{a^2}{2}} \mathbb{E}[e^{a X} F(X)].
\end{align*}
Hence, only assuming $F$ is bounded, using the Cauchy--Schwarz inequality, 
\toyomu{a bit more detail, CS}
we get 
\begin{align*}
  \abs{\mathbb{E}[F(X)] - \mathbb{E}[F(X+a)]}
  \lesssim \norm{F}_{L^{\infty}} \abs{a}, \quad \forall a \text{ with } \abs{a} \leq 1.
\end{align*}
Very roughly speaking, we will go through such lines of reasoning to prove Theorem~\ref{thm:convergence-of-K}. The Gaussian change of variable will be replaced by 
Girsanov's theorem in the spirit of Picard~\cite{picard08}, see Lemma~\ref{lem:K-rho}.

\subsection{Convergence of variation}\label{subsec:var_conv}
As already suggested, to prove Theorem~\ref{thm:convergence-of-K}, we 
will apply the shifted stochastic sewing, Lemma~\ref{lem:shifted_ssl}. 
We denote by $(\mathcal{F}_t)_{t \in \mathbb{R}}$ the filtration generated by the Brownian motion $W$ appearing in 
the Mandelbrot--Van Ness representation \eqref{eq:mandelbrot}.
The following is the first observation.
\begin{lemma}[Asymptotic weak estimate of $\bar{K}$]
  \label{lem:K-bar-average}
  Let $\zeta \geq 1$ and $v < s < t$. 
  We set $\epsilon \defby (\frac{t-s}{\zeta})^H$. 
  Then, if $\frac{t-s}{s-v}$ is sufficiently small and $p>1$, we have 
  \begin{align*}
    \norm{\mathbb{E}[\bar{K}_{s,t}(\epsilon, B) \vert \mathcal{F}_v] 
    - \mathbb{E}[\bar{K}_{0, \zeta}(1, B)]}_{L^p(\mathbb{P})}
    \lesssim_{p, \zeta} \Big( \frac{t-s}{s-v} \Big)^{1-H}.
  \end{align*}
\end{lemma}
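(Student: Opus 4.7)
The plan is to exploit the Mandelbrot--Van Ness decomposition of $B$ and apply a Cameron--Martin shift argument in the spirit of Picard~\cite{picard08}. First, I would decompose, for $r \geq v$, $B_r = \mu_r + \tilde B_r$ with $\mu_r := \int_{-\infty}^v \kernel(r,u)\,\mathrm{d}W_u$ being $\mathcal{F}_v$-measurable and $\tilde B_r := \int_v^r \kernel(r,u)\,\mathrm{d}W_u$ independent of $\mathcal{F}_v$. Combining the stationarity identity from Lemma~\ref{lem:K-bar-subadditive-and-stationary}(iii) (which in particular eliminates the initial value $B_s$) with the scaling identity from Lemma~\ref{lem:K-bar-subadditive-and-stationary}(i) applied with $\lambda := 1/\epsilon = (\zeta/(t-s))^H$, one rewrites
\begin{equation*}
  \bar K_{s,t}(\epsilon, B) = \bar K_{0, \zeta}(1, \hat Z + \hat Y),
\end{equation*}
where $\hat Y_r := \lambda(\tilde B_{s + \lambda^{-1/H} r} - \tilde B_s)$ is Gaussian and independent of $\mathcal{F}_v$ (with covariance close to that of fBM on $[0, \zeta]$ under our asymptotic regime), and $\hat Z_r := \lambda(\mu_{s + \lambda^{-1/H} r} - \mu_s)$ is a small $\mathcal{F}_v$-measurable perturbation.

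Defining $\Phi(z) := \mathbb{E}[\bar K_{0, \zeta}(1, z + \hat Y)]$ for a deterministic path $z$, the independence of $\hat Y$ from $\mathcal{F}_v$ gives the key identity $\mathbb{E}[\bar K_{s,t}(\epsilon, B) \mid \mathcal{F}_v] = \Phi(\hat Z)$. Unconditionally, $\hat Z + \hat Y \dequal B$ on $[0, \zeta]$ (by stationarity of fBM increments together with the scaling), so $\mathbb{E}[\Phi(\hat Z)] = \mathbb{E}[\bar K_{0, \zeta}(1, B)]$. Thus the target estimate reduces to the concentration inequality $\|\Phi(\hat Z) - \mathbb{E}\Phi(\hat Z)\|_{L^p(\mathbb{P})} \lesssim_{p,\zeta} ((t-s)/(s-v))^{1-H}$.

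The technical heart is a Lipschitz bound $|\Phi(z_1) - \Phi(z_2)| \lesssim_{p,\zeta} \|z_1 - z_2\|_{H_{\hat Y}}$ in the Cameron--Martin space $H_{\hat Y}$ of $\hat Y$. This I would obtain from Girsanov's theorem in the Volterra setting, writing
\begin{equation*}
  \mathbb{E}[F(\hat Y + z_1)] = \mathbb{E}\Big[F(\hat Y + z_2) \exp\Big(I(z_1 - z_2) - \tfrac12 \|z_1 - z_2\|_{H_{\hat Y}}^2\Big)\Big]
\end{equation*}
with $I(\cdot)$ the isonormal Gaussian on $H_{\hat Y}$. Cauchy--Schwarz combined with the $L^q$ moment bound on $\bar K$ from Lemma~\ref{lem:J-moment} (applied with $F = \bar K_{0,\zeta}(1, \cdot)$) and the observation that the Girsanov exponential is close to $1$ for small shifts yields the Lipschitz estimate. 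Coupling $\hat Z$ with an independent copy $\hat Z'$ and applying Jensen then gives $\|\Phi(\hat Z) - \mathbb{E}\Phi(\hat Z)\|_{L^p} \lesssim_{p,\zeta} \bigl\| \|\hat Z\|_{H_{\hat Y}} \bigr\|_{L^p}$.

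It remains to estimate the Cameron--Martin norm of $\hat Z$. Using $\mu_a - \mu_s = \int_{-\infty}^v [(a-u)^{H-1/2} - (s-u)^{H-1/2}]\,\mathrm{d}W_u$ for $a, s > v$ and the Taylor expansion $(a-u)^{H-1/2} - (s-u)^{H-1/2} \approx (H-\tfrac12)(a-s)(s-u)^{H-3/2}$ valid for $a-s \ll s-u$, an explicit computation yields $\mathbb{E}[(\mu_a-\mu_s)^2] \lesssim (a-s)^2 (s-v)^{2H-2}$, and after rescaling this becomes $\mathbb{E}[\hat Z_r^2] \lesssim_\zeta ((t-s)/(s-v))^{2(1-H)}$ uniformly in $r \in [0,\zeta]$. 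Identifying $H_{\hat Y}$ via the Volterra kernel of $\hat Y$ then gives $\mathbb{E}\|\hat Z\|_{H_{\hat Y}}^2 \lesssim_\zeta ((t-s)/(s-v))^{2(1-H)}$, and Gaussian hypercontractivity lifts this to $L^p$. The main obstacle I anticipate is the rigorous identification of $H_{\hat Y}$ for the Volterra-type Gaussian process $\hat Y$ --- which is close to, but not exactly, fractional Brownian motion --- and the verification that $\hat Z$ lies in $H_{\hat Y}$ with the stated norm bound; this is precisely the technical point handled by Picard's \cite{picard08} framework.
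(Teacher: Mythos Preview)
Your approach is correct and would work, but it is considerably more laborious than the paper's. The paper observes that, by Lemma~\ref{lem:K-bar-subadditive-and-stationary}(iii), the random variable $\bar{K}_{s,t}(\epsilon,B)$ is measurable with respect to $\sigma(B_r-B_s:\,s\le r\le t)$, and then applies Picard's asymptotic independence lemma \cite[Lemma~A.1]{picard08} as a black box: for any such $G$ one has $\|\mathbb{E}[G\mid\mathcal{F}_v]-\mathbb{E}[G]\|_{L^p}\lesssim_p((t-s)/(s-v))^{1-H}\|G\|_{L^p}$. Since by stationarity and scaling $\bar{K}_{s,t}(\epsilon,B)\dequal\bar{K}_{0,\zeta}(1,B)$, the $L^p$ norm on the right is bounded by Lemma~\ref{lem:J-moment} and the proof is three lines.

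What you propose is essentially the \emph{proof} of Picard's lemma specialised to $G=\bar{K}_{s,t}(\epsilon,B)$: decompose $B$ into its $\mathcal{F}_v$-measurable part and its independent remainder, rescale, and run the Cameron--Martin/Girsanov argument to compare $\Phi(\hat Z)$ with $\Phi(0)$. The obstacle you flag --- identifying the Cameron--Martin space of the Volterra process $\hat Y$ and bounding $\|\hat Z\|_{H_{\hat Y}}$ --- is exactly what Picard's lemma packages up. So your route is sound but re-derives the general tool instead of citing it; the paper's route is shorter and isolates the one structural fact specific to $\bar K$ (increment-measurability) from the general Gaussian machinery.
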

Lemma~\ref{lem:K-bar-average} is an easy consequence of the following result.
\begin{lemma}[Asymptotic independence, {\cite[Lemma~A.1]{picard08}}]
  Let $0 \leq v < s < t$. 
  Let $F$ and $G$ be measurable with respect to
  $\mathcal{F}_v$ and 
  \begin{equation}\label{eq:sigma_alg_by_diff}
      \sigma (B_{t'} - B_{s'} \of
     s \leq s' < t' \leq t)
  \end{equation}
  respectively, and suppose that $F, G \in L^p (\mathbb{P})$ with $p \in (1, \infty)$. If
  $(t- s) (s- v)^{- 1}$ is sufficiently small, then we have
  \begin{equation*}
     | \mathbb{E} [F G] -\mathbb{E} [F] \mathbb{E} [G] | \lesssim_{p}
     \Big( \frac{t-s}{s-v} \Big)^{1 - H} \| F \|_{L^p
     (\mathbb{P})} \| G \|_{L^p (\mathbb{P})} . 
  \end{equation*}
\end{lemma}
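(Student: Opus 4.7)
The plan is to separate $B$ into an $\mathcal F_v$-measurable past and an $\mathcal F_v$-independent future, show that the past contributes only a small perturbation to the increments on $[s, t]$, and absorb this perturbation by a Cameron--Martin (Girsanov) shift. Concretely, using the Mandelbrot--Van Ness representation \eqref{eq:mandelbrot}, I would decompose for $r > v$
\[
B_r = \phi_r + \beta_r, \qquad \phi_r \defby \int_{-\infty}^v \kernel(r, u) \mathrm{d} W_u, \qquad \beta_r \defby \int_v^r \kernel(r, u) \mathrm{d} W_u,
\]
so that $\phi$ is $\mathcal F_v$-measurable and $\beta$ is independent of $\mathcal F_v$. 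Writing $\chi_r \defby \phi_r - \phi_s$, the increments of $B$ on $[s, t]$ split as $B_r - B_s = \chi_r + (\beta_r - \beta_s)$. The mean-value bound $\abs{(r-u)^{H-1/2} - (s-u)^{H-1/2}} \lesssim (r-s)(s-u)^{H-3/2}$ for $u \leq v < s < r$, combined with a Gaussian variance computation and hypercontractivity, gives $\norm{\chi_r}_{L^q(\mathbb P)} \lesssim_q (r-s)(s-v)^{H-1}$ for every $q < \infty$. Hence $\chi$ is small on $[s, t]$ by a factor of order $((t-s)/(s-v))^{1-H}$ relative to the natural scale $(t-s)^H$ of $\beta - \beta_s$.

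The key second step is to realize $\chi$ as a Cameron--Martin element of $\beta$: solve the Volterra equation
\[
\chi_r = \int_v^r \kernel(r, u) \dot h(u) \mathrm{d} u, \qquad r \in [s, t],
\]
for an $\mathcal F_v$-measurable $\dot h \in L^2((v, t))$, and establish $\norm{\norm{\dot h}_{L^2((v, t))}}_{L^q(\mathbb P)} \lesssim_q ((t-s)/(s-v))^{1-H}$. Since $G$ depends only on the increments of $B$ on $[s, t]$, we have $G = \Phi(\chi + Y)$ with $Y \defby \beta - \beta_s$ independent of $\mathcal F_v$ and $\Phi$ a measurable functional. Conditioning on $\mathcal F_v$ and applying Cameron--Martin then yields
\[
\mathbb E[G \mid \mathcal F_v] = \mathbb E\big[\Phi(Y) Z \mid \chi\big], \qquad Z \defby \exp\Big(\int_v^t \dot h(u) \mathrm{d} W_u - \tfrac{1}{2} \norm{\dot h}_{L^2}^2\Big),
\]
with $\mathbb E[Z \mid \chi] = 1$.

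To conclude, I would combine the $\mathcal F_v$-measurability of $F$, the independence $\Phi(Y) \perp \mathcal F_v$, and the cancellation $\mathbb E[(F - \mathbb E F)\Phi(Y)] = 0$ to obtain
\[
\mathbb E[FG] - \mathbb E[F]\mathbb E[G] = \mathbb E\big[(F - \mathbb E F)\, \Phi(Y)\, (Z - 1)\big].
\]
By H\"older's inequality with conjugate exponents $p, p'$, the right-hand side is bounded by $2 \norm{F}_{L^p} \norm{\Phi(Y)}_{L^p} \norm{Z - 1}_{L^{p'}(\mathbb P)}$, using $\norm{(F - \mathbb E F)\Phi(Y)}_{L^p} \leq 2\norm{F}_{L^p}\norm{\Phi(Y)}_{L^p}$ by the independence of the two factors. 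The log-normal estimate $\mathbb E[\abs{Z - 1}^{p'} \mid \chi]^{1/p'} \lesssim_{p'} \norm{\dot h}_{L^2}$ (valid for small $\norm{\dot h}$), the $L^q$-estimate on $\norm{\dot h}_{L^2}$ from the previous paragraph, and the Cameron--Martin comparison $\norm{\Phi(Y)}_{L^p} \lesssim \norm{G}_{L^p}$ (which holds since $Z$ is close to $1$) then produce the claim. The main obstacle is the second step: inverting the fractional Volterra kernel and extracting the sharp exponent $1-H$ in the $L^q$-estimate on $\norm{\dot h}_{L^2}$. This is where the precise regularity of the kernel $\kernel$ at its singularity enters essentially, and is also where the specific exponent $1-H$ in the final bound is produced.
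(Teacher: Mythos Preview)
The paper does not give its own proof of this lemma; it is quoted verbatim from \cite[Lemma~A.1]{picard08}. Your outline is precisely Picard's strategy, and the same Girsanov-on-the-past-drift machinery reappears (with the Riemann--Liouville inversion you anticipate) in the paper's own proofs of Lemma~\ref{lem:K-rho} and Lemma~\ref{lem:fix-X-and-Y}. So your approach is both correct and the intended one.

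One technical point in your final step deserves care. You bound the covariance by $\|F\|_{L^p}\|\Phi(Y)\|_{L^p}\|Z-1\|_{L^{p'}}$ and then invoke $\|\Phi(Y)\|_{L^p}\lesssim\|G\|_{L^p}$. That comparison, if done by another Girsanov/H\"older step, typically produces $\|G\|_{L^{p_1}}$ for some $p_1>p$ rather than $\|G\|_{L^p}$. A cleaner route that lands on exactly $\|G\|_{L^p}$ is to write, for fixed $\chi$ and an independent copy $\chi'$,
\[
g(\chi)-g(\chi')=\mathbb E_Y\bigl[\Phi(\chi+Y)\bigr]-\mathbb E_Y\bigl[\Phi(\chi'+Y)\bigr]
=\mathbb E_Y\bigl[\Phi(\chi+Y)\,(1-Z_{h(\chi'-\chi)})\bigr],
\]
so that the H\"older step places $\mathbb E[|G|^p\mid\mathcal F_v]^{1/p}$ (and hence $\|G\|_{L^p}$ after averaging) directly on the right-hand side, with no detour through $\Phi(Y)$. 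This is a cosmetic fix; your identification of the Volterra inversion and the extraction of the exponent $1-H$ as the substantive step is exactly right.
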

\begin{remark}\label{rem:how_to_use_asymp_indep}
  Consequently, we have the following estimate.
  Let $p \in [2, \infty)$, and we set $p' \defby p/(p-1)$.
  If $G$ is measurable with respect to the $\sigma$-algebra \eqref{eq:sigma_alg_by_diff}, then 
  for any $\mathcal{F}_v$-measurable $F$ we have 
  \begin{align*}
    \abs{\mathbb{E}[(G - \mathbb{E}[G]) F]} \lesssim_p 
    \Big( \frac{t-s}{s-v} \Big)^{1-H} \norm{F}_{L^{p'}(\mathbb{P})} \norm{G}_{L^{p'}(\mathbb{P})},
  \end{align*}
  provided that $\frac{t-s}{s-v}$ is sufficiently small. 
  Since $L^{p'}(\mathbb{P})$ is the dual of $L^p(\mathbb{P})$ and $p' \leq p$, we have
  \begin{align}\label{eq:asymptotic_indep}
    \norm{\mathbb{E}[G \vert \mathcal{F}_{v}] - \mathbb{E}[G]}_{L^p(\mathbb{P})}
    \lesssim_p \Big( \frac{t-s}{s-v} \Big)^{1 - H} \norm{G}_{L^{p}(\mathbb{P})},
  \end{align}
  where we do not need to assume that $\frac{t-s}{s-v}$ is small.
\end{remark}
\begin{proof}[Proof of Lemma~\ref{lem:K-bar-average}]
  By Lemma~\ref{lem:K-bar-subadditive-and-stationary}-(iii) (or by Remark~\ref{rem:truncated_var}), \rafal{rather by Remark \ref{rem:truncated_var}}
  the random variable $\bar{K}_{s, t}(\epsilon, B)$ is 
  measurable with respect to 
  $\sigma(B_r - B_s : s \leq r \leq t)$. 
  The estimate \eqref{eq:asymptotic_indep} implies
  \begin{align*}
    \norm{\mathbb{E}[\bar{K}_{s, t}(\epsilon, B) \vert \mathcal{F}_v] - 
    \mathbb{E}[\bar{K}_{s, t}(\epsilon, B)]}_{L^p(\mathbb{P})}
    \lesssim \Big( \frac{t-s}{s-v} \Big)^{1-H} 
    \norm{\bar{K}_{s, t}(\epsilon, B)}_{L^p(\mathbb{P})}.
  \end{align*}
  By the stationarity and the scaling (Lemma~\ref{lem:K-bar-subadditive-and-stationary}), 
  \begin{align*}
    \bar{K}_{s, t}(\epsilon, B) \dequal \bar{K}_{0, \zeta}(1, B)
  \end{align*}
  and the moment 
   $\norm{\bar{K}_{0, \zeta}(1, B)}_{L^p(\mathbb{P})}$ is bounded by Lemma~\ref{lem:J-moment}. 
  The claim now follows.
\end{proof}
We recall the Mandelbrot--Van Ness representation (Definition~\ref{def:mandelbrot_van_ness}).
The next lemma is a consequence of Girsanov's theorem.
\begin{lemma}[Weak estimate on $K$]
  \label{lem:K-rho}Let $v < s < t$, $\epsilon \in (0, 1)$, 
  $\rho, \rho' \in [- \epsilon / 2, \epsilon / 2]$ and $y
  : [v, t] \to \mathbb{R}$ be a deterministic continuous path. We set
  \begin{equation}\label{eq:tilde-B}
   \tilde{B}^v_r \assign  \int_v^r (r - u)^{H - 1 / 2} \mathrm{d} W_u, \quad
     v \leq r \leq t
  \end{equation}
  \toyomu{consistent with $\mathcal{B}$ or $\tilde{B}$}
  and
  \begin{equation*}
     b_H \assign \frac{1}{4(1-H)} \Big( \frac{1}{\Gamma (H + 1 / 2)
     \Gamma (3 / 2 - H)} \Big)^2, 
  \end{equation*}
  where $\Gamma$ is the usual Gamma function
  \begin{align}\label{eq:gamma_fcn}
     \Gamma(z) \defby \int_0^{\infty} t^{z-1} e^{-t} \mathrm{d} t.
  \end{align}
  We then have the bound
  \begin{multline*}
       | \mathbb{E} [K_{s, t} (\epsilon, \tilde{B}^v + y + \rho)] -\mathbb{E} [K_{s, t} (\epsilon,
       \tilde{B}^v + y + \rho' )] | \\
       \lesssim  e^{b_H | \rho - \rho' |^2 (s - v)^{- 2} (t - v)^{2 - 2
       H}} 
       \times \mathbb{E} [K_{s, t} (\epsilon, \tilde{B}^v + y + \rho)^2]^{\frac{1}{2}} |
       \rho - \rho' | (s - v)^{- 1} (t - v)^{1 - H} .
  \end{multline*}
\end{lemma}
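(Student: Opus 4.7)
The plan is to use Girsanov's theorem to absorb the constant shift $c \assign \rho - \rho'$ into the driving Brownian motion of the Mandelbrot--Van Ness representation, in the spirit of Picard \cite{picard08}. Since $K_{s,t}(\epsilon, \tilde B^v + y + \rho) = K_{s,t}(\epsilon, (\tilde B^v + c) + y + \rho')$ and the functional $K_{s,t}$ depends only on the restriction of its argument to $[s, t]$, it suffices to compare the laws of $\tilde B^v$ and $\tilde B^v + c$ on $[s, t]$ via a change of measure. Concretely, I will seek a deterministic drift $\dot h \in L^2([v, t])$ satisfying the Volterra equation
\begin{equation*}
  \int_v^r (r - u)^{H - 1/2} \dot h(u) \dd u = c, \qquad r \in [s, t], \tag{$\star$}
\end{equation*}
and set $\mathcal{E}_t \assign \exp \bigl( \int_v^t \dot h(u) \dd W_u - \tfrac{1}{2} \int_v^t \dot h(u)^2 \dd u \bigr)$. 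Under $\mathbb{Q}$ with $\dd \mathbb{Q} / \dd \mathbb{P} = \mathcal{E}_t$, Girsanov yields that $\tilde W_u \assign W_u - \int_v^u \dot h(r) \dd r$ is a Brownian motion and, by $(\star)$, $\tilde B^v_r + c = \int_v^r (r-u)^{H-1/2} \dd \tilde W_u$ for every $r \in [s, t]$. Hence $(\tilde B^v + c)|_{[s, t]}$ has the same law under $\mathbb{Q}$ as $\tilde B^v|_{[s, t]}$ has under $\mathbb{P}$.

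The main obstacle is the construction of $\dot h$ and the sharp $L^2$-bound
\begin{equation*}
  \| \dot h \|_{L^2([v, t])}^2 \leq 2 b_H \, c^2 (s - v)^{-2} (t - v)^{2 - 2 H}. \tag{$\star\star$}
\end{equation*}
The equation $(\star)$ is a fractional integral equation of Riemann--Liouville type with kernel $(r-u)^{H - 1/2}$ and constant right-hand side on the subinterval $[s, t]$. I will invert it by fractional differentiation applied to a carefully chosen extension of the target constant $c$ to $[v, s]$: the ``naive'' choice $\dot h(u) \propto (u-v)^{-H - 1/2}$, which produces the constant $c$ on all of $(v, t]$, fails to be in $L^2$ because of the singularity at $u = v$; the cure is to modify $\dot h$ on $[v, s]$ in a way that preserves $(\star)$ while regularizing at $u = v$. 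Optimizing this modification—using the Beta-function identity $\Gamma(H + 1/2) \Gamma(3/2 - H) = B(H + 1/2, 3/2 - H)$—yields the constant $b_H = \frac{1}{4(1 - H)} \bigl( \Gamma(H + 1/2) \Gamma(3/2 - H) \bigr)^{-2}$ appearing in the statement, together with the exponents $(s-v)^{-2}$ and $(t-v)^{2 - 2H}$ dictated by the scaling of the fractional kernel between $[v, s]$ and $[s, t]$.

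Granted $(\star\star)$, the conclusion is a short computation. Using the identity
\begin{equation*}
  \expect^{\mathbb{P}} \bigl[ K_{s,t}(\epsilon, \tilde B^v + y + \rho') \bigr] = \expect^{\mathbb{Q}} \bigl[ K_{s,t}(\epsilon, (\tilde B^v + c) + y + \rho') \bigr] = \expect^{\mathbb{P}} \bigl[ \mathcal{E}_t \, K_{s,t}(\epsilon, \tilde B^v + y + \rho) \bigr],
\end{equation*}
I rewrite the difference of interest as $\expect^{\mathbb{P}}[(\mathcal{E}_t - 1) K_{s, t}(\epsilon, \tilde B^v + y + \rho)]$, and apply Cauchy--Schwarz. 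Since $\expect^{\mathbb{P}}[\mathcal{E}_t^2] = \exp (\| \dot h \|_{L^2}^2)$, we have $\| \mathcal{E}_t - 1 \|_{L^2}^2 = e^{\| \dot h \|_{L^2}^2} - 1 \leq \| \dot h \|_{L^2}^2 \, e^{\| \dot h \|_{L^2}^2}$, and taking square roots gives
\begin{equation*}
  \| \mathcal{E}_t - 1 \|_{L^2(\mathbb{P})} \leq \| \dot h \|_{L^2} \, e^{\| \dot h \|_{L^2}^2 / 2} \lesssim |c| (s - v)^{-1} (t - v)^{1 - H} \, e^{b_H \, c^2 (s - v)^{-2} (t - v)^{2 - 2 H}}.
\end{equation*}
Combined with $\| K_{s, t}(\epsilon, \tilde B^v + y + \rho) \|_{L^2(\mathbb{P})} = \expect[K_{s,t}(\epsilon, \tilde B^v + y + \rho)^2]^{1/2}$, this produces exactly the bound in the statement. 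The entire difficulty is therefore concentrated in the fractional-calculus inversion performed in the middle paragraph.
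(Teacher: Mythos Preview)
Your proposal is correct and follows exactly the paper's approach: a Girsanov shift to absorb the constant into the driving Brownian motion, followed by Cauchy--Schwarz and the log-normal identity $\|\mathcal{E}_t-1\|_{L^2}^2 = e^{\|\dot h\|_{L^2}^2}-1$. For the construction you leave sketched, the paper's explicit choice is the linear ramp $h_r=(\rho'-\rho)\frac{r-v}{s-v}$ on $[v,s]$ and $h_r=\rho'-\rho$ on $[s,t]$, whose fractional derivative $\dot g_r \propto (r-v)^{1/2-H}-(r-s)_+^{1/2-H}$ lies in $L^2([v,t])$ and, via the Beta identity you cite, yields exactly your bound~$(\star\star)$.
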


\begin{proof}
  The proof is inspired by {\cite[Theorem A.1]{picard08}}. Let $\delta \assign
  \rho' - \rho$ and
  \begin{equation*} 
    h_r \assign
      \begin{cases}
       (s - v)^{- 1} (r - v) \delta &  \text{if } v \leq r \leq s,\\
       \delta &   \text{if } s \leq r.
      \end{cases}
  \end{equation*}   
  Note that the functions $r \mapsto \tilde{B}^v_r + y_r + \rho'$ and $r \mapsto \tilde{B}^v_r + y_r + h_r + \rho$
  are equal on the interval $[s, t]$.
  Thus,
  \begin{equation*}
     K_{s, t} (\epsilon, \tilde{B}^v + y + \rho') = K_{s, t} (\epsilon,
     \tilde{B}^v + y + h + \rho) . 
  \end{equation*}
  We claim
  \begin{equation*}
     h_r =  \int_v^r (r - u)^{H - 1 / 2} \mathrm{d} g_u, 
  \end{equation*}
  where for $r > v$,
  \begin{equation*}
     g_r \assign \frac{\delta\{ (r - v)^{3 / 2 - H} - (r - r \wedge s)^{3 /
     2 - H} \}}{\Gamma (H + 1 / 2) \Gamma (3 / 2 - H) (3 / 2
     - H) (s-v)}   . 
  \end{equation*}
  Indeed,
  \begin{align*}
   \dot{g}_r &\assign \frac{\dd g_r}{\dd r} \\
   &\phantom{\vcentcolon} = \frac{1}{\Gamma (H + 1 / 2) \Gamma (3 / 2 - H)}
     \frac{\delta}{s - v} \{ (r - v)^{1 / 2 - H} - (r - s)^{1 / 2 - H}
     \indic_{\{ r > s \}} \} 
  \end{align*}
  and
  \begin{align*}
    \int_v^r (r - u)^{H - 1 / 2} (u - v)^{1 / 2 - H} \mathrm{d} u & =  \int_0^{r
    - v} (r - v - u)^{H - 1 / 2} u^{1 / 2 - H} \mathrm{d} u\\
    & =  (r - v) \int_0^1 (1 - u)^{H - 1 / 2} u^{1 / 2 - H} \mathrm{d} u\\
    & =  \Gamma (H + 1 / 2) \Gamma (3 / 2 - H) (r - v),
  \end{align*}
  where in the last line the relation between the Beta function and the Gamma function is used.
  Therefore,
  \begin{equation*}
      \int_v^r (r - u)^{H - 1 / 2} \mathrm{d} g_u = \frac{\delta}{s - v} \{ (r
     - v) - (r - s) \indic_{\{ r > s \}} \} = h_r . 
  \end{equation*}

  If we set
  \begin{equation*}
     F (w) \assign K_{s, t} \Big( \epsilon,  \int_v^{\cdot} (\cdot -
     u)^{H - 1 / 2} \mathrm{d} w_u + y + \rho  \Big), 
  \end{equation*}
  then $K_{s, t} (\epsilon, \tilde{B}^v + y + \rho') = F (W + g)$ and by Girsanov's theorem 
  (or the Cameron-Martin theorem)
  \begin{equation*}
     \mathbb{E} [F (W + g)] =\mathbb{E} \Big[ e^{\int_v^t \dot{g_r} \mathrm{d}
     W_r - \frac{1}{2} \int_v^t | \dot{g}_r |^2 \mathrm{d} r} F (W) \Big] . 
  \end{equation*}
  Thus,
  \begin{multline*}
   \mathbb{E} [K_{s, t} (\epsilon, \tilde{B}^v + y + \rho')] -\mathbb{E} [K_{s, t} (\epsilon, \tilde{B}^v + y
     + \rho )]  \\
     =\mathbb{E} \Big[ \Big\{ e^{\int_v^t \dot{g_r}
     \mathrm{d} W_r - \frac{1}{2} \int_v^t | \dot{g}_r |^2 \mathrm{d} r} - 1 \Big\}
     K_{s, t} (\epsilon,  \tilde{B}^v + y + \rho) \Big] .
  \end{multline*}
  By the Cauchy--Schwarz inequality, it is bounded by
  \begin{equation*}
     \mathbb{E} \Big[ \Big( e^{\int_v^t \dot{g_r} \mathrm{d} W_r - \frac{1}{2}
     \int_v^t | \dot{g}_r |^2 \mathrm{d} r} - 1 \Big)^2 \Big]^{1 / 2}
     \mathbb{E} [K_{s, t} (\epsilon, \tilde{B}^v + y + \rho)^2]^{\frac{1}{2}} . 
  \end{equation*}
  Since $\int_v^t \dot{g}_r \mathrm{d} W_r$ is centered Gaussian with variance 
  \begin{align*}
    \int_v^t \abs{\dot{g}_r}^2 \mathrm{d} r \leq 2 b_H \delta^2 (s-v)^{-2} (t-v)^{2- 2H},
  \end{align*}
 \rafal{in the formula above I changed $a_H$ into $b_H$} we obtain
  \begin{align*}
    \mathbb{E} \Big[ \Big( e^{\int_v^t \dot{g_r} \mathrm{d} W_r - \frac{1}{2}
    \int_v^t | \dot{g}_r |^2 \mathrm{d} r} - 1 \Big)^2 \Big] 
    &= e^{\int_v^t
    | \dot{g}_r |^2 \mathrm{d} r} - 1\\
    &\leq \int_v^t | \dot{g}_r |^2 \mathrm{d} r \times e^{\int_v^t | \dot{g}_r
    |^2 \mathrm{d} r}\\
    &\lesssim e^{2 b_H | \rho - \rho' |^2 (s - v)^{- 2} (t - v)^{2 - 2
    H}} | \rho - \rho' |^2 (s - v)^{- 2} (t - v)^{2 - 2 H},
  \end{align*}
 (in the second line we used $e^a -1 \le a \times e^a$ for $a\ge 0$) which completes the proof.
\end{proof}

\begin{proof}[Proof of Theorem~\ref{thm:convergence-of-K}]
  In view of the scaling, we may suppose that $T = 1$. 
  The proof resembles that of the subadditive ergodic theorem \cite[Theorem~6.4.1]{Durrett2019}.

  {\tmstrong{Step 1, lower bound.}}   
  We fix a parameter $\zeta \geq 1$, which will go to infinity at the end. 
  (The parameter $\zeta$ corresponds to the parameter $m$ in \cite[Theorem~6.4.1]{Durrett2019}.) 
  Let $\pi_{\epsilon, \zeta}$ be 
  the partition of $[0, 1]$ with identical mesh size $\zeta \epsilon^{\frac{1}{H}}$.  
  By the superadditivity (Lemma~\ref{lem:K-subadditive}), using the relation $t-s = \zeta \epsilon^{1/H}$ for $[s,t] \in \pi_{\epsilon, \zeta}$, we obtain
  \begin{align*}
    \epsilon^{\frac{1}{H}} K_{0, 1}(\epsilon, B+\rho)
    \geq \sum_{[s, t] \in \pi_{\epsilon, \zeta}} \epsilon^{\frac{1}{H}} 
      K_{s, t}(\epsilon, B + \rho) = \zeta^{-1} 
      \sum_{[s, t] \in \pi_{\epsilon, \zeta}}  A_{s, t}^1, 
  \end{align*}
  where $A^1_{s, t} \defby K_{s, t}((\frac{t-s}{\zeta})^H, B+\rho) (t-s)$.
  Furthermore, we set 
  \begin{align*}
    A^2_{s, t} \defby \bar{K}_{s, t}\Big(\Big(\frac{t-s}{\zeta}\Big)^H, B \Big) (t-s),
    \quad A^3_{s, t} \defby \mathbb{E}[\bar{K}_{0, \zeta}(1, B)] (t-s).
  \end{align*} 
  We see that $A_{s, t} \defby A^1_{s, t} - A^3_{s, t}$ satisfies the condition 
  of Lemma~\ref{lem:shifted_ssl}. Indeed, by scaling we have 
  \begin{align*}
    \norm{K_{s, t}(\epsilon, B + \rho)}_{L^p(\mathbb{P})}
    +\norm{\bar{K}_{s, t}(\epsilon, B + \rho)}_{L^p(\mathbb{P})} 
    \lesssim_{p, \zeta} 1
  \end{align*}
  and hence 
  \begin{align*}
    \norm{A_{s, t}}_{L^p(\mathbb{P})} \leq 
    \norm{A^1_{s, t}}_{L^p(\mathbb{P})} +
     \norm{A^3_{s, t}}_{L^p(\mathbb{P})}
    \lesssim_{p, \zeta} (t-s).
  \end{align*}

  To estimate the conditional expectation, let $(t-s)/(s-v)$ be so small that the claim of Lemma~\ref{lem:K-bar-average} holds.
  Since 
  \begin{align*}
    K_{s, t}(\epsilon, B + \rho) - \bar{K}_{s, t}(\epsilon, B) 
    = \epsilon^{-1} \int_{-\epsilon/2}^{\epsilon/2} 
    \{ K_{s, t}(\epsilon, B + \rho) - K_{s, t}(\epsilon, B + \rho + \rho')  \}\mathrm{d} \rho',
  \end{align*}
  by Lemma~\ref{lem:K-rho}, using $|\rho - \rho'| \le \epsilon$, $(s-v)^{-1}(t-v)^{1-H} \le \sqrt{2}(s-v)^{-H}$ (this holds for $(t-s)/(s-v)$ sufficiently small) and $\epsilon (s-v)^{-H} = ((t-s)/(s-v))^H / \zeta^H$ \rafal{I added few explanations here} we have 
  \begin{align*}
    \MoveEqLeft[3]
    |\mathbb{E}[K_{s, t}(\epsilon, B + \rho) - \bar{K}_{s, t}(\epsilon, B) \vert \mathcal{F}_v]| \\
    &\lesssim \frac{\mathbb{E}[K_{s,t}(\epsilon, B + \rho)^2 \vert \mathcal{F}_v]^{\frac{1}{2}}}{\epsilon} 
    \int_{-\epsilon/ 2}^{\epsilon/2} e^{2 b_H \epsilon^2 (s-v)^{-2H}} \epsilon (s-v)^{-H} \mathrm{d} \rho' \\
    &\lesssim_{\zeta} \mathbb{E}[K_{s,t}(\epsilon, B + \rho)^2 \vert \mathcal{F}_v]^{\frac{1}{2}} \Big( \frac{t-s}{s-v} \Big)^H,
  \end{align*}
  which readily yields
  \begin{align*}
    \norm{\mathbb{E}[A^1_{s,t} - A^2_{s, t} \vert \mathcal{F}_v]}_{L^p(\mathbb{P})} 
    \lesssim_{p, \zeta} \Big( \frac{t-s}{s-v} \Big)^H (t-s).
  \end{align*}
  By Lemma~\ref{lem:K-bar-average}, 
  \begin{align*}
    \norm{\mathbb{E}[A^2_{s, t} - A^3_{s, t} \vert \mathcal{F}_v] }_{L^p(\mathbb{P})}     
    \lesssim_{p, \zeta} \Big( \frac{t-s}{s-v} \Big)^{1 - H} (t-s).
  \end{align*}
  Therefore, 
  \begin{align*}
    \norm{\mathbb{E}[A_{s, t} \vert \mathcal{F}_v]}_{L^p(\mathbb{P})}
    \lesssim_{p, \zeta} \Big( \frac{t-s}{s-v} \Big)^{\min\{H, 1-H\}} (t-s),
  \end{align*}
  and we indeed see that $(A_{s, t})_{s < t}$ satisfies the conditions of 
  Lemma~\ref{lem:shifted_ssl}.

  Consequently, recalling Remark \ref{rem:how_to_use_ssl}, we obtain 
  \begin{align}\label{eq:K_quantitative_lbd}
    \epsilon^{\frac{1}{H}} K_{0, 1}(\epsilon, B) 
    \geq \frac{\mathbb{E}[\bar{K}_{0, \zeta}(1, B)]}{\zeta} - R_{\epsilon, \zeta},
  \end{align}
  where 
  \begin{align*}
    \norm{R_{\epsilon, \zeta}}_{L^p(\mathbb{P})} 
    \lesssim_{p, \zeta} \epsilon^{\delta} 
  \end{align*}
  for some $\delta$ depending only on $H$. By the Borel--Cantelli lemma, 
  if $\epsilon_n = O(n^{-\eta})$ for some $\eta > 0$, then 
  $R_{\epsilon_n, \zeta} \to 0$ a.s. This implies 
  \begin{align*}
    \liminf_{n \to \infty} \epsilon_n^{\frac{1}{H}} K_{0, 1}(\epsilon_n, B + \rho) \geq 
    \frac{\mathbb{E}[\bar{K}_{0, \zeta}(1, B)]}{\zeta}
    \quad \text{a.s.}
  \end{align*}
  Since $\zeta$ is an arbitrary real no smaller than $1$, the lower bound is obtained.  

  {\bfseries Step 2, upper bound.} Since $(K_{s, t}(\epsilon, B + \rho) + 1)_{s < t}$ 
  is subadditive, 
  we obtain 
  \begin{align}\label{eq:K_quantitative_ubd}
    \epsilon^{\frac{1}{H}} K_{0, 1}(\epsilon, B) 
    \leq \frac{\mathbb{E}[\bar{K}_{0, \zeta}(1, B)]}{\zeta} + \frac{1}{\zeta} + R_{\epsilon, \zeta},
  \end{align}
  and we similarly obtain the upper bound.
\end{proof}

\section{Local time via level crossings}\label{sec:local_time}

In this section, we are interested in level crossings at a specific level. 
Our goal of this section is to prove Theorem~\ref{thm:lemieux_type_result}, fractional analogue of Chacon et al. \cite{chacon1981}. 
The key is to obtain a more quantitative version of Theorem~\ref{thm:convergence_to_local_time}, as stated just below. 
Recall the definition of $U_{s, t}(\epsilon, w)$ from \eqref{eq:def_of_U}, 
which counts the total number of upcrossings from $0$ to $\epsilon$ in the interval $[s, t]$.

\begin{theorem}[Quantitative bound on number of upcrossing $U$]
  \label{thm:local-time-level-crossing}
  Let $H  \in (0, 1/2)$, $T \in (0, \infty)$ and $a \in \mathbb{R}$.
  The constant $\mathfrak{c}_H$ is defined by \eqref{eq:const_c_H}.
  %
  Almost surely, we have the following quantitative bound:
  \begin{equation*}
     \Big| \epsilon^{\frac{1}{H} - 1} U_{0, T} (\epsilon, B - a) - \frac{\mathfrak{c}_H}{2} L_T (a) \Big|
     \leq \zeta^{-1} L_T (a) + \mathcal{R}_{\epsilon, \zeta, T, a},  
  \end{equation*}
  for all $\epsilon \in (0, \infty)$ and $\zeta \in (1, \infty)$,
  where there exists a positive $\kappa$ such that for every $p \in (0, \infty)$ we have
  \begin{equation*}
     \| \mathcal{R}_{\epsilon, \zeta, T, a} \|_{L^p (\mathbb{P})} \leq C_{p, \zeta} T^{1-H} \epsilon^{\kappa} 
  \end{equation*}
  with $C_{p, \zeta}$ independent of $\epsilon$, $T$ and $a$.
\end{theorem}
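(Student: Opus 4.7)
Following the strategy of the proof of Theorem~\ref{thm:convergence-of-K}, I would apply the shifted stochastic sewing lemma (Lemma~\ref{lem:shifted_ssl} together with Remark~\ref{rem:how_to_use_ssl}) to a germ whose Riemann sum along a uniform partition of $[0,T]$ of mesh of order $\zeta\epsilon^{1/H}$ approximates $\epsilon^{1/H-1}U_{0,T}(\epsilon,B-a) - \tfrac{\mathfrak{c}_H}{2}L_T(a)$. The natural choice is
\[
A_{s,t} := \epsilon^{1/H-1}U_{s,t}(\epsilon,B-a) - \tfrac{\mathfrak{c}_H}{2}\bigl(L_t(a)-L_s(a)\bigr),
\]
with any boundary defect (from upcrossings straddling partition points, which are missed by the local counting $U_{s,t}$) carefully tracked and absorbed into the final error $\mathcal R_{\epsilon,\zeta,T,a}$. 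This in turn calls for either a mesh slightly coarser than $\zeta\epsilon^{1/H}$ or a redefinition of $U$ counting each upcrossing by its right endpoint so that exact additivity holds.

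The strong estimate $\|A_{s,t}\|_{L^p(\mathbb{P})}\le\Gamma_1(t-s)^{\beta_1}$ follows from Lemma~\ref{lem:J-moment} (via $U\le K$) together with Berman-type moments of the fBm local time, tuned so that $2\beta_1>1$. The conditional estimate, which I expect to be the content of Lemma~\ref{lem:U_conditioning}, is the crux. Decomposing $B_r=Y_r^v+\tilde B_r^v$ according to the Mandelbrot–Van Ness representation with $Y^v$ measurable with respect to $\mathcal{F}_v$ and $\tilde B^v$ as in \eqref{eq:tilde-B} independent of $\mathcal{F}_v$, one reduces the conditional expectation to
\[
\mathbb{E}\bigl[U_{s,t}(\epsilon,\tilde B^v+y-a)\bigr]\Big|_{y=Y_s^v}.
\]
A Girsanov shift in the spirit of Lemma~\ref{lem:K-rho} permits free translation of $y-a$ over a window of size $\epsilon$ at the price of the exponential $\exp(b_H|\rho|^2(s-v)^{-2}(t-v)^{2-2H})$, and averaging the Girsanov-shifted expectation against the Gaussian density of $\tilde B_s^v$ (whose width $(s-v)^H$ exceeds $\epsilon$ precisely because the sewing restricts $t-s\le M^{-1}(s-v)$) replaces the level-averaging that produced $\bar K$ in Section~\ref{subsec:var_conv}. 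The definition \eqref{eq:const_c_H} together with the rate \eqref{eq:c_H_convergence_bound} then identifies the inner expectation with $\tfrac{\mathfrak c_H}{2}(t-s)\,p_{s-v}(Y_s^v-a)$ up to an error of order $\zeta^{-1}$, and the occupation-density formula converts the resulting Gaussian-density time integral into $\mathbb{E}[L_t(a)-L_s(a)\mid\mathcal F_v]$.

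Altogether this yields a bound of the form
\[
\bigl\|\mathbb{E}[A_{s,t}\mid\mathcal F_v]\bigr\|_{L^p(\mathbb{P})}\lesssim_{p,\zeta}\Gamma_2(s-v)^{-\alpha}(t-s)^{\beta_2} + \zeta^{-1}\bigl\|\mathbb{E}[L_t(a)-L_s(a)\mid\mathcal F_v]\bigr\|_{L^p(\mathbb{P})},
\]
with $\alpha,\beta_2$ satisfying the sewing condition $\min\{2\beta_1,2(\beta_2-\alpha),\beta_2\}>1$ precisely in the Hurst regime $H<1/2$. Invoking Remark~\ref{rem:how_to_use_ssl}, the accumulated $\zeta^{-1}$ terms add up to $\zeta^{-1}L_T(a)$ while the sewing residual gives $\|\mathcal R_{\epsilon,\zeta,T,a}\|_{L^p(\mathbb{P})}\lesssim C_{p,\zeta}T^{1-H}\epsilon^\kappa$ for some $\kappa>0$. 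The main obstacle is the conditional estimate above: unlike $\bar K$, the counting $U$ is attached to a single level $a$ and admits no direct level-averaging. The substitute via the Gaussian density of $\tilde B_s^v$ works only when the scale $(s-v)^H$ of that density is comparable to or larger than $\epsilon$, forcing a delicate interplay between the sewing gap $t-s\le M^{-1}(s-v)$ and the Girsanov Radon–Nikodym exponential; it is this interplay that selects the regime $H<1/2$ and produces the polynomial rate $\epsilon^\kappa$.
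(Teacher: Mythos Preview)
Your architecture is right---sub/super-additivity plus the shifted stochastic sewing lemma reduces everything to the conditional estimate on $\mathbb{E}[U_{s,t}(\epsilon,B-a)\mid\mathcal{F}_v]$---but two ingredients are missing.

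First, a two-way split $B=Y^v+\tilde B^v$ is not enough for the Gaussian change of variable you have in mind: $\tilde B_s^v$ is \emph{not} independent of the increments $(\tilde B_r^v-\tilde B_s^v)_{r\in[s,t]}$ that generate the upcrossings, so ``averaging against the density of $\tilde B_s^v$'' does not factorise $U_{s,t}$. The paper inserts an intermediate time $u\in(v,s)$ and writes $B=X+Y+Z$ with $X$ being $\mathcal{F}_v$-measurable, $Y_r=\int_v^u\mathcal{K}(r,\theta)\,\mathrm{d}W_\theta$, and $Z_r=\int_u^r\mathcal{K}(r,\theta)\,\mathrm{d}W_\theta$. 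Now $Y_s$ \emph{is} independent of $Z$, and the Gaussian change of variable on $Y_s$ (Lemma~\ref{lem:Y_change_of_variables}), together with the observation that $|Y_s|$ must be small for $U_{s,t}>0$ (Lemma~\ref{lem:exp-U}), cleanly yields $\mathbb{E}[U_{s,t}(\epsilon,B-a)\mid\mathcal{F}_v]\approx\mathbb{E}[U_{s-v,t-v}(\epsilon,\tilde B)]\,e^{-X_s^2/(2\sigma_{Y+Z}^2)}$, after which one optimises over $u$.

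Second, and this is the real gap: you assert that the resulting prefactor equals $\tfrac{\mathfrak{c}_H}{2}(t-s)\,p_{s-v}(Y_s^v-a)$ by invoking \eqref{eq:const_c_H} and \eqref{eq:c_H_convergence_bound}. But those identities concern $\bar K$, and there is no direct passage from $\mathbb{E}[U_{s-v,t-v}(\epsilon,\tilde B)]$ to $\bar K$. The paper's device (Lemma~\ref{lem:expect_U_convergence}) is to \emph{integrate the approximate identity over $a\in\mathbb{R}$ and take expectations}: the left-hand side becomes $\int_{\mathbb{R}}\mathbb{E}[U_{s,t}(\epsilon,B-a)]\,\mathrm{d}a=\tfrac{\epsilon}{2}\,\mathbb{E}[\bar K_{0,\zeta}(1,B)]$ (via $B\overset{\mathrm d}{=}-B$ and the fact that integrating $U+D$ over levels recovers $\bar K$), while on the right-hand side $\int_{\mathbb{R}} e^{-X_s^2/(2\sigma^2)}\,\mathrm{d}a=\sqrt{2\pi}\,\sigma$. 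Solving this \emph{determines} the unknown prefactor as $\mathbb{E}[\bar K_{0,\zeta}(1,B)]\,\epsilon/(2\sqrt{2\pi}\,\sigma_{Y+Z})$, and only then does \eqref{eq:c_H_convergence_bound} convert $\zeta^{-1}\mathbb{E}[\bar K_{0,\zeta}(1,B)]$ into $\mathfrak{c}_H+O(\zeta^{-1})$. Without this integration-over-$a$ trick there is no bridge from upcrossings at a single level to $\mathfrak{c}_H$. (A smaller point: rather than carry a non-vanishing $\zeta^{-1}$ term through the sewing, the paper works throughout with $\tfrac{1}{2\zeta}\mathbb{E}[\bar K_{0,\zeta}(1,B)]$ in place of $\mathfrak{c}_H/2$, compares the $U$-germ to the explicit local-time germ $\tilde A_{s,t}$ of Lemma~\ref{lem:L-germ}, and bounds from above and below separately using $U$ and $\bar U=U+\indic_{\{w_s\in(0,\epsilon)\}}$.)
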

The proof of Theorem~\ref{thm:local-time-level-crossing} is somewhat 
similar to that of Theorem~\ref{thm:convergence-of-K}, especially the bounds \eqref{eq:K_quantitative_lbd} and \eqref{eq:K_quantitative_ubd}. 
Indeed, it is based on 
the super(sub)-additivity, Girsanov's theorem and shifted stochastic sewing lemma. However, a major difficulty here is that we cannot find a counterpart to $\bar{K}$. This leads to more involved technical arguments. 
Therefore, instead of directly going to the proof, in the next section we heuristically explain our strategy.

\subsection{Heuristics}\label{subsec:heuristic}
Herein we explain our heuristic strategy to prove Theorem~\ref{thm:local-time-level-crossing}. 
Let $(\mathcal{F}_t)_{t \in \mathbb{R}}$ be the filtration generated by $W$ in the Mandelbrot--Van Ness representation \eqref{eq:mandelbrot}.
We set 
\begin{align*}
  A_{s, t} \defby U_{s, t}((t-s)^H, B - a) (t-s)^{1 - H}.
\end{align*}
In view of Lemma~\ref{lem:shifted_ssl}, our goal is to show 
\begin{align}\label{eq:A_approx_L}
  \mathbb{E}[A_{s, t} \vert \mathcal{F}_v] \approx \frac{\mathfrak{c}_H}{2} \mathbb{E}[L_{s, t}(a) \vert \mathcal{F}_v].
\end{align}
Indeed, once the estimate \eqref{eq:A_approx_L} is proven, the rest of the argument is similar 
to the proof of Theorem~\ref{thm:convergence-of-K}.

We thus explain heuristically how to prove \eqref{eq:A_approx_L}. 
For simplicity, we set $a = 0$, and we write $\epsilon \defby (t-s)^H$. 
(Strictly speaking, we actually introduce another parameter $\zeta$ going to infinity 
and set $\epsilon \defby (\frac{t-s}{\zeta})^H$, 
but for simplicity here we set $\zeta = 1$.)
Let us introduce another parameter $u \in (v, s)$ 
(in mind $t-s \ll s-u \ll u-v$), and, recalling the Mandelbrot--Van Ness representation from Definition~\ref{def:mandelbrot_van_ness}, for $r \in [s, t]$ we decompose 

\begin{align*}
   B_r  &= \int_{- \infty}^{v} \kernel (r, \theta) \mathrm{d}
   W_{\theta} + \int_{v}^{u}
   \kernel (r, \theta) \mathrm{d} W_{\theta} + \int_{u}^r \kernel (r,
   \theta) \mathrm{d} W_{\theta} \\
   &\backassign X_r + Y_r + Z_r. 
\end{align*}
In the interval $[s, t]$ the smooth
processes $X$ and $Y$ do not change much compared to $Z$. Therefore, we
can freeze time of $X$ and $Y$ (Lemma~\ref{lem:fix-X-and-Y}):
\[ \mathbb{E}[U_{s, t} (\epsilon, B) \vert \mathcal{F}_v] 
\approx \mathbb{E}[U_{s, t} (\epsilon, X_s + Y_s + Z) \vert \mathcal{F}_v] . \]
But we see
\begin{equation*}
 \mathbb{E} [ U_{s, t} (\epsilon, X_{s} + Y_{s} + Z) | \mathcal{F}_{v}   ] 
   =  \mathbb{E} [ U_{s, t} (\epsilon, x + Y_s + Z) ] |_{x = X_s}, 
\end{equation*}
and the Gaussian change of variable to $Y$ yields
\begin{equation*}
 \mathbb{E} [ U_{s, t} (\epsilon, x + Y_s + Z) ] 
   = e^{- \frac{1}{2} ( \frac{x}{\sigma_Y}
   )^2} \mathbb{E} \Big[ e^{\frac{x Y_s}{\sigma_Y^2}}
   U_{s, t} (\epsilon, Y_s + Z)
   \Big], 
\end{equation*}
where $\sigma_Y$ is the variance of $Y_s$ (Lemma~\ref{lem:Y_change_of_variables}).

For $U_{s, t} (\epsilon, Y_s + Z)$
to be positive,  $Y_s$ must be
around $0$ with high probability. (In other words, if $Y_s$ is far away from $0$, the process $Z$
must move quite a lot, which is costly.) Therefore (Lemma~\ref{lem:exp-U}),
\begin{align*}
  \mathbb{E} \Big[ e^{\frac{x Y_s}{\sigma_Y^2}} U_{s, t} (\epsilon, Y_s + Z) \Big]  \approx 
  \mathbb{E} [ U_{s, t} (\epsilon, Y_s + Z) ]
   \approx  \mathbb{E} \Big[ U_{s, t} (\epsilon,
  Y + Z) \Big] .
\end{align*}
As $v \ll u \ll s$, we have $\sigma_Y \approx \sigma_{Y+Z}$, with $\sigma_{Y+Z}$ being the variance of $Y+Z$ (Lemma~\ref{lem:sigma_Y_vs_sigma_Y_Z}). 
In the end, we have (Lemma~\ref{lem:U_conditioning}) \rafal{I think that instead of $U_{s, t} (1, Y+Z)$ on the rhs of the equation below we should have $U_{s, t} (\epsilon, Y+Z)$ there}
\[ \mathbb{E}[U_{s,t}(\epsilon, B) \vert \mathcal{F}_v] 
\approx \mathbb{E} \Big[ U_{s, t} (\epsilon, Y+Z) \Big]  e^{- \frac{1}{2} (\frac{X_s}{\sigma_{Y+Z}})^2} . \]

It is well-known that the local time is heuristically represented as integral of Dirac's delta function along $B$ (see Lemma~\ref{lem:L-germ}).
We then observe (Lemma~\ref{lem:L-germ}) \rafal{Maybe it is in place to here to recall a relationship between the local time and the integral below. TM: sentence added.}
\begin{align*}
  \mathbb{E} \Big[  \int_{s}^{t} \delta_0 (B_r)
  \mathrm{d} r \Big| \mathcal{F}_{v} \Big] 
  & \approx \int_s^t \mathbb{E}[\delta_0(B_s) \vert \mathcal{F}_v] \mathrm{d} r\\ 
  &=  \frac{1}{\sqrt{2 \pi} \sigma_{Y+Z}}  e^{- \frac{X_s^2}{2 \sigma_{Y+Z}^2}} (t-s).
\end{align*}
It is not obvious, but in Lemma~\ref{lem:expect_U_convergence} we prove 
\begin{align*}
  \sqrt{2 \pi} \sigma_{Y+Z} (t-s)^{-H} \mathbb{E}[U_{s, t}(\epsilon, Y+Z)] 
  \approx \frac{\mathfrak{c}_H}{2}.
\end{align*}
Now we see \eqref{eq:A_approx_L}.
With this heuristic argument in mind, we move to a rigorous proof in the next section.
\subsection{Convergence to local time}\label{subsec:local_time_key}
\subsubsection{Estimates on level crossings}\label{subsubsec:level_crossings}
The following process will appear in our argument.
\begin{definition}\label{def:riemann_liouville}
  The kernel $\mathcal{K}$ is defined by \eqref{eq:kernel_def}.
  We denote by $\tilde{B} = \tilde{B}^H$ the \emph{Riemann-Liouville process}
  \[ \tilde{B}_t \assign \int_0^t \kernel(t, r) \mathrm{d} W_r . \]
  In view of the Mandelbrot--Van Ness representation (Definition~\ref{def:mandelbrot_van_ness}), we have 
  \begin{align}\label{eq:B_and_tilde_B}
    B_t = \int_{-\infty}^0 \kernel(t, r) \mathrm{d} W_r + \tilde{B}_t.
  \end{align}
\end{definition}
We begin with three elementary lemmas.
\begin{lemma}[Scaling of $U$]
  \label{lem:U-scaling}We have the following scaling property: for $\lambda > 0$,
  \[ (U_{s, t} (\epsilon, B + \rho))_{s < t, \epsilon > 0,
     \rho \in \mathbb{R}} \overset{\mathrm{d}}{=} (U_{\lambda^{1 / H} s, \lambda^{1 /
     H} t} (\lambda \epsilon, B + \lambda \rho))_{s < t,
     \epsilon > 0, \rho \in \mathbb{R}} . \]
   A similar result holds with $B$ replaced by $\tilde{B}$.
\end{lemma}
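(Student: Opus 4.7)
The plan is to mirror the proof of Lemma~\ref{lem:K-scaling} very closely: the scaling property for $U$ is a formal consequence of (i) an elementary pathwise identity expressing how $U_{s,t}$ transforms under joint rescaling of time and amplitude, and (ii) the self-similarity of the Gaussian process under consideration. No probabilistic input beyond self-similarity is needed.

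Concretely, I would first establish the pathwise identity that for any continuous $w$, any $\lambda > 0$, and any admissible $s,t,\epsilon,\rho$,
\[
U_{s,t}(\epsilon, w + \rho) \;=\; U_{s,t}(\lambda\epsilon,\, \lambda w + \lambda\rho) \;=\; U_{\lambda^{1/H} s,\, \lambda^{1/H} t}\bigl(\lambda\epsilon,\, w^{(\lambda)} + \lambda\rho\bigr),
\]
where $w^{(\lambda)}_r \defby \lambda w_{\lambda^{-1/H} r}$. The first equality is immediate from \eqref{eq:def_of_U}: multiplication by $\lambda > 0$ is an order-preserving bijection that sends upcrossings of $[0,\epsilon]$ by $w+\rho$ to upcrossings of $[0,\lambda\epsilon]$ by $\lambda(w+\rho)$. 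The second equality is a bijective time change $r \mapsto \lambda^{1/H} r$ which again preserves ordering and hence the counting. Setting $w = B$ and invoking the $H$-self-similarity $B^{(\lambda)} := \lambda B_{\lambda^{-1/H}\cdot} \dequal B$ as processes then yields the claimed equality in law, jointly in $(s,t,\epsilon,\rho)$, since the pathwise identity above holds simultaneously for every path.

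For the Riemann--Liouville process $\tilde{B}$, the only additional step is to verify the analogous self-similarity. Since for $r \in [0,t]$ the Mandelbrot--Van Ness kernel reduces to $\kernel(t,r) = (t-r)^{H-1/2}$, writing $\tilde B_t = \int_0^t (t-r)^{H-1/2}\, dW_r$ and applying the substitution $r = \lambda^{-1/H} r'$ together with Brownian scaling $dW_{\lambda^{-1/H} r'} \dequal \lambda^{-1/(2H)}\, dW'_{r'}$ leads, after a short exponent count ($-(H-\tfrac{1}{2})/H - 1/(2H) = -1$), to $\lambda\,\tilde{B}_{\lambda^{-1/H}\cdot} \dequal \tilde{B}_{\cdot}$ as processes. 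With this in hand, the same pathwise identity for $U$ closes the argument. I expect no real obstacle; the only point that requires a moment of attention is that the equality in law is asserted jointly across all indexing parameters, which is automatic because the pathwise identity is truly pathwise and the sole stochastic input is the distributional equality of the driving processes.
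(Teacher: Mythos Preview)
Your proposal is correct and matches the paper's approach exactly: the paper's proof simply says ``Similarly to Lemma~\ref{lem:K-scaling}, it follows from the scaling property of $B$ and $\tilde{B}$,'' and you have spelled out precisely that argument, including the verification of self-similarity for $\tilde{B}$ which the paper leaves implicit.
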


\begin{proof}
Similarly to Lemma~\ref{lem:K-scaling}, it follows from the scaling property of $B$ and $\tilde{B}$.
\end{proof}

\begin{notation}\label{not:U_bar}\purba{maybe we want this as a noration instead of definition TM: corrected}
  We set
  \[ \bar{U}_{s, t} (\epsilon, w) \assign U_{s, t} (\epsilon, w)
     +\indic_{\{ w_s \in (0, \epsilon) \}} . \]
\end{notation}
\begin{lemma}[Sub/super-additivity of $U$]
  \label{lem:U-additivity}For $s < u < t$ we have
  \[ U_{s, t} (\epsilon, w) \geq U_{s, u} (\epsilon, w) + U_{u, t}
     (\epsilon, w), \quad \bar{U}_{s, t} (\epsilon, w) \leq \bar{U}_{s,
     u} (\epsilon, w) + \bar{U}_{u, t} (\epsilon, w) . \]
\end{lemma}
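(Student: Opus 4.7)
The plan is to verify each inequality directly from the definition \eqref{eq:def_of_U}. Both inequalities are purely combinatorial: no probabilistic structure or scaling is needed, only a careful bookkeeping of upcrossing pairs $(u', v)$ across the middle time $u$.

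For the superadditivity $U_{s,t}(\epsilon, w) \geq U_{s,u}(\epsilon, w) + U_{u,t}(\epsilon, w)$, I would observe that every upcrossing pair $(u', v)$ counted by $U_{s,u}(\epsilon, w)$ satisfies $s \leq u' < v \leq u \leq t$, so it is automatically counted by $U_{s,t}(\epsilon, w)$, and analogously for pairs counted by $U_{u,t}(\epsilon, w)$. The two families are disjoint, since no pair with $u' < v$ can lie in $[s,u]^2 \cap [u,t]^2 = \{(u,u)\}$. Summing then yields the first inequality.

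For the subadditivity of $\bar{U}$, the natural reduction is to the pointwise bound
\begin{align*}
U_{s,t}(\epsilon, w) \leq U_{s,u}(\epsilon, w) + U_{u,t}(\epsilon, w) + \indic_{\{ w_u \in (0, \epsilon) \}},
\end{align*}
from which the stated inequality follows by adding $\indic_{\{w_s \in (0, \epsilon)\}}$ to both sides and regrouping terms according to the definition of $\bar{U}$. To establish this bound, I would classify each upcrossing pair $(u', v)$ in $[s,t]^2$ according to three cases: it lies in $[s,u]^2$, it lies in $[u,t]^2$, or it \emph{straddles}, i.e.\ $u' < u < v$. A straddling pair forces $w_u \in (0, \epsilon)$ by the defining condition $w_r \in (0, \epsilon)$ for all $r \in (u', v)$. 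Moreover, the open intervals $(u', v)$ of two distinct upcrossings are necessarily disjoint (if not, one endpoint would lie in the interior of another upcrossing, violating either $w_{u'} = 0$ or $w_v = \epsilon$), so at most one upcrossing can straddle $u$.

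The main (and only minor) obstacle is to handle boundary configurations cleanly: when $u' = u$ or $v = u$, the corresponding value $w_u$ equals $0$ or $\epsilon$, so $w_u \notin (0, \epsilon)$, which means the pair is unambiguously assigned to exactly one of $U_{s,u}$ or $U_{u,t}$ and cannot be straddling. Once this is noted, the classification argument closes without any double counting.
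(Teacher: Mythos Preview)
Your proof is correct and follows essentially the same approach as the paper: both identify the trichotomy that an upcrossing of $[s,t]$ lies in $[s,u]^2$, lies in $[u,t]^2$, or straddles $u$ (forcing $w_u \in (0,\epsilon)$, with at most one such pair), which yields both inequalities at once. Your version simply spells out the disjointness of the upcrossing intervals and the boundary cases more explicitly than the paper's terse dichotomy.
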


\begin{proof}
  We have
  \begin{align*}
    U_{s, t} (\epsilon, w) = 
    U_{s, u} (\epsilon, w) + U_{u, t} (\epsilon, w) + 1
  \end{align*}
  if there exist $a$ and $b$ such that $s \le a < u < b \le t$, $w_a = 0, w_b =
  \epsilon$ and $w_r \in (0, \epsilon)$  for all  $r \in (a, b)$, 
  and otherwise  
  \begin{equation*}
    U_{s, t} (\epsilon, w) = 
    U_{s, u} (\epsilon, w) + U_{u, t} (\epsilon, w). \qedhere
  \end{equation*}
\end{proof}

\begin{lemma}[Moment bound on $U$]\label{lem:U_uniform_bound}
   There exists a positive constant $\beta=\beta(H)$ \rafal{maybe better to use $\beta$ instead of $\kappa$ here, since it is later used where $\kappa$ appears in quite different context} such that 
   for $a\in \mathbb{R}$, $\epsilon \in (0, \infty)$, $s < t$ and $p \in (0, \infty)$ we have
   \begin{align*}
     \norm{U_{s,t}(\epsilon, B-a)}_{L^p(\mathbb{P})} 
     \lesssim_{H, p, \epsilon} 1 + (t-s)^{\beta}.
   \end{align*}
   A similar estimate holds with $B$ replaced by $\tilde{B}$.
\end{lemma}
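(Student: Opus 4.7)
The strategy is to dominate upcrossings by the total number of level crossings and then re-run the deterministic Hölder argument already used in the proof of Lemma~\ref{lem:J-moment}.

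First I would observe that every pair $(u,v)$ contributing to $U_{s,t}(\epsilon, B-a)$ gives rise to a distinct pair of consecutive hitting times of different $\epsilon$-levels by the path $B_{s+\cdot}-a$, so that
\begin{equation*}
  U_{s,t}(\epsilon, B-a) \leq K_{s,t}(\epsilon, B-a).
\end{equation*}
It is therefore enough to control $K$ in $L^p(\mathbb{P})$. Applying the deterministic reasoning from the proof of Lemma~\ref{lem:J-moment} to the path $(B_{s+r}-a)_{r\in[0,t-s]}$ (the shift by $a$ and the change of starting time play no role), I obtain, for every $\alpha\in(0,H)$, the pathwise inequality
\begin{equation*}
  K_{s,t}(\epsilon, B-a) \leq (t-s)\,\epsilon^{-1/\alpha}\bigl(1+\llbracket B\rrbracket_{C^\alpha([s,t])}\bigr)^{1/\alpha}.
\end{equation*}

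Next I would invoke the stationarity of increments and the self-similarity of fractional Brownian motion, which give
\begin{equation*}
  \llbracket B\rrbracket_{C^\alpha([s,t])} \stackrel{\mathrm{d}}{=} (t-s)^{H-\alpha}\,\llbracket B\rrbracket_{C^\alpha([0,1])},
\end{equation*}
and the right-hand side has finite moments of all orders by the Kolmogorov continuity theorem (or Fernique's theorem). Since $1/\alpha\geq 1$, a routine application of Minkowski's inequality yields
\begin{equation*}
  \bigl\|(1+\llbracket B\rrbracket_{C^\alpha([s,t])})^{1/\alpha}\bigr\|_{L^p(\mathbb{P})}
  \lesssim_{p,\alpha} 1+(t-s)^{(H-\alpha)/\alpha},
\end{equation*}
whence, absorbing the factor $\epsilon^{-1/\alpha}$ into the implicit constant,
\begin{equation*}
  \|K_{s,t}(\epsilon,B-a)\|_{L^p(\mathbb{P})} \lesssim_{p,\alpha,\epsilon} (t-s)+(t-s)^{H/\alpha}.
\end{equation*}
Since $\alpha<H$ forces $H/\alpha>1$, both terms are dominated by $1+(t-s)^{H/\alpha}$; fixing any admissible choice such as $\alpha := H/2$ produces the desired bound with, for example, $\beta := H/\alpha = 2$.

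For the Riemann--Liouville process $\tilde B$, exactly the same scheme applies: the deterministic Hölder bound from Lemma~\ref{lem:J-moment} relies only on Hölder continuity of the path, and the moments of $\llbracket\tilde B\rrbracket_{C^\alpha([s,t])}$ are controlled via the covariance estimate $\mathbb{E}[|\tilde B_v-\tilde B_u|^2]\lesssim (v-u)^{2H}$ (uniformly in $0\leq u<v$) together with Fernique's theorem. The only mild subtlety is that $\tilde B$ lacks stationary increments, so the scaling-in-place used for $B$ must be replaced by a direct covariance computation on $[s,t]$; I do not expect this to constitute a real obstacle, and indeed the bulk of the work is already done in Lemma~\ref{lem:J-moment}. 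The main point requiring attention is tracking the $(t-s)$-dependence of the $L^p$-norm of the Hölder seminorm carefully through the scaling, as this is what generates the growth exponent $\beta$.
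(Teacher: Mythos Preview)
Your proposal is correct and follows essentially the same route as the paper: dominate $U$ by $K$ and invoke the H\"older bound \eqref{eq:J_bound_by_Holder}, with the extra scaling step you carry out being exactly what makes the $(t-s)^\beta$ growth explicit. For $\tilde B$, the paper shortcuts your direct covariance computation by noting from the Mandelbrot--Van Ness decomposition that $\|\tilde B_v-\tilde B_u\|_{L^p(\mathbb P)}\le\|B_v-B_u\|_{L^p(\mathbb P)}$, which immediately gives the required increment bound without having to redo any scaling.
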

\begin{proof}
  Regarding $B$, the claim follows from the obvious inequality $U_{s,t}(\epsilon, B - a) 
  \leq K_{s, t}(\epsilon, B - a)$ and the estimate \eqref{eq:J_bound_by_Holder}. \rafal{Regarding $\tilde{B}$, it seems for me too fast. Maybe some more detailed argument like scaling and Kolmogorov's lemma too?}
  Regarding $\tilde{B}$, by \eqref{eq:B_and_tilde_B} we observe that 
  \begin{align*}
    \norm{\tilde{B}_{s, t}}_{L^p(\mathbb{P})} \leq \norm{B_{s, t}}_{L^p(\mathbb{P})}.  
  \end{align*}
  This yields an estimate on Hölder norm of $\tilde{B}$ in $L^m(\mathbb{P})$, with which we can proceed as in $B$.
\end{proof}

\toyomu{$\zeta$ is defined here; in particular it is independent of $\epsilon$.}
We introduce some notation that will be used throughout Section~\ref{subsubsec:level_crossings}.
As in the proof of Theorem~\ref{thm:convergence-of-K}, we fix $\zeta \geq 1$, and at the very end we let $\zeta \to \infty$. 
We fix $v < u < s <t$ with 
$t-s \ll s-u \ll u-v$ and set 
\begin{equation} \label{eps_sect_3}
\epsilon \defby \left(\frac{t-s}{\zeta} \right)^H,
\end{equation} as shown in Figure~\ref{fig:parameters}.
\begin{figure}
  \centering
   \raisebox{-0.5\height}{\includegraphics[width=0.8\textwidth]{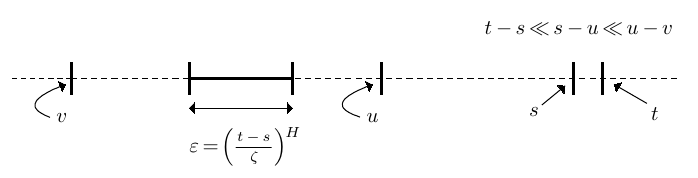}}
   \caption{Parameters for Lemma~\ref{lem:U_conditioning}}
   \label{fig:parameters}
\end{figure}
We set
\begin{equation}\label{eq:def_X_Y_Z}
     X_r \assign  \int_{- \infty}^v \kernel (r, \theta) \mathrm{d} W_{\theta} - a, \quad
   Y_r \assign \int_v^u \kernel (r, \theta) \mathrm{d} W_{\theta}, \quad Z_r \assign
   \int_u^r \kernel (r, \theta) \mathrm{d} W_{\theta}  
\end{equation}
for $r \in [s, t]$. 
Let $(\mathcal{F}_t)_{t \in \mathbb{R}}$ be the filtration generated by $W$ in the Mandelbrot--Van Ness representation \eqref{eq:mandelbrot}.
We have the identity
\begin{equation*}
     \mathbb{E} [U_{s, t} (\epsilon, B - a) | \mathcal{F}_v] =\mathbb{E}
   [U_{s, t} (\epsilon, x + Y + Z)] |_{x = X} . 
\end{equation*}
Finally, we write 
\begin{align}\label{eq:def_of_sigma_Y}
  \sigma_Y^2 &\defby \mathbb{E}[Y_s^2] = \frac{1}{2H} \{(s-v)^{2H} - (s-u)^{2H} \}, \\
  \label{eq:def_of_sigma_Y_Z}
  \sigma_{Y+Z}^2 &\defby \mathbb{E}[(Y_s+Z_s)^2] = \frac{1}{2H} (s-v)^{2H}.
\end{align}

In the spirit of the shifted stochastic sewing (Lemma~\ref{lem:shifted_ssl}), we will estimate
\begin{equation*}
     \mathbb{E} [U_{s, t} (\epsilon, B - a) | \mathcal{F}_v] . 
\end{equation*}
The most crucial ingredient to Theorem~\ref{thm:local-time-level-crossing} is the following. 
\toyomu{This is the most technical but most important lemma.}
\begin{lemma}[Asymptotic weak estimate on $U$]\label{lem:U_conditioning}
  Let $H \in (0, 1/2)$ and $p \in (1, \infty)$.
  We further let $0 \leq v < s < t \leq 1$ and $\zeta \in [1, \infty)$, 
  and set $\epsilon$ as in \eqref{eps_sect_3}.
  Let $(\mathcal{F}_t)_{t \in \mathbb{R}}$ be the filtration generated by $W$ in the Mandelbrot--Van Ness representation \eqref{eq:mandelbrot}.
  We define $X$ by \eqref{eq:def_X_Y_Z} and $\sigma_{Y+Z}$ by \eqref{eq:def_of_sigma_Y_Z}.
  For every  $\kappa \in (0, 1)$,
  if $\frac{t-s}{s-v}$ is sufficiently small, we have 
  %
  %
\rafal{Maybe instead of $\Big( \frac{t-s}{\zeta} \Big)^H$ it is better to write $\epsilon$ here? so we get:}
 \begin{equation}\label{eq:U_conditioning_asymptotic}
    \mathbb{E} [U_{s, t} (\epsilon, B - a) | \mathcal{F}_v] 
    =  \frac{\mathbb{E}[\bar{K}_{0, \zeta}(1, B)]}{2\sqrt{2 \pi} \sigma_{Y+Z}} 
    e^{-\frac{1}{2}(\frac{X_s}{\sigma_{Y+Z}})^2} \epsilon
    + R_{v,s,t},
  \end{equation}
where (we omit the $a$-dependence of $R_{v,s,t}$ from the notation and the following estimate holds uniformly in $a$):
\begin{align}\label{eq:R_estimate}
  \norm{R_{v, s, t}}_{L^p(\mathbb{P})}
  \lesssim_{p, \zeta, \kappa} \Big( \frac{t-s}{s-v} \Big)^{(2 - \kappa) H (1-H)} (t-s)^{- \kappa H}.
\end{align}
\end{lemma}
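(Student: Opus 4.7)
\medskip

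\noindent\textbf{Proof proposal.} The plan is to make the heuristics of Section~\ref{subsec:heuristic} rigorous, decomposing the argument into several auxiliary lemmas that each contribute a controlled error of the form in \eqref{eq:R_estimate}. Starting from the decomposition $B_r - a = X_r + Y_r + Z_r$ from \eqref{eq:def_X_Y_Z}, I would first establish a \emph{freezing lemma}: on $[s,t]$ the paths $X$ and $Y$ are smooth (they are integrals of $\mathcal{K}(\cdot,\theta)\mathrm{d} W_\theta$ over $\theta \leq u$, hence $C^\infty$ with derivatives controlled by negative powers of $s-u$), so replacing them by their values at time $s$ costs only a Hölder-in-$r$ error of order $(t-s)/(s-u)^{1-H}$ on the path level. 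A small shift of the argument of $U_{s,t}$ can change the count $U$ only when the path visits an $O(\text{shift})$-neighbourhood of $\{0,\epsilon\}$, and the expected number of such near-crossings is controlled via $\bar{K}$ and Lemma~\ref{lem:U_uniform_bound}; this replaces $\mathbb{E}[U_{s,t}(\epsilon,B-a)\mid\mathcal{F}_v]$ by $\mathbb{E}[U_{s,t}(\epsilon, X_s + Y_s + Z)]\vert_{X_s}$ modulo an acceptable remainder.

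Next, I would apply the Cameron--Martin/Girsanov identity to the variable $Y_s$, which is centered Gaussian with variance $\sigma_Y^2$ independent of $Z$: this yields
\[
\mathbb{E}[U_{s,t}(\epsilon, x + Y_s + Z)]
= e^{-x^2/(2\sigma_Y^2)}\,
\mathbb{E}\bigl[e^{xY_s/\sigma_Y^2}\,U_{s,t}(\epsilon, Y_s + Z)\bigr].
\]
Then I would show (this is the step corresponding to Lemma~\ref{lem:exp-U}) that the exponential factor inside is harmless: on the event $\{U_{s,t}(\epsilon, Y_s+Z)\geq 1\}$, the path must return near $0$, forcing $|Y_s|\lesssim |Z|_{\infty,[s,t]} + \epsilon$, which is of order $(t-s)^H$; since $\sigma_Y^2 \asymp (s-v)^{2H-1}(u-v)$ (or, more usefully for us, $\sigma_Y \asymp (s-v)^H$ when $u-v \asymp s-v$), the exponent $xY_s/\sigma_Y^2$ is tiny and the correction is $1 + O(\text{small})$. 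Expanding $e^a = 1 + O(|a|)$ and applying Cauchy--Schwarz together with Gaussian tail bounds on $x=X_s$ gives a remainder of the prescribed order.

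The third block is the replacement $\sigma_Y \to \sigma_{Y+Z}$ (this is the content of the forthcoming Lemma~\ref{lem:sigma_Y_vs_sigma_Y_Z}): a Taylor expansion of $\sigma_Y^2 = \sigma_{Y+Z}^2 - \tfrac{1}{2H}(s-u)^{2H}$ from \eqref{eq:def_of_sigma_Y} shows that $\sigma_Y = \sigma_{Y+Z}(1 + O(((s-u)/(s-v))^{2H}))$, and the Gaussian density $e^{-x^2/(2\sigma^2)}/\sqrt{2\pi}\,\sigma$ is Lipschitz in $\sigma$ with acceptable constant after multiplying by the $L^p$-moments of $X_s$. At this point one arrives at
\[
\mathbb{E}[U_{s,t}(\epsilon, B-a)\mid\mathcal{F}_v]
\approx \frac{1}{\sqrt{2\pi}\,\sigma_{Y+Z}}\,
e^{-X_s^2/(2\sigma_{Y+Z}^2)}\,\mathbb{E}[U_{s,t}(\epsilon, Y+Z)] \cdot \sqrt{2\pi}\,\sigma_{Y+Z},
\]
where $Y+Z$ is the Riemann--Liouville process $\tilde{B}^v$ shifted to start at time $v$.

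The remaining and most delicate step is to identify $\mathbb{E}[U_{s,t}(\epsilon,Y+Z)]$ with $\tfrac{1}{2}\mathfrak{c}_H \epsilon (t-s)^{1-H}$ up to the correction $\epsilon^{\kappa H}$ (this is the forthcoming Lemma~\ref{lem:expect_U_convergence}). Here I would argue that $Y+Z$ on $[s,t]$ is close in law, after centering and rescaling, to a genuine fractional Brownian motion, so that $\epsilon^{1/H-1} U_{s,t}(\epsilon,Y+Z)\to \tfrac{1}{2}\mathfrak{c}_H L_{t-s}(0)$; then by Fubini together with the local nondeterminism/$\sigma_{Y+Z}$-Gaussian density at $0$, and using that on $[s,t]$ the process $Y+Z$ is essentially stationary, one obtains the desired constant. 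The identification of the constant as $\mathfrak{c}_H/(2)$ rather than $\mathfrak{c}_H$ comes from the fact that each level crossing contributes one upcrossing and one downcrossing, while $\bar{K}$ counts both, and the occupation-density limit gives the factor $1/(\sqrt{2\pi}\sigma_{Y+Z})$. Combining the four blocks produces \eqref{eq:U_conditioning_asymptotic} with the explicit exponent $(2-\kappa)H(1-H)$ on $(t-s)/(s-v)$ and $-\kappa H$ on $(t-s)$; the power $(2-\kappa)H(1-H)$ reflects that the freezing error supplies a factor $((t-s)/(s-v))^{1-H}$, while the Girsanov-exponential correction supplies an additional factor of the same form, and the Taylor expansion of the Gaussian density is harmless.

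The hardest step I anticipate is the third block's control of $\mathbb{E}[e^{xY_s/\sigma_Y^2}U_{s,t}(\epsilon,Y_s+Z)] - \mathbb{E}[U_{s,t}(\epsilon,Y+Z)]$ uniformly in the shift $x=X_s$: one needs to combine a pathwise estimate ``$U$ positive forces $Y_s$ small'' with integrability of $e^{xY_s/\sigma_Y^2}$ in a way that still produces a polynomial (not exponential) dependence on $|x|$, so that the subsequent $L^p$-norm in $X_s$ behaves. This is where the assumption $H < 1/2$ is crucial, because it is the regime where $\sigma_Y$ is large enough compared to the typical fluctuations of $Z$ on $[s,t]$ to make the exponential correction controllable.
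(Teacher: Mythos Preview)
Your decomposition into four blocks (freeze $X,Y$; Gaussian change of variable on $Y_s$; replace $\sigma_Y$ by $\sigma_{Y+Z}$; identify the constant) is exactly the skeleton of the paper's proof, and the middle two blocks match Lemmas~\ref{lem:Y_change_of_variables}, \ref{lem:exp-U}, \ref{lem:sigma_Y_vs_sigma_Y_Z} quite closely. However, two of your blocks contain genuine gaps.

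\textbf{The freezing step.} Your proposed mechanism --- ``a small shift changes $U$ only when the path visits a neighbourhood of $\{0,\epsilon\}$, and the expected number of such near-crossings is controlled via $\bar{K}$'' --- is not how the paper proceeds and is unlikely to give the required bound. A deterministic shift can change the upcrossing count in a way that is not localized to near-crossings, and there is no pathwise monotonicity to exploit. The paper instead (Lemma~\ref{lem:fix-X-and-Y}) writes the time-dependent drift $w_r = -(X_r+Y_r-X_s-Y_s)$ as $I_{H-1/2}$ of a Cameron--Martin shift of the driving $W$, applies Girsanov, and bounds the $L^q$-norm of the Radon--Nikodym density minus $1$ by $((t-s)/(s-u))^{1-H}$ via an $L^2$-estimate on the shift. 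Note the exponent involves $s-u$, not $s-v$.

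\textbf{The constant identification.} Your plan to show that $Y+Z$ is close in law to fractional Brownian motion and then invoke $\epsilon^{1/H-1}U_{s,t}(\epsilon,Y+Z)\to\tfrac12\mathfrak{c}_H L_{t-s}(0)$ is circular: the convergence of upcrossings to local time (for either $B$ or $\tilde{B}$) is precisely what Lemma~\ref{lem:U_conditioning} is meant to feed into. The paper's Lemma~\ref{lem:expect_U_convergence} uses a completely different and self-contained trick: it \emph{integrates the already-proved decomposition} (Lemma~\ref{lem:U_conditioning_rough}) over the level $a\in\mathbb{R}$. On the left, $\int_{\mathbb{R}}\mathbb{E}[U_{s,t}(\epsilon,B-a)]\,\mathrm{d}a = \tfrac12\mathbb{E}[\bar{K}_{0,\zeta}(1,B)]\,\epsilon$ is an \emph{exact identity}; on the right, the Gaussian in $X_s$ integrates to $\sqrt{2\pi}\,\sigma_{Y+Z}$, and the remainders $R^1,R^2$ are integrable in $a$ thanks to the explicit factors $e^{-ca^2}$ and $e^{-X_s^2/(2p_1\sigma^2)}$ retained in \eqref{eq:R_1_precise}--\eqref{eq:R_2_precise}. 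This solves for $\mathbb{E}[U_{s-v,t-v}(\epsilon,\tilde{B})]$ without any appeal to local time.

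\textbf{The exponent.} Your account of how $(2-\kappa)H(1-H)$ arises is incorrect. All the intermediate errors are in terms of the auxiliary point $u\in(v,s)$, giving two competing bounds of order $((t-s)/(s-u))^{1-H}$ and $((t-s)/(s-u))^{-H}((t-s)/(s-v))^{(2-\kappa)H}$. Only after \emph{optimizing over $u$} by setting $(t-s)/(s-u)=((t-s)/(s-v))^{(2-\kappa)H}$ does one obtain the stated exponent; this step is absent from your outline.
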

\begin{remark}\label{rem:T_to_one}
  Due to scaling, it suffices to prove Theorem~\ref{thm:local-time-level-crossing} with $T = 1$. Therefore, in Lemma~\ref{lem:U_conditioning} we assume $t \leq 1$. 
  We then do not have to keep track of dependency of constants on the final time $T$.
\end{remark}
%
%
%

The proof of Lemma~\ref{lem:U_conditioning}, to which the rest of Section~\ref{subsubsec:level_crossings} is devoted, will be built on several technical lemmas.
For the sake of the next lemma, we recall the Riemann--Liouville operator 
(e.g., \cite{picard11})
\begin{equation*}
     I_{\alpha} f (r) \assign \frac{1}{\Gamma (\alpha)} \int_s^r (r -
   \theta)^{\alpha - 1} f (\theta) \mathrm{d} \theta, \quad r > s, 
\end{equation*}
where $\Gamma$ is the Gamma function \eqref{eq:gamma_fcn} and $\alpha > 0$.
If $f$ is Lipschitz with $f_s = 0$ and $\alpha \in (- 1, 0]$, we set
\begin{equation*}
     I_{\alpha} f (r) \assign \frac{1}{\Gamma (1 + \alpha)} \int_s^r (r -
   \theta)^{\alpha} \dot{f} (\theta) \mathrm{d} \theta . 
\end{equation*}
The family $(I_{\alpha})_{\alpha > -1}$ has the semigroup property $I_{\alpha} I_{\beta} = I_{\alpha + \beta}$.

\begin{lemma}[Fix time of $X$ and $Y$]
  \label{lem:fix-X-and-Y}
  For $p\in (1, \infty)$, $\zeta  \in [1, \infty)$, $\epsilon$ given by \eqref{eps_sect_3} and $\kappa \in (1-1/p, 1)$, there exists  
  a positive constant $c$, depending on $H$ only, \purba{what is the conditions on $c$?} \toyomu{$c$ is $c_2$ in the proof.} such that if $(t-s)(s-u)^{-1}$ is sufficiently small, 
  \toyomu{how small depends on $p$ and $\kappa$}
  we have
  \begin{multline*}
   \| \mathbb{E} [U_{s, t} (\epsilon, B - a) | \mathcal{F}_u] -\mathbb{E}
     [U_{s, t} (\epsilon, X_s + Y_s + Z) | \mathcal{F}_u] \|_{L^p
     (\mathbb{P})} \\
     \lesssim_{p, \kappa, \zeta} \norm{U_{s,t}(\epsilon, B - a)}_{L^1(\mathbb{P})}^{1 - \kappa} e^{- c a^2}  \Big(\frac{t-s}{s-u} \Big)^{ 1 - H} . 
  \end{multline*} 
\purba{$\zeta$ is not in the statement. I think $\zeta= \epsilon^{-1/H}$}
\end{lemma}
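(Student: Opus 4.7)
My plan is to remove the shift $h_r \defby (X_r - X_s) + (Y_r - Y_s)$ by a conditional application of Girsanov's theorem in the spirit of Lemma~\ref{lem:K-rho}, and then to bound the resulting change-of-measure factor using the smoothness of $X + Y$ on $[s, t]$. Conditional on $\mathcal{F}_u$, both $\xi \defby X_s + Y_s$ and $h$ are deterministic, while $Z$ is $\mathcal{F}_u$-independent and distributed as a Riemann--Liouville process on $[u, t]$. Since $B_r - a = \xi + h_r + Z_r$ for $r \in [s, t]$, the task reduces to bounding $\mathbb{E}[U_{s,t}(\epsilon, \xi + Z + h) - U_{s,t}(\epsilon, \xi + Z) \vert \mathcal{F}_u]$ in $L^p(\mathbb{P})$. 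I will solve $h_r = \int_u^r \kernel(r, \theta) \dot{g}_\theta \mathrm{d}\theta$ for $\dot g$ via the Riemann--Liouville semigroup, obtaining $\dot g$ proportional to $I_{1/2 - H}[\dot h]$; this $g$ is $\mathcal{F}_u$-measurable. Conditional Girsanov then yields
\begin{equation*}
\mathbb{E}[U_{s,t}(\epsilon, \xi + Z + h) - U_{s,t}(\epsilon, \xi + Z) \vert \mathcal{F}_u] = \mathbb{E}[U_{s,t}(\epsilon, \xi + Z)(\mathcal{E}_g - 1) \vert \mathcal{F}_u]
\end{equation*}
with $\mathcal{E}_g = \exp(\int_u^t \dot g_\theta \mathrm{d} W_\theta - \tfrac{1}{2} \int_u^t \dot g_\theta^2 \mathrm{d}\theta)$.

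The quantitative input is a bound of the form $\int_u^t \dot g_\theta^2 \mathrm{d}\theta \lesssim N_u ((t-s)/(s-u))^{2(1-H)}$, where $N_u$ is an $\mathcal{F}_u$-measurable chi-squared-type variable with all moments finite. This follows from the elementary estimate $|\kernel(r, \theta) - \kernel(s, \theta)| \lesssim (r-s)(s - \theta)^{H - 3/2}$ for $\theta \leq u \leq s \leq r$, which controls $\dot h$, together with $L^2$-boundedness of the fractional operator sending $\dot h$ to $\dot g$. Standard exponential-martingale estimates then give $\|\mathcal{E}_g - 1\|_{L^q(\mathbb{P})} \lesssim_q ((t-s)/(s-u))^{1-H}$ for every $q \in [1, \infty)$, provided $(t-s)/(s-u)$ is sufficiently small. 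Hölder conditionally on $\mathcal{F}_u$ and then in $\mathbb{P}$, combined with conditional Jensen and a harmless reverse-Girsanov comparison $\|U_{s,t}(\epsilon, \xi + Z)\|_{L^r} \lesssim \|U_{s,t}(\epsilon, B-a)\|_{L^r}$, reduces the target to a product $\|U_{s,t}(\epsilon, B-a)\|_{L^r(\mathbb{P})} \cdot \|\mathcal{E}_g - 1\|_{L^{r'}(\mathbb{P})}$ for suitable conjugate $(r, r')$ with $r$ slightly larger than $1$.

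To produce the advertised $\|U\|_{L^1}^{1-\kappa} e^{-c a^2}$, I will interpolate $\|U(B-a)\|_{L^r} \leq \|U(B-a)\|_{L^1}^{1-\kappa} \|U(B-a)\|_{L^M}^{\kappa}$ via log-convexity of $L^p$ norms, with $M$ large so that $1/r = (1-\kappa) + \kappa/M$; the hypothesis $\kappa > 1 - 1/p$ is precisely what guarantees that such $r$ lies strictly below $p/(p-1)$, leaving room for the Girsanov factor. The Gaussian factor $e^{-c a^2}$ arises from the high-moment norm via the observation that $U_{s, t}(\epsilon, B - a)$ is nonzero only on the event $\{\inf_{r \in [s, t]} |B_r - a| \leq \epsilon\}$, which for $t \leq 1$ has probability $\lesssim e^{- a^2/(2\sigma^2)}$ by standard Gaussian concentration; combined with Lemma~\ref{lem:U_uniform_bound}, this yields $\|U_{s,t}(\epsilon, B-a)\|_{L^M} \lesssim e^{- c' a^2/M}$, which upon raising to the power $\kappa$ produces the desired $e^{- c a^2}$ with $c = c' \kappa / M$.

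The hardest step will be the joint calibration of the Hölder and interpolation exponents, which must simultaneously produce the weight $1-\kappa$ on $\|U\|_{L^1}$ exactly, preserve the Girsanov bound $((t-s)/(s-u))^{1-H}$, and preserve the Gaussian decay $e^{-c a^2}$ with a strictly positive constant $c$; these three constraints together force the lower bound $\kappa > 1 - 1/p$. A secondary technical obstacle is that the Riemann--Liouville inversion producing $\dot g$ from $\dot h$ is nonlocal over $[u, t]$, so the sharp exponent $2(1-H)$ in the $L^2$ bound on $\dot g$ requires a careful variance estimate tailored to the kernel difference $\kernel(r, \theta) - \kernel(s, \theta)$, rather than a crude sup-norm bound.
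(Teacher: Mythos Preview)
Your proposal is correct and follows essentially the same approach as the paper: Girsanov via Riemann--Liouville inversion to remove the drift $h$, Hölder to separate the exponential-martingale factor (bounded by $((t-s)/(s-u))^{1-H}$, which the paper cites from Picard), log-convexity interpolation to extract the $\|U\|_{L^1}^{1-\kappa}$ weight, and the Gaussian tail of $\sup_{[0,1]}|B|$ for the $e^{-ca^2}$ factor. The one minor difference is the direction of the Girsanov shift: the paper writes $x_s+y_s+Z = (x+y+Z)+w$ with $w=-h$ and applies Girsanov so that $U_{s,t}(\epsilon, x+y+Z) = U_{s,t}(\epsilon, B-a)$ appears \emph{directly} inside the Hölder estimate, which avoids your ``reverse-Girsanov comparison'' step entirely; your detour is harmless (it is exactly the content of Remark~\ref{rem:bound_U_Y_s_by_U_Y}) but unnecessary.
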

\begin{proof}
  The proof is similar to \cite[Lemma~A.1]{picard08}.
  We have
  \begin{equation*}
     \mathbb{E} [U_{s, t} (\epsilon, X_s + Y_s + Z) | \mathcal{F}_u]
     =\mathbb{E} [U_{s, t} (\epsilon, x_s + y_s + Z)] |_{x = X, y = Y} 
  \end{equation*}
  and
  \begin{equation*}
     \mathbb{E} [U_{s, t} (\epsilon, x_s + y_s + Z)] =\mathbb{E} [U_{s, t}
     (\epsilon, x + y + w + Z)] 
  \end{equation*}
  where
  \begin{equation*}
     w_r \assign - (x_r + y_r - x_s - y_s), \quad r \in [s, t] . 
  \end{equation*}
  Since $X$ and $Y$ are smooth on $[s, t]$, we may suppose that the realizations $x$ and $y$ are smooth as well.
  As $w_s = 0$,
  \begin{equation*}
     w = I_1 \dot{w} = I_{H - \frac{1}{2}} I_{\frac{3}{2} - H} \dot{w} 
  \end{equation*}
  and
  \begin{equation*}
     w_r + \int_s^r \kernel (r, \theta) \mathrm{d} W_{\theta} = \int_s^r \kernel (r, \theta)
     \mathrm{d} \Big( W_{\theta} + c_1  \big( I_{\frac{3}{2} - H} \dot{w}
     \big)_{\theta} \Big) 
  \end{equation*}
  for some constant $c_1$ depending only on $H$.
  
  By Girsanov's theorem,
  \begin{multline*}
   \mathbb{E} [U_{s, t} (\epsilon, x + y + w + Z)] 
   = \mathbb{E} \Big[
     U_{s, t} (\epsilon, x + y + Z) \\
     \times \exp \Big( c_1 \int_s^t
     \frac{\mathrm{d}}{\mathrm{d} \theta} I_{\frac{3}{2} - H} \dot{w} \,\mathrm{d}
     W_{\theta} - \frac{c_1^2}{2} \int_s^t \Big| \frac{\mathrm{d}}{\mathrm{d} \theta}
     I_{\frac{3}{2} - H} \dot{w} \Big|^2 \mathrm{d} \theta \Big) \Big].
  \end{multline*}
  Therefore, if $p^{-1} + q^{-1} = 1$, by Hölder's inequality,
  \begin{multline*}
       | \mathbb{E} [U_{s, t} (\epsilon, x + y + Z)] -\mathbb{E} [U_{s, t}
       (\epsilon, x_s + y_s + Z)] |\\
       \lesssim \mathbb{E} \Big[ \Big| \exp \Big( c_1 \int_s^t
       \frac{\mathrm{d}}{\mathrm{d} \theta} I_{\frac{3}{2} - H} \dot{w} \mathrm{d}
       W_{\theta} - \frac{c^2_1}{2} \int_s^t \Big| \frac{\mathrm{d}}{\mathrm{d}
       \theta} I_{\frac{3}{2} - H} \dot{w} \Big|^2 \mathrm{d} \theta \Big) - 1
       \Big|^{q} \Big]^{\frac{1}{q}} \\
        \times \mathbb{E} [U_{s, t} (\epsilon, x + y +
       Z)^{p}]^{\frac{1}{p}}. 
  \end{multline*}
  Since the random variable
  \begin{equation*}
     \int_s^t \frac{\mathrm{d}}{\mathrm{d} \theta} I_{\frac{3}{2} - H} \dot{w} (\theta) \mathrm{d}
     W_{\theta} 
  \end{equation*}
  is Gaussian, by Lemma~\ref{lem:log_normal_moment}, 
  \begin{multline*}
   \mathbb{E} \Big[ \Big| \exp \Big( c_1 \int_s^t \frac{\mathrm{d}}{\mathrm{d}
     \theta} I_{\frac{3}{2} - H} \dot{w} \mathrm{d} W_{\theta} - \frac{c^2_1}{2}
     \int_s^t \Big| \frac{\mathrm{d}}{\mathrm{d} \theta} I_{\frac{3}{2} - H} \dot{w}
     \Big|^2 \mathrm{d} \theta \Big) - 1 \Big|^{q} \Big]^{\frac{1}{q}} \\
     \lesssim_{q} \Big(  \int_s^t \Big| \frac{\mathrm{d}}{\mathrm{d} \theta} I_{\frac{3}{2} - H} \dot{w}
     \Big|^2 \mathrm{d} \theta \Big)^{\frac{1}{2}} 
     \exp \Big( C_{q} \int_s^t \Big| \frac{\mathrm{d}}{\mathrm{d} \theta} I_{\frac{3}{2} - H} \dot{w}
     \Big|^2 \mathrm{d} \theta \Big).  
  \end{multline*}
  Hence, by setting 
  \begin{align*}
    S_{s, t} \defby \int_s^t \Big| \frac{\mathrm{d}}{\mathrm{d} \theta} I_{\frac{3}{2} - H} (\dot{X}+ \dot{Y})
     \Big|^2 \mathrm{d} \theta,
  \end{align*}
  we have \rafal{What is $\mathbb{E}_u ...$ here? I guess it is just $\mathbb{E} (... | {\cal F}_u)$ }
  %
  \begin{align*}
    \MoveEqLeft[10]
       \| \mathbb{E} [U_{s, t} (\epsilon, B - a) | \mathcal{F}_u]
       -\mathbb{E} [U_{s, t} (\epsilon, X_s + Y_s + Z) | \mathcal{F}_u]
       \|_{L^p (\mathbb{P})}\\
       &\lesssim \Big[ \mathbb{E} \left( \mathbb{E} [U_{s, t} (\epsilon, B
       - a)^{p}\vert \mathcal{F}_u]  S_{s,t}^{\frac{p}{2}} e^{p C_q S_{s,t}} \right) \Big]^{\frac{1}{p}}\\
       &\leq \| U_{s, t} (\epsilon, B - a) \|_{L^{p_1} (\mathbb{P})}
       \Big\|    S_{s,t}^{\frac{1}{2}} e^{ C_q S_{s,t}}    \Big\|_{L^{q_1} (\mathbb{P})} ,
  \end{align*}
  where $p_1^{-1} + q_1^{-1} = p^{-1}$.
  Choose $p_2$ so that $p_1^{-1} = (1-\kappa) + \kappa p_2^{-1}$ (since $\kappa > 1- p^{-1}$, this is possible by choosing $p_1$ close to $p$). By the log-convexity of $L^p$ norms, 
  \begin{align*}
    \| U_{s, t} (\epsilon, B - a) \|_{L^{p_1} (\mathbb{P})}
    \leq  \| U_{s, t} (\epsilon, B - a) \|_{L^{1} (\mathbb{P})}^{1-\kappa}
    \| U_{s, t} (\epsilon, B - a) \|_{L^{p_2} (\mathbb{P})}^{\kappa}.
  \end{align*}
  We also have (by the Cauchy--Schwarz inequality) 
  \begin{align*}
    \| U_{s, t} (\epsilon, B - a) \|_{L^{p_2} (\mathbb{P})}
    &\leq \mathbb{P}(\norm{B}_{L^{\infty}([0, 1])} \geq a)^{\frac{1}{2}} \norm{U_{s, t}(\epsilon, B-a)}_{L^{2p_2}(\mathbb{P})} \\
    &\lesssim e^{- c_2 a^2} \norm{U_{s, t}(\epsilon, B-a)}_{L^{2p_2}(\mathbb{P})}.
  \end{align*}
  The scaling property (Lemma~\ref{lem:U-scaling}) gives
  \begin{equation*}
     \| U_{s, t} (\epsilon, B - a) \|_{L^{2 p_2} (\mathbb{P})} = \| U_{s/(t-s), t/(t-s)} (\zeta^{-H}, B - (t-s)^{-H} a) \|_{L^{2 p_2} (\mathbb{P})} . 
  \end{equation*}
  By Lemma~\ref{lem:U_uniform_bound}, 
  \begin{align*}
    \| U_{s/(t-s), t/(t-s)} (\zeta^{-H}, B - (t-s)^{-H} a) \|_{L^{2 p_2} (\mathbb{P})} 
    \lesssim_{p_2, \zeta} 1.
  \end{align*}\purba{$\zeta$ depends on $H$ and $\epsilon$} \rafal{opposite, $\epsilon$ depends on $\zeta$, $t-s$ and $H$}
  It remains to see 
  \begin{align*}
    \Big\|    S_{s,t}^{\frac{1}{2}} e^{ C_q S_{s,t}}    \Big\|_{L^{q_1} (\mathbb{P})} 
    \lesssim \Big( \frac{t-s}{s-u} \Big)^{1-H}, 
    \quad \text{if } \frac{t-s}{s-u} \text{ is sufficiently small.}
  \end{align*}
  This was essentially proven in \cite[Lemma~A.1]{picard08} (our $S_{s,t}$ 
  corresponds to $L$ therein). 
  \toyomu[inline]{
    Easy way to see this: we have 
    \begin{align*}
      \norm{I_{1/2 - H}(\dot{X} + \dot{Y})}_{L^2([0, T])}
      = \norm{I_{-1/2 - H}(X + Y)}_{L^2([0, T])} 
      \sim \norm{X + Y}_{W^{H+1/2, 2}([0, T])}.
    \end{align*}
    Therefore, 
    \begin{align*}
      \norm{S_{s, t}}_{L^1(\mathbb{P})}^2 \lesssim \int_s^t \int_s^t \frac{\norm{(X+Y)_{r_1, r_2}}_{L^2(\mathbb{P})}^2}{\abs{r_1 - r_2}^{2(H+1/2) + 1}} \mathrm{d} r_1 \mathrm{d} r_2.
    \end{align*}
    Since 
    \begin{align*}
      \norm{(X+Y)_{r_1, r_2}}_{L^2(\mathbb{P})}^2 \lesssim (s-u)^{2H - 2} \abs{r_1 - r_2}^2,
    \end{align*}
    we see 
    \begin{align*}
      \norm{S_{s, t}}_{L^1(\mathbb{P})}^2 \lesssim \int_s^t \int_s^t \frac{(s-u)^{2H-2} (r_2 - r_1)^2}{\abs{r_2 - r_1}^{2H + 2}} \mathrm{d} r_1 \mathrm{d} r_2 
      \lesssim (s-u)^{2H - 2} (t-s)^{2 - 2H}.
    \end{align*}
    Now apply Fernique's theorem.
  }
  \rafal[inline]{Maybe it is worth to write this remark in the text, with the explanation what does the notation $W^{H+1/2, 2}([0, T])$ and $\norm{(X+Y)_{r_1, r_2}}$ mean ...}
\end{proof}
\begin{remark}\label{rem:bound_U_Y_s_by_U_Y}
  We note that a similar reasoning shows that for $p < p_1 < \infty$
  if $\frac{t-s}{s-u}$ is sufficiently small, we have 
  \begin{align*}
    \norm{U_{s, t}(\epsilon, Y_s + Z)}_{L^p(\mathbb{P})}
    \lesssim_{\zeta, p, p_1} \norm{U_{s, t}(\epsilon, Y + Z)}_{L^{p_1}(\mathbb{P})}.
  \end{align*}\purba{$\lesssim_{p, \epsilon,H}$. Do the bound also depend on choice of $p_1$?}
  \toyomu[inline]{Yes, it depends on $p_1$. I think it is better not to write dependency on $H$, as it is not important. I will comment in Notation section.}
\end{remark}
\begin{lemma}[Gaussian change of variable]\label{lem:Y_change_of_variables}
  Recall $\sigma_Y$ from \eqref{eq:def_of_sigma_Y}. We have the estimate
  \begin{equation*}
     \mathbb{E} [U_{s, t} (\epsilon, X_s + Y_s + Z) | \mathcal{F}_v] = e^{-
     \frac{1}{2} ( \frac{X_s}{\sigma_Y} )^2} \mathbb{E} \Big[
     \Big. e^{\frac{X_s Y_s}{\sigma_Y^2}} U_{s, t} (\epsilon, Y_s + Z)
     \Big| \mathcal{F}_v \Big] . 
  \end{equation*}
\end{lemma}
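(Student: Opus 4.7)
This is a scalar Cameron--Martin identity rather than a Girsanov-on-paths argument: although the full Girsanov machinery was needed in Lemma~\ref{lem:fix-X-and-Y} to shift an entire trajectory, here the shift enters only through the single value $Y_s$. The plan is to freeze $X_s$ using its $\mathcal{F}_v$-measurability, and then perform a one-dimensional Gaussian change of variable on $Y_s$.

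First, I would record the measurability structure. By \eqref{eq:def_X_Y_Z} and the Mandelbrot--Van Ness representation, $X_s$ is $\mathcal{F}_v$-measurable, while $Y_s$ is a Wiener integral of $W$ on $(v, u)$ and the process $Z = (Z_r)_{r \in [s, t]}$ is a functional of $W$ on $(u, \infty)$. In particular, the pair $(Y_s, Z)$ is independent of $\mathcal{F}_v$, and, because the integration domains $(v, u)$ and $(u, \infty)$ are disjoint, $Y_s$ is also independent of $Z$. Consequently, setting
\begin{align*}
g(x) \defby \mathbb{E}\big[U_{s,t}(\epsilon, x + Y_s + Z)\big],
\end{align*}
one has $\mathbb{E}[U_{s,t}(\epsilon, X_s + Y_s + Z) \vert \mathcal{F}_v] = g(X_s)$.

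Second, I would compute $g(x)$ by conditioning on $Z$ and integrating against the $N(0, \sigma_Y^2)$ density of $Y_s$. Performing the translation $y \mapsto y - x$ gives
\begin{align*}
g(x) &= \mathbb{E}\!\left[ \int_{\mathbb{R}} U_{s,t}(\epsilon, x + y + Z)\, \frac{e^{-y^2/(2\sigma_Y^2)}}{\sqrt{2\pi}\,\sigma_Y}\, \mathrm{d} y \right] \\
 &= \mathbb{E}\!\left[ \int_{\mathbb{R}} U_{s,t}(\epsilon, y + Z)\, \frac{e^{-(y-x)^2/(2\sigma_Y^2)}}{\sqrt{2\pi}\,\sigma_Y}\, \mathrm{d} y \right].
\end{align*}
Expanding $e^{-(y-x)^2/(2\sigma_Y^2)} = e^{-x^2/(2\sigma_Y^2)} \, e^{xy/\sigma_Y^2} \, e^{-y^2/(2\sigma_Y^2)}$ and using the independence of $Y_s$ and $Z$ in reverse to recognize the resulting integral as an expectation, one obtains
\begin{align*}
g(x) = e^{-x^2/(2\sigma_Y^2)}\, \mathbb{E}\!\left[ e^{x Y_s/\sigma_Y^2}\, U_{s,t}(\epsilon, Y_s + Z) \right].
\end{align*}

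Finally, I would substitute $x = X_s$. Since $X_s$ is $\mathcal{F}_v$-measurable, the prefactor $e^{-X_s^2/(2\sigma_Y^2)}$ factors outside the conditional expectation, and the replacement of the unconditional expectation on the right-hand side by a conditional one is justified again by the independence of $(Y_s, Z)$ from $\mathcal{F}_v$. This yields exactly the stated identity. I do not anticipate any genuine obstacle: $U_{s,t}(\epsilon, \cdot)$ is a nonnegative measurable functional of continuous paths, and Lemma~\ref{lem:U_uniform_bound} guarantees the integrability needed for Fubini. The main conceptual point is simply to identify that one is performing a finite-dimensional Gaussian shift rather than a path-space change of measure.
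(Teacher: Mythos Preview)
Your proposal is correct and follows essentially the same approach as the paper's proof: both freeze $X_s$ by its $\mathcal{F}_v$-measurability, reduce to a deterministic function of $x$ via the independence of $(Y_s,Z)$ from $\mathcal{F}_v$, and then perform a one-dimensional Gaussian change of variable on $Y_s$ (a translation in the integration variable followed by completing the square). The only cosmetic difference is the order of integration: the paper first integrates out $Z$ to define $F(\eta)=\mathbb{E}[U_{s,t}(\epsilon,\eta+Z)]$ and then averages over $Y_s$, whereas you condition on $Z$ and integrate over $Y_s$ first; by independence and Fubini these are equivalent.
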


\begin{proof}
  We set $F (\eta) \assign \mathbb{E} [U_{s, t} (\epsilon, \eta + Z)]$ for $\eta \in \mathbb{R}$.
  Since $X, Y$ and $Z$ are independent and $X$ is $\mathcal F_v$-measurable, 
  %
  %
  \rafal[inline]{Maybe it is easier to omit the middle term and just write}
  \begin{equation*}
     \mathbb{E} [U_{s, t} (\epsilon, X_s + Y_s + Z) | \mathcal{F}_v]
      = \mathbb{E}[F(x_s + Y_s)] \vert_{x = X}. 
  \end{equation*}
Since $Y_s$ is Gaussian with the variance $\sigma_Y^2$ and the mean $0$, we observe 
  \begin{align*}
    \mathbb{E} [F (x_s + Y_s)] & =  \frac{1}{\sqrt{2 \pi}} \int_{\mathbb{R}}
    F (x_s + \sigma_Y \eta) e^{- \frac{\eta^2}{2}} \mathrm{d} \eta\\
    & =  \frac{1}{\sqrt{2 \pi}} \int_{\mathbb{R}} F (\sigma_Y \eta) e^{-
    \frac{1}{2} (\eta - \sigma_Y^{- 1} x_s)^2} \mathrm{d} \eta\\
    & =  e^{- \frac{1}{2} ( \frac{x_s}{\sigma_Y} )^2}
    \mathbb{E} \Big[ e^{\frac{Y_s x_s}{\sigma_Y^2}} F (Y_s) \Big] .
  \end{align*} \purba{I am strugeling to understand the fist equation with $\mathbb{E} [F (x_s + Y_s)]=$}
  \toyomu[inline]{$Y_s$ is Gaussian with variance $\sigma_Y^2$.}
  The claim thus follows.
\end{proof}

\begin{lemma}[$Y_s$ must be near $0$]
  \label{lem:exp-U}For every $p_1 \in (1, \infty)$, if $\frac{t-s}{s-u}$ is 
  sufficiently small, then
  \begin{multline*}
   \Big| \mathbb{E} \Big[ e^{\frac{x_s Y_s}{\sigma_Y^2}} U_{s, t}
     (\epsilon, Y_s + Z) \Big] -\mathbb{E} [U_{s, t} (\epsilon, Y_s +
     Z)] \Big| \\
     \lesssim_{H, p_1} \frac{| x_s | (t - u)^H}{\sigma_Y^2}
     e^{c ( \frac{| x_s | (t - u)^H}{\sigma_Y^2} )^2} \| U_{s,
     t} (\epsilon, Y + Z) \|_{L^{p_1} (\mathbb{P})} 
  \end{multline*}
  with $c$ depending only on $H$ and $p_1$.
\end{lemma}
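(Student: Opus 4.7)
The plan is to control $\mathbb{E}[(e^G - 1) U_{s,t}(\epsilon, Y_s + Z)]$ for $G \defby x_s Y_s / \sigma_Y^2$ by combining a geometric restriction on $Y_s$ with Hölder's inequality. The key observation is that whenever $U_{s,t}(\epsilon, Y_s + Z)$ is strictly positive, continuity of the path and the identity $Z_s = 0$ force the existence of some $a \in [s,t]$ with $Y_s = -Z_a$. Consequently, on the event $\{U_{s,t}(\epsilon, Y_s + Z) > 0\}$,
\[
|Y_s| \leq M \defby \sup_{r \in [s,t]} |Z_r|.
\]
Combined with the elementary inequality $|e^\eta - 1| \leq |\eta| e^{|\eta|}$, this yields the pointwise bound
\[
|e^G - 1|\, U_{s,t}(\epsilon, Y_s + Z) \leq \frac{|x_s| M}{\sigma_Y^2} \exp\!\Big( \frac{|x_s| M}{\sigma_Y^2} \Big)\, U_{s,t}(\epsilon, Y_s + Z).
\]

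Next, I would apply Hölder's inequality with some $p_0 \in (1, p_1)$ and its conjugate exponent $q$, splitting off the exponential prefactor from the $U$-factor. Since $Z_r = \int_u^r \kernel(r,\theta)\,\mathrm{d} W_\theta$ is a centered Gaussian process with $\mathbb{E}[Z_r^2] = (r-u)^{2H}/(2H)$, the supremum $M$ is Gaussian-concentrated at the natural scale $(t-u)^H$. Using Fernique's theorem and a further splitting $\|\cdot\|_{L^q} \leq \|\cdot\|_{L^{2q}} \|\cdot\|_{L^{2q}}$, I obtain
\[
\Big\| \frac{|x_s| M}{\sigma_Y^2}\, e^{|x_s| M / \sigma_Y^2} \Big\|_{L^q(\mathbb{P})} \lesssim_{H,q} \frac{|x_s|(t-u)^H}{\sigma_Y^2} \exp\!\Big( c\Big(\frac{|x_s|(t-u)^H}{\sigma_Y^2}\Big)^{\!2}\Big),
\]
which is exactly the prefactor in the claim. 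For the remaining factor, $p_0 < p_1$ and Remark~\ref{rem:bound_U_Y_s_by_U_Y} (applicable since $(t-s)/(s-u)$ is assumed small) give
\[
\|U_{s,t}(\epsilon, Y_s + Z)\|_{L^{p_0}(\mathbb{P})} \lesssim_{H, p_0, p_1} \|U_{s,t}(\epsilon, Y + Z)\|_{L^{p_1}(\mathbb{P})},
\]
finishing the proof.

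The main subtle point is not the routine Hölder/Fernique computation but the geometric bound $|Y_s| \leq M$ on $\{U > 0\}$: it is precisely this constraint that replaces the naive scale $\sigma_G = |x_s|/\sigma_Y$ coming from the standard deviation of $G$ by the much smaller $|x_s|(t-u)^H/\sigma_Y^2$. In the relevant regime $t-u \ll s-v$ one has $(t-u)^H/\sigma_Y \ll 1$, so the gain is essential: without it, the estimate would not be strong enough for the subsequent integration over $x = X_s$ in Lemma~\ref{lem:U_conditioning}.
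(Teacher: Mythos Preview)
Your argument mirrors the paper's proof: the key geometric observation that $U_{s,t}(\epsilon, Y_s + Z) > 0$ forces $|Y_s| \leq \sup_{r\in[s,t]}|Z_r|$, followed by $|e^\eta - 1| \leq |\eta|e^{|\eta|}$, H\"older (the paper uses a three-way split in place of your two-way H\"older plus Cauchy--Schwarz, which is equivalent), Fernique for the exponential moment of $\sup|Z|$, and Remark~\ref{rem:bound_U_Y_s_by_U_Y} to pass from $Y_s + Z$ to $Y + Z$. One inaccuracy: $Z_s = \int_u^s (s-\theta)^{H-1/2}\,\mathrm{d} W_\theta$ is \emph{not} zero (you may be thinking of $Z_u = 0$), but this is immaterial---the existence of $a\in[s,t]$ with $Y_s + Z_a = 0$ follows directly from the definition of an upcrossing, so no intermediate-value argument is needed.
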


\begin{proof}
  For $U_{s, t} (\epsilon, Y_s + Z)$ to be non-zero, we must have $\inf_{r
  \in [s, t]} | Y_s + Z_r | = 0$. Therefore,
  \begin{multline*}
   \mathbb{E} \Big[ e^{\frac{x_s Y_s}{\sigma_Y^2}} U_{s, t}
     (\epsilon, Y_s + Z) \Big] -\mathbb{E} [U_{s, t} (\epsilon, Y_s +
     Z)]  \\
     =\mathbb{E} \Big[ \Big( e^{\frac{x_s Y_s}{\sigma_Y^2}} - 1
     \Big) U_{s, t} (\epsilon, Y_s + Z) \indic_{\{ \| Z
     \|_{L^{\infty} ([s, t])} \geq | Y_s | \}} \Big] . 
  \end{multline*}
  Using the inequality
  \begin{equation*}
     | e^{\lambda} - 1 | \leq e^{| \lambda |} | \lambda |, \quad \lambda \in
     \mathbb{R}, 
  \end{equation*}
  we estimate
  \begin{multline*}
       \Big| \mathbb{E} \Big[ \Big( e^{\frac{x_s Y_s}{\sigma_Y^2}} - 1
       \Big) U_{s, t} (\epsilon, Y_s + Z) \indic_{\{ \| Z
       \|_{L^{\infty} ([s, t])} \geq | Y_s | \}} \Big] \Big|\\
       \leq \frac{| x_s |}{\sigma_Y^2} \mathbb{E} \Big[ e^{\frac{|
       x_s | \| Z \|_{L^{\infty} ([s, t])}}{\sigma_Y^2}} \| Z
       \|_{L^{\infty} ([s, t])} U_{s, t} (\epsilon, Y_s + Z) \Big],
  \end{multline*}
  and, by H{\"o}lder's inequality, the expectation on the right hand side is bounded by \rafal{I changed the order of apperance of square brackets}
  \begin{equation*}
     \left[ \mathbb{E} U_{s, t} (\epsilon, Y_s + Z)^{p_1} \right]^{\frac{1}{p_1}}
     \left[ \mathbb{E} \| Z \|_{L^{\infty} ([s, t])}^{p_2} \right]^{\frac{1}{p_2}}
     \left[ \mathbb{E} e^{\frac{p_3 | x_s | \| Z \|_{L^{\infty} ([s,
     t])}}{\sigma_Y^2}} \right]^{\frac{1}{p_3}}, 
  \end{equation*}
where $p_1, p_2, p_3 \in (1, \infty)$ satisfy
  \begin{equation*}
     \frac{1}{p_1} + \frac{1}{p_2} + \frac{1}{p_3} = 1. 
  \end{equation*}
  By Remark~\ref{rem:bound_U_Y_s_by_U_Y}, if $\frac{t-s}{s-u}$ is sufficiently small, we have
  \begin{equation*}
     \left[ \mathbb{E} U_{s, t} (\epsilon, Y_s + Z)^{p_1} \right]^{\frac{1}{p_1}}
     \lesssim_{p_1} \left[ \mathbb{E} [U_{s, t} (\epsilon, Y +
     Z)^{p_4} \right]^{\frac{1}{p_4}}, \quad p_4 \assign p_1^2 . 
  \end{equation*}
  Recalling $\tilde{B}$ from Definition~\ref{def:riemann_liouville}, the scaling 
  property yields
  \begin{equation*}
     \mathbb{E} \| Z \|_{L^{\infty} ([s, t])}^{p_2} \leq
     \mathbb{E} \| Z \|_{L^{\infty} ([u, t])}^{p_2} = (t -
     u)^{p_2 H} \mathbb{E} \| \tilde{B} \|_{L^{\infty} ([0,
     1])}^{p_2} 
  \end{equation*}
  and similarly
  \begin{equation*}
     \mathbb{E} \Big[ e^{\frac{p_3 | x_s | \| Z \|_{L^{\infty} ([s,
     t])}}{\sigma_Y^2}} \Big] \leq \mathbb{E} \Big[ e^{\frac{p_3 | x_s
     | (t - u)^H}{\sigma_Y^2} \| \tilde{B} \|_{L^{\infty} ([0, 1])}}
     \Big] . 
  \end{equation*}
  Since $\| \tilde{B} \|_{L^{\infty} ([0, 1])}$ has a Gaussian tail by
  Fernique's theorem, there exists a constant $c$ depending only on $H$ such
  that
  \begin{equation*}
     \mathbb{E} \Big[ e^{\frac{p_3 | x_s | (t - u)^H}{\sigma_Y^2} \|
     \tilde{B} \|_{L^{\infty} ([0, 1])}} \Big] \lesssim e^{c ( \frac{p_3
     | x_s | (t - u)^H}{\sigma_Y^2} )^2} . 
  \end{equation*}
  Now the claim is proved.
\end{proof}

\begin{lemma}[Sharp bound on $U$]
  \label{lem:U-moment}For every $p_1, p_2 \in (1, \infty)$ we have
  \[ \| U_{s, t} (\epsilon, Y + Z) \|_{L^{p_1} (\mathbb{P})} \lesssim_{
     \zeta, p_1, p_2} 
     \Big( \frac{t-s}{s-v} \Big)^{\frac{H}{p_1 p_2}}. \]
\end{lemma}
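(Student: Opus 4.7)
The plan is to show that $U_{s,t}(\epsilon, Y+Z) \geq 1$ forces the Gaussian variable $(Y+Z)_s$ to be atypically small, and to trade off this small-probability event against a uniform moment bound on $U$. Write $\Xi := Y + Z$. Since $v \geq 0$, the Mandelbrot--Van Ness kernel reduces to $\kernel(s, \theta) = (s-\theta)^{H-1/2}$ on $[v, s]$, so $\Xi_s$ is centered Gaussian with variance $\sigma_{Y+Z}^2 = (s-v)^{2H}/(2H)$ and density bounded by a constant times $(s-v)^{-H}$. If $U_{s,t}(\epsilon, \Xi) \geq 1$, then $\Xi$ takes the values $0$ and $\epsilon$ on $[s,t]$, whence $|\Xi_s| \leq 2M$ for $M := \sup_{r \in [s,t]} |\Xi_r - \Xi_s|$. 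Setting $E := \{|\Xi_s| \leq 2M\}$ and applying Hölder's inequality with some $\tilde p_2 \in (1, p_2)$ and conjugate exponent $\tilde p_2'$,
\[
  \|U_{s,t}(\epsilon, \Xi)\|_{L^{p_1}(\mathbb{P})}^{p_1}
  \leq \mathbb{P}(E)^{1/\tilde p_2}\, \|U_{s,t}(\epsilon, \Xi)\|_{L^{p_1 \tilde p_2'}(\mathbb{P})}^{p_1}.
\]

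To bound $\mathbb{P}(E)$, I would decouple $\Xi_s$ and $M$ by the union bound $\mathbb{P}(E) \leq \mathbb{P}(|\Xi_s| \leq K) + \mathbb{P}(M \geq K/2)$ for a free parameter $K > 0$. The first term is $\lesssim K/(s-v)^H$ by the Gaussian density estimate. For the second, the covariance estimate $\mathbb{E}[(\Xi_r - \Xi_{r'})^2] \lesssim (r-r')^{2H}$ for $r, r' \in [s,t]$, together with Fernique's theorem (or a Kolmogorov moment bound), yields $\|M\|_{L^q(\mathbb P)} \lesssim_q (t-s)^H$, so that $\mathbb{P}(M \geq K/2) \lesssim_q ((t-s)^H/K)^q$ for any $q \geq 1$. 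Balancing the two terms by choosing $K$ optimally produces
\[
  \mathbb{P}(E) \lesssim_q \Bigl(\frac{t-s}{s-v}\Bigr)^{H q / (q + 1)}.
\]

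Finally, to control $\|U_{s,t}(\epsilon, \Xi)\|_{L^{p_1 \tilde p_2'}(\mathbb{P})}$: the shifted process $\Xi_{v + \cdot}$ has the same law as the Riemann--Liouville fBm $\tilde B$, and $\tilde B$ is $H$-self-similar, so
\[
  U_{s,t}(\epsilon, \Xi) \dequal U_{A, A+\zeta}(1, \tilde B), \qquad A := \zeta (s-v)/(t-s).
\]
The pathwise estimate underlying Lemma~\ref{lem:J-moment} gives $U_{A, A+\zeta}(1, \tilde B) \leq \zeta \,(1 + \|\tilde B\|_{C^\alpha([A, A+\zeta])})^{1/\alpha}$ for any $\alpha \in (0, H)$, and a direct covariance computation shows that the $L^q(\mathbb P)$ moments of $\|\tilde B\|_{C^\alpha([A, A+\zeta])}$ are bounded uniformly in $A$, so $\|U_{s,t}(\epsilon, \Xi)\|_{L^{p_1 \tilde p_2'}(\mathbb P)} \lesssim_{\zeta, p_1, p_2} 1$. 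Assembling the three steps gives $\|U_{s,t}(\epsilon, \Xi)\|_{L^{p_1}(\mathbb P)} \lesssim ((t-s)/(s-v))^{H q / ((q+1) p_1 \tilde p_2)}$; choosing $\tilde p_2 \in (1, p_2)$ slightly less than $p_2$ and $q$ large enough so that $H q / ((q+1) \tilde p_2) \geq H/p_2$ delivers the claimed exponent $H/(p_1 p_2)$. The main obstacle is the correlation between $\Xi_s$ and $M$, which prevents a direct density argument; the union-bound decoupling is the tool that handles it at the cost of an arbitrarily small exponent loss (controlled by $q$), absorbed by the slack between $\tilde p_2$ and $p_2$.
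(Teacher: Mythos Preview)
Your proof is correct and shares the paper's core idea: $U_{s,t}(\epsilon,\Xi)\geq 1$ forces the Gaussian variable $\Xi_s$ into a neighbourhood of $0$ of size $\sim(t-s)^H$, which has probability $\lesssim((t-s)/(s-v))^H$; one then trades this small probability against a uniform moment bound on $U$ via H\"older's inequality and scaling. The execution differs. The paper first rescales to $U_{k_1,k_2}(\zeta^{-H},\tilde B)$ with $k_1=(s-v)/(t-s)$, then slices the sample space as $\{\tilde B_{k_1}\in(a-1,a]\}_{a\in\mathbb Z}$, inserts the indicator $\{\max_{[k_1,k_2]}|\tilde B_r-\tilde B_{k_1}|\geq|a|-1\}$, and applies a three-factor H\"older (exponents $q_2,q_2,p_2$ with $2q_2^{-1}+p_2^{-1}=1$); the density bound $\mathbb P(\tilde B_{k_1}\in(a-1,a])\lesssim k_1^{-H}$ supplies the factor $((t-s)/(s-v))^{H/p_2}$, while Fernique's Gaussian tail makes the sum over $a$ converge. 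Your single-threshold decoupling $\mathbb P(|\Xi_s|\leq 2M)\leq\mathbb P(|\Xi_s|\leq K)+\mathbb P(M\geq K/2)$ followed by optimising over $K$ achieves the same outcome more directly, at the price of an auxiliary parameter $q$ and the slack $\tilde p_2<p_2$ to absorb the $q/(q+1)$ loss. Neither approach dominates: the paper's avoids the optimisation step and hits the exponent $H/(p_1p_2)$ without slack parameters, whereas yours is arguably more transparent about where the smallness comes from.
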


\begin{proof}
  By the scaling, \purba{takeing $\zeta = \epsilon^{1/H}$}
  \[ \| U_{s, t} (\epsilon, Y + Z) \|_{L^{p_1} (\mathbb{P})} = \| U_{s -
     v, t - v} (\epsilon, \tilde{B}) \|_{L^{p_1} (\mathbb{P})} = \| U_{\frac{s-v}{t-s}, \frac{t-v}{t-s}} (\zeta^{-H}, \tilde{B}) \|_{L^{p_1} (\mathbb{P})}
     . \]
  We set $k_1 \assign \frac{s-v}{t-s}$ and $k_2 \assign \frac{t-v}{t-s}$. \rafal{ I moved the remark that $k_2 - k_1 = 1$ just before the application of Fernique} We
  observe
  \begin{align*}
   \MoveEqLeft[3]
    \| U_{k_1, k_2} (\zeta^{-H}, \tilde{B}) \|_{L^{p_1} (\mathbb{P})}^{p_1}\\
    & = 
    \sum_{a \in \mathbb{Z}} \mathbb{E} \Big[ U_{k_1, k_2} (\zeta^{-H},
    \tilde{B})^{p_1} \indic_{\{ \tilde{B}_{k_1} \in (a - 1, a] \}}
    \Big]\\
    & =  \sum_{a \in \mathbb{Z}} \mathbb{E} \Big[ U_{k_1, k_2} (\zeta^{-H},
    \tilde{B})^{p_1} \indic_{\{ \tilde{B}_{k_1} \in (a - 1, a] \}}\indic_{\{ \max_{r \in [k_1, k_2]} | \tilde{B}_r -
    \tilde{B}_{k_1} | \geq | a | - 1 \}} \Big]\\
    & \leq  \sum_{a \in \mathbb{Z}} \mathbb{E} [U_{k_1, k_2} (\zeta^{-H},
    \tilde{B})^{p_1 q_2}]^{\frac{1}{q_2}} \mathbb{P} (\max_{r \in [k_1,
    k_2]} | \tilde{B}_r - \tilde{B}_{k_1} | \geq | a | -
    1)^{\frac{1}{q_2}} \\
    & \hspace{6cm} \times\mathbb{P} (\tilde{B}_{k_1} \in (a - 1,
    a])^{\frac{1}{p_2}},
  \end{align*}     
  where $p_2^{-1} + (2q_2)^{-1} = 1$ \rafal{I guess it should be $p_2^{-1} + 2q_2^{-1} = 1$}. 
  By Lemma~\ref{lem:U_uniform_bound}, 
  \begin{align*}
    \mathbb{E} [U_{k_1, k_2} (\zeta^{-H},
    \tilde{B})^{p_1 q_2}]^{\frac{1}{q_2}} \lesssim_{p_1, p_2, \zeta} 1.
  \end{align*}
  Since
   \begin{align*}
     \norm{\tilde{B}_{r_1} - \tilde{B}_{r_2}}_{L^2(\mathbb{P})}^2
     \leq \norm{B_{r_1} - B_{r_2}}_{L^2(\mathbb{P})}^2 \lesssim \abs{r_1 - r_2}^{2 H},
   \end{align*}
   by Kolmogorov's continuity theorem and Fernique's theorem (note that $k_2 - k_1 = 1$)
   we obtain
   \begin{align*}
     \mathbb{P} (\max_{r \in [k_1, k_2]} | \tilde{B}_r - \tilde{B}_{k_1} |
     \geq | a | - 1) \lesssim e^{- c a^2 }. 
   \end{align*}
   Finally, considering the density of $\tilde{B}_{k_1}$ we see
     $\mathbb{P}
     (\tilde{B}_{k_1} \in (a - 1, a]) \lesssim k_1^{- H}$,
  \toyomu{Since the density of $\tilde{B}_{k_1}$ is bounded by $k_1^{-H}$ }and the claim follows.
\end{proof}

\begin{lemma}[Fix time of $Y$]
  \label{lem:fix-Y} 
  For every $p_1 \in (1, \infty)$, 
  if $\frac{t-s}{s-u}$ is sufficiently small, we have
  \begin{equation*}
     | \mathbb{E} [U_{s, t} (\epsilon, Y_s + Z)] -\mathbb{E} [U_{s, t}
     (\epsilon, Y + Z)] | \lesssim_{\zeta, p_1}  \Big( \frac{t-s}{s-v} \Big)^{H/p_1} 
     \Big( \frac{t-s}{s-u} \Big)^{1 - H}. 
  \end{equation*}
\end{lemma}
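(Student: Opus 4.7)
\textbf{Proof plan for Lemma~\ref{lem:fix-Y}.} I would adapt the Girsanov strategy of Lemma~\ref{lem:fix-X-and-Y}, now perturbing only by $w_r \defby Y_r - Y_s$ on $[s,t]$ rather than by the full shift $X_r + Y_r - X_s - Y_s$. Since $Y$ is smooth on $[s,t]$ (its driving noise lives on $[v,u]$, and in particular $r\mapsto Y_r$ is $C^\infty$ on $[s,t]$) and $w_s=0$, the same identity $w = I_{H-1/2} I_{3/2-H}\dot w$ applies, and Girsanov's theorem yields
\begin{equation*}
  \mathbb{E}[U_{s,t}(\epsilon, Y+Z) \vert \sigma(Y)]
  = \mathbb{E}\Big[U_{s,t}(\epsilon, Y_s+Z)\, \mathcal{E}_{s,t} \Big\vert \sigma(Y)\Big],
\end{equation*}
where $\mathcal{E}_{s,t}$ is the Girsanov exponential built from $c_1 \int_s^t \frac{\mathrm{d}}{\mathrm{d}\theta}I_{3/2-H}\dot Y\,\mathrm{d} W_\theta$, with $c_1=c_1(H)$ as in Lemma~\ref{lem:fix-X-and-Y}. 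Using $|\mathcal{E}_{s,t}-1|\le |\log\mathcal{E}_{s,t}|e^{|\log\mathcal{E}_{s,t}|}$ together with Lemma~\ref{lem:log_normal_moment}, the conditional difference is bounded in $L^{q_1}(\mathbb{P})$ by a multiple of $S^{1/2}e^{C_{q_1}S}$ with $S \defby \int_s^t \bigl|\tfrac{\mathrm{d}}{\mathrm{d}\theta}I_{3/2-H}\dot Y\bigr|^2\mathrm{d}\theta$.

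Taking expectation in $Y$ as well and applying Hölder's inequality with exponents $(p_1,q_1)$ satisfying $p_1^{-1}+q_1^{-1}=1$ splits the problem into two factors:
\begin{equation*}
  |\mathbb{E}[U_{s,t}(\epsilon, Y+Z)] - \mathbb{E}[U_{s,t}(\epsilon, Y_s+Z)]|
  \lesssim \|U_{s,t}(\epsilon, Y_s+Z)\|_{L^{p_1}(\mathbb{P})}\cdot \|S^{1/2}e^{C_{q_1}S}\|_{L^{q_1}(\mathbb{P})}.
\end{equation*}
For the first factor, Remark~\ref{rem:bound_U_Y_s_by_U_Y} upgrades $Y_s$ to $Y$ at the price of a slightly larger exponent $p_2>p_1$, and then Lemma~\ref{lem:U-moment} (with a suitable choice of its two internal parameters) gives $\|U_{s,t}(\epsilon,Y_s+Z)\|_{L^{p_1}(\mathbb{P})}\lesssim_{\zeta,p_1}(\tfrac{t-s}{s-v})^{H/p_1}$.

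For the second factor I would compute $\|S\|_{L^1(\mathbb{P})}$ directly. Since $\dot Y_r = (H-\tfrac12)\int_v^u (r-\theta)^{H-3/2}\,\mathrm{d} W_\theta$ for $r\in[s,t]$, a direct computation gives $\|Y_{r_1}-Y_{r_2}\|_{L^2(\mathbb{P})}^2 \lesssim (s-u)^{2H-2}(r_1-r_2)^2$. Interpreting $\tfrac{\mathrm{d}}{\mathrm{d}\theta}I_{3/2-H}\dot Y$ as a fractional derivative of $Y$ of order $H+\tfrac12$ and using the Slobodeckij-type representation, I get
\begin{equation*}
  \|S\|_{L^1(\mathbb{P})}
  \lesssim \int_s^t\!\!\int_s^t \frac{\|Y_{r_1}-Y_{r_2}\|_{L^2(\mathbb{P})}^2}{|r_1-r_2|^{2H+2}}\,\mathrm{d} r_1\mathrm{d} r_2
  \lesssim \Big(\frac{t-s}{s-u}\Big)^{2-2H}.
\end{equation*}
Since $S$ is a quadratic functional of a Gaussian, Fernique's theorem supplies Gaussian tails, and provided $(t-s)/(s-u)$ is small enough to absorb $C_{q_1}$ one obtains $\|S^{1/2}e^{C_{q_1}S}\|_{L^{q_1}(\mathbb{P})} \lesssim (\tfrac{t-s}{s-u})^{1-H}$. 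Multiplying the two factors closes the proof. The main technical obstacle is the clean identification of $S^{1/2}$ with a Sobolev--Slobodeckij seminorm of $Y$ on $[s,t]$; once that is handled, everything else is a replay of Lemma~\ref{lem:fix-X-and-Y}, and the crucial change is that the smoothness of $Y$ on $[s,t]$ is now controlled by the distance $s-u$ rather than $s-v$, which is precisely what produces the $(\tfrac{t-s}{s-u})^{1-H}$ factor in the final bound.
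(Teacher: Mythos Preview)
Your proposal is correct and follows essentially the same route as the paper: the paper's proof simply says ``As in Lemma~\ref{lem:fix-X-and-Y}'' to obtain the bound $\|U_{s,t}(\epsilon,Y+Z)\|_{L^{p_1}}\cdot(\tfrac{t-s}{s-u})^{1-H}$ and then invokes Lemma~\ref{lem:U-moment}. The only cosmetic difference is the direction of the Girsanov shift: the paper takes $w_r=-(Y_r-Y_s)$, so the H\"older factor is $\|U_{s,t}(\epsilon,Y+Z)\|_{L^{p_1}}$ directly and Lemma~\ref{lem:U-moment} applies immediately, whereas your choice $w_r=Y_r-Y_s$ lands on $\|U_{s,t}(\epsilon,Y_s+Z)\|_{L^{p_1}}$ and needs the extra pass through Remark~\ref{rem:bound_U_Y_s_by_U_Y}; either way, after relabelling the arbitrary exponent you recover $(\tfrac{t-s}{s-v})^{H/p_1}$.
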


\begin{proof}
  As in  Lemma \ref{lem:fix-X-and-Y}, we get 
  \begin{align*}
     | \mathbb{E} [U_{s, t} (\epsilon, Y_s + Z)] -\mathbb{E} [U_{s, t}
     (\epsilon, Y + Z)] | \lesssim_{\zeta, p_1} \norm{U_{s, t}(\epsilon, Y+Z)}_{L^{p_1}}  
     \Big( \frac{t-s}{s-u} \Big)^{1 - H}. 
  \end{align*}
  We then apply Lemma~\ref{lem:U-moment}.
\end{proof}

\begin{lemma}[$\sigma_Y$ vs $\sigma_{Y+Z}$]\label{lem:sigma_Y_vs_sigma_Y_Z}
  If $\frac{s-u}{s-v} \leq \frac{1}{2}$, for every $p_1 \in (1, \infty)$, we have 
  \begin{equation*}
   \Big|  e^{- \frac{1}{2} (
     \frac{X_s}{\sigma_{Y + Z}} )^2} - 
     e^{- \frac{1}{2} ( \frac{X_s}{\sigma_Y} )^2}
     \Big| 
     \lesssim_{p_1} e^{- \frac{1}{2p_1} (
     \frac{X_s}{\sigma_{Y + Z}} )^2} \Big( \frac{s-u}{s-v} \Big)^{2H}.
  \end{equation*}
\end{lemma}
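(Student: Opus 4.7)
The plan is to reduce the estimate to a first-order Taylor bound on the function $x \mapsto e^{-x}$, exploiting the explicit formulas \eqref{eq:def_of_sigma_Y} and \eqref{eq:def_of_sigma_Y_Z} to control $\sigma_{Y+Z}^2 - \sigma_Y^2$. Subtracting these formulas gives $\sigma_{Y+Z}^2 - \sigma_Y^2 = (s-u)^{2H}/(2H)$, so setting
\begin{equation*}
r \defby \left(\frac{s-u}{s-v}\right)^{2H}
\end{equation*}
we obtain the clean identity $\sigma_Y^2 = (1-r)\sigma_{Y+Z}^2$. The hypothesis $(s-u)/(s-v) \leq 1/2$ yields $r \leq 2^{-2H}$, so $1-r$ is bounded below by a positive constant (depending only on $H$), and in particular $r/(1-r) \lesssim r$.

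With this in hand, I set $a \defby \tfrac{1}{2}(X_s/\sigma_{Y+Z})^2$. Then $\tfrac{1}{2}(X_s/\sigma_Y)^2 = a/(1-r) = a + ar/(1-r)$, and since $\sigma_Y \leq \sigma_{Y+Z}$ both exponents are nonnegative and ordered. Applying the elementary bound $1 - e^{-\delta} \leq \delta$ for $\delta \geq 0$ yields
\begin{equation*}
\Bigl| e^{-\frac{1}{2}(X_s/\sigma_{Y+Z})^2} - e^{-\frac{1}{2}(X_s/\sigma_Y)^2} \Bigr|
\;=\; e^{-a}\bigl(1 - e^{-ar/(1-r)}\bigr)
\;\lesssim\; r \cdot a\, e^{-a}.
\end{equation*}

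It remains to absorb the polynomial factor $a$ into the exponential at the cost of worsening the rate from $\tfrac{1}{2}$ to $\tfrac{1}{2p_1}$. For any $p_1 \in (1,\infty)$ the function $x \mapsto x\, e^{-x(1-1/p_1)}$ is bounded on $[0,\infty)$ by a constant $C_{p_1}$, so $a\, e^{-a} = \bigl(a\, e^{-a(1-1/p_1)}\bigr)\, e^{-a/p_1} \leq C_{p_1}\, e^{-a/p_1}$. Plugging this in and substituting back the definition of $r$ and $a$ gives exactly the claimed bound.

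There is no real obstacle here: the statement is a routine second-order estimate once the identity $\sigma_Y^2 = (1-r)\sigma_{Y+Z}^2$ is written down. The only point that requires care is the choice of the constant $1/(2p_1)$ rather than $1/2$ in the residual exponential on the right-hand side, which is precisely what is needed to accommodate the polynomial prefactor $a$ produced by the Taylor step; keeping the sharp constant $1/2$ would force an additional assumption on the size of $X_s$.
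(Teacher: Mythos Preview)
Your proof is correct and follows essentially the same approach as the paper: factor out the larger exponential $e^{-\frac{1}{2}(X_s/\sigma_{Y+Z})^2}$, apply $1-e^{-\delta}\le\delta$, and then absorb the resulting polynomial factor into the exponential via $\sup_{x\ge 0} x\,e^{-(1-1/p_1)x}<\infty$. The only difference is cosmetic: you package the comparison of variances through the ratio $r=((s-u)/(s-v))^{2H}$ and the identity $\sigma_Y^2=(1-r)\sigma_{Y+Z}^2$, whereas the paper works directly with $\sigma_Y^{-2}-\sigma_{Y+Z}^{-2}$.
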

\begin{proof}
  Recalling \eqref{eq:def_of_sigma_Y}, we have
  \begin{align*}
    | \sigma_{Y + Z}^{- 2} - \sigma_Y^{- 2} | =
     \sigma_{Y+Z}^{-2} \sigma_Y^{-2} \frac{(s-u)^{2H}}{2H} 
    \lesssim \sigma_{Y+Z}^{-2} \Big( \frac{s-u}{s-v} \Big)^{2H}.
  \end{align*}
\rafal[inline]{I think that $2H$ should be moved above the fraction bar:}

  %
  Using the inequality $1-e^{-\lambda} \leq \lambda$ for $\lambda \geq 0$,  we observe
  \begin{align*}
    e^{- \frac{1}{2} (
     \frac{X_s}{\sigma_{Y + Z}} )^2} - e^{-
    \frac{1}{2} (
     \frac{X_s}{\sigma_{Y }} )^2} 
    & =  e^{-
    \frac{1}{2} (
     \frac{X_s}{\sigma_{Y + Z}} )^2} \Big\{1 - e^{-\frac{X_s^2}{2} (\sigma_Y^{-2} - \sigma_{Y+Z}^{-2})} \Big \}\\
    & \lesssim  e^{- \frac{1}{2} (
     \frac{X_s}{\sigma_{Y + Z}} )^2} X_s^2 (\sigma_Y^{-2} - \sigma_{Y+Z}^{-2})\\
    & \lesssim  e^{- \frac{1}{2} (
     \frac{X_s}{\sigma_{Y + Z}} )^2} \sigma_{Y+Z}^{-2} X_s^2  \Big( \frac{s-u}{s-v} \Big)^{2H}.
  \end{align*}
  Since $\sup_{\lambda \geq 0} \lambda e^{-(\frac{1}{2} - \frac{1}{2p_1}) \lambda} < \infty$,
  we obtain the claimed estimate.
\end{proof}
\begin{lemma}[Combining estimates obtained so far]\label{lem:U_conditioning_rough}
  For every $p \in (1, \infty)$, $\kappa \in (1 - p^{-1}, 1)$ and $p_1 \in (1, 2)$,
  if $\frac{t-s}{s-u}$ and $\frac{s-u}{u-v}$ are sufficiently small, we have 
  \begin{equation} \label{lemma_316_est}
    \mathbb{E} [U_{s, t} (\epsilon, B - a) | \mathcal{F}_v] 
    = \mathbb{E}[U_{s-v, t-v}(\epsilon, \tilde{B})] e^{-\frac{1}{2}(\frac{X_s}{\sigma_{Y+Z}})^2}
    + R^1_{v, u, s,t}(a) + R^2_{v, u, s, t}(a),
  \end{equation}
  where  
  \begin{align}\label{eq:R_1_precise}
    \norm{R^1_{v, u, s,t}(a)}_{L^p(\mathbb{P})} \lesssim_{p, \zeta, \kappa} 
    \mathbb{E}[U_{s,t}(\epsilon, B-a)]^{1- \kappa} e^{-c a^2} \Big( \frac{t-s}{s-u} \Big)^{1 - H}
  \end{align}
  with $c$ being a constant depending only on $H, \kappa, p$,
  and almost surely
  \begin{equation}\label{eq:R_2_precise}
    \abs{R^2_{v, u, s, t}(a)} \lesssim_{p_1, \zeta} 
    e^{-\frac{1}{2p_1} ( \frac{X_s}{\sigma_{Y+Z}} )^2}
    \frac{ (t - u)^H}{\sigma_{Y+Z}} \Big( \frac{t-s}{s-v} \Big)^{H /p_1} 
    + e^{-\frac{1}{2} (\frac{X_s}{\sigma_Y})^2} \Big( \frac{t-s}{s-v} \Big)^{ H/p_1} \Big( \frac{t-s}{s-u} \Big)^{1-H}.
  \end{equation}
  In particular, 
  \begin{multline}\label{eq:R_1_R_2_rough}
    \norm[\Big]{\mathbb{E} [U_{s, t} (\epsilon, B - a) | \mathcal{F}_v] 
    - \mathbb{E}[U_{s-v, t-v}(\epsilon, \tilde{B})] e^{-\frac{1}{2}(\frac{X_s}{\sigma_{Y+Z}})^2}}_{L^p(\mathbb{P})} \\
    \lesssim_{p, \zeta, \kappa}
    \Big( \frac{t-s}{s-u} \Big)^{1-H} + \Big( \frac{t-s}{s-u} \Big)^{-H} \Big( \frac{t-s}{s-v} \Big)^{(2-\kappa)H}.
  \end{multline}
  \rafal[inline]{I think that in the expression for $R^2$, $(t-u)^H$ should be replaced by $(t-u)^{2H} \vee (t-u)^{H}$ ($(t-u)^{2H}$ appears in the estimate for $R_4$) and $p_1$ by $p_1^2$: 
  TM: we have $t-u \leq 1$ and we can replace $p_1^2$ by $p_1$.
RL: OK.  }
\end{lemma}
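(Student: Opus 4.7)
\bigskip

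\noindent\textbf{Proof plan for Lemma~\ref{lem:U_conditioning_rough}.}

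The idea is to chain together the preparatory Lemmas~\ref{lem:fix-X-and-Y}, \ref{lem:Y_change_of_variables}, \ref{lem:exp-U}, \ref{lem:U-moment}, \ref{lem:fix-Y} and \ref{lem:sigma_Y_vs_sigma_Y_Z} in exactly the order suggested by the heuristic of Section~\ref{subsec:heuristic}, and then to organize the resulting error terms into $R^1$ and $R^2$. First, starting from $\mathbb{E}[U_{s,t}(\epsilon,B-a)\vert\mathcal F_v]$, I would use the tower property $\mathbb{E}[\,\cdot\vert\mathcal F_v]=\mathbb{E}[\mathbb{E}[\,\cdot\vert\mathcal F_u]\vert\mathcal F_v]$ together with Lemma~\ref{lem:fix-X-and-Y} to replace $B-a$ by the frozen-time version $X_s+Y_s+Z$; this produces an $L^p$ error of the shape $\mathbb{E}[U_{s,t}(\epsilon,B-a)]^{1-\kappa}e^{-ca^2}((t-s)/(s-u))^{1-H}$, which is exactly $R^1$ as specified in \eqref{eq:R_1_precise}. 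Because $X$ is $\mathcal F_v$-measurable and $(Y,Z)$ is independent of $\mathcal F_v$, I can then bring in Lemma~\ref{lem:Y_change_of_variables} to rewrite
\begin{equation*}
\mathbb{E}[U_{s,t}(\epsilon,X_s+Y_s+Z)\vert\mathcal F_v]=e^{-\frac{1}{2}(X_s/\sigma_Y)^2}\mathbb{E}\!\left[e^{X_sY_s/\sigma_Y^2}U_{s,t}(\epsilon,Y_s+Z)\right]\!\Big|_{\text{realisation of }X_s}.
\end{equation*}

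Next I would apply Lemma~\ref{lem:exp-U} to remove the Girsanov weight $e^{X_sY_s/\sigma_Y^2}$, and Lemma~\ref{lem:fix-Y} to pass from $\mathbb{E}[U_{s,t}(\epsilon,Y_s+Z)]$ to $\mathbb{E}[U_{s,t}(\epsilon,Y+Z)]$. Since $Y_r+Z_r=\int_v^r\mathcal{K}(r,\theta)\,\mathrm d W_\theta$ for $r\ge s$, the scaling and translation properties of the Riemann--Liouville process give $\mathbb{E}[U_{s,t}(\epsilon,Y+Z)]=\mathbb{E}[U_{s-v,t-v}(\epsilon,\tilde B)]$, which is the main-term expectation in \eqref{lemma_316_est}. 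Finally I would use Lemma~\ref{lem:sigma_Y_vs_sigma_Y_Z} to swap the prefactor $e^{-\frac12(X_s/\sigma_Y)^2}$ for $e^{-\frac12(X_s/\sigma_{Y+Z})^2}$; this swap costs at most a factor $((s-u)/(s-v))^{2H}e^{-\frac{1}{2p_1}(X_s/\sigma_{Y+Z})^2}$, which is of smaller order than the first term in \eqref{eq:R_2_precise} once the main-term prefactor $\mathbb{E}[U_{s-v,t-v}(\epsilon,\tilde B)]$ is controlled via scaling and Lemma~\ref{lem:U_uniform_bound}.

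The main obstacle, in my view, is the careful bookkeeping of the Gaussian weights in $R^2$. The error from Lemma~\ref{lem:exp-U}, after restoring the $e^{-\frac12(X_s/\sigma_Y)^2}$ prefactor, is proportional to
\begin{equation*}
e^{-\frac{1}{2}(X_s/\sigma_Y)^2}\cdot\frac{|X_s|(t-u)^H}{\sigma_Y^2}\cdot e^{c(|X_s|(t-u)^H/\sigma_Y^2)^2}\cdot\|U_{s,t}(\epsilon,Y+Z)\|_{L^{p_1}}.
\end{equation*}
The key observation is that when $(t-s)/(s-u)$ and $(s-u)/(s-v)$ are sufficiently small, the ratio $(t-u)^H/\sigma_{Y+Z}$ is also small (since $\sigma_{Y+Z}\sim(s-v)^H$), so the quadratic-in-$X_s$ factor from the $e^{c(\cdots)^2}$ term may be absorbed into the Gaussian prefactor at the cost of replacing the exponent $\frac12$ by $\frac1{2p_1}$; the linear-in-$X_s$ factor $|X_s|/\sigma_Y$ is then bounded by a constant times $e^{\delta(X_s/\sigma_{Y+Z})^2}$ for any small $\delta>0$, and the leftover Gaussian decay still suffices. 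Combined with the moment bound $\|U_{s,t}(\epsilon,Y+Z)\|_{L^{p_1}}\lesssim((t-s)/(s-v))^{H/p_1}$ from Lemma~\ref{lem:U-moment} and a similar handling of the Lemma~\ref{lem:fix-Y} error (which keeps the $e^{-\frac12(X_s/\sigma_Y)^2}$ prefactor untouched), these two contributions form the two summands of $R^2$ in \eqref{eq:R_2_precise}.

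To deduce \eqref{eq:R_1_R_2_rough} I would then take $L^p(\mathbb P)$ norms: the $R^1$ contribution bounds itself since $\mathbb{E}[U_{s,t}(\epsilon,B-a)]\lesssim 1$ uniformly in $a$, while for $R^2$ I would use that $X_s/\sigma_{Y+Z}$ is a standard Gaussian to compute
\begin{equation*}
\Big\|e^{-\frac{1}{2p_1}(X_s/\sigma_{Y+Z})^2}\Big\|_{L^p(\mathbb{P})}\lesssim 1,
\end{equation*}
together with the estimate $(t-u)^H/\sigma_{Y+Z}\lesssim((t-s)/(s-v))^{-H}$, yielding the first polynomial power; the second summand of $R^2$ directly gives the first term in \eqref{eq:R_1_R_2_rough}. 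The dominant delicate step throughout is the simultaneous management of the Girsanov-type exponential factors and the Gaussian prefactors, which is what forces the threshold $\kappa>1-1/p$ and the auxiliary exponent $p_1\in(1,2)$.
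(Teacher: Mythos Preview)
Your proposal follows the paper's proof exactly: the paper writes $\mathbb{E}[U_{s,t}(\epsilon,B-a)\vert\mathcal F_v]=R_1+R_2+R_3+R_4+R_5$ using Lemmas~\ref{lem:fix-X-and-Y}, \ref{lem:Y_change_of_variables}, \ref{lem:exp-U}, \ref{lem:fix-Y} and \ref{lem:sigma_Y_vs_sigma_Y_Z} in the order you describe, then sets $R^1:=R_1$ and $R^2:=R_2+R_3+R_4$; your absorption of the $e^{c(\cdot)^2}$ factor into the Gaussian prefactor at the cost $\tfrac12\to\tfrac{1}{2p_1}$ is precisely the paper's manoeuvre.

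Two small slips in your final step deserve correction. First, $X_s/\sigma_{Y+Z}$ is \emph{not} a standard Gaussian (the shift $-a$ spoils centering and the variance is wrong); this does not matter because the paper simply bounds $e^{-(\cdot)^2}\le 1$ to get \eqref{eq:R_1_R_2_rough}. Second, your estimate $(t-u)^H/\sigma_{Y+Z}\lesssim((t-s)/(s-v))^{-H}$ is too crude: combined with the factor $((t-s)/(s-v))^{H/p_1}$ it yields $((t-s)/(s-v))^{-H(1-1/p_1)}$, which diverges. The paper instead uses $(t-u)^H/\sigma_{Y+Z}\lesssim((s-u)/(s-v))^H=((t-s)/(s-u))^{-H}((t-s)/(s-v))^H$, producing exactly the second term of \eqref{eq:R_1_R_2_rough}. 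Relatedly, for the $\sigma_Y\to\sigma_{Y+Z}$ swap the paper controls $\mathbb{E}[U_{s,t}(\epsilon,Y+Z)]$ via the sharp Lemma~\ref{lem:U-moment} (not Lemma~\ref{lem:U_uniform_bound}), which is what supplies the $((t-s)/(s-v))^{H/p_1}$ factor needed for $R_4$ to fit into \eqref{eq:R_2_precise}.
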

\begin{proof}
  In view of Lemma~\ref{lem:Y_change_of_variables}, we decompose 
  \begin{equation*}
     \mathbb{E}[U_{s, t}(\epsilon, B - a) \vert \mathcal{F}_v] = R_1 + R_2 + R_3 + R_4 + R_5, 
  \end{equation*}
  where
  \begin{align*}
    R_1 &\assign  
    \mathbb{E} [U_{s, t} (\epsilon, B-a) | \mathcal{F}_v] -\mathbb{E}
    [ U_{s, t} (\epsilon,  X_s + Y_s + Z ) | \mathcal{F}_v ] , \\ 
    R_2  &\assign  
    e^{- \frac{1}{2} ( \frac{X_s}{\sigma_{Y}} )^2} \mathbb{E} \Big[ \Big.
    e^{\frac{X_s Y_s}{\sigma_{Y}^2}} U_{s, t}(\epsilon, Y_s + Z)  \Big| \mathcal{F}_v
    \Big] 
      -  e^{- \frac{1}{2} ( \frac{X_s}{\sigma_{Y}} )^2} 
      \mathbb{E} [ U_{s, t} (\epsilon, Y_s + Z )], \\
    R_3  &\assign  
    e^{- \frac{1}{2} ( \frac{X_s}{\sigma_{Y}} )^2} \mathbb{E} [ U_{s, t} (\epsilon, Y_s + Z ) ] 
      -  e^{- \frac{1}{2} ( \frac{X_s}{\sigma_{Y}} )^2} \mathbb{E} 
      [U_{s, t} (\epsilon, Y+Z)], \\
    R_4 &\assign  e^{-\frac{1}{2} ( \frac{X_s}{\sigma_{Y}} )^2} \mathbb{E} [U_{s, t} (\epsilon, Y+Z)]
    - e^{-
    \frac{1}{2} ( \frac{X_s}{\sigma_{Y + Z}} )^2} \mathbb{E} [U_{s, t} (\epsilon, Y+Z)],  \\
    R_5 &\defby   
    e^{-\frac{1}{2} ( \frac{X_s}{\sigma_{Y + Z}} )^2} \mathbb{E} [U_{s, t} (\epsilon, Y+Z)] 
    = \mathbb{E}
    [U_{s-v, t-v} (\epsilon, \tilde{B})] e^{-\frac{1}{2} ( \frac{X_s}{\sigma_{Y + Z}} )^2}.
  \end{align*}
  By Lemma \ref{lem:fix-X-and-Y},
  \begin{equation*}
     \| R_1 \|_{L^p (\mathbb{P})}  \lesssim_{H, p, \zeta} 
    \mathbb{E}[U_{s, t}(\epsilon, B - a)]^{1 - \kappa} e^{- c a^2}
   \Big( \frac{t-s}{s - u} \Big)^{1-H} . 
  \end{equation*}
  By Lemma \ref{lem:fix-Y}, 
  \begin{align*}
    \abs{R_3} \lesssim_{\zeta, p_1} 
    e^{-\frac{1}{2} (\frac{X_s}{\sigma_Y})^2} \Big( \frac{t-s}{s-v} \Big)^{H/p_1}\Big( \frac{t-s}{s - u} \Big)^{1-H}.
  \end{align*}
 To estimate $R_2$, by Lemma \ref{lem:exp-U},  
  \begin{align*}
    \abs{R_2} \lesssim e^{-\frac{1}{2} ( \frac{X_s}{\sigma_{Y}} )^2}
    \frac{| X_s | (t - u)^H}{\sigma_Y^2}
     e^{c ( \frac{| X_s | (t - u)^H}{\sigma_Y^2} )^2} \| U_{s,
     t} (\epsilon, Y + Z) \|_{L^{p_1} (\mathbb{P})} .
  \end{align*}
  If $\frac{t-u}{u-v}$ is sufficiently small, we have 
  \begin{align*}
    c  (\frac{(t-u)^H}{\sigma_Y} )^2 \leq \frac{1}{2} - \frac{1}{2 p_1},
  \end{align*}
  hence 
  \begin{align*}
    \abs{R_2} \lesssim e^{-\frac{1}{2 p_1} ( \frac{X_s}{\sigma_{Y}} )^2}
    \frac{| X_s | (t - u)^H}{\sigma_Y^2}
     \| U_{s, t} (\epsilon, Y + Z) \|_{L^{p_1} (\mathbb{P})}.
  \end{align*}
  Using the estimate $\sup_{\lambda \geq 0} \lambda e^{-(\frac{1}{2p_1} - \frac{1}{2p_1^2}) \lambda^2} < \infty$ and Lemma~\ref{lem:U-moment}, 
  we get 
  \begin{align*}
    \abs{R_2} &\lesssim e^{-\frac{1}{2 p_1^2} ( \frac{X_s}{\sigma_{Y}} )^2}
    \frac{ (t - u)^H}{\sigma_Y} \Big( \frac{t-s}{s-v} \Big)^{H/p_1^2} \\
    &\lesssim e^{-\frac{1}{2 p_1^2} ( \frac{X_s}{\sigma_{Y+Z}} )^2}
    \frac{ (t - u)^H}{\sigma_{Y+Z}} \Big( \frac{t-s}{s-v} \Big)^{H/p_1^2}.
  \end{align*}
  Finally, we estimate $R_4$. By Lemma~\ref{lem:sigma_Y_vs_sigma_Y_Z}, we get 
  \begin{align*}
    \abs{R_4} \lesssim_{p_1} \| U_{s,
     t} (\epsilon, Y + Z) \|_{L^{1} (\mathbb{P})}
     e^{- \frac{1}{2p_1} (
     \frac{X_s}{\sigma_{Y + Z}} )^2} \Big( \frac{s-u}{s-v} \Big)^{2H}.
  \end{align*}
  By Lemma~\ref{lem:U-moment}, we obtain 
  \begin{align*}
    \abs{R_4} 
    &\lesssim e^{-\frac{1}{2 p_1} ( \frac{X_s}{\sigma_{Y+Z}} )^2}
    \Big(\frac{ s-u}{s-v} \Big)^{2H} \Big( \frac{t-s}{s-v} \Big)^{H/p_1^2}\\
    &\lesssim e^{-\frac{1}{2 p_1^2} ( \frac{X_s}{\sigma_{Y+Z}} )^2}
    \frac{ (t - u)^{2H}}{\sigma_{Y+Z}} \Big( \frac{t-s}{s-v} \Big)^{H/p_1^2}.
  \end{align*}
  Setting $R_{v, u, s, t}^1(a) \defby R_1 $ and $R_{v, u, s, t}^2(a) \defby R_2 + R_3 + R_4$, we get the estimates 
  \eqref{eq:R_1_precise} and \eqref{eq:R_2_precise}. 
  In particular, by the trivial bound on the exponential function and Lemma~\ref{lem:U_uniform_bound},
  \begin{equation*}
    \begin{gathered}
    \norm{R^1_{v, u, s, t}(a)}_{L^p(\mathbb{P})} \lesssim_{p, \zeta} \Big( \frac{t-s}{s-u} \Big)^{1-H}, \\
    \abs{R^2_{v, u, s, t}(a)} \lesssim_{\zeta, \kappa} \Big( \frac{s-u}{t-s} \Big)^{H} \Big( \frac{t-s}{s-v} \Big)^{(2-\kappa)H} 
    + \Big( \frac{t-s}{s-u} \Big)^{1-H} \Big( \frac{t-s}{s-v} \Big)^{(1-\kappa)H}.
    \end{gathered}
  \end{equation*}
  Since $\frac{t-s}{s-v} \leq 1$, these bounds lead to \eqref{eq:R_1_R_2_rough}.
\end{proof}

As a final ingredient in the proof of Lemma~\ref{lem:U_conditioning}, we estimate $\mathbb{E}[U_{s-v, t-v}(\epsilon, \tilde{B})]$. 
\begin{lemma}[Asymptotics on constants]\label{lem:expect_U_convergence}
  For every $\kappa \in (0, 1)$, if $\frac{t-s}{s-u}$ and $\frac{s-u}{u-v}$ are sufficiently small, we have the estimate \rafal{notation $\bar{K}_0^{\zeta}(1, B)$ or $\bar{K}_{0,{\zeta}}(1, B)$}
  \begin{multline*}
    \abs[\Big]{\sqrt{2\pi} \epsilon^{-1} \sigma_{Y+Z} \mathbb{E}[U_{s-v, t-v}(\epsilon, \tilde{B})] 
    - \frac{1}{2} \mathbb{E}[\bar{K}_{0, \zeta}(1, B)]} \\
    \lesssim_{\kappa, \zeta} \Big( \frac{t-s}{s-u} \Big)^{-H} \Big( \frac{t-s}{s-v} \Big)^{(1- \kappa) H} +(t-s)^{-\kappa H} \Big( \frac{t-s}{s-u} \Big)^{1-H}.
  \end{multline*}
\end{lemma}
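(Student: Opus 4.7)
The plan is to apply Lemma~\ref{lem:U_conditioning_rough} pointwise in the level $a\in\mathbb{R}$, integrate in $a$, and compare the result with a layer-cake identity linking $\bar K$ with $U$. The first ingredient is
\[
\bar K_{s,t}(\epsilon,w) = \epsilon^{-1}\int_{\mathbb{R}}\bigl[U_{s,t}(\epsilon,w-a)+D_{s,t}(\epsilon,w-a)\bigr]\,\mathrm{d} a,
\]
which follows by expanding $K_{s,t}(\epsilon,w+\rho)$ as the sum over $k\in\mathbb{Z}$ of up- and downcrossings of the bands $[k\epsilon-\rho,(k+1)\epsilon-\rho]$ inside the definition of $\bar K_{s,t}(\epsilon,w)$ and then changing variables $a=k\epsilon-\rho$. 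Applied to $w=B$, the symmetry $B\dequal -B$ (which makes the $U$- and $D$-integrals coincide after a trivial translation) together with the stationarity/scaling of Lemma~\ref{lem:K-bar-subadditive-and-stationary} collapses this to
\[
\tfrac{\epsilon}{2}\,\mathbb{E}[\bar K_{0,\zeta}(1,B)] = \int_{\mathbb{R}}\mathbb{E}[U_{s,t}(\epsilon,B-a)]\,\mathrm{d} a.
\]

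Next, I take the unconditional expectation of the decomposition \eqref{lemma_316_est} from Lemma~\ref{lem:U_conditioning_rough} and integrate in $a$. Since $X_s(a)=\xi-a$ for a centered Gaussian $\xi$ independent of $a$, a direct Gaussian computation yields the universal identity
\[
\int_{\mathbb{R}}\mathbb{E}\bigl[e^{-X_s(a)^2/(2\tau^2)}\bigr]\,\mathrm{d} a = \sqrt{2\pi}\,\tau, \qquad \tau>0,
\]
independent of $\mathrm{Var}(\xi)$. Applying it with $\tau=\sigma_{Y+Z}$ to the main term and combining with the previous display gives
\[
\tfrac{\epsilon}{2}\,\mathbb{E}[\bar K_{0,\zeta}(1,B)] = \sqrt{2\pi}\,\sigma_{Y+Z}\,\mathbb{E}[U_{s-v,t-v}(\epsilon,\tilde B)] + \int_{\mathbb{R}}\mathbb{E}[R^1_{v,u,s,t}(a)+R^2_{v,u,s,t}(a)]\,\mathrm{d} a,
\]
so that, after dividing by $\epsilon$, the lemma reduces to bounding $\epsilon^{-1}|\int_{\mathbb{R}}\mathbb{E}[R^1+R^2]\,\mathrm{d} a|$ by the asserted right-hand side.

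I would estimate the two remainders separately using \eqref{eq:R_1_precise} and \eqref{eq:R_2_precise}. For $R^1$, Hölder's inequality applied to $\mathbb{E}[U_{s,t}(\epsilon,B-a)]^{1-\kappa}\cdot e^{-ca^2}$, combined with the bound $\int\mathbb{E}[U_{s,t}(\epsilon,B-a)]\,\mathrm{d} a\lesssim_{\zeta}\epsilon$ obtained in the first step and the substitution $\epsilon=((t-s)/\zeta)^H$, produces exactly the summand $(t-s)^{-\kappa H}(\frac{t-s}{s-u})^{1-H}$. For $R^2$, the two Gaussian envelopes $e^{-X_s(a)^2/(2p_1\sigma_{Y+Z}^2)}$ and $e^{-X_s(a)^2/(2\sigma_Y^2)}$ integrate via the universal identity to $\sqrt{2\pi p_1}\,\sigma_{Y+Z}$ and $\sqrt{2\pi}\,\sigma_Y$ respectively; combining these with the prefactors in \eqref{eq:R_2_precise}, dividing by $\epsilon$, and using $\sigma_{Y+Z},\sigma_Y\sim(s-v)^H$ together with $t\le 1$ (Remark~\ref{rem:T_to_one}) to absorb any residual $(s-v)$-factor, I recover the summand $(\frac{t-s}{s-u})^{-H}(\frac{t-s}{s-v})^{(1-\kappa)H}$, provided the auxiliary parameter $p_1\in(1,2)$ of Lemma~\ref{lem:U_conditioning_rough} is taken close to $1/(1-\kappa)$, which is admissible for every $\kappa\in(0,1)$.

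The hard part will be the bookkeeping in this last step: the $R^2$ bound of Lemma~\ref{lem:U_conditioning_rough} involves two Gaussian envelopes with different variances and distinct $\epsilon$-independent prefactors, and matching each of them against the two summands on the claimed right-hand side requires tracking the dependence on $p_1$ and $\kappa$ carefully. The structural feature that makes the strategy work is the universal Gaussian identity above, which is independent of $\mathrm{Var}(\xi)$ and therefore extracts exactly the factor $\sqrt{2\pi}\,\sigma_{Y+Z}$ that cancels the $\sigma_{Y+Z}^{-1}$ appearing in the first term of \eqref{eq:R_2_precise} and delivers the correct main-term coefficient $\tfrac12\,\mathbb{E}[\bar K_{0,\zeta}(1,B)]$.
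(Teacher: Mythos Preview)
Your approach is exactly the paper's: integrate the decomposition of Lemma~\ref{lem:U_conditioning_rough} over $a\in\mathbb{R}$, identify the left-hand side via the layer-cake/symmetry identity $\int_{\mathbb{R}}\mathbb{E}[U_{s,t}(\epsilon,B-a)]\,\mathrm{d} a=\tfrac{\epsilon}{2}\mathbb{E}[\bar K_{0,\zeta}(1,B)]$, and use the Gaussian integral $\int_{\mathbb{R}}e^{-X_s^2/(2\tau^2)}\,\mathrm{d} a=\sqrt{2\pi}\,\tau$ on the main term and on the envelopes in the remainder bounds. One correction to your bookkeeping: the second piece of $R^2$ in \eqref{eq:R_2_precise} (the one carrying $e^{-X_s^2/(2\sigma_Y^2)}(\frac{t-s}{s-u})^{1-H}$) does \emph{not} land in the summand $(\frac{t-s}{s-u})^{-H}(\frac{t-s}{s-v})^{(1-\kappa)H}$; after integrating to $\sqrt{2\pi}\,\sigma_Y\lesssim (s-v)^H$ and dividing by $\epsilon$, it yields $(\frac{t-s}{s-v})^{-\kappa H}(\frac{t-s}{s-u})^{1-H}\le (t-s)^{-\kappa H}(\frac{t-s}{s-u})^{1-H}$ (using $s-v\le 1$), i.e.\ it feeds the \emph{second} summand together with $R^1$.
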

\begin{proof}
  By integrating \eqref{lemma_316_est} \rafal{I changed this, since in the previous version there was a reference to estimate in Lemma 3.7 which we still need to prove} over $\mathbb{R}$ with 
  respect to $a$, we get 
  \begin{multline}\label{eq:U_expect_int}
    \int_{\mathbb{R}} \mathbb{E}[U_{s, t}(\epsilon, B-a)] \mathrm{d} a 
    = \mathbb{E}[U_{s-v, t-v}(\epsilon, \tilde{B})] \mathbb{E}\Big[ \int_{\mathbb{R}} e^{-\frac{1}{2} (\frac{X_s}{\sigma_{Y+Z}})^2} \mathrm{d} a \Big] \\
    + \mathbb{E}\Big[ \int_{\mathbb{R}} R^1_{v, u, s, t}(a) \mathrm{d} a \Big]
    + \mathbb{E}\Big[ \int_{\mathbb{R}} R^2_{v, u, s, t}(a) \mathrm{d} a \Big].
  \end{multline}
  We will estimate each term of \eqref{eq:U_expect_int}.

  By the scaling (Lemma~\ref{lem:U-scaling}), 
  \begin{align*}
    \int_{\mathbb{R}} \mathbb{E}[U_{s, t}(\epsilon, B - a)] \mathrm{d} a 
    &= \int_{\mathbb{R}} \mathbb{E}[U_{\frac{s}{t-s}\zeta, \frac{t}{t-s} \zeta} 
    (1, B - \zeta^{H} (t-s)^{-H} a)] \mathrm{d} a  \\ 
    &= \zeta^{-H} (t-s)^H  \int_{\mathbb{R}} \mathbb{E}[U_{\frac{s}{t-s}\zeta, \frac{t}{t-s} \zeta} (1, B -  a)] \mathrm{d} a   \\ 
    &= \epsilon \int_{\mathbb{R}} \mathbb{E}[U_{\frac{s}{t-s}\zeta, \frac{t}{t-s} \zeta} (1, B -  a)] \mathrm{d} a. 
  \end{align*}
  We recall the downcrossing $D_{s, t}(\epsilon, w)$ from \eqref{eq:def_of_D}. By definition, we have 
  \begin{align*}
    \bar{K}_{s, t}(\epsilon, w) = \int_{\mathbb{R}} \{U_{s, t}(\epsilon, w - a) + D_{s, t}(\epsilon, w - a)\} \mathrm{d} a 
  \end{align*}
  Since 
  $D_{s, t}(\epsilon, w) = U_{s, t}(\epsilon, -w - \epsilon)$
  and $B \dequal - B$, we obtain 
  \begin{align*}
    \int_{\mathbb{R}} \mathbb{E}[U_{\frac{s}{t-s}\zeta, \frac{t}{t-s} \zeta} (1, B -  a)] \mathrm{d} a
    =\int_{\mathbb{R}} \mathbb{E}[D_{\frac{s}{t-s}\zeta, \frac{t}{t-s} \zeta} (1, B -  a)] \mathrm{d} a
    = \frac{1}{2} \mathbb{E}[\bar{K}_{\frac{s}{t-s}\zeta, \frac{t}{t-s}\zeta}(1, B)].
  \end{align*}
  By the stationarity of $\bar{K}$ (Lemma~\ref{lem:K-bar-subadditive-and-stationary}),
  \begin{align*}
    \mathbb{E}[\bar{K}_{\frac{s}{t-s}\zeta, \frac{t}{t-s}\zeta}(1, B)] = \mathbb{E}[\bar{K}_{0, \zeta}(1, B)].
  \end{align*}
  Therefore, 
  \begin{align}\label{eq:U_expect_int_bar_K} 
    \int_{\mathbb{R}} \mathbb{E}[U_{s, t}(\epsilon, B - a)] \mathrm{d} a = \frac{\mathbb{E}[\bar{K}_{0, \zeta}(1, B)]}{2} \epsilon.
  \end{align}

  Recalling how $X$ depends on $a$ from \eqref{eq:def_X_Y_Z}, for any $\sigma >0$
  \begin{equation*}
    \int_{\mathbb{R}} e^{-\frac{1}{2} ( \frac{X_s}{\sigma} )^2} \mathrm{d} a = \sqrt{2 \pi} \sigma ,
  \end{equation*}
  in particular, we have 
  \begin{align}\label{eq:int_X_over_a}
    \int_{\mathbb{R}} e^{-\frac{1}{2} (\frac{X_s}{\sigma_{Y+Z}})^2} \mathrm{d} a = \sqrt{2 \pi} \sigma_{Y+Z}, \quad \int_{\mathbb{R}} e^{-\frac{1}{2} (\frac{X_s}{\sigma_{Y}})^2} \mathrm{d} a = \sqrt{2 \pi} \sigma_{Y}.
  \end{align}
  Combining \eqref{eq:U_expect_int}, \eqref{eq:U_expect_int_bar_K} and \eqref{eq:int_X_over_a}, we obtain 
  \begin{align*}
    \abs[\Big]{\sqrt{2\pi} \epsilon^{-1} \sigma_{Y+Z} \mathbb{E}[U_{s-v, t-v}(\epsilon, \tilde{B})] 
    -  \frac{\mathbb{E}[\bar{K}_{0, \zeta}(1, B)]}{2}} 
    \leq  \epsilon^{-1} \sum_{i = 1, 2} \int_{\mathbb{R}} \norm{R^i_{v, u, s, t}(a)}_{L^1(\mathbb{P})} \mathrm{d} a.
  \end{align*}
  It remains to estimate the right-hand side.

  By \eqref{eq:R_1_precise}, 
  \begin{align*}
    \norm{R^1_{v, u, s, t}(a)}_{L^1(\mathbb{P})}
    \lesssim_{\zeta, \kappa} \mathbb{E}[U_{s, t}(\epsilon, B - a)]^{1-\kappa} e^{-c a^2} \Big( \frac{t-s}{s-u} \Big)^{1-H}.
  \end{align*}
  By Jensen's inequality and \eqref{eq:U_expect_int_bar_K},
  \begin{align*}
    \int_{\mathbb{R}} \mathbb{E}[U_{s,t}(\epsilon, B - a)]^{1-\kappa} e^{-c a^2} \mathrm{d} a 
    \lesssim_{\kappa} \Big( \int_{\mathbb{R}} \mathbb{E}[U_{s,t}(\epsilon, B - a)] e^{-c a^2} \mathrm{d} a  \Big)^{1-\kappa}  
    \lesssim_{\zeta, \kappa} \epsilon^{1-\kappa}.
  \end{align*}
  This gives an estimate for $R^1$. 
  To estimate $R^2$ we use \eqref{eq:R_2_precise} and \eqref{eq:int_X_over_a}, and obtain
  \begin{align*}
    \int_{\mathbb{R}} \mathbb{E} \left| R^2_{v, u, s, t}(a) \right| \mathrm{d} a & = \mathbb{E}  \int_{\mathbb{R}} \left| R^2_{v, u, s, t}(a) \right| \mathrm{d} a \\
   & \lesssim_{\zeta, \kappa} (t-u)^H \Big( \frac{t-s}{s-v} \Big)^{(1-\kappa)H} + \sigma_Y \Big( \frac{t-s}{s-v} \Big)^{(1-\kappa)H} 
    \Big( \frac{t-s}{s-u} \Big)^{1-H}.
  \end{align*}
  Recalling \eqref{eq:def_of_sigma_Y}, we have $\sigma_Y \lesssim (s-v)^H$. Using also $t-u \leq 2 (s-u)$ and $\epsilon = (\frac{t-s}{\zeta})^H$, 
  we get
  \begin{align*}
    \epsilon^{-1} \int_{\mathbb{R}} \norm{R^2_{v, u, s, t}(a)}_{L^1(\mathbb{P})} \mathrm{d} a 
    \lesssim_{\zeta, \kappa} \Big( \frac{t-s}{s-u} \Big)^{-H} 
    \Big( \frac{t-s}{s-v} \Big)^{(1-\kappa)H} +  \Big( \frac{t-s}{s-v} \Big)^{-\kappa H} 
    \Big( \frac{t-s}{s-u} \Big)^{1-H}.
  \end{align*}
  Noting that $( \frac{t-s}{s-v} )^{-\kappa H} \leq (t-s)^{-\kappa H}$ (recall that we assume that $0 \leq v < s < t \le 1$), we conclude the proof.
\end{proof}
\begin{proof}[Proof of Lemma~\ref{lem:U_conditioning}]
  By the bound \eqref{eq:R_1_R_2_rough} in Lemma~\ref{lem:U_conditioning_rough}, we have 
  \begin{multline*}
    \norm[\Big]{U_{s, t}(\epsilon, B - a) -   \frac{\mathbb{E}[\bar{K}_{0, \zeta}(1, B)] \epsilon}{2\sqrt{2 \pi} \sigma_{Y+Z}} 
    e^{-\frac{1}{2} (\frac{X_s}{\sigma_{Y+Z}})^2}  }_{L^p(\mathbb{P})}  \\
    \lesssim_{p, \zeta, \kappa} 
    \norm[\Big]{\Big( \mathbb{E}[U_{s-v, t-v}(\epsilon, \tilde{B})] -  \frac{\mathbb{E}[\bar{K}_{0, \zeta}(1, B)]}{2\sqrt{2 \pi} \sigma_{Y+Z}} \epsilon \Big) e^{-\frac{1}{2} (\frac{X_s}{\sigma_{Y+Z}})^2}}_{L^p(\mathbb{P})} \\
    + \Big( \frac{t-s}{s-u} \Big)^{1-H} + \Big( \frac{t-s}{s-u} \Big)^{-H} \Big( \frac{t-s}{s-v} \Big)^{(2-\kappa)H}
  \end{multline*}
  if $\frac{t-s}{s-u}$ and $\frac{s-u}{u-v}$ are sufficiently small.
  By the bound $\frac{\epsilon}{\sigma_{Y+Z}} \lesssim_{\zeta} (\frac{t-s}{s-v})^H \leq 1$ and Lemma~\ref{lem:expect_U_convergence} we have 
  \begin{multline*}
    \abs[\Big]{\mathbb{E}[U_{s-v, t-v}(\epsilon, \tilde{B})] -  \frac{\mathbb{E}[\bar{K}_{0, \zeta}(1, B)]}{2\sqrt{2 \pi} \sigma_{Y+Z}} \epsilon} \le 
    \abs[\Big]{{\sqrt{2 \pi} \epsilon^{-1} \sigma_{Y+Z}} \mathbb{E}[U_{s-v, t-v}(\epsilon, \tilde{B})] -  \frac{1}{2}\mathbb{E}[\bar{K}_{0, \zeta}(1, B)] }
    \\
    \lesssim_{\zeta, \kappa} 
    (t-s)^{-\kappa H} \Big( \frac{t-s}{s-u} \Big)^{1-H} + \Big( \frac{t-s}{s-u} \Big)^{-H} \Big( \frac{t-s}{s-v} \Big)^{(2-\kappa)H}.
  \end{multline*}
  Therefore, 
  \begin{multline*}
    \norm[\Big]{U_{s, t}(\epsilon, B - a) -   \frac{\mathbb{E}[\bar{K}_{0, \zeta}(1, B)] \epsilon}{2\sqrt{2 \pi} \sigma_{Y+Z}} 
    e^{-\frac{1}{2} (\frac{X_s}{\sigma_{Y+Z}})^2}  }_{L^p(\mathbb{P})}  \\
    \lesssim_{p, \zeta, \kappa} 
    (t-s)^{-\kappa H} \Big( \frac{t-s}{s-u} \Big)^{1-H} + \Big( \frac{t-s}{s-u} \Big)^{-H} \Big( \frac{t-s}{s-v} \Big)^{(2-\kappa)H}
  \end{multline*}
  if $\frac{t-s}{s-u}$ and $\frac{s-u}{u-v}$ are sufficiently small.
  To optimize, we choose $u$ so that 
  \begin{align*}
    \frac{t-s}{s-u} = \Big( \frac{t-s}{s-v} \Big)^{(2-\kappa) H}.
  \end{align*}
  Note that, as $H < 1/2$, the exponent $(2 - \kappa) H$ is less than $1$.
  Therefore, if $\frac{t-s}{s-v}$ is sufficiently small, then 
  $\frac{t-s}{s-u}$ and $\frac{s-u}{u-v}$ are sufficiently small as well.
  This gives the claimed bound.
\end{proof}
\begin{remark}\label{rem:U_bar_conditioning}
  Recall $\bar{U}$ from Notation~\ref{not:U_bar}. 
  Since 
  \begin{align*}
    \int_{\mathbb{R}} \mathbb{E}[\bar{U}_{s, t}(\epsilon, B - a)] \mathrm{d} a = 
    \frac{\mathbb{E}[\bar{K}_{0, \zeta}(1, B)] + 1}{2} \epsilon,
  \end{align*}
  we similarly obtain 
  \begin{align*}
    \mathbb{E} [\bar{U}_{s, t} (\epsilon, B - a) | \mathcal{F}_v] 
    =  \frac{\mathbb{E}[\bar{K}_{0, \zeta}(1, B)] + 1}{2\sqrt{2 \pi} \sigma_{Y+Z}} 
    e^{-\frac{1}{2}(\frac{X_s}{\sigma_{Y+Z}})^2} \epsilon
    + \bar{R}_{v,s,t}
  \end{align*}
  with, provided that $\frac{t-s}{s-v}$ is sufficiently small, 
  \begin{align*}
    \norm{\bar{R}_{v, s, t}}_{L^p(\mathbb{P})}
    \lesssim_{p, \zeta, \kappa} \Big( \frac{t-s}{s-v} \Big)^{(2 - \kappa) H (1-H)} (t-s)^{- \kappa H}.
  \end{align*}
\end{remark}

\subsubsection{Estimates on the local time}
The following is the last technical ingredient for Theorem~\ref{thm:local-time-level-crossing}.
\begin{lemma}[Local time approximation]
  \label{lem:L-germ}Let $H \in (0, 1/2)$. We set
  \begin{align}
    \tilde{A}_{s, t} &\assign \mathbb{E} [\delta_0 (B_s - a) | \mathcal{F}_{s -
    (t - s)}] (t - s) 
    \label{eq:def-of-tilde-A} \\
    &\phantom{\vcentcolon}= \sqrt{\frac{H}{\pi}} e^{- \frac{H}{(t - s)^{2 H}}
    \mathbb{E} [B_s - a| \mathcal{F}_{s - (t - s)}]^2} (t - s)^{1 - H} . \notag
  \end{align}
  Then, there exists a $\delta> 0$ such that for any $p < \infty$ and 
  for any partition $\pi$ of $[0, 1]$,
  \begin{equation*}
     \Big\| L_1 (a) - \sum_{[s, t] \in \pi} \tilde{A}_{s, t} \Big\|_{L^p
     (\mathbb{P})} \lesssim_p | \pi |^{\delta} . 
  \end{equation*}
\end{lemma}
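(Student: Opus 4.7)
The plan is to apply the shifted stochastic sewing lemma (Lemma~\ref{lem:shifted_ssl}) to the two-parameter germ $A_{s,t} := \tilde{A}_{s,t}$ on $[0,1]$, and then to identify the limiting process with $L_{\cdot}(a)$. First, I would verify that the two expressions in \eqref{eq:def-of-tilde-A} coincide. Conditional on $\mathcal{F}_{s-(t-s)}$, the random variable $B_s$ is Gaussian with mean $\mathbb{E}[B_s | \mathcal{F}_{s-(t-s)}]$ and conditional variance $(t-s)^{2H}/(2H)$ (read off from the Mandelbrot--Van Ness kernel $\mathcal{K}$). Evaluating the conditional density at $a$ and multiplying by $(t-s)$ yields the explicit expression, using $1/\sqrt{2\pi\sigma^2} = \sqrt{H/\pi}\,(t-s)^{-H}$ and $1/(2\sigma^2) = H/(t-s)^{2H}$.

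Next, I would verify the two hypotheses of Lemma~\ref{lem:shifted_ssl}, writing $\delta A_{s,u,t} := A_{s,t} - A_{s,u} - A_{u,t}$. For the strong bound, since $0 \leq \tilde{A}_{s,t} \leq \sqrt{H/\pi}\,(t-s)^{1-H}$, the triangle inequality yields $\|\delta A_{s,u,t}\|_{L^p(\mathbb{P})} \lesssim (t-s)^{1-H}$; as $H<1/2$, the exponent $\beta_1 := 1-H$ exceeds $1/2$. For the weak conditional bound, I would take $v$ sufficiently far before $s$ in the sense $t-s \leq M^{-1}(s-v)$ with $M$ large enough that $v \leq 2u-t$. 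The tower property then gives
\begin{equation*}
  \mathbb{E}[A_{s,t} | \mathcal{F}_v] = (t-s)\,p^v_s(a), \qquad \mathbb{E}[A_{s,u} | \mathcal{F}_v] = (u-s)\,p^v_s(a), \qquad \mathbb{E}[A_{u,t} | \mathcal{F}_v] = (t-u)\,p^v_u(a),
\end{equation*}
where $p^v_r(\cdot)$ denotes the Gaussian conditional density of $B_r$ given $\mathcal{F}_v$. Hence
\begin{equation*}
  \mathbb{E}[\delta A_{s,u,t} | \mathcal{F}_v] = (t-u)\,[p^v_s(a) - p^v_u(a)].
\end{equation*}
Using $\partial_r \sigma^2_{r,v} = (r-v)^{2H-1}$ for the conditional variance together with the bound $\|\partial_r \mathbb{E}[B_r | \mathcal{F}_v]\|_{L^p(\mathbb{P})} \lesssim_p (r-v)^{H-1}$ (computed from differentiating the Mandelbrot--Van Ness kernel in its first argument) and Gaussian regularity, I expect to obtain $\|p^v_s(a) - p^v_u(a)\|_{L^p(\mathbb{P})} \lesssim_p (s-v)^{H-1}(u-s)$, and therefore $\|\mathbb{E}[\delta A_{s,u,t} | \mathcal{F}_v]\|_{L^p(\mathbb{P})} \lesssim_p (s-v)^{-(1-H)}(t-s)^2$. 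Setting $\alpha := 1-H$ and $\beta_2 := 2$, one has $\beta_2 > 1$ and $2(\beta_2 - \alpha) = 2(1+H) > 1$, so the SSL hypotheses are satisfied.

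The SSL then produces a unique adapted process $(\mathcal{A}_t)_{t\in[0,1]}$ with $\mathcal{A}_0=0$ and $\|\mathcal{A}_1 - \sum_{[s,t]\in\pi} A_{s,t}\|_{L^p(\mathbb{P})} \lesssim_p |\pi|^\delta$ for some $\delta>0$. To identify $\mathcal{A}_1$ with $L_1(a)$, I would observe that for every $f \in C_c(\mathbb{R})$,
\begin{equation*}
  \int_{\mathbb{R}} f(a)\,A_{s,t}\,da = (t-s)\,\mathbb{E}[f(B_s) | \mathcal{F}_{s-(t-s)}],
\end{equation*}
whose Riemann sums over $\pi$ converge in $L^p$ to $\int_0^1 f(B_r)\,dr$ by martingale convergence combined with a standard Riemann-sum argument. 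Combined with the occupation density formula and joint continuity of $a \mapsto L_1(a)$ (together with continuity of $a \mapsto \mathcal{A}_1(a)$, which follows from the uniformity of the SSL bounds in $a$), this forces $\mathcal{A}_1(a) = L_1(a)$.

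The main obstacle is the weak conditional bound: controlling $\|p^v_s(a) - p^v_u(a)\|_{L^p(\mathbb{P})}$ uniformly in $a \in \mathbb{R}$ requires combining pointwise Gaussian regularity in the parameters of the density with Fernique-type tail bounds on the random mean $\mathbb{E}[B_r | \mathcal{F}_v]$, since the density has random parameters. Ensuring that the resulting constant does not depend on $a$ (so that the approximation rate $|\pi|^\delta$ is genuinely uniform) is the most delicate point.
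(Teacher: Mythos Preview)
Your approach is sound and close to the paper's, with one organizational difference and one exponent that needs correcting.

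The paper applies the shifted SSL not to $\tilde A$ alone but to the germ $A_{s,t}:=L_{s,t}(a)-\tilde A_{s,t}$, so that (by Remark~\ref{rem:how_to_use_ssl}) the limiting process is identically zero and no separate identification step is needed. Concretely, they check $\|L_{s,t}(a)\|_{L^p}\lesssim_p(t-s)^{1-H}$ (standard, via ordinary stochastic sewing) and $\|\tilde A_{s,t}\|_{L^p}\lesssim_p(t-s)^{1-H}$, and then focus entirely on
\[
\big\|\mathbb{E}[L_{s,t}(a)-\tilde A_{s,t}\mid\mathcal F_v]\big\|_{L^p(\mathbb P)}\lesssim_p (s-v)^{-1-H}(t-s)^2.
\]
Since $\mathbb{E}[L_{s,t}(a)\mid\mathcal F_v]=\int_s^t p^v_r(a)\,\mathrm{d}r$ and (by towering, when $v\le s-(t-s)$) $\mathbb{E}[\tilde A_{s,t}\mid\mathcal F_v]=(t-s)\,p^v_s(a)$, this reduces to bounding $\int_s^t|p^v_r(a)-p^v_s(a)|\,\mathrm{d}r$, which is precisely your density-difference computation. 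So the analytic core is identical; the paper's packaging simply avoids your test-function identification step and the continuity-in-$a$ issue you flag at the end.

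One correction: your claimed bound $\|p^v_s(a)-p^v_u(a)\|_{L^p}\lesssim_p(s-v)^{H-1}(u-s)$ is too optimistic. Differentiating the Gaussian density $p^v_r(a)$ in $r$ produces, for both the variance contribution $\partial_r\sigma_r\sim(r-v)^{H-1}$ and the mean contribution $\|\partial_r\mu_r\|_{L^p}\lesssim(r-v)^{H-1}$, an additional factor $\sigma_r^{-2}\sim(s-v)^{-2H}$ coming from the parameter-gradient of the density. The correct bound is therefore $(s-v)^{-1-H}(u-s)$, i.e.\ $\alpha=1+H$ rather than $1-H$. This is harmless for the conclusion: with $\beta_2=2$ one still has $2(\beta_2-\alpha)=2(1-H)>1$ precisely because $H<1/2$, so the SSL hypotheses remain satisfied with the corrected exponent.
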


\begin{proof}
  We write $L_{s, t}(a) \defby L_t(a) - L_s(a)$.
  We use the shifted stochastic sewing (Lemma~\ref{lem:shifted_ssl}). To this end, it suffices to check
  \begin{equation}
    \| L_{s, t} (a) \|_{L^p (\mathbb{P})} \lesssim_p (t - s)^{1 - H}, \quad \|
    \tilde{A}_{s, t} \|_{L^p (\mathbb{P})} \lesssim_p (t - s)^{1 - H}
    \label{eq:L-and-A-tilde}
  \end{equation}
  and
  \begin{equation}
    \| \mathbb{E} [L_{s, t} (a) - \tilde{A}_{s, t} | \mathcal{F}_v] \|_{L^p
    (\mathbb{P})} \lesssim_p (s - v)^{- 1 - H} (t - s)^2, \quad t-s \leq s-v. \label{eq:L-tilde-A-conditioning}
  \end{equation}
  \rafal[inline]{I am not sure if in the above relations one should not have $\sim_p$ instead of $\sim$. TM: Yes, it depends on $p$}
  The estimate for $L$ in {\eqref{eq:L-and-A-tilde}} is well known and can be shown for example by (non-shifted) stochastic sewing with $\Xi_{u,v} = \mathbb{E}[\int_u^v \delta(B_r-a)\mathrm{d}r|\mathcal F_u]$, and the estimate for $\tilde A$ in {\eqref{eq:L-and-A-tilde}} is not difficult to show. Hence, we focus on the
  estimate {\eqref{eq:L-tilde-A-conditioning}}. In \cite[Lemma~4.7]{matsuda22}, an 
  estimate similar to {\eqref{eq:L-tilde-A-conditioning}} is obtained, but the exponents therein
  depend on $p$. We slightly improve the argument.
  
  We have \rafal{Some reference or explanation where does these relationships (also (45)) come from would be desirable, $L_{s,t}$ needs to be defined}
  \begin{multline*}
   \mathbb{E} [L_{s, t} (a) - \tilde{A}_{s, t} | \mathcal{F}_v] 
   =
   \sqrt{\frac{H}{\pi}} \int_s^t \Big\{ e^{- \frac{H}{(r - v)^{2 H}}
   \mathbb{E} [B_r - a| \mathcal{F}_v]^2} (r - v)^{- H} \\
   - e^{- \frac{H}{(s -
   v)^{2 H}} \mathbb{E} [B_s - a| \mathcal{F}_v]^2} (s - v)^{- H} \Big\}
   \mathrm{d} r. 
  \end{multline*}
  For simplification, we replace $B - a$ by $B$. We decompose the integrand as
  $R_1 + R_2 + R_3$, where
  \begin{equation*}
     R_1 \assign e^{- \frac{H}{(r - v)^{2 H}} \mathbb{E} [B_r |
     \mathcal{F}_v]^2} (r - v)^{- H} - e^{- \frac{H}{(r - v)^{2 H}} \mathbb{E}
     [B_r | \mathcal{F}_v]^2} (s - v)^{- H}, 
  \end{equation*}
  \begin{equation*}
     R_2 \assign e^{- \frac{H}{(r - v)^{2 H}} \mathbb{E} [B_r |
     \mathcal{F}_v]^2} (s - v)^{- H} - e^{- \frac{H}{(s - v)^{2 H}} \mathbb{E}
     [B_r | \mathcal{F}_v]^2} (s - v)^{- H}, 
  \end{equation*}
  \begin{equation*}
     R_3 \assign e^{- \frac{H}{(s - v)^{2 H}} \mathbb{E} [B_r |
     \mathcal{F}_v]^2} (s - v)^{- H} - e^{- \frac{H}{(s - v)^{2 H}} \mathbb{E}
     [B_s | \mathcal{F}_v]^2} (s - v)^{- H} . 
  \end{equation*}
  To obtain \eqref{eq:L-tilde-A-conditioning}, it suffices to show 
  \begin{align*}
    \norm{R_1}_{L^p(\mathbb{P})} + \norm{R_2}_{L^p(\mathbb{P})} + \norm{R_3}_{L^p(\mathbb{P})}
    \lesssim_p (s-v)^{-1-H} (t-s).
  \end{align*}

  Since
  \begin{equation*}
     0 \leq (s - v)^{- H} - (r - v)^{- H} \lesssim (s - v)^{- H - 1} (r - s),
  \end{equation*}
  we have
  \begin{equation*}
     | R_1 | \lesssim (s - v)^{- H - 1} (t - s) . 
  \end{equation*}
  We observe
  \begin{align*}
   \MoveEqLeft[3]
    e^{- \frac{H}{(r - v)^{2 H}} \mathbb{E} [B_r | \mathcal{F}_v]^2} - e^{-
    \frac{H}{(s - v)^{2 H}} \mathbb{E} [B_r | \mathcal{F}_v]^2} \\
    & =  e^{-
    \frac{H}{(r - v)^{2 H}} \mathbb{E} [B_r | \mathcal{F}_v]^2} (1 - e^{- H
    ((s - v)^{- 2 H} - (r - v)^{- 2 H}) \mathbb{E} [B_r | \mathcal{F}_v]^2})\\
    & \lesssim  e^{- \frac{H}{(r - v)^{2 H}} \mathbb{E} [B_r |
    \mathcal{F}_v]^2} \mathbb{E} [B_r | \mathcal{F}_v]^2 ((s - v)^{- 2 H} - (r
    - v)^{- 2 H})\\
    & \lesssim  e^{- \frac{H}{(r - v)^{2 H}} \mathbb{E} [B_r |
    \mathcal{F}_v]^2} \mathbb{E} [B_r | \mathcal{F}_v]^2 (s - v)^{- 2 H - 1}
    (r - s)\\
    & \lesssim  (r - v)^{2 H} (s - v)^{- 2 H - 1} (r - s) \\
    & \lesssim (s-v)^{-1}(t-s),
  \end{align*}
  where in the last step we used that $(r-v)\leq (t-s) + (s-v) \leq 2(s-v)$. Hence,
  \begin{equation*}
     | R_2 | \lesssim (s - v)^{- 1 - H} (t - s) . 
  \end{equation*}
  Finally, we estimate $R_3$. Suppose that $\mathbb{E} [B_r | \mathcal{F}_v]^2
  \leq \mathbb{E} [B_s | \mathcal{F}_v]^2$. Then,
  \begin{multline*}
   \Big| e^{- \frac{H}{(s - v)^{2 H}} \mathbb{E} [B_r | \mathcal{F}_v]^2}
     - e^{- \frac{H}{(s - v)^{2 H}} \mathbb{E} [B_s | \mathcal{F}_v]^2}
     \Big| \\
     \leq e^{- \frac{H}{(s - v)^{2 H}} \mathbb{E} [B_r |
     \mathcal{F}_v]^2} \frac{H}{(s - v)^{2 H}} (\mathbb{E} [B_s |
     \mathcal{F}_v]^2 -\mathbb{E} [B_r | \mathcal{F}_v]^2) . 
  \end{multline*}
  Since
  \begin{align*}
    \mathbb{E}[B_s \vert \mathcal{F}_v]^2 - \mathbb{E}[B_r \vert \mathcal{F}_v]^2
    = 2 \mathbb{E}[B_r \vert \mathcal{F}_v] (\mathbb{E}[B_s \vert \mathcal{F}_v] - \mathbb{E}[B_r \vert \mathcal{F}_v]) 
    + (\mathbb{E}[B_s \vert \mathcal{F}_v] - \mathbb{E}[B_r \vert \mathcal{F}_v])^2
  \end{align*}
  and
  \begin{equation*}
     e^{- \frac{H}{(s - v)^{2 H}} \mathbb{E} [B_r | \mathcal{F}_v]^2} (s -
     v)^{- H} | \mathbb{E} [B_r | \mathcal{F}_v] | \lesssim 1, 
  \end{equation*}
  we obtain
  \begin{multline*}
   \Big| e^{- \frac{H}{(s - v)^{2 H}} \mathbb{E} [B_r | \mathcal{F}_v]^2} -
     e^{- \frac{H}{(s - v)^{2 H}} \mathbb{E} [B_s | \mathcal{F}_v]^2} \Big| \\
     \lesssim (s - v)^{- H} | \mathbb{E} [B_s | \mathcal{F}_v] -\mathbb{E}
     [B_r | \mathcal{F}_v] | + (s - v)^{- 2 H} | \mathbb{E} [B_s |
     \mathcal{F}_v] -\mathbb{E} [B_r | \mathcal{F}_v] |^2 . 
  \end{multline*}
  A similar estimate holds if $\mathbb{E} [B_r | \mathcal{F}_v]^2 \geq
  \mathbb{E} [B_s | \mathcal{F}_v]^2$. Therefore, it remains to note
  \begin{equation*}
     \| \mathbb{E} [B_s | \mathcal{F}_v] -\mathbb{E} [B_r | \mathcal{F}_v]
     \|_{L^p (\mathbb{P})} \lesssim_p (s - v)^{H - 1} (t - s) . \qedhere 
  \end{equation*}
\end{proof}

\subsubsection{Concluding estimates}
Now we can finish the proof of Theorem~\ref{thm:local-time-level-crossing}.
Recall from Remark~\ref{rem:T_to_one} that we can set $T = 1$.
Let $\pi$ be a partition of $[0, 1]$.
By Lemma \ref{lem:U-additivity}, 
\begin{equation}
  \epsilon^{\frac{1}{H} - 1} U_{0, 1} (\epsilon, B - a) 
  \geq \sum_{[s, t] \in \pi} \epsilon^{\frac{1}{H} - 1} U_{s, t}(\epsilon, B - a), \label{eq:U-superadditive}
\end{equation}
\begin{equation}
  \epsilon^{\frac{1}{H} - 1} U_{0, 1} (\epsilon, B - a) 
  \leq \sum_{[s, t] \in \pi} \epsilon^{\frac{1}{H} - 1} \bar{U}_{s, t}(\epsilon, B - a). \label{eq:U-subadditive}
\end{equation}
Here and henceforth, $\epsilon$ is an independent parameter; unlike Subsection~\ref{subsubsec:level_crossings}, we do not 
assume the relation \eqref{eps_sect_3}.
\begin{lemma}[Lower bound on $U$]
  \label{lem:U-lower-bound}  
  Let $H \in (0, 1/2)$, $p \in [2, \infty)$, $\epsilon \in (0, 1)$ and $\zeta \in [1, \infty)$.
  Then, we have
  \begin{equation*}
     \epsilon^{\frac{1}{H} - 1} U_{0, 1} (\epsilon, B-a) 
     \geq \frac{1}{2 \zeta} \mathbb{E}[\bar{K}_{0, \zeta}(1, B)] L_1(a) - R_{\epsilon}, 
  \end{equation*}
  where for some $\delta$ depending only on $H$ we have
  \begin{equation*}
   \| R_{\epsilon} \|_{L^p (\mathbb{P})} \lesssim_{p, \zeta} \epsilon^{\delta}.  
  \end{equation*}
\end{lemma}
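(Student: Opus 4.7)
The plan is to combine the superadditivity \eqref{eq:U-superadditive} with the shifted stochastic sewing estimate of Remark~\ref{rem:how_to_use_ssl}, using Lemma~\ref{lem:U_conditioning} to identify $\epsilon^{1/H-1}U_{s,t}(\epsilon,B-a)$ with the local-time germ $\tilde{A}_{s,t}$ of Lemma~\ref{lem:L-germ}. I would fix the uniform partition $\pi$ of $[0,1]$ with mesh $h \assign \zeta\epsilon^{1/H}$ (adjusting the last interval if necessary), so that the scaling relation \eqref{eps_sect_3} used throughout Section~\ref{subsubsec:level_crossings} holds on each $[s,t]\in\pi$, and define
\begin{equation*}
    A_{s,t} \assign \epsilon^{\frac{1}{H}-1} U_{s,t}(\epsilon, B-a) - \frac{\mathbb{E}[\bar{K}_{0,\zeta}(1,B)]}{2\zeta}\,\tilde{A}_{s,t}.
\end{equation*}
Then \eqref{eq:U-superadditive} rewrites as
\begin{equation*}
    \epsilon^{\frac{1}{H}-1} U_{0,1}(\epsilon, B-a) \geq \frac{\mathbb{E}[\bar{K}_{0,\zeta}(1,B)]}{2\zeta} L_1(a) + \frac{\mathbb{E}[\bar{K}_{0,\zeta}(1,B)]}{2\zeta}\Bigl(\sum_{[s,t]\in\pi} \tilde{A}_{s,t} - L_1(a)\Bigr) + \sum_{[s,t]\in\pi} A_{s,t},
\end{equation*}
reducing the task to controlling the two error sums in $L^p(\mathbb{P})$ by a small positive power of $\epsilon$.

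The strong bound $\|A_{s,t}\|_{L^p(\mathbb{P})}\lesssim_{p,\zeta}(t-s)^{1-H}$ follows from the scaling of $U$ (Lemma~\ref{lem:U-scaling}, Lemma~\ref{lem:U_uniform_bound}) together with the direct $L^p$ estimate on $\tilde{A}_{s,t}$. For the weak bound, the decisive observation is that the Gaussian factor $\frac{1}{\sqrt{2\pi}\sigma_{Y+Z}}e^{-X_s^2/(2\sigma_{Y+Z}^2)}$ appearing in \eqref{eq:U_conditioning_asymptotic} is precisely the conditional density of $B_s-a$ at $0$ given $\mathcal{F}_v$, while $\tilde{A}_{s,t}$ itself equals $(t-s)\,\mathbb{E}[\delta_0(B_s-a)|\mathcal{F}_{s-(t-s)}]$. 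Consequently, for $v\leq s-(t-s)$ the tower property gives $\mathbb{E}[\tilde{A}_{s,t}|\mathcal{F}_v] = (t-s)\,\mathbb{E}[\delta_0(B_s-a)|\mathcal{F}_v]$, and the identity $\epsilon^{1/H} = (t-s)/\zeta$ makes the leading terms in $\epsilon^{1/H-1}\mathbb{E}[U_{s,t}(\epsilon,B-a)|\mathcal{F}_v]$ and in $\frac{\mathbb{E}[\bar{K}_{0,\zeta}(1,B)]}{2\zeta}\mathbb{E}[\tilde{A}_{s,t}|\mathcal{F}_v]$ agree on the nose. Therefore $\mathbb{E}[A_{s,t}|\mathcal{F}_v] = \epsilon^{1/H-1} R_{v,s,t}$ with $R_{v,s,t}$ from Lemma~\ref{lem:U_conditioning}, and inserting \eqref{eq:R_estimate} together with $\epsilon^{1/H-1}\lesssim_\zeta (t-s)^{1-H}$ produces a weak bound of the form $\|\mathbb{E}[A_{s,t}|\mathcal{F}_v]\|_{L^p(\mathbb{P})}\lesssim_{p,\zeta,\kappa}(s-v)^{-\alpha}(t-s)^{\beta_2}$ with $\alpha = (2-\kappa)H(1-H)$ and $\beta_2 - \alpha = 1 - H(1+\kappa)$.

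Since $H<1/2$, the shifted SSL exponent condition $\min\{2(1-H),\, 2(1-H(1+\kappa)),\, \beta_2\}>1$ is satisfied once $\kappa>0$ is chosen sufficiently small, and Remark~\ref{rem:how_to_use_ssl} then yields $\|\sum_{[s,t]\in\pi} A_{s,t}\|_{L^p(\mathbb{P})}\lesssim_{p,\zeta} h^{\delta'}\lesssim_\zeta \epsilon^{\delta'/H}$ for some $\delta'=\delta'(H)>0$. Combining this with the Lemma~\ref{lem:L-germ} bound $\|\sum_\pi \tilde{A}_{s,t} - L_1(a)\|_{L^p(\mathbb{P})}\lesssim_p h^\delta$ produces the claim with $R_\epsilon$ equal to the sum of the two remainders, using that $\mathbb{E}[\bar{K}_{0,\zeta}(1,B)]/(2\zeta)$ is bounded in $\zeta$. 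The main obstacle is the exact matching of the two conditional expectations above; this pairing is dictated by Lemma~\ref{lem:U_conditioning} being sharp enough to align the level-crossing germ with the natural local-time germ $\tilde{A}_{s,t}$, and it is precisely the strict inequality $H<1/2$ that makes the SSL exponent condition feasible.
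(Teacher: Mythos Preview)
Your strategy is exactly the paper's: superadditivity \eqref{eq:U-superadditive}, then shifted stochastic sewing applied to the difference between the upcrossing germ and the local-time germ $\tilde{A}_{s,t}$, using Lemma~\ref{lem:U_conditioning} to match the leading conditional expectations and Lemma~\ref{lem:L-germ} for the local-time approximation. The exponent check you sketch is the same one carried out in the paper.

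There is one technical imprecision worth fixing. You define the sewing germ with $\epsilon$ held \emph{fixed}, i.e.\ $A_{s,t}=\epsilon^{1/H-1}U_{s,t}(\epsilon,B-a)-\tfrac{\mathbb{E}[\bar K_{0,\zeta}(1,B)]}{2\zeta}\tilde A_{s,t}$, and then claim the strong bound $\|A_{s,t}\|_{L^p}\lesssim_{p,\zeta}(t-s)^{1-H}$. This bound does \emph{not} hold uniformly in $s<t$ when $\epsilon$ is fixed: for $t-s\ll \epsilon^{1/H}$ the upcrossing term is $O_{L^p}(\epsilon^{1/H-1})$, not $O_{L^p}((t-s)^{1-H})$, so the SSL hypotheses fail off the chosen partition. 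The remedy (and this is precisely what the paper does) is to let the germ carry its own scale, setting $\hat A_{s,t}\assign U_{s,t}\bigl((\tfrac{t-s}{\zeta})^{H},B-a\bigr)(\tfrac{t-s}{\zeta})^{1-H}$; then the strong bound and the conditional bound from Lemma~\ref{lem:U_conditioning} hold for \emph{all} $s<t$, Lemma~\ref{lem:shifted_ssl} applies, and on the specific partition of mesh $\zeta\epsilon^{1/H}$ the germ $\hat A_{s,t}$ coincides with your $\epsilon^{1/H-1}U_{s,t}(\epsilon,B-a)$, recovering your decomposition. With this adjustment the argument is complete and identical to the paper's.
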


\begin{proof}
  We define $\tilde{A}$ by \eqref{eq:def-of-tilde-A}, and we set 
  \begin{align*}
    \hat{A}_{s, t} 
    \defby U_{s, t}(\zeta^{-H} (t-s)^H, B - a) \Big( \frac{t-s}{\zeta} \Big)^{1 - H}.
  \end{align*}
  By Lemma~\ref{lem:U_uniform_bound}, we have 
  \begin{align*}
    \norm{\hat{A}_{s, t}}_{L^p(\mathbb{P})} \lesssim (t-s)^{1 - H}.
  \end{align*}
  By Lemma~\ref{lem:U_conditioning}, 
  \begin{align*}
    \mathbb{E}[\hat{A}_{s,t} \vert \mathcal{F}_v]
    & = \frac{1}{2 \zeta} \mathbb{E}[\bar{K}_{0, \zeta}(1, B)] \sqrt{\frac{H}{\pi (s -v)^{2H}}} 
    e^{-\frac{H \mathbb{E}[B_s \vert \mathcal{F}_v]^2}{(s - v)^{2 H}}} (t-s) + R_{v, s, t} \\
    &= \frac{1}{2 \zeta} \mathbb{E}[\bar{K}_{0, \zeta}(1, B)] \mathbb{E}[\tilde{A}_{s,t} \vert \mathcal{F}_v]
    + R_{v, s, t},
  \end{align*}
  where 
  \begin{align*}
    \norm{R_{v,s,t}}_{L^p(\mathbb{P})}
    \lesssim_{p, \zeta, \kappa} \Big( \frac{t-s}{s-v} \Big)^{(2 - \kappa)H(1-H)} (t-s)^{1 - (1+\kappa) H}.
  \end{align*}
  for any $\kappa \in (0, 1)$.
  Since $H < 1/2$, choosing $\kappa$ sufficiently small, we can suppose that 
  \begin{align*}
    1 - (1 + \kappa) H > \frac{1}{2}, \quad 1 - (1 + \kappa) H + (2 - \kappa)H(1-H) > 1.
  \end{align*}
  Hence, by Lemma~\ref{lem:shifted_ssl}, with some $\delta = \delta(H)$,
  \begin{align*}
    \norm[\Big]{\sum_{[s, t] \in \pi} \Big( \hat{A}_{s,t} - \frac{1}{2 \zeta} \mathbb{E}[\bar{K}_{0, \zeta}(1, B)] \tilde{A}_{s,t} \Big)  }_{L^p(\mathbb{P})}
    \lesssim_{p, \zeta} \abs{\pi}^{\delta}.
  \end{align*}
  In particular, considering a partition of size $\zeta \epsilon^{\frac{1}{H}}$,
  the claim follows in view of \eqref{eq:U-superadditive} and Lemma~\ref{lem:L-germ}.
\end{proof}
\begin{lemma}[Upper bound on $U$]
  \label{lem:U-upper-bound}  
  Let $H \in (0, 1/2)$, $p \in [2, \infty)$, $\epsilon \in (0, 1)$ and $\zeta \in [1, \infty)$.
  Then, we have
  \begin{equation*}
     \epsilon^{\frac{1}{H} - 1} U_{0, 1} (\epsilon, B-a) 
     \leq \frac{1}{2 \zeta} (\mathbb{E}[\bar{K}_{0, \zeta}(1, B)] + 1) L_1(a) + \bar{R}_{\epsilon}, 
  \end{equation*}
  where for some $\delta$ depending only on $H$ we have
  \begin{equation*}
   \| \bar{R}_{\epsilon} \|_{L^p (\mathbb{P})} \lesssim_{p, \zeta} \epsilon^{\delta}.  
  \end{equation*}
\end{lemma}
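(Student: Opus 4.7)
The plan is to repeat verbatim the argument of Lemma~\ref{lem:U-lower-bound}, exchanging $U$ for $\bar{U}$ throughout and using the subadditivity \eqref{eq:U-subadditive} in place of the superadditivity \eqref{eq:U-superadditive}. Concretely, I shall introduce
\[
\hat{A}_{s,t} \defby \bar{U}_{s,t}\bigl(\zeta^{-H}(t-s)^H, B - a\bigr) \Bigl(\frac{t-s}{\zeta}\Bigr)^{1-H},
\]
derive the moment bound $\|\hat{A}_{s,t}\|_{L^p(\mathbb{P})} \lesssim_{p,\zeta} (t-s)^{1-H}$ from Lemma~\ref{lem:U_uniform_bound} together with the trivial estimate $\bar{U}_{s,t}(\epsilon, w) \leq U_{s,t}(\epsilon, w) + 1$, and then compare the conditional expectation $\mathbb{E}[\hat{A}_{s,t}\vert\mathcal{F}_v]$ with the local-time germ $\mathbb{E}[\tilde{A}_{s,t}\vert\mathcal{F}_v]$ from Lemma~\ref{lem:L-germ}.

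The conditional estimate is supplied by Remark~\ref{rem:U_bar_conditioning}, the $\bar{U}$-analogue of Lemma~\ref{lem:U_conditioning}, which gives
\[
\mathbb{E}[\hat{A}_{s,t}\vert\mathcal{F}_v] = \frac{\mathbb{E}[\bar{K}_{0,\zeta}(1,B)]+1}{2\zeta}\,\mathbb{E}[\tilde{A}_{s,t}\vert\mathcal{F}_v] + R_{v,s,t},
\]
with the same quantitative remainder bound as in the proof of Lemma~\ref{lem:U-lower-bound}. The extra $+1$ in the leading constant is exactly what matches the statement of Lemma~\ref{lem:U-upper-bound} and is dictated by the integral identity $\int_{\mathbb{R}}\mathbb{E}[\bar{U}_{s,t}(\epsilon, B-a)]\,\mathrm{d} a = \tfrac{(\mathbb{E}[\bar{K}_{0,\zeta}(1,B)]+1)\epsilon}{2}$ recorded in Remark~\ref{rem:U_bar_conditioning}. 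Since $H < 1/2$, I may pick $\kappa \in (0,1)$ small enough to guarantee both $2\bigl(1-(1+\kappa)H\bigr) > 1$ and $1-(1+\kappa)H + (2-\kappa)H(1-H) > 1$, so the hypotheses of the shifted stochastic sewing lemma (Lemma~\ref{lem:shifted_ssl}) hold.

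Applying Lemma~\ref{lem:shifted_ssl} then yields $\delta = \delta(H) > 0$ with
\[
\Bigl\|\sum_{[s,t]\in\pi}\Bigl(\hat{A}_{s,t} - \frac{\mathbb{E}[\bar{K}_{0,\zeta}(1,B)]+1}{2\zeta}\,\tilde{A}_{s,t}\Bigr)\Bigr\|_{L^p(\mathbb{P})} \lesssim_{p,\zeta} |\pi|^{\delta}
\]
for every partition $\pi$ of $[0,1]$. I will then specialize to the uniform partition of mesh $\zeta\epsilon^{1/H}$: by \eqref{eq:U-subadditive}, the Riemann sum $\sum\hat{A}_{s,t}$ dominates $\epsilon^{1/H-1} U_{0,1}(\epsilon, B-a)$, while Lemma~\ref{lem:L-germ} allows me to replace $\sum\tilde{A}_{s,t}$ by $L_1(a)$ at the cost of an additional $O(|\pi|^\delta)$ error in $L^p(\mathbb{P})$. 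The two remainders combine into a single $\bar{R}_\epsilon$ satisfying $\|\bar{R}_\epsilon\|_{L^p(\mathbb{P})} \lesssim_{p,\zeta} \epsilon^{\delta/H}$, which after relabelling of $\delta$ gives the stated bound.

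The only genuinely new input is Remark~\ref{rem:U_bar_conditioning}, and I expect this to be the main point requiring verification, though it is essentially cosmetic. Indeed, $\bar{U}_{s,t}(\epsilon, w) - U_{s,t}(\epsilon, w) = \indic_{\{w_s \in (0,\epsilon)\}}$ is $\sigma(B_s)$-measurable, and a direct Gaussian computation, entirely parallel to the steps leading to Lemma~\ref{lem:U_conditioning}, shows that its conditional expectation given $\mathcal{F}_v$ contributes precisely $\epsilon\bigl(2\sqrt{2\pi}\sigma_{Y+Z}\bigr)^{-1} e^{-X_s^2/(2\sigma_{Y+Z}^2)}$, up to error terms of the same order as $R_{v,s,t}$ in Lemma~\ref{lem:U_conditioning}. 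The appearance of the $+1$ is thus transparent from the integral identity above, and the remainder of the argument is a mechanical transcription of the lower-bound proof.
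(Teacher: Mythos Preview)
Your proposal is correct and follows exactly the approach the paper takes: the paper's own proof consists of a single sentence pointing to Remark~\ref{rem:U_bar_conditioning} and \eqref{eq:U-subadditive} and saying the argument is the same as in Lemma~\ref{lem:U-lower-bound}. Your write-up simply unpacks that sentence in full detail, and your identification of Remark~\ref{rem:U_bar_conditioning} as the only new ingredient (yielding the extra $+1$) matches the paper precisely.
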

\begin{proof}
  In view of Remark~\ref{rem:U_bar_conditioning} and {\eqref{eq:U-subadditive}}, the proof is similar to Lemma~\ref{lem:U-lower-bound}.
\end{proof}

\begin{proof}[Proof of Theorem~\ref{thm:local-time-level-crossing}]
  It readily follows from Lemma~\ref{lem:U-lower-bound}, Lemma~\ref{lem:U-upper-bound} 
  and the estimate \eqref{eq:c_H_convergence_bound}.
\end{proof}

\subsection{Uniform convergence}\label{subsec:lemieux}
A naive application of Theorem~\ref{thm:local-time-level-crossing} yields that, by the Borel--Cantelli lemma, for any $a \in \mathbb{R}$ and for
any $\bm{\epsilon} = (\epsilon_n)_{n=1}^{\infty}$ with polynomial decay, there exists 
a measurable set $\Omega_{a, \bm{\epsilon}}$ such that 
$\mathbb{P}(B \in \Omega_{a, \bm{\epsilon}}) = 1$ and for every $w \in \Omega_{a, \bm{\epsilon}}$ 
the limit 
\begin{align*}
  \lim_{n \to \infty} \epsilon_n^{\frac{1}{H} - 1} U_{0, t}(\epsilon_n, w - a)
\end{align*} 
exists for every $t \geq 0$.
However, as observed by Chacon et al. \cite{chacon1981}, the quantitative estimate in 
Theorem~\ref{thm:local-time-level-crossing} implies more strongly that we can take $\Omega_{a, \bm{\epsilon}}$ 
uniformly over $a$ and $\bm{\epsilon}$.
Furthermore, we can remove the polynomial decaying condition. 

The arguments below are essentially given in \cite{chacon1981} and \cite{Lemieux_1983}, but we repeat them for the reader's convenience.
We begin with the following lemma. 
\begin{lemma}[Uniform convergence over grids]\label{lem:U_local_time_R_k_bound}
  Let $H \in (0, 1/2)$ and $t \in (0, \infty)$.
  We define the grid
  \begin{align*}
  G_k:= \{ i k^{-7} : i\in\mathbb{Z} , |i|\leq k^8 \}, \quad k \in \mathbb{N}\cup\{0\}.
  \end{align*}
  We then have 
  \begin{align*}
    \lim_{k \to \infty} 
    \max_{x \in G_k} \abs[\big]{k^{-6(\frac{1}{H}-1)} U_{0,t}(k^{-6},B-x) - \frac{\mathfrak{c}_H}{2} L_t(x)} = 0 \quad \text{almost surely.}
  \end{align*}
\end{lemma}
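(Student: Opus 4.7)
The plan is to combine the quantitative bound from Theorem~\ref{thm:local-time-level-crossing} with a Markov--union bound argument over the finite grid $G_k$, then send an auxiliary parameter to infinity to eliminate the $L_t(x)$ term.

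First, apply Theorem~\ref{thm:local-time-level-crossing} with $T = t$, $\epsilon = k^{-6}$, and $a = x \in G_k$. For each $\zeta > 1$ this yields
\begin{equation*}
\max_{x \in G_k}\Big| k^{-6(\tfrac{1}{H}-1)} U_{0,t}(k^{-6}, B - x) - \tfrac{\mathfrak{c}_H}{2} L_t(x)\Big| \leq \zeta^{-1} \sup_{x \in \mathbb{R}} L_t(x) + \max_{x \in G_k} |\mathcal{R}_{k^{-6}, \zeta, t, x}|,
\end{equation*}
where, crucially, $\|\mathcal{R}_{k^{-6}, \zeta, t, x}\|_{L^p(\mathbb{P})} \leq C_{p, \zeta, t}\, k^{-6\kappa}$ uniformly in $x$, for any $p \in (0, \infty)$ and some fixed $\kappa > 0$ depending only on $H$.

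Next, I would control the maximum over $G_k$ by a Markov--union bound: since $|G_k| \lesssim k^8$, for any $\delta > 0$,
\begin{equation*}
\mathbb{P}\Big(\max_{x \in G_k} |\mathcal{R}_{k^{-6}, \zeta, t, x}| > \delta \Big) \lesssim k^8 \cdot \delta^{-p} C_{p, \zeta, t}^{p} k^{-6 \kappa p}.
\end{equation*}
Choosing $p$ so large that $6\kappa p - 8 > 1$, the right-hand side is summable in $k$. The Borel--Cantelli lemma then gives, for each fixed $\zeta \in \mathbb{N}$ and each rational $\delta > 0$, $\max_{x \in G_k} |\mathcal{R}_{k^{-6}, \zeta, t, x}| \to 0$ almost surely. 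A countable intersection over $\zeta \in \mathbb{N}$ and $\delta$ ranging over a countable dense set yields a single almost sure event on which this convergence holds for every $\zeta \in \mathbb{N}$.

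Finally, I would use that the local time $a \mapsto L_t(a)$ is continuous and compactly supported (being supported in the range of $B$ on $[0, t]$), so $\sup_{a \in \mathbb{R}} L_t(a) < \infty$ almost surely. On the almost sure event above, for every $\zeta \in \mathbb{N}$,
\begin{equation*}
\limsup_{k \to \infty} \max_{x \in G_k} \Big| k^{-6(\tfrac{1}{H}-1)} U_{0,t}(k^{-6}, B - x) - \tfrac{\mathfrak{c}_H}{2} L_t(x) \Big| \leq \zeta^{-1} \sup_{a \in \mathbb{R}} L_t(a),
\end{equation*}
and sending $\zeta \to \infty$ concludes the proof. The main obstacle is the bookkeeping to ensure that the exponent choice $6\kappa p > 9$ is compatible with keeping the constant $C_{p,\zeta,t}$ finite, but this is precisely the strength of the $L^p$ bound in Theorem~\ref{thm:local-time-level-crossing}, which holds for every $p$; the exponents $-6$ in $k^{-6}$ and $-7, 8$ in the definition of $G_k$ are chosen exactly to give this summability with room to spare.
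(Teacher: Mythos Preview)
Your proposal is correct and follows essentially the same route as the paper: apply Theorem~\ref{thm:local-time-level-crossing} to split off a $\zeta^{-1}\sup_x L_t(x)$ term, use the uniform-in-$a$ $L^p$ bound on the remainder together with a union bound over the $O(k^8)$ grid points and Borel--Cantelli (with $p$ chosen large enough), and finally let $\zeta\to\infty$. The paper phrases the union bound as $\|\max_{x\in G_k}R_{k,\zeta,x}\|_p^p\le\sum_{x\in G_k}\|R_{k,\zeta,x}\|_p^p$, which is the same idea, and your explicit countable intersection over $\zeta\in\mathbb{N}$ is a welcome bit of care that the paper leaves implicit.
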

\begin{proof}
  In the notation of Theorem~\ref{thm:local-time-level-crossing}, we have 
  \begin{align*}
    \max_{x \in G_k} 
    \abs[\big]{k^{-6(\frac{1}{H}-1)} U_{0,t}(k^{-6},B-x) - \frac{\mathfrak{c}_H}{2} L_t(x)} 
    \leq \zeta^{-1} \sup_{x \in \mathbb{R}} L_t(x) + \max_{x \in G_k} R_{k, \zeta, x}. 
  \end{align*}
  Since $x \mapsto L_t(x)$ is continuous and $L_t(\cdot)$ is supported on 
  \begin{align*}
    \{x \in \mathbb{R} : \abs{x} \leq \norm{B}_{L^{\infty}([0, t])} \},
  \end{align*} 
  we see that $\sup_{x \in \mathbb{R}} L_t(x) < \infty$ a.s. 
  By Theorem~\ref{thm:local-time-level-crossing}, 
  \begin{align*}
    \norm{\max_{x \in G_k} R_{k, \zeta, x}}_p^p 
    \leq \sum_{x \in G_k} \norm{R_{k, \zeta, x}}_p^p 
    \lesssim_{p, \zeta} k^{-p \delta + 8},
  \end{align*}
  where $\delta$ is independent of $p$. Since $p$ can be arbitrarily large, 
  the Borel--Cantelli lemma implies that almost surely we have 
  \begin{align*}
    \lim_{k \to \infty}
    \max_{x \in G_k} R_{k, \zeta, x} = 0 
  \end{align*}
  and  
  \begin{align*}
    \limsup_{k \to \infty} \max_{x \in G_k} \abs[\big]{k^{-6(\frac{1}{H}-1)} U_{0,t}(k^{-6},B-x) - \frac{\mathfrak{c}_H}{2} L_t(x)} \leq \zeta^{-1} \sup_{x \in \mathbb{R}} L_t(x).
  \end{align*}
  Since $\zeta$ is arbitrary, we complete the proof.
\end{proof}

\begin{theorem}[{Uniform convergence to local time, \cite[Theorem~II.2.4]{Lemieux_1983}}]\label{thm:lemieux_local_time}
  Let $H \in (0, 1/2)$ and $T \in (0, \infty)$. 
  Almost surely, we have 
  \[\lim_{\epsilon\to 0} \sup_{t \leq T} \sup_{a\in \mathbb{R}} \abs[\big]{\epsilon^{\frac{1}{H}-1} U_{0,t}(\epsilon,B-a) - \frac{\mathfrak{c}_H}{2} L_t(a)} = 0. 
 \]
\end{theorem}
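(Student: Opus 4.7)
The plan is to upgrade the almost sure convergence on discrete grids from Lemma~\ref{lem:U_local_time_R_k_bound} to joint uniform convergence in $(\epsilon, t, a)$ by exploiting three monotonicity properties of $U$ together with the joint continuity of $(t, a) \mapsto L_t(a)$. The properties are: \textup{(i)} $t \mapsto U_{0,t}(\epsilon, B-a)$ is non-decreasing; \textup{(ii)} $\epsilon \mapsto U_{0,t}(\epsilon, B-a)$ is non-increasing, because any upcrossing of $[0, \epsilon']$ with $\epsilon' > \epsilon$ contains an upcrossing of $[0, \epsilon]$ obtained by restricting to the sub-excursion ending at the first hit of $\epsilon$; and \textup{(iii)} if $[a, a+\epsilon] \subseteq [a_1, a_1+\eta]$, then $U_{0,t}(\eta, B-a_1) \leq U_{0,t}(\epsilon, B-a)$, by an analogous first-hit argument applied inside the wider interval.

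First, I would apply Lemma~\ref{lem:U_local_time_R_k_bound} along each of the three polynomially decaying sequences $k^{-6}$ and $k^{-6} \pm k^{-7}$, at every $s$ in a countable dense subset $D \subset [0, T]$. The Borel--Cantelli proof of the lemma is insensitive to this modification, so intersecting the countably many resulting almost sure events yields a full-measure event on which, for every such sequence $(\epsilon_k)$ and every $s \in D$,
\[
\lim_{k \to \infty} \max_{x \in G_k} \abs[\Big]{\epsilon_k^{\frac{1}{H}-1} U_{0,s}(\epsilon_k, B - x) - \tfrac{\mathfrak{c}_H}{2} L_s(x)} = 0.
\]
To extend in the time variable, note that $B$ is bounded on $[0, T]$, hence $(t, a) \mapsto L_t(a)$ is compactly supported in $a$ uniformly in $t$ and is uniformly continuous. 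For $\eta > 0$, partition $[0, T]$ by $\{t_i\} \subset D$ so that $L_{t_{i+1}}(a) - L_{t_i}(a) < \eta$ uniformly in $a$, and use monotonicity \textup{(i)} to sandwich $U_{0,t}$ between $U_{0,t_i}$ and $U_{0,t_{i+1}}$ for $t \in [t_i, t_{i+1}]$. This yields uniform convergence in $t \in [0, T]$ for $a \in G_k$.

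For arbitrary $a$ in the range of $B$ on $[0, T]$, pick $a_1, a_2 \in G_k$ with $a_1 \leq a \leq a_2$ and $a_2 - a_1 \leq k^{-7}$ (possible once $G_k$ covers this range). Monotonicity \textup{(iii)} gives
\[
U_{0,t}(k^{-6} + k^{-7}, B - a_1) \leq U_{0,t}(k^{-6}, B - a) \leq U_{0,t}(k^{-6} - k^{-7}, B - a_2),
\]
and since the prefactor ratios $((k^{-6} \pm k^{-7})/k^{-6})^{\frac{1}{H}-1}$ tend to $1$, both outer terms rescaled by $(k^{-6})^{\frac{1}{H}-1}$ converge uniformly in $(t, a)$ to $\frac{\mathfrak{c}_H}{2} L_t(a)$, by the previous step and the uniform continuity of $L$ in $a$. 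Finally, for arbitrary $\epsilon \in (0, 1)$, take $k$ with $k^{-6} \leq \epsilon < (k-1)^{-6}$; monotonicity \textup{(ii)} yields
\[
(k^{-6})^{\frac{1}{H}-1} U_{0,t}((k-1)^{-6}, B - a) \leq \epsilon^{\frac{1}{H}-1} U_{0,t}(\epsilon, B - a) \leq \big((k-1)^{-6}\big)^{\frac{1}{H}-1} U_{0,t}(k^{-6}, B - a),
\]
and since $(k/(k-1))^{\pm 6(1/H-1)} \to 1$, both bounds converge uniformly to $\frac{\mathfrak{c}_H}{2} L_t(a)$ by the preceding step.

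The main obstacle is the $a$-uniformity: the sandwich from monotonicity \textup{(iii)} forces the discrete convergence to hold not only at $k^{-6}$ but also at the shifted values $k^{-6} \pm k^{-7}$. This is harmless because the Borel--Cantelli argument behind Lemma~\ref{lem:U_local_time_R_k_bound}, fed by the quantitative bound of Theorem~\ref{thm:local-time-level-crossing}, applies verbatim to any polynomially decaying sequence of $\epsilon$'s.
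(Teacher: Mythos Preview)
Your proof is correct and follows the same overall strategy as the paper: reduce to grid convergence via Lemma~\ref{lem:U_local_time_R_k_bound}, then interpolate using the interval-inclusion monotonicity of $U$ and the uniform continuity of $L$. The organization differs slightly. You decouple the interpolations in $a$ and $\epsilon$: first sandwich $U_{0,t}(k^{-6},B-a)$ for arbitrary $a$ between values at grid points $a_1,a_2\in G_k$ with perturbed widths $k^{-6}\pm k^{-7}$, which forces you to re-run Lemma~\ref{lem:U_local_time_R_k_bound} along the auxiliary sequences $k^{-6}\pm k^{-7}$; then you interpolate in $\epsilon$ separately via monotonicity~\textup{(ii)}. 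The paper instead handles $a$ and $\epsilon$ in a single sandwich: for $\epsilon\in((m+1)^{-6},m^{-6}]$ and arbitrary $x$, it traps $[x,x+\epsilon]$ between an interval of length $(m+2)^{-6}$ with left endpoint in $G_{m+2}$ and an interval of length $(m-1)^{-6}$ with left endpoint in $G_{m-1}$. By shifting the grid \emph{index} rather than perturbing the width, the paper needs Lemma~\ref{lem:U_local_time_R_k_bound} only for the single sequence $k^{-6}$, which is a tidier bookkeeping; your version is equally valid since, as you note, the Borel--Cantelli argument behind the lemma is insensitive to replacing $k^{-6}$ by any polynomially decaying sequence.
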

\begin{proof}
Firstly, by an elementary argument using monotonicity of $U$ and continuity of $L$ \cite[Note after Theorem~II.2.4]{Lemieux_1983}, 
it suffices to prove that for each $t \in (0, \infty)$ we have
\begin{align*}
  \lim_{\epsilon\to 0}  \sup_{a\in \mathbb{R}} \abs[\big]{\epsilon^{\frac{1}{H}-1} U_{0,t}(\epsilon,B-a) - \frac{\mathfrak{c}_H}{2} L_t(a)} = 0
  \quad \text{a.s.} 
\end{align*}

By Lemma~\ref{lem:U_local_time_R_k_bound}, we can find an 
$\Omega_{1} \subseteq \Omega$ with $\mathbb{P}(\Omega_1) = 1$ such that 
for any $\delta \in (0, 1)$ and $\omega \in \Omega_1$ there exists an $N = N(\delta, \omega)$
with the following inequalities: 

\begin{gather}
     (k-1)^{-6}-(k-1)^{-7}>k^{-6}  \quad \forall k\geq N, \label{eq:k_6_k_7} \\
    \norm{B(\omega)}_{L^{\infty}([0, t])} < N - 1, \label{eq:B_L_infty_bound_by_N} \\
    \sup_{k \geq N} \max_{x\in G_k}\abs[\big]{k^{-6(\frac{1}{H}-1)} U_{0,t}(k^{-6},B(\omega)-x) 
    - \frac{\mathfrak{c}_H}{2} L_t(x)(\omega)}<\delta. 
    \label{eq:U_and_L_over_grid}
\end{gather}

The argument below holds on the event $\Omega_{1}$.
For $\epsilon\leq (N+1)^{-6}$, there exists a unique $m = m_{\epsilon}\geq N+1$ such that 
\begin{align*}
  (m+1)^{-6}< \epsilon \leq m^{-6}.
\end{align*}
If $|x| \geq N- 1$, then by \eqref{eq:B_L_infty_bound_by_N} we have $L^H_t(x) = 0$. 
On the other hand, if $|x| < N - 1$, 
then we define   
\begin{align*}
  x_k \defby \max_{y \in G_k} \{y \leq x\}
\end{align*}
for all $k \geq N$. Since $x<x_{m-1}+ (m-1)^{-7}$, we have
\begin{itemize}
    \item $x_{m-1}\leq x<x+\epsilon <x_{m-1}+(m-1)^{-7} + m^{-6} \leq x_{m-1}+(m-1)^{-6}$ and 
    \item $x<x_{m+2}+(m+2)^{-7}<x_{m+2}+(m+2)^{-7}+(m+2)^{-6}<x+\epsilon$,
\end{itemize}
where \eqref{eq:k_6_k_7} are applied in both items.
Hence, defining the two sets $I_{m-1}$ and $\bar I_{m+2}$ as
\begin{equation*}
  I_{m-1}:=\left[x_{m-1},x_{m-1}+(m-1)^{-6}\right], \quad
  \bar I_{m+2}:=\left[\bar x_{m+2}, \bar x_{m+2}+(m+2)^{-6}\right], 
\end{equation*}
where $\bar x_{m+2} \defby x_{m+2} + (m+2)^{-7}$, 
we have the inclusions
\begin{align}\label{eq:I_x_epsilon}
  \bar I_{m+2} \subseteq [x,x+\epsilon] \subseteq I_{m-1}.
\end{align}

Now we move to the bound on $U$. We first observe the monotonicity of $U$: 
\begin{align*}
  U_{0, t}(\epsilon_1, B - x_1) \leq U_{0, t}(\epsilon_2, B - x_2)
\end{align*}
provided that $[x_2, x_2 + \epsilon_2] \subseteq [x_1, x_1 + \epsilon_1]$. 
The relation \eqref{eq:I_x_epsilon} thus yields 
\begin{align*}
  U_{0, t}((m-1)^{-6}, B - x_{m-1})
  \leq U_{0, t}(\epsilon, B - x) \leq U_{0, t}((m+2)^{-6}, B - \bar x_{m+2}).
\end{align*}
Hence, 
\begin{equation}\label{eq:U_and_L_bound}
  \sup_{x \in \mathbb{R}} 
  \abs[\big]{\epsilon^{\frac{1}{H}-1} U_{0,t}(\epsilon,B-x) - \frac{\mathfrak{c}_H}{2} L_t(x)} 
  \leq A_{\epsilon} + \bar A_{\epsilon} + \frac{\mathfrak{c}_H}{2} 
  \sup_{x, y: \abs{x-y} \leq 2 \epsilon} \abs{L_t(x) - L_t(y)},
\end{equation}
where 
\begin{align*}
  A_{\epsilon} &\defby \sup_{x\in G_{m_{\epsilon}-1}} 
  \abs[\big]{\epsilon^{\frac{1}{H} - 1} U_{0,t}((m_{\epsilon}-1)^{-6},B-x) - \frac{\mathfrak{c}_H}{2} L_t(x)}, \\
  \bar A_{\epsilon} &\defby \sup_{x\in G_{m_{\epsilon}+2}} 
  \abs[\big]{\epsilon^{\frac{1}{H} - 1} U_{0,t}((m_{\epsilon}+2)^{-6},B-x) - \frac{\mathfrak{c}_H}{2} L_t(x)}.
\end{align*}
Due to the uniform continuity of $L_t(\cdot)$, the last term of \eqref{eq:U_and_L_bound} converges to $0$.
To estimate $A_{\epsilon}$, we observe the bound 
\begin{multline*}
  A_{\epsilon} \leq \sup_{x\in G_{m_{\epsilon}-1}} 
  \abs[\big]{(m_{\epsilon} - 1)^{-6(\frac{1}{H} - 1)} U_{0,t}((m_{\epsilon}-1)^{-6},B-x) - \frac{\mathfrak{c}_H}{2} L_t(x)} \\
  + \sup_{x\in G_{m_{\epsilon}-1}}
  \abs[\big]{\{(m_{\epsilon} - 1)^{-6(\frac{1}{H} - 1)} - \epsilon^{\frac{1}{H} - 1}\} U_{0,t}((m_{\epsilon}-1)^{-6},B-x)}. 
\end{multline*}
By \eqref{eq:U_and_L_over_grid}, 
\begin{align*}
  \limsup_{\epsilon \to 0} \sup_{x\in G_{m_{\epsilon}-1}} 
  \abs[\big]{(m_{\epsilon} - 1)^{-6(\frac{1}{H} - 1)} U_{0,t}((m_{\epsilon}-1)^{-6},B-x) - \frac{\mathfrak{c}_H}{2} L_t(x)} \leq  \delta.
\end{align*}
On the other hand, 
\begin{align*}
  \abs{(m_{\epsilon} - 1)^{-6(\frac{1}{H} - 1)} - \epsilon^{\frac{1}{H} - 1}}
  &\lesssim \abs{\epsilon - (m_{\epsilon} - 1)^{-6}}^{\frac{1}{H} - 1}\\
  &\lesssim  (m_{\epsilon} - 1)^{-7(\frac{1}{H} - 1)}.
\end{align*}
As \eqref{eq:U_and_L_over_grid} implies 
\begin{align*}
  \sup_{\epsilon \in (0, 1)} \sup_{x \in G_{m_{\epsilon}-1}} (m_{\epsilon} - 1)^{-6(\frac{1}{H} - 1)}  U_{0,t}((m_{\epsilon}-1)^{-6},B-x) < \infty,
\end{align*}
we obtain 
\begin{align*}
  \lim_{\epsilon \to 0} 
  \sup_{x\in G_{m_{\epsilon}-1}}
  \abs[\big]{\{(m_{\epsilon} - 1)^{-6(\frac{1}{H} - 1)} - \epsilon^{\frac{1}{H} - 1}\} U_{0,t}((m_{\epsilon}-1)^{-6},B-x)} = 0.
\end{align*}
Hence, we get $\limsup_{\epsilon \to 0} A_{\epsilon} \leq \delta$, and we get a similar estimate for $\bar{A}_{\epsilon}$. 
Recalling \eqref{eq:U_and_L_bound}, this implies 
\begin{align*}
  \lim_{\epsilon \to 0} \sup_{x \in \mathbb{R}} 
  \abs[\big]{\epsilon^{\frac{1}{H}-1} U_{0,t}(\epsilon,B-x) - \frac{\mathfrak{c}_H}{2} L_t(x)} \leq 2 \delta.
\end{align*}
Since $\delta$ is arbitrary, we conclude the proof.
\end{proof}

Recall the total number $D_{s, t}(\epsilon, w)$ of downcrossings from \eqref{eq:def_of_D} 
and the variation $V_{s, t}(\mathrm{P}, w)$ along Lebesgue partition $\mathrm{P}$ from \eqref{eq:def_vr_lebesgue}. 
Since the total number of upcrossings and that of downcrossings can differ by at most $1$, 
almost surely we have
\begin{align*}
  \lim_{\epsilon \to 0} \sup_{t \leq T} \sup_{a \in \mathbb{R}} 
  \abs[\big]{ \epsilon^{\frac{1}{H} - 1} D_{0, t}(\epsilon, B - a) - \frac{\mathfrak{c}_H}{2} L_t(a)}
  = 0 \quad \forall T \geq 0,
\end{align*}
or 
\begin{align}\label{eq:U_D_limit}
  \lim_{\epsilon \to 0} \sup_{t \leq T} \sup_{a \in \mathbb{R}} 
  \abs[\big]{ \epsilon^{\frac{1}{H} - 1} (U_{0, t}(\epsilon, B - a) + D_{0, t}(\epsilon, B - a)) - \mathfrak{c}_H L_t(a)}
  = 0 \quad \forall T \geq 0.
\end{align}
\begin{theorem}[{Uniform convergence of variation, \cite[Proposition~III.2.1]{Lemieux_1983}}]\label{thm:lemieux_variation}
  Let $H \in (0, 1/2)$ and $T \in (0, \infty)$. 
  Almost surely, we have 
  \begin{equation*}
      \lim_{\epsilon \to 0} \sup_{t \leq T} \sup_{\substack{\mathrm{P} : \text{ \emph{partition of} $\mathbb{R}$}, \\ \abs{\mathrm{P}} \leq \epsilon}}
      \abs{V_{0, t}(\mathrm{P}, B) - \mathfrak{c}_H t} = 0.
  \end{equation*}
\end{theorem}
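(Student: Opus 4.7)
The plan is to identify $V_{0, t}(\mathrm{P}, B)$ with a Riemann sum in the spatial variable and then invoke the uniform convergence \eqref{eq:U_D_limit} together with the occupation density formula. Writing $(b - a)^{1/H} = (b - a) \cdot (b - a)^{1/H - 1}$, each term of $V_{0, t}(\mathrm{P}, B)$ is $(b - a)$ times a quantity that, for small $b - a$, is uniformly close to $\mathfrak{c}_H L_t(a)$. Summing, $V_{0, t}(\mathrm{P}, B)$ should be close to $\mathfrak{c}_H \sum_{[a, b] \in \mathrm{P}} (b - a) L_t(a)$, which is itself a left Riemann sum for $\mathfrak{c}_H \int_{\mathbb{R}} L_t(a) \, \mathrm{d} a = \mathfrak{c}_H t$.

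Concretely, I would work pathwise on the almost sure event of \eqref{eq:U_D_limit} and set $M \defby \norm{B}_{L^{\infty}([0, T])} + 1$, noting that both $L_T(a)$ and $a \mapsto U_{0, T}(\epsilon, B - a) + D_{0, T}(\epsilon, B - a)$ vanish for $\abs{a} \geq M$ and every $\epsilon \in (0, 1]$. Given $\delta > 0$, by \eqref{eq:U_D_limit} one finds $\epsilon_1 > 0$ such that for every $\epsilon' \in (0, \epsilon_1]$,
\[
\sup_{t \leq T,\, a \in \mathbb{R}} \abs[\big]{(\epsilon')^{1/H - 1}(U_{0, t}(\epsilon', B - a) + D_{0, t}(\epsilon', B - a)) - \mathfrak{c}_H L_t(a)} < \delta.
\]
Any partition $\mathrm{P}$ of $\mathbb{R}$ with $\abs{\mathrm{P}} \leq \epsilon_1$ then has $b - a \leq \epsilon_1$ for each $[a, b] \in \mathrm{P}$, so applying the above termwise with $\epsilon' = b-a$ and using that only intervals meeting $[-M, M]$ contribute nontrivially, one obtains
\[
\sup_{t \leq T} \abs[\Big]{V_{0, t}(\mathrm{P}, B) - \mathfrak{c}_H \sum_{[a, b] \in \mathrm{P}} (b - a) L_t(a)} \leq \delta (2M + 2 \epsilon_1).
\]

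For the remaining Riemann sum, $(t, a) \mapsto L_t(a)$ is jointly continuous and supported in $[0, T] \times [-M, M]$, hence uniformly continuous on that compact rectangle; a routine argument then gives $\sup_{t \leq T} \abs{\sum_{[a, b] \in \mathrm{P}} (b - a) L_t(a) - t} \to 0$ as $\abs{\mathrm{P}} \to 0$, using the occupation density formula $\int_{\mathbb{R}} L_t(a) \, \mathrm{d} a = t$. Combining both estimates and letting $\delta \to 0$ concludes the proof. The main (and relatively mild) obstacle is that $\mathrm{P}$ is an infinite partition of $\mathbb{R}$, so a priori the sum looks divergent; the key observation is the almost sure compactness of $B([0, T])$, which reduces everything to contributions from only finitely many intervals in $\mathrm{P}$. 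Beyond \eqref{eq:U_D_limit} and the basic pathwise properties of $L$, no further probabilistic input is needed.
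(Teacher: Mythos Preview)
Your proof is correct and follows essentially the same approach as the paper's: both reduce the claim to \eqref{eq:U_D_limit}, the compact support of $L_T(\cdot)$, its uniform continuity, and the occupation density formula. The only cosmetic difference is that the paper rewrites $V_{0,t}(\mathrm{P},B)$ as $\int_{\mathbb{R}}\sum_{[a,b]\in\mathrm{P}}(b-a)^{1/H-1}\{U_{0,t}+D_{0,t}\}\indic_{(a,b]}(x)\,\mathrm{d}x$ and compares directly with $\mathfrak{c}_H t=\int_{\mathbb{R}}\mathfrak{c}_H L_t(x)\,\mathrm{d}x$, so your separate Riemann-sum step is absorbed into a single estimate via $|L_t(a)-L_t(x)|$ for $x\in[a,b]$.
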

\begin{proof}
  We have the identity 
  \begin{align*}
    V_{0, t}(\mathrm{P}, B) 
    = \int_{\mathbb{R}} \sum_{[a, b] \in \mathrm{P}} (b-a)^{\frac{1}{H} - 1} 
    \{ U_{0, t}(b-a, B-a) + D_{0, t}(b-a, B-a) \} \indic_{(a, b]}(x) \mathrm{d} x.
  \end{align*}
  Setting $I \defby [- \norm{B}_{L^{\infty}([0, T])}, \norm{B}_{L^{\infty}([0, T])}]$, 
  the occupation density formula yields 
  \begin{multline*}
    \abs{V_{0, t}(\mathrm{P}, B) - \mathfrak{c}_H t} \\
    \leq \int_I \sum_{[a, b] \in \mathrm{P}} (b-a)^{\frac{1}{H} - 1} 
    \abs{ U_{0, t}(b-a, B-a) + D_{0, t}(b-a, B-a) - \mathfrak{c}_H L_t(x) } \indic_{(a, b]}(x) \mathrm{d} x.
  \end{multline*}
  For $x \in [a, b]$, we have the bound 
  \begin{multline*}
    \abs{ U_{0, t}(b-a, B-a) + D_{0, t}(b-a, B-a) - \mathfrak{c}_H L_t(x) } \\
    \leq \abs{ U_{0, t}(b-a, B-a) + D_{0, t}(b-a, B-a) - \mathfrak{c}_H L_t(a) } \\
    + \mathfrak{c}_H \sup_{a_1, a_2 : \abs{a_1 - a_2} \leq \abs{\mathrm{P}}} \abs{L_t(a_1) - L_t(a_2)}.
  \end{multline*}
  Therefore, 
  \begin{multline*}
    \abs{V_{0, t}(\mathrm{P}, B) - \mathfrak{c}_H t} 
    \leq \abs{I} \times \Big\{ \sup_{\delta \leq \abs{\mathrm{P}}} \sup_{a \in \mathbb{R}} 
    \abs{ U_{0, t}(\delta, B-a) + D_{0, t}(\delta, B-a) - \mathfrak{c}_H L_t(a) } \\
    + 
    \mathfrak{c}_H \sup_{\abs{a_1 - a_2} \leq \abs{\mathrm{P}}} \abs{L_t(a_1) - L_t(a_2)}
    \Big\}.
  \end{multline*}
  %
  In view of \eqref{eq:U_D_limit} and the uniform continuity of $L$, the claim follows.
\end{proof}
\subsection{Horizontally rough function}
%
In a recent work \cite{das2020},  a concept of \emph{quadratic roughness} has been introduced. This pathwise quadratic roughness property ensures an invariant notion of quadratic variation,
i.e., given two (appropriate) partition sequences $\bm{\pi}$ and $\bm{\sigma}$, quadratic roughness of function $x$ implies $[x]_{\bm{\pi}} = [x]_{\bm{\sigma}}$. 
As expected, Brownian motion satisfies this quadratic roughness property. 
In fact, for any \emph{deterministic} partition sequence $\bm{\pi} = (\pi^n)$ with $|\pi^n|\log n \to 0$, we have $[B^{1/2}]_{\bm{\pi}} (t) = t$ almost surely. 
That is there exists $\Omega_{\bm{\pi}} \subset \Omega$ of full $\mathbb{P}$-measure 
such that for all $\omega\in \Omega_{\bm{\pi}},\; [\omega]_{\bm{\pi}} (t)=t$. 
On the other hand, by \cite{Dudley:1973aa} there exists for each $\omega \in \Omega$ a partition $\pi=\pi(\omega)$ such that $[\omega]_{\pi(\omega)}(t) = 0$, and therefore $\cap_{\bm{\pi}}\Omega_{\bm{\pi}} = \emptyset$. 
So even for Brownian motion quadratic roughness does not ensure an almost sure invariance of quadratic variation across all deterministic partitions (partitions purely on time variable). 
So an obvious question is: are there any notion of roughness which ensures almost sure invariance of quadratic variation across a large (uncountable) class of partition sequence? To answer this question we define the notion of \emph{horizontal roughness} (the word `horizontal' represents path dependent Lebesgue type partitions constructed from level crossings).

\begin{definition}[Horizontally rough: an invariance notion for $p$-th variation]
A function $x\in C^0([0,T],\mathbb{R})$ is called \emph{horizontally rough} if for any 
$t \in [0, T]$, $\rho\in \mathbb{R}$ and  $\epsilon=\{\epsilon_n\}$ with $\epsilon_n\downarrow 0$,
  \[\lim_{n\to \infty}\frac{K_{0, t}(\epsilon_n,x+\rho)}{K_{0, t}(\epsilon_n,x)} =1.\]
\end{definition}

\par This notion of horizontal roughness is completely pathwise and scale invariant. If a continuous function $x$ has the horizontally rough property and $x$ also has $p$-th variation along a uniform Lebesgue partition, then $x$ has $p$-th variation along all uniform Lebesgue partition and the $p$-th variation is the same across different uniform Lebesgue partitions. Unlike the notion of quadratic roughness defined in \cite{das2020}, this notion of horizontally rough ensures an invariant notion of $p$-th variation across a large class of Lebesgue type partitions  almost surely i.e. there is a common measure zero set outside which the $p$-th variation along any uniform Lebesgue type partition is the same. 
\begin{example}
  The following are examples of horizontally rough functions.
  \begin{itemize}
  \item From the definition any linear function is horizontally rough.
  \item Using results from \cite{chacon1981,Lemieux_1983}, we can show that Brownian motion and more generally continuous semimartingales are horizontally rough almost surely.
  \item  Theorem~\ref{thm:lemieux_type_result} shows that  fractional Brownian motion with Hurst index $H<1/2$  is horizontally rough almost surely. 
  \end{itemize}
\end{example}

It is interesting to construct horizontally rough functions sampled from a non-Gaussian, non-semimartingale process. We leave this and further properties of such functions as a future work.   


\appendix
\section{An estimate on log-normal distribution}
\begin{lemma}\label{lem:log_normal_moment}
  Let $Z$ be a standard normal distribution, and let $q \in [2, \infty)$. We then have 
  \begin{align*}
    \norm{e^{\lambda Z - \frac{\lambda^2}{2}} - 1}_{L^q(\mathbb{P})} \leq  \sqrt{2 \log 2} \sqrt{q-1} \lambda e^{(q-1) \lambda^2}, \quad \forall \lambda \geq 0.
  \end{align*}
\end{lemma}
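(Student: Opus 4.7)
The plan is to invoke Nelson's hypercontractivity for the Ornstein--Uhlenbeck (OU) semigroup, which is the natural tool for sharp $L^q$ bounds on Gaussian exponentials. Write $f_\lambda(y) \defby e^{\lambda y - \lambda^2/2} - 1$ and let $(P_t)_{t \ge 0}$ be the OU semigroup on $\mathbb{R}$ preserving the standard Gaussian measure, so that Nelson's inequality reads
\begin{equation*}
  \|P_t g\|_{L^q(\mathbb{P})} \le \|g\|_{L^2(\mathbb{P})} \qquad \text{whenever } e^{2t} \ge q - 1.
\end{equation*}

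First I would carry out a direct Mehler-kernel computation showing that the OU semigroup acts diagonally on the family $\{f_\lambda\}$, namely $P_t f_\lambda = f_{\lambda e^{-t}}$ (completing the square in the Gaussian integral). Rewriting $f_\lambda = P_t f_{\lambda e^t}$ and choosing $t$ so that $e^{2t} = q-1$, Nelson's bound would then yield
\begin{equation*}
  \|f_\lambda\|_{L^q(\mathbb{P})} \;\le\; \|f_{\lambda\sqrt{q-1}}\|_{L^2(\mathbb{P})} \;=\; \bigl( e^{(q-1)\lambda^2} - 1 \bigr)^{1/2},
\end{equation*}
where the final equality is the elementary $L^2$ computation $\|f_\mu\|_{L^2(\mathbb{P})}^2 = \mathbb{E}[e^{2\mu Z - \mu^2}] - 2\mathbb{E}[e^{\mu Z - \mu^2/2}] + 1 = e^{\mu^2} - 1$. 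The boundary case $q = 2$ corresponds to $t=0$ and is absorbed into the same formula.

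The remaining step is to establish the scalar inequality
\begin{equation*}
  e^u - 1 \le 2(\log 2)\,u\,e^{2u}, \qquad u \ge 0,
\end{equation*}
and then to apply it with $u = (q-1)\lambda^2$ and take square roots to obtain the claimed bound $\sqrt{2\log 2}\,\sqrt{q-1}\,\lambda\,e^{(q-1)\lambda^2}$. The scalar inequality reduces, after comparing values at $u=0$, to checking $2(\log 2)(1+2u)e^u \ge 1$ on $[0,\infty)$, which holds since $2\log 2 > 1$ and $(1+2u)e^u$ is nondecreasing.

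I do not expect any genuine obstacle: both the Mehler computation and the scalar inequality are short. The only mildly delicate point is the way the prefactor $\sqrt{q-1}\,\lambda$ is produced, namely by applying Nelson to $f_{\lambda\sqrt{q-1}}$ rather than to $f_\lambda$ itself. This is precisely what converts a $q$-dependence hidden inside an $L^q$ norm into an enlarged Gaussian variance, manufacturing the exponent $(q-1)\lambda^2$ and the prefactor $\sqrt{q-1}\,\lambda$ that appear in the statement with the constant $\sqrt{2\log 2}$ pinned down by the derivative comparison above.
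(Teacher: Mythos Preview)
Your argument is correct. Both proofs rest on Gaussian hypercontractivity, but they invoke it differently. The paper expands $e^{\lambda Z-\lambda^2/2}-1=\sum_{n\ge 1}\lambda^n H_n(Z)$ in Hermite polynomials, applies the chaos form of hypercontractivity term by term, $\|H_n(Z)\|_{L^q}\le (q-1)^{n/2}\|H_n(Z)\|_{L^2}=(q-1)^{n/2}/\sqrt{n!}$, and then controls the resulting series via a Cauchy--Schwarz split that produces the constant $\sqrt{2\log 2}$. You instead use the semigroup formulation directly: from the Mehler identity $P_t f_\lambda=f_{\lambda e^{-t}}$ and Nelson's inequality with $e^{2t}=q-1$ you obtain the clean intermediate bound $\|f_\lambda\|_{L^q}\le (e^{(q-1)\lambda^2}-1)^{1/2}$, which is in fact sharper than the paper's statement, and only then relax it via the scalar inequality $e^u-1\le 2(\log 2)\,u\,e^{2u}$ to match the stated form. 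Your route is shorter and yields the tighter estimate along the way; the paper's route is more hands-on and keeps the chaos structure visible, which is natural given that \cite{nualart06} is already cited for the Hermite apparatus.
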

\begin{proof}
  By \cite[Equation (1.1)]{nualart06}, we have 
  \begin{align*}
    e^{\lambda Z - \frac{\lambda^2}{2}} - 1 = \sum_{n=1}^{\infty} \lambda^n H_n(Z),
  \end{align*}
  where $H_n$ is $n$th Hermite polynomial. By the triangle inequality, 
  \begin{align*}
    \norm{e^{\lambda Z - \frac{\lambda^2}{2}} - 1}_{L^q(\mathbb{P})} \leq \sum_{n =1}^{\infty} \lambda^n \norm{H_n(Z)}_{L^q(\mathbb{P})}.
  \end{align*}
  The hypercontractivity \cite[Theorem~1.4.1]{nualart06} implies that 
  \begin{align*}
    \norm{H_n(Z)}_{L^q(\mathbb{P})} \leq (q-1)^{\frac{n}{2}} \norm{H_n(Z)}_{L^2(\mathbb{P})}.
  \end{align*}
  Furthermore, by \cite[Lemma~1.1.1]{nualart06}, 
  \begin{align*}
    \norm{H_n(Z)}_{L^2(\mathbb{P})} = \frac{1}{\sqrt{n!}}.
  \end{align*}
  Therefore, 
  \begin{align*}
    \norm{e^{\lambda Z - \frac{\lambda^2}{2}} - 1}_{L^q(\mathbb{P})} &\leq \sqrt{q-1} \lambda \sum_{n=0}^{\infty} \frac{\lambda^n (q-1)^{\frac{n}{2}}}{\sqrt{n!}} \\
    &\leq \sqrt{q-1} \lambda \Big(  \sum_{n=0}^{\infty} \frac{2^{-n}}{n+1} \Big)^{\frac{1}{2}} \Big(  \sum_{n=0}^{\infty} \frac{2^{n} \lambda^{2n} (q-1)^n }{n!}\Big)^{\frac{1}{2}} \\
    &\leq \sqrt{q-1} \lambda \sqrt{2 \log 2} e^{(q-1) \lambda^2}.  \qedhere
  \end{align*}
\end{proof}

\printbibliography[heading=bibintoc]

@article{Harang:2022aa,
	abstract = {We investigate the space-time regularity of the local time associated with Volterra–Lévy processes, including Volterra processes driven by {\$}{\$}{$\backslash$}alpha {\$}{\$}-stable processes for {\$}{\$}{$\backslash$}alpha {$\backslash$}in (0,2{$]$}{\$}{\$}. We show that the spatial regularity of the local time for Volterra–Lévy process is {\$}{\$}{\{}{$\backslash$}mathbb {\{}P{\}}{\}}{\$}{\$}-a.s. inverse proportional to the singularity of the associated Volterra kernel. We apply our results to the investigation of path-wise regularizing effects obtained by perturbation of ordinary differential equations by a Volterra–Lévy process which has sufficiently regular local time. Following along the lines of Harang and Perkowski (2020), we show existence, uniqueness and differentiability of the flow associated with such equations.},
	author = {Harang, Fabian A. and Ling, Chengcheng},
	date = {2022-09-01},
	date-added = {2023-07-03 17:31:47 +0200},
	date-modified = {2023-07-03 17:31:47 +0200},
	doi = {10.1007/s10959-021-01114-4},
	id = {Harang2022},
	%isbn = {1572-9230},
	journal = {Journal of Theoretical Probability},
	number = {3},
	pages = {1706--1735},
	title = {Regularity of Local Times Associated with Volterra–Lévy Processes and Path-Wise Regularization of Stochastic Differential Equations},
	url = {https://doi.org/10.1007/s10959-021-01114-4},
	volume = {35},
	year = {2022},
	bdsk-url-1 = {https://doi.org/10.1007/s10959-021-01114-4}}

@article{perkins81,
	author = {Edwin Perkins},
	issn = {00911798},
	journal = {The Annals of Probability},
	number = {5},
	pages = {800--817},
	publisher = {Institute of Mathematical Statistics},
	title = {A Global Intrinsic Characterization of Brownian Local Time},
	url = {http://www.jstor.org/stable/2243739},
	urldate = {2023-06-29},
	volume = {9},
	year = {1981},
	bdsk-url-1 = {http://www.jstor.org/stable/2243739}}

@article{perkowski15,
	author = {Perkowski, Nicolas and Pr\"{o}mel, David J.},
	doi = {10.1214/EJP.v20-3534},
	fjournal = {Electronic Journal of Probability},
	issn = {1083-6489},
	journal = {Electron. J. Probab.},
	mrclass = {60H05 (60J60 91G80)},
	mrnumber = {3339866},
	mrreviewer = {Volkert Paulsen},
	pages = {no. 46, 15},
	title = {Local times for typical price paths and pathwise {T}anaka formulas},
	url = {https://doi.org/10.1214/EJP.v20-3534},
	volume = {20},
	year = {2015},
	bdsk-url-1 = {https://doi.org/10.1214/EJP.v20-3534}}

@article{das2020,
	author = {Rama Cont and Purba Das},
	journal = {Bernoulli},
	keywords = {{Brownian motion, It\^o calculus, Local time, pathwise integration, Quadratic Variation, roughness}},
	number = {1},
	pages = {496 -- 522},
	publisher = {Bernoulli Society for Mathematical Statistics and Probability},
	title = {{Quadratic variation and quadratic roughness}},
	volume = {29},
	year = {2023}}

@article{berman1973,
	author = {Berman, Simeon M and Getoor, Ronald},
	journal = {Indiana University Mathematics Journal},
	number = {1},
	pages = {69--94},
	publisher = {JSTOR},
	title = {Local nondeterminism and local times of Gaussian processes},
	volume = {23},
	year = {1973}}

@book{nualart06,
	author = {Nualart, David},
	date-added = {2023-06-22 10:32:54 +0200},
	date-modified = {2023-06-22 10:32:54 +0200},
	edition = {Second},
	isbn = {978-3-540-28328-7},
	mrclass = {60-02 (60H07 60H30)},
	mrnumber = {2200233},
	mrreviewer = {Daniel Ocone},
	pages = {xiv+382},
	publisher = {Springer-Verlag, Berlin},
	series = {Probability and its Applications (New York)},
	title = {The {M}alliavin calculus and related topics},
	year = {2006}}

@misc{butkovsky2023stochastic,
	archiveprefix = {arXiv},
	author = {Oleg Butkovsky and Khoa Lê and Leonid Mytnik},
	date-added = {2023-06-19 15:48:37 +0200},
	date-modified = {2023-06-19 15:48:37 +0200},
	eprint = {2302.11937},
	primaryclass = {math.PR},
	title = {Stochastic equations with singular drift driven by fractional Brownian motion},
	year = {2023}}

@article{Harang:2021aa,
	author = {Harang, Fabian Andsem and Perkowski, Nicolas},
	journal = {Stochastics and Dynamics},
	number = {08},
	pages = {2140010},
	publisher = {World Scientific},
	title = {{$C^\infty$}- regularization of ODEs perturbed by noise},
	volume = {21},
	year = {2021}}

@article{lochowski2017,
	author = {Rafał M. Łochowski},
	doi = {10.4064/cm6583-3-2017},
	journal = {Colloquium Mathematicum},
	number = {2},
	pages = {301--313},
	publisher = {Institute of Mathematics, Polish Academy of Sciences},
	title = {On a generalisation of the Banach Indicatrix Theorem},
	url = {https://doi.org/10.4064%2Fcm6583-3-2017},
	volume = {148},
	year = 2017,
	bdsk-url-1 = {https://doi.org/10.4064%2Fcm6583-3-2017},
	bdsk-url-2 = {https://doi.org/10.4064/cm6583-3-2017}}

@article{lochowski2008,
	author = {Rafał Łochowski},
	journal = {Bulletin of the Polish Academy of Sciences. Mathematics},
	keywords = {Brownian motion; variation},
	language = {eng},
	number = {3},
	pages = {267-281},
	title = {On Truncated Variation of Brownian Motion with Drift},
	url = {http://eudml.org/doc/281146},
	volume = {56},
	year = {2008},
	bdsk-url-1 = {http://eudml.org/doc/281146}}

@book{freedman,
	author = {Freedman, David},
	edition = {Second},
	isbn = {0-387-90805-6},
	mrclass = {60J65 (60J60)},
	mrnumber = {686607},
	pages = {xii+231},
	publisher = {Springer-Verlag, New York-Berlin},
	title = {Brownian motion and diffusion},
	year = {1983}}

@article{Dudley:1973aa,
	author = {R. M. Dudley},
	date = {1973-02-01},
	date-added = {2023-03-14 10:51:31 +0100},
	date-modified = {2023-03-14 10:51:31 +0100},
	doi = {10.1214/aop/1176997026},
	journal = {The Annals of Probability},
	journal1 = {The Annals of Probability},
	journal2 = {The Annals of Probability},
	month = {2},
	number = {1},
	pages = {66--103},
	title = {Sample Functions of the Gaussian Process},
	url = {https://doi.org/10.1214/aop/1176997026},
	volume = {1},
	year = {1973},
	bdsk-url-1 = {https://doi.org/10.1214/aop/1176997026}}

@article{feuerverger,
	abstract = {. The fractal index α and fractal dimension D of a Gaussian process are characteristics that describe the smoothness of the process. In principle, smoother processes have fewer crossings of a given level, and so level crossings might be employed to estimate α or D. However, the number of crossings of a level by a non‐differentiable Gaussian process is either zero or infinity, with probability one, so that level crossings are not directly usable. Crossing counts may be rendered finite by smoothing the process. Therefore, we consider estimators that are based on comparing the sizes of the average numbers of crossings for a small, bounded number of different values of the smoothing bandwidth. The averaging here is over values of the level. Strikingly, we show that such estimators are consistent, as the size of the smoothing bandwidths shrinks to zero, if and only if the weight function in the definition of ‘average’ is constant. In this important case we derive the asymptotic bias and variance of the estimators, assuming only a non‐parametric description of covariance, and describe the estimators' numerical properties. We also introduce a novel approach to generating Gaussian process data on a very fine grid.},
	author = {Andrey Feuerverger and Peter Hall and Andrew T. A. Wood},
	date-added = {2023-03-13 16:02:18 +0100},
	date-modified = {2023-03-13 16:02:30 +0100},
	doi = {10.1111/j.1467-9892.1994.},
	journal = {Journal of Time Series Analysis},
	month = {11},
	number = {6},
	pages = {587-606},
	title = {{Estimation Of Fractal Index And Fractal Dimension Of A Gaussian Process By Counting The Number Of Level Crossings}},
	url = {https://ideas.repec.org/a/bla/jtsera/v15y1994i6p587-606.html},
	volume = {15},
	year = 1994,
	bdsk-url-1 = {https://ideas.repec.org/a/bla/jtsera/v15y1994i6p587-606.html},
	bdsk-url-2 = {https://doi.org/10.1111/j.1467-9892.1994.}}

@article{ananova20,
	author = {Ananova, Anna and Cont, Rama and Xu, Renyuan},
	date-added = {2023-03-13 15:55:20 +0100},
	date-modified = {2023-03-13 16:58:44 +0100},
	doi = {http://dx.doi.org/10.2139/ssrn.3723980},
	title = {Excursion Risk},
	url = {http://dx.doi.org/10.2139/ssrn.3723980},
	year = {2020},
	bdsk-url-1 = {http://dx.doi.org/10.2139/ssrn.3723980}}

@article{Kim:2022aa,
	abstract = {We study a pathwise local time of even integer order {\$}{\$}p {$\backslash$}ge 2{\$}{\$}, defined as an occupation density, for continuous functions with finite $p$-th variation along a sequence of time partitions. With this notion of local time and a new definition of the F\"ollmer integral, we establish Tanaka-type change-of-variable formulas in a pathwise manner. We also derive some identities involving this high-order pathwise local time, each of which generalizes a corresponding identity from the theory of semimartingale local time. We then use collision local times between multiple functions of arbitrary regularity to study the dynamics of ranked continuous functions.},
	author = {Kim, Donghan},
	date = {2022-12-01},
	date-added = {2023-03-13 15:37:06 +0100},
	date-modified = {2023-03-13 15:37:06 +0100},
	doi = {10.1007/s10959-022-01159-z},
	id = {Kim2022},
	isbn = {1572--9230},
	journal = {Journal of Theoretical Probability},
	number = {4},
	pages = {2540--2568},
	title = {Local Times for Continuous Paths of Arbitrary Regularity},
	url = {https://doi.org/10.1007/s10959-022-01159-z},
	volume = {35},
	year = {2022},
	bdsk-url-1 = {https://doi.org/10.1007/s10959-022-01159-z}}

@article{Azais:1996aa,
	author = {Jean-Marc Azaïs and Mario Wschebor},
	date = {1996-09-01},
	date-added = {2023-03-13 15:21:10 +0100},
	date-modified = {2023-03-13 15:21:10 +0100},
	doi = {10.3150/bj/1178291722},
	journal = {Bernoulli},
	journal1 = {Bernoulli},
	journal2 = {Bernoulli},
	month = {9},
	number = {3},
	pages = {257--270},
	title = {Almost sure oscillation of certain random processes},
	url = {https://doi.org/10.3150/bj/1178291722},
	volume = {2},
	year = {1996},
	bdsk-url-1 = {https://doi.org/10.3150/bj/1178291722}}

@article{azais90,
	author = {Azaïs, Jean-Marc},
	date-added = {2023-03-13 15:16:06 +0100},
	date-modified = {2023-03-13 15:16:06 +0100},
	journal = {{Probability and Mathematical Statistics (Pol)}},
	pages = {19-36},
	title = {{Conditions for convergence of number of crossings to the local time. Application to stable processes with independent increments and to gaussian processes}},
	url = {https://hal.inrae.fr/hal-02717652},
	volume = {11},
	year = {1990},
	bdsk-url-1 = {https://hal.inrae.fr/hal-02717652}}

@book{azais09,
	author = {Azaïs, Jean-Marcs and Wschebor, Mario},
	date-added = {2023-03-13 14:23:26 +0100},
	date-modified = {2023-03-13 14:23:26 +0100},
	doi = {10.1002/9780470434642},
	isbn = {978-0-470-40933-6},
	mrclass = {60-02 (60E15 60G05 60G15 60G60 60G70)},
	mrnumber = {2478201},
	mrreviewer = {Anna Amirdjanova},
	pages = {xii+393},
	publisher = {John Wiley \& Sons, Inc., Hoboken, NJ},
	title = {Level sets and extrema of random processes and fields},
	url = {https://doi.org/10.1002/9780470434642},
	year = {2009},
	bdsk-url-1 = {https://doi.org/10.1002/9780470434642}}

@article{Fristedt:1983aa,
	abstract = {A sequence of functions defined on the space of excursions of a Markov process from a fixed point is considered. For each of the functions the sum over the excursions that begin by time t is normalized in an appropriate manner. Conditions are obtained for the convergence of the sequence of normalized sums to the local time evaluated at time t. We obtain a unified structure for convergence theorems which includes some new constructions of local time as well as many constructions previously obtained by quite varied techniques.},
	author = {Fristedt, Bert and Taylor, S. J.},
	date = {1983-03-01},
	date-added = {2023-03-13 14:20:32 +0100},
	date-modified = {2023-03-13 14:20:32 +0100},
	doi = {10.1007/BF00532164},
	id = {Fristedt1983},
	isbn = {1432-2064},
	journal = {Zeitschrift für Wahrscheinlichkeitstheorie und Verwandte Gebiete},
	number = {1},
	pages = {73--112},
	title = {Constructions of local time for a Markov process},
	url = {https://doi.org/10.1007/BF00532164},
	volume = {62},
	year = {1983},
	bdsk-url-1 = {https://doi.org/10.1007/BF00532164}}

@article{Kac:1943aa,
	author = {M. Kac},
	date = {1943-04-01},
	date-added = {2023-03-13 13:38:10 +0100},
	date-modified = {2023-03-13 13:38:10 +0100},
	journal = {Bulletin of the American Mathematical Society},
	journal1 = {Bulletin of the American Mathematical Society},
	journal2 = {Bulletin of the American Mathematical Society},
	month = {4},
	number = {4},
	pages = {314--320},
	title = {On the average number of real roots of a random algebraic equation},
	volume = {49},
	year = {1943}}

@article{Rice:1945aa,
	author = {S. O. Rice},
	date-added = {2023-03-13 13:36:24 +0100},
	date-modified = {2023-03-13 13:36:24 +0100},
	doi = {10.1002/j.1538-7305.1945.tb00453.x},
	journal = {The Bell System Technical Journal},
	journal1 = {The Bell System Technical Journal},
	journal2 = {The Bell System Technical Journal},
	number = {1},
	pages = {46--156},
	title = {Mathematical analysis of random noise},
	vo = {24},
	volume = {24},
	year = {1945},
	year1 = {Jan. 1945},
	bdsk-url-1 = {https://doi.org/10.1002/j.1538-7305.1945.tb00453.x}}

@article{cont19,
	author = {Cont, Rama and Perkowski, Nicolas},
	date-added = {2023-03-10 13:56:12 +0100},
	date-modified = {2023-03-10 13:56:22 +0100},
	doi = {10.1090/btran/34},
	fjournal = {Transactions of the American Mathematical Society. Series B},
	journal = {Trans. Amer. Math. Soc. Ser. B},
	mrclass = {60H05 (60H99)},
	mrnumber = {3937343},
	mrreviewer = {Torstein K. Nilssen},
	pages = {161--186},
	title = {Pathwise integration and change of variable formulas for continuous paths with arbitrary regularity},
	url = {https://doi.org/10.1090/btran/34},
	volume = {6},
	year = {2019},
	bdsk-url-1 = {https://doi.org/10.1090/btran/34}}

@article{Gerencser:2022aa,
	abstract = {We show that perturbing ill-posed differential equations with (potentially very) smooth random processes can restore well-posedness---even if the perturbation is (potentially much) more regular than the drift component of the solution. The noise considered is of fractional Brownian type, and the familiar regularity condition {\$}{\$}{$\backslash$}alpha >1-1/(2H){\$}{\$}is recovered for all non-integer {\$}{\$}H>1{\$}{\$}.},
	author = {Gerencsér, Máté},
	date = {2022-04-03},
	date-added = {2023-02-13 18:24:24 +0000},
	date-modified = {2023-02-13 18:24:24 +0000},
	doi = {10.1007/s40072-022-00242-0},
	id = {Gerencsér2022},
	isbn = {2194-041},
	journal = {Stochastics and Partial Differential Equations: Analysis and Computations},
	title = {Regularisation by regular noise},
	url = {https://doi.org/10.1007/s40072-022-00242-0},
	year = {2022},
	bdsk-url-1 = {https://doi.org/10.1007/s40072-022-00242-0}}

@book{ito_mckean,
	author = {It\^{o}, Kiyosi and McKean, Jr., Henry P.},
	date-added = {2023-01-26 12:12:15 +0000},
	date-modified = {2023-01-26 12:12:26 +0000},
	mrclass = {60J60 (60G17 60J70)},
	mrnumber = {0345224},
	note = {Second printing, corrected},
	pages = {xv+321},
	publisher = {Springer-Verlag, Berlin-New York},
	series = {Die Grundlehren der mathematischen Wissenschaften, Band 125},
	title = {Diffusion processes and their sample paths},
	year = {1974}}

@inbook{El-Karoui:1978aa,
	author = {El Karoui, Nicole},
	booktitle = {Temps locaux},
	date = {1978},
	date-added = {2023-01-26 12:01:04 +0000},
	date-modified = {2023-01-26 12:23:13 +0000},
	id = {AST{\_}1978{\_}{\_}52-53{\_}{\_}63{\_}0},
	la = {fr},
	number = {52-53},
	publisher = {Sociétémathématique de France},
	series = {Astérisque},
	title = {Sur les montées des semi-martingales},
	url = {http://www.numdam.org/item/AST_1978__52-53__63_0/},
	year = {1978},
	bdsk-url-1 = {http://www.numdam.org/item/AST_1978__52-53__63_0/}}

@book{revuz_yor,
	author = {Revuz, Daniel and Yor, Marc},
	date-added = {2023-01-26 11:37:31 +0000},
	date-modified = {2023-01-26 11:37:37 +0000},
	doi = {10.1007/978-3-662-06400-9},
	edition = {Third},
	isbn = {3-540-64325-7},
	mrclass = {60G44 (60G07 60H05)},
	mrnumber = {1725357},
	pages = {xiv+602},
	publisher = {Springer-Verlag, Berlin},
	series = {Grundlehren der mathematischen Wissenschaften [Fundamental Principles of Mathematical Sciences]},
	title = {Continuous martingales and {B}rownian motion},
	url = {https://doi.org/10.1007/978-3-662-06400-9},
	volume = {293},
	year = {1999},
	bdsk-url-1 = {https://doi.org/10.1007/978-3-662-06400-9}}

@incollection{Krug98,
	author = {Krug, J.},
	date-added = {2023-01-26 10:45:59 +0000},
	date-modified = {2023-01-26 10:46:07 +0000},
	fjournal = {Markov Processes and Related Fields},
	issn = {1024-2953},
	journal = {Markov Process. Related Fields},
	mrclass = {60G17 (60G18 82C41)},
	mrnumber = {1677056},
	mrreviewer = {Ralph P. Russo},
	note = {I Brazilian School in Probability (Rio de Janeiro, 1997)},
	number = {4},
	pages = {509--516},
	title = {Persistence of non-{M}arkovian processes related to fractional {B}rownian motion},
	volume = {4},
	year = {1998}}

@article{Geman:1980ve,
	author = {Donald Geman and Joseph Horowitz},
	date-added = {2023-01-22 23:16:44 +0000},
	date-modified = {2023-01-22 23:16:44 +0000},
	doi = {10.1214/aop/1176994824},
	journal = {The Annals of Probability},
	journal1 = {The Annals of Probability},
	journal2 = {The Annals of Probability},
	month = {2},
	number = {1},
	pages = {1--67},
	title = {Occupation Densities},
	url = {https://doi.org/10.1214/aop/1176994824},
	volume = {8},
	year = {1980},
	bdsk-url-1 = {https://doi.org/10.1214/aop/1176994824}}

@book{Morters:2010vm,
	abstract = {This eagerly awaited textbook covers everything the graduate student in probability wants to know about Brownian motion, as well as the latest research in the area. Starting with the construction of Brownian motion, the book then proceeds to sample path properties like continuity and nowhere differentiability. Notions of fractal dimension are introduced early and are used throughout the book to describe fine properties of Brownian paths. The relation of Brownian motion and random walk is explored from several viewpoints, including a development of the theory of Brownian local times from random walk embeddings. Stochastic integration is introduced as a tool and an accessible treatment of the potential theory of Brownian motion clears the path for an extensive treatment of intersections of Brownian paths. An investigation of exceptional points on the Brownian path and an appendix on SLE processes, by Oded Schramm and Wendelin Werner, lead directly to recent research themes.},
	address = {Cambridge},
	author = {Mörters, Peter and Peres, Yuval},
	date-added = {2023-01-22 22:42:22 +0000},
	date-modified = {2023-01-22 22:42:22 +0000},
	db = {Cambridge Core},
	doi = {DOI: 10.1017/CBO9780511750489},
	dp = {Cambridge University Press},
	isbn = {9780521760188},
	publisher = {Cambridge University Press},
	series = {Cambridge Series in Statistical and Probabilistic Mathematics},
	title = {Brownian Motion},
	url = {https://www.cambridge.org/core/books/brownian-motion/F639B9A8403BD465F896F3E18A9C3382},
	year = {2010},
	bdsk-url-1 = {https://www.cambridge.org/core/books/brownian-motion/F639B9A8403BD465F896F3E18A9C3382},
	bdsk-url-2 = {https://doi.org/10.1017/CBO9780511750489}}

@article{Kratz:2006tb,
	author = {Marie F. Kratz},
	date-added = {2023-01-22 17:35:51 +0000},
	date-modified = {2023-01-22 17:35:51 +0000},
	doi = {10.1214/154957806000000087},
	journal = {Probability Surveys},
	journal1 = {Probability Surveys},
	journal2 = {Probability Surveys},
	month = {1},
	number = {none},
	pages = {230--288},
	title = {Level crossings and other level functionals of stationary Gaussian processes},
	url = {https://doi.org/10.1214/154957806000000087},
	volume = {3},
	year = {2006},
	bdsk-url-1 = {https://doi.org/10.1214/154957806000000087}}

@article{le20,
	author = {Khoa Lê},
	date-added = {2022-09-01 10:03:38 +0900},
	date-modified = {2022-09-01 10:03:55 +0900},
	doi = {10.1214/20-EJP442},
	journal = {Electronic Journal of Probability},
	journal1 = {Electronic Journal of Probability},
	journal2 = {Electronic Journal of Probability},
	month = {1},
	number = {none},
	pages = {1--55},
	title = {A stochastic sewing lemma and applications},
	url = {https://doi.org/10.1214/20-EJP442},
	volume = {25},
	year = {2020},
	bdsk-url-1 = {https://doi.org/10.1214/20-EJP442}}

@article{mandelbrot68,
	author = {Mandelbrot, Benoit B. and Van Ness, John W.},
	date-added = {2022-08-16 08:22:12 +0900},
	date-modified = {2022-08-16 08:22:12 +0900},
	doi = {10.1137/1010093},
	journal = {SIAM Review},
	number = {4},
	pages = {422-437},
	title = {Fractional Brownian Motions, Fractional Noises and Applications},
	url = {https://doi.org/10.1137/1010093},
	volume = {10},
	year = {1968},
	bdsk-url-1 = {https://doi.org/10.1137/1010093}}

@misc{matsuda22,
	author = {Matsuda, Toyomu and Perkowski, Nicolas},
	copyright = {Creative Commons Attribution 4.0 International},
	date-added = {2022-11-08 09:56:12 +0000},
	date-modified = {2022-11-08 09:56:20 +0000},
	doi = {10.48550/ARXIV.2206.01686},
	keywords = {Probability (math.PR), FOS: Mathematics, FOS: Mathematics, 60G22, 60H05, 60H10, 60J55},
	publisher = {arXiv},
	title = {An extension of the stochastic sewing lemma and applications to fractional stochastic calculus},
	url = {https://arxiv.org/abs/2206.01686},
	year = {2022},
	bdsk-url-1 = {https://arxiv.org/abs/2206.01686},
	bdsk-url-2 = {https://doi.org/10.48550/ARXIV.2206.01686}}

@book{Durrett2019,
	abstract = {This lively introduction to measure-theoretic probability theory covers laws of large numbers, central limit theorems, random walks, martingales, Markov chains, ergodic theorems, and Brownian motion. Concentrating on results that are the most useful for applications, this comprehensive treatment is a rigorous graduate text and reference. Operating under the philosophy that the best way to learn probability is to see it in action, the book contains extended examples that apply the theory to concrete applications. This fifth edition contains a new chapter on multidimensional Brownian motion and its relationship to partial differential equations (PDEs), an advanced topic that is finding new applications. Setting the foundation for this expansion, Chapter 7 now features a proof of Itô's formula. Key exercises that previously were simply proofs left to the reader have been directly inserted into the text as lemmas. The new edition re-instates discussion about the central limit theorem for martingales and stationary sequences.},
	address = {Cambridge},
	author = {Durrett, Rick},
	date-added = {2022-06-01 16:46:15 +0200},
	date-modified = {2022-06-01 16:46:40 +0200},
	db = {Cambridge Core},
	doi = {DOI: 10.1017/9781108591034},
	dp = {Cambridge University Press},
	edition = {5},
	isbn = {9781108473682},
	publisher = {Cambridge University Press},
	series = {Cambridge Series in Statistical and Probabilistic Mathematics},
	title = {Probability: Theory and Examples},
	url = {https://www.cambridge.org/core/books/probability/DD9A1907F810BB14CCFF022CDFC5677A},
	year = {2019},
	bdsk-url-1 = {https://www.cambridge.org/core/books/probability/DD9A1907F810BB14CCFF022CDFC5677A},
	bdsk-url-2 = {https://doi.org/10.1017/9781108591034}}

@article{picard08,
	author = {Picard, Jean},
	doi = {10.1214/07-aop388},
	issn = {0091-1798},
	journal = {The Annals of Probability},
	number = {6},
	pages = {2235-2279},
	title = {A tree approach to p-variation and to integration},
	type = {Journal Article},
	url = {https://dx.doi.org/10.1214/07-aop388},
	volume = {36},
	year = {2008},
	bdsk-url-1 = {https://dx.doi.org/10.1214/07-aop388}}

@inbook{picard11,
	abstract = {We discuss the relationships between some classical representations of the fractional Brownian motion, as a stochastic integral with respect to a standard Brownian motion, or as a series of functions with independent Gaussian coefficients. The basic notions of fractional calculus which are needed for the study are introduced. As an application, we also prove some properties of the Cameron--Martin space of the fractional Brownian motion, and compare its law with the law of some of its variants. Several of the results which are given here are not new; our aim is to provide a unified treatment of some previous literature, and to give alternative proofs and additional results; we also try to be as self-contained as possible.},
	address = {Berlin, Heidelberg},
	author = {Picard, Jean},
	booktitle = {Séminaire de Probabilités XLIII},
	doi = {10.1007/978-3-642-15217-7_1},
	editor = {Donati-Martin, Catherine and Lejay, Antoine and Rouault, Alain},
	isbn = {978-3-642-15217-7},
	pages = {3--70},
	publisher = {Springer Berlin Heidelberg},
	title = {Representation Formulae for the Fractional Brownian Motion},
	url = {https://doi.org/10.1007/978-3-642-15217-7_1},
	year = {2011},
	bdsk-url-1 = {https://doi.org/10.1007/978-3-642-15217-7_1}}

@mastersthesis{Lemieux_1983,
	author = {Lemieux, Marc},
	collection = {Retrospective Theses and Dissertations, 1919-2007},
	doi = {http://dx.doi.org/10.14288/1.0080232},
	school = {University of British Columbia},
	series = {Retrospective Theses and Dissertations, 1919-2007},
	title = {On the quadratic variation of semi-martingales},
	url = {https://open.library.ubc.ca/collections/ubctheses/831/items/1.0080232},
	year = {1983},
	bdsk-url-1 = {https://open.library.ubc.ca/collections/ubctheses/831/items/1.0080232},
	bdsk-url-2 = {http://dx.doi.org/10.14288/1.0080232}}

@article{davis2018,
	author = {Davis, Mark and Obłój, Jan and Siorpaes, Pietro},
	doi = {10.1214/16-AIHP792},
	fjournal = {Annales de l'Institut Henri Poincaré, Probabilités et Statistiques},
	journal = {Ann. Inst. H. Poincaré Probab. Statist.},
	month = {02},
	number = {1},
	pages = {1--21},
	publisher = {Institut Henri Poincaré},
	title = {Pathwise stochastic calculus with local times},
	url = {https://doi.org/10.1214/16-AIHP792},
	volume = {54},
	year = {2018},
	bdsk-file-1 = {YnBsaXN0MDDSAQIDBFxyZWxhdGl2ZVBhdGhZYWxpYXNEYXRhXxAvLi4vLi4vRG93bmxvYWRzLzEwLjEwMDdfQkYwMDUzMjE2NC1jaXRhdGlvbi5yaXNPEQGQAAAAAAGQAAIAAAxNYWNpbnRvc2ggSEQAAAAAAAAAAAAAAAAAAADgsK+9QkQAAf////8fMTAuMTAwN19CRjAwNTMyMTY0LWNpdGF0aW9uLnJpcwAAAAAAAAAAAAAAAAAAAAAAAAAAAAAAAAAAAAAAAAAA/////+A04aAAAAAAAAAAAAACAAIAAAogY3UAAAAAAAAAAAAAAAAACURvd25sb2FkcwAAAgA5LzpVc2Vyczp0b3lvbTk1OkRvd25sb2FkczoxMC4xMDA3X0JGMDA1MzIxNjQtY2l0YXRpb24ucmlzAAAOAEAAHwAxADAALgAxADAAMAA3AF8AQgBGADAAMAA1ADMAMgAxADYANAAtAGMAaQB0AGEAdABpAG8AbgAuAHIAaQBzAA8AGgAMAE0AYQBjAGkAbgB0AG8AcwBoACAASABEABIAN1VzZXJzL3RveW9tOTUvRG93bmxvYWRzLzEwLjEwMDdfQkYwMDUzMjE2NC1jaXRhdGlvbi5yaXMAABMAAS8AABUAAgAO//8AAAAIAA0AGgAkAFYAAAAAAAACAQAAAAAAAAAFAAAAAAAAAAAAAAAAAAAB6g==},
	bdsk-url-1 = {https://doi.org/10.1214/16-AIHP792}}

@book{levy1965,
	author = {Lévy, Paul},
	mrclass = {60.00 (60.40)},
	mrnumber = {0190953},
	mrreviewer = {H. P. McKean, Jr.},
	pages = {vi+438},
	publisher = {Gauthier-Villars \& Cie, Paris},
	title = {Processus stochastiques et mouvement brownien},
	year = {1948}}

@article{levy1940,
	author = {L\'evy, Paul},
	journal = {American Journal of Mathematics},
	pages = {487-550},
	read = {1},
	title = {Le mouvement brownien plan},
	volume = {62},
	year = {1940}}

@article{RL2021,
	author = {Rafał M. Łochowski and Jan Obłój and David J. Prömel and Pietro Siorpaes},
	doi = {10.1214/21-EJP638},
	journal = {Electronic Journal of Probability},
	keywords = {càdlàg path, Föllmer--Itô formula, Local time, pathwise stochastic integration, pathwise Tanaka formula, Semimartingale},
	number = {none},
	pages = {1 -- 29},
	publisher = {Institute of Mathematical Statistics and Bernoulli Society},
	title = {{Local times and Tanaka--Meyer formulae for càdlàg paths}},
	url = {https://doi.org/10.1214/21-EJP638},
	volume = {26},
	year = {2021},
	bdsk-url-1 = {https://doi.org/10.1214/21-EJP638}}

@article{chacon1981,
	author = {Chacon, Rafael V and Le Jan, Yves and Perkins, E and Taylor, SJ},
	journal = {Zeitschrift für Wahrscheinlichkeitstheorie und Verwandte Gebiete},
	pages = {197--211},
	publisher = {Springer},
	title = {Generalised arc length for Brownian motion and Lévy processes},
	volume = {57},
	year = {1981}}
\end{document}